\newtheorem{thm}{Theorem}[section]
\newtheorem{lem}[thm]{Lemma}
\newtheorem{prop}[thm]{Proposition}
\newtheorem{cor}[thm]{Corollary}
\theoremstyle{definition}
\theoremstyle{definition}
\newtheorem{df}[thm]{Definition}
\theoremstyle{definition}
\newtheorem{rem}[thm]{Remark}
\theoremstyle{definition}
\newcommand{\red}{\textcolor{red}}
\renewcommand{\phi}{\varphi}
\newcommand{\N}{\mathbb{N}}
\newcommand{\Z}{\mathbb{Z}}
\newcommand{\Q}{\mathbb{Q}}
\newcommand{\R}{\mathbb{R}}
\newcommand{\C}{\mathbb{C}}
\newcommand{\T}{\mathbb{T}}
\numberwithin{equation}{section}
\newcommand{\Aff}{\operatorname{Aff}}
\newcommand{\id}{\operatorname{id}}
\newcommand{\aff}{\rm aff}
\newcommand{\morp}{contractive completely positive linear map}
\newcommand{\cp}{completely positive linear map}
\newcommand{\hm}{homomorphism}
\newcommand{\dt}{\delta}
\newcommand{\ep}{\varepsilon}
\newcommand{\td}{\tilde}
\newcommand{\DT}{\Delta}
\newcommand{\andeqn}{\,\,\,{\rm and}\,\,\,}
\newcommand{\rforal}{\,\,\,{\rm for\,\,\,all}\,\,\,}
\newcommand{\CA}{$C^*$-algebra}
\newcommand{\SCA}{$C^*$-subalgebra}
\newcommand{\af}{{\alpha}}
\newcommand{\bt}{{\beta}}
\newcommand{\diag}{{\rm diag}}
\newcommand{\wilog}{without loss of generality}
\newcommand{\Wlog}{Without loss of generality}
\newcommand{\D}{\mathbb D}
\newcommand{\beq}{\begin{eqnarray}}
\newcommand{\eneq}{\end{eqnarray}}
\newcommand{\tforal}{\,\,\,\text{for\,\,\,all}\,\,\,}
\newcommand{\tand}{\,\,\,\text{and}\,\,\,}
\title{\hspace{-0.2in}A classification of finite simple amenable ${\cal Z}$-stable C*-algebras, II. C*-algebras with rational generalized tracial rank one.}
\author{Guihua Gong,  Huaxin Lin and Zhuang Niu
 }
\date{
}
\begin{document}

\maketitle

\begin{abstract}



A classification theorem is obtained for a class of unital simple separable amenable ${\cal Z}$-stable  $C^*$-algebras 
which exhausts all possible values of the Elliott invariant for unital stably finite simple separable amenable ${\cal Z}$-stable \CA s.  Moreover,  it contains all unital simple separable  amenable \CA s which satisfy the UCT and have finite  rational tracial rank.

\end{abstract}

\setcounter{section}{21}
\tableofcontents

\vspace{0.3in}

This is the second part of the paper entitled
``A classification of finite simple amenable ${\cal Z}$-stable C*-algebras" (see \cite{GLN-I}).


The main theorem of this part is the following isomorphism theorem:
\vspace{0.1in}

{\bf Theorem} (see Theorem \ref{MFTh}).
Let $A$ and $B$ be two unital separable simple amenable ${\cal Z}$-stable \CA s which satisfy the UCT.
Suppose that $gTR(A\otimes Q)\le 1$ and $gTR(B\otimes Q)\le 1.$ Then
$A\cong B$ if and only if
\begin{equation*}
{\rm Ell}(A)\cong {\rm Ell}(B).
\end{equation*}

See Section 29 for a brief explanation.
We also refer to the first part  \cite{GLN-I}, in particular, Section 2 of \cite{GLN-I},  for the  notations and definitions.
\vspace{0.1in}


{\bf Acknowledgements}:
A large part of this article {{was}} written during the summers  of 2012, 2013, and 2014
when all three authors visited The Research Center for Operator Algebras in East China Normal University
 which is in part supported  by NNSF of China (11531003)  and Shanghai Science and Technology
 Commission (13dz2260400)
 and  Shanghai Key Laboratory of PMMP.
They were
partially
supported by the {{Center}} (also during summer 2017 when
some of revision were made).  Both the first named author and  the second named author were supported partially by NSF grants {(DMS 1665183 and DMS 1954600).}
 The work of the third named author was partially supported by a NSERC Discovery Grant, a Start-Up Grant from the University of Wyoming, a Simons Foundation Collaboration Grant, and a NSF grant (DMS-1800882).
{{The authors would like to take this opportunity to express their most sincere gratitude to  the referees 
and the editor for their efforts to improve the paper.}}

\section{{{Construction of maps}}}
{In this section, we will introduce some technical results on the existence of certain maps.}

{{Recall that ${\cal C}$ is the class of \CA s which are 1-dimensional NCCWs (see 3.1 of \cite{GLN-I}).
Let $A$ be a unital
simple \CA. We say $A\in {\cal B}_1$\index{${\cal B}_1$} if the following property holds:
Let $\ep>0,$ let $a\in A_+\setminus \{0\},$ and let ${\cal F}\subset A$ be a
finite subset.
There exist a non-zero projection $p\in A$ and a \SCA\, $C\in {\cal C}$
with $1_C=p$ such that
\begin{eqnarray}\nonumber
&&\|xp-px\|<\ep\tforal x\in {\cal F},\\
&&{\rm dist}(pxp, C)<\ep\tforal x\in {\cal F},\andeqn \\
&&1-p\lesssim a.
\end{eqnarray}}}
{{If $C$ as above can always be chosen in $\mathcal C_0$, that is, with $K_{{1}}(C)=\{0\},$ then we say that $A\in {\cal B}_0.$
\index{${\cal B}_0.$}}}

\vspace{0.2in}

Recall that we  refer to the first part  \cite{GLN-I}, in particular, Section 2 of \cite{GLN-I},  for the  notations and definitions.

\begin{lem}\label{preBot1}
Let $X$ be a finite CW complex, let $C=PM_k(C(X))P,$ and let $A_1\in {\cal B}_0$ be a unital simple \CA. Assume that $A=A_1\otimes U$ for
a UHF-algebra $U$
of infinite type.
Let $\af\in KK_e(C, A)^{++}$ (see Definition 2.10 of \cite{GLN-I}).
Then there exists a unital  monomorphism $\phi: C\to A$ such that $[\phi]=\af.$ Moreover
we may write $\phi=\phi_n'\oplus \phi_n'',$
where $\phi_n': C\to (1-p_n)A(1-p_n)$ is a unital monomorphism,  $\phi_n'': C\to p_nAp_n$ is a unital homomorphism with $[\phi_n'']=[\Phi]$ in $KK(C, p_nAp_n)$ for some
\hm\, $\Phi$ with finite dimensional range,
and
$$\lim_{n\to\infty} \max\{\tau(1-p_n):\tau\in T(A)\}=0\rforal \tau\in T(A),$$
where $p_n\in A$ is a sequence of projections.
\end{lem}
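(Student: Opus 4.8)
The plan is to obtain the statement from the existence theorem of \cite{GLN-I} for homomorphisms into unital simple $C^*$-algebras of generalized tracial rank one, together with its uniqueness counterpart, using the structure of the class $\mathcal B_0$ established there. As a preliminary reduction: since $A_1\in\mathcal B_0$, the algebra $A=A_1\otimes U$ is a unital simple $C^*$-algebra lying in $\mathcal B_1$ (permanence of $\mathcal B_1$ under tensoring by a UHF algebra), hence in $\mathcal B_0$ by Corollary~19.3 of \cite{GLN-I}; in particular $A$ is $\mathcal Z$-stable, so $K_0(A)$ is weakly unperforated and its positive cone is the strict cone determined by the traces. Also, because $U$ has infinite type, any corner $eAe$ cut by a projection $e$ coming from $1_{A_1}\otimes U$ is again isomorphic to $A$, and so belongs to $\mathcal B_0$; this is what will let us invoke the existence/uniqueness machinery on corners of arbitrarily small trace.

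\emph{Producing $\phi$.} Choose a continuous affine map $\gamma\colon T(A)\to T(C)$ and a homomorphism on $U(C)/CU(C)$ that are compatible with $\af$; these exist because $\af\in KK_e(C,A)^{++}$ and $C=PM_k(C(X))P$ has enough traces. The existence theorem of \cite{GLN-I} then yields a unital monomorphism $\phi\colon C\to A$ with $[\phi]=\af$, $\phi_T=\gamma$, and the prescribed de la Harpe--Skandalis data.

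\emph{The decomposition.} For each $n$ I would build, using that $A$ is simple and $\mathcal Z$-stable (hence contains a projection of trace uniformly larger than $1-1/n$) and that a UHF-stable corner accommodates the required matrix sizes, a unital homomorphism $\Phi_n\colon C\to p_nAp_n$ with finite-dimensional range --- for instance $\Phi_n(c)=\bigoplus_i c(x_i)\otimes g_{n,i}$ for finitely many points $x_i\in X$ and projections $g_{n,i}\in A$ --- where $p_n:=\Phi_n(1_C)$ satisfies $\max_{\tau\in T(A)}\tau(1-p_n)<1/n$. Since $X$ is a finite CW complex, every trace of $A$ pairs with $K_0(C)$ only through the pointwise rank, so $\af-[\Phi_n]$ is unital and satisfies $\rho_A\bigl((\af-[\Phi_n])(x)\bigr)=\tau(1-p_n)\cdot(\mbox{rank of }x)>0$ for all $x\in K_0(C)_+\setminus\{0\}$; by weak unperforation this puts $\af-[\Phi_n]$ in $KK_e\bigl(C,(1-p_n)A(1-p_n)\bigr)^{++}$. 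Applying the existence theorem on the corner $(1-p_n)A(1-p_n)\cong A$ and on $p_nAp_n$, I get a unital monomorphism $\phi_n'\colon C\to(1-p_n)A(1-p_n)$ with $[\phi_n']=\af-[\Phi_n]$ and a unital homomorphism $\phi_n''\colon C\to p_nAp_n$ with $[\phi_n'']=[\Phi_n]$, with tracial and determinant data arranged so that $\phi_n'\oplus\phi_n''$ has the same $KK$-class, tracial map and $U/CU$-invariant as $\phi$. The uniqueness theorem of \cite{GLN-I} then gives unitaries $u_n$ with ${\rm Ad}\,u_n\circ\phi\approx_{\ep_n}\phi_n'\oplus\phi_n''$ on a prescribed finite set; carrying this out as a one-sided approximate intertwining with $\sum_n\ep_n<\infty$ and an exhausting sequence of finite sets, I would replace $\phi$ by the resulting limit monomorphism, which still satisfies $[\phi]=\af$ and now literally decomposes as $\phi=\phi_n'\oplus\phi_n''$ (up to the usual inner conjugacy) for every $n$, with $p_n\in\phi(C)'\cap A$ and $\max_\tau\tau(1-p_n)\to0$.

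The step I expect to be the main obstacle is the decomposition itself: splitting off a finite-dimensional-range summand $\phi_n''$ that is \emph{dominant} in trace while keeping the complementary summand $\phi_n'$ a genuine monomorphism --- so that it still carries the full $K_1$ and the residual $K_0$ data --- and doing so compatibly in $n$ so that a single $\phi$ decomposes at every level. What makes the residual class $\af-[\Phi_n]$ positive (hence realizable by a monomorphism) is precisely that the traces of $A$ are blind to the reduced $K$-theory of $X$, so that $\rho_A\circ\af$ and $\rho_A\circ[\Phi_n]$ differ on the positive cone only by the vanishing factor $\tau(1-p_n)$; getting this, together with the matching of the decomposed tracial and determinant invariants (needed for the uniqueness theorem) and the organization of the intertwining, is the technical heart. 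The reduction and the two appeals to the existence and uniqueness theorems are, by comparison, routine.
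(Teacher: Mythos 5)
Your overall architecture --- first produce some unital monomorphism $\phi$ with $[\phi]=\af$, then split off a trace-dominant finite-dimensional-range summand and repair the residual class on a small corner via an existence-plus-uniqueness intertwining --- is different from the paper's, and the trace computation showing that $\rho_A\circ(\af-[\Phi_n])$ is strictly positive (because states on $K_0(PM_k(C(X))P)$ for connected $X$ see only the rank, so $\af$ and $[\Phi_n]$ differ on the positive cone by the factor $\tau(1-p_n)$) is correct and is indeed the same observation the paper uses to verify strict positivity. But the proposal has a genuine gap: it is circular. Both of your appeals to ``the existence theorem of \cite{GLN-I}'' --- once to produce $\phi$ itself with $[\phi]=\af$ (and prescribed $\phi_T$ and determinant data), and once to produce $\phi_n'$ on the corner with $[\phi_n']=\af-[\Phi_n]$ --- invoke an existence theorem for unital monomorphisms from $C=PM_k(C(X))P$, $X$ an \emph{arbitrary} finite CW complex, into $A_1\otimes U$ realizing a prescribed element of $KK_e(C,A)^{++}$. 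No such theorem is available from Part I: its existence results either have simple domain (Corollary 21.11), or domain in ${\cal C}_0$ or in the restricted class ${\bf H}$, or only produce almost-multiplicative c.p.\ maps up to stabilization and up to adding a finite-dimensional correction (Theorem 18.2). Lemma \ref{preBot1} is precisely the base case of that existence theorem; the stronger versions with tracial and $U/CU$ data that you quote (Corollary \ref{istBotC}, Theorem \ref{Next2014/09}) are proved later in this very section \emph{from} Lemma \ref{preBot1}.

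The paper escapes this circle by a detour you would need some substitute for: it replaces $C$ by a unital \emph{simple} AH-algebra $B=\lim(M_{r_n}(C),\psi_n)$ of real rank zero with a unique trace (a Goodearl algebra over $X$, whose connecting maps are $l_n$ copies of the identity plus many point evaluations, with $l_n/m_n\to 0$), computes $\underline{K}(B)$ and builds an invertible $\kappa\in KL(B,C\otimes U)$ with $\kappa^{-1}\circ[j_C]=[h]$ for the first-stage inclusion $h:C\to B$, and then applies the simple-to-simple existence theorem (Corollary 21.11 of \cite{GLN-I}) to get $H:B\to A$ with $[H]={\tilde\af}_1\circ\kappa$; the desired map is $\phi=H\circ h$. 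The decomposition $\phi=\phi_n'\oplus\phi_n''$ then comes for free from the point-evaluation blocks of the $\psi_n$ (the projections $q_n=\psi_{n+1,\infty}(e_n)$ commute with $h(C)$ and the complementary cut-down has finite-dimensional range), whereas in your scheme arranging a \emph{single} $\phi$ that decomposes at every $n$ requires the one-sided intertwining you sketch, which in turn needs the uniqueness theorem with matched tracial and determinant data that you cannot yet guarantee. Two smaller points: the corner $(1-p_n)A(1-p_n)$ is not automatically of the form $B_1\otimes U'$ with $U'$ of infinite type (a corner $eUe$ of a UHF algebra of infinite type need not be isomorphic to $U$), so even granting an existence theorem you would have to arrange the $g_{n,i}$ inside $1_{A_1}\otimes U$ and use $U\cong U\otimes U$ to restore the required form; and the lemma does not ask you to prescribe $\phi_T$, so the compatible $\gamma$ in your first step is both unnecessary and, at this stage, unobtainable.
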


\begin{proof}
To simplify the matter, we may assume that $X$ is connected.
Suppose that the lemma holds for the case
$C=M_k(C(X))$ for some integer $k\ge 1.$
Consider the case
$C=PM_k(C(X))P.$ Note $C\otimes {\cal K}\cong C(X)\otimes {\cal K}.$
Let $q\in M_m(A)$  be a projection (for some integer $m\ge 1$) such that $[q]=\af([1_{M_k(C(X))}]).$ Put $A_2=qM_m(A)q.$
Then $\af\in KK_e(M_k(C(X)), A_2)^{++}.$
Let $\psi: M_k(C(X))\to qM_m(A)q$ be the map given by the lemma for the case that $C=M_k(C(X)).$
Note now $C=PM_k(C(X))P.$
Let $\psi'=\psi|_C.$ Since $P\le 1_{M_k(C(X))},$ $\psi'(1_C)=\psi'(P)\le q.$ Moreover,
$[\psi'(1_C)]=[1_A].$ Since $A=A_1\otimes U,$ there is a unitary $v\in A_2$ such
that  $v^*\psi'(1_C)v=1_A.$ Define $\phi={\rm Ad} v\,\circ \psi'.$ We see that the general case reduces to the
case  $C=M_k(C(X)).$ This case then reduces to the case $C=C(X).$

Since quasitraces of $C$ and $A$ are traces (see  9.10 of
\cite{GLN-I})
  by Corollary 3.4 of \cite{Blatrace},
$\alpha ({\rm ker}\rho_C) \subset {\rm ker}\rho_A$.

Since $K_i(C)$ is finitely generated, $i=0,1,$ $KK(C,A)=KL(C,A).$
Let $\af\in KL_e(C,A)^{++}.$  We
may identify $\af$  with an element in ${\rm Hom}_{\Lambda}(\underline{K}(C),
\underline{K}(A))$ by a result
in (\cite{DL}).

{ {Write  $A=\lim_{n\to\infty}(A_1\otimes M_{r_n}, \imath_{n,n+1}),$  where $r_n|r_{n+1},$
$r_{n+1}=m_nr_n$ and
$\imath_{n,n+1}(a)=a\otimes 1_{M_{m_n}},$ $n=1,2,....$ Since $K_*(C)$ is finitely generated and consequently, $\underline{K}(C)$ is finitely generated modulo Bockstein $\Lambda$ operations, there is an element  $\alpha_1\in KK(C, A_1\otimes M_{r_n})$ such that $\alpha=\alpha_1\times [  \imath_n],$ where $ [  \imath_n] \in KK(A_1\otimes M_{r_n}, A)$ is induced by the inclusion $ \imath_n: A_1\otimes M_{r_n} \to A$.
Increasing $n$, we may assume that $\alpha_1 ({\rm ker}\rho_C) \subset {\rm ker}\rho_{A_1\otimes M_{r_n}}$ and further that $\alpha_1\in KK_e(C, A_1\otimes M_{r_n})^{++}$. Replacing  $A_1$ by $A_1\otimes M_{r_n}$,
we may assume that $\alpha=\alpha_1 \times [\imath]$, where $\alpha_1\in KK_e(C, A_1)^{++}$ and $ \imath: A_1 \to A$ is the inclusion.}}

It induces an element ${{\tilde \af}}_1\in KL(C\otimes U, A\otimes U).$
Let $K_0(U)=\D,$ a dense subgroup of $\Q$.
Note that $K_i(C\otimes U)=K_i(C)\otimes \D,$ $i=0,1,$ by the K\"{u}nneth formula.

We verify that ${{\tilde \af}}_1(K_0(C\otimes U)_+\setminus \{0\})\subset K_0(A\otimes U)_+\setminus\{0\}.$
Consider $x=\sum_{i=1}^m x_i\otimes d_i\in K_0(C\otimes U)_+\setminus \{0\}$ with $x_i\in K_0(C)$ and $d_i\in \D,$ $i=1,2,...,m.$   There  is a projection $p\in M_r(C)$ for some $r\ge 1$ such that $[p]=x.$
 Let $t\in T(C);$ then
\beq\label{prebot1-n1}
\sum_{i=1}^m t(x_i)d_i>0.
\eneq
It should be noted that, since $C=C(X)$ and $X$ is connected, $t(x_i)\in \Z$ and $t(x_i)=t'(x_i)$ for all $t, t'\in T({{C}}).$
Since $\af_{ {1}}([1_C])=[1_{A_{{1}}}],$ $\tau\circ \af_{{1}}(x_i)=t(x_i)$ for any $\tau\in T(A_{{1}})$ and $t\in T(C).$
By (\ref{prebot1-n1}),
\beq\label{prebot1-n2}
\tau({{\tilde \af}}_1(x))=\sum_{i=1}^m \tau\circ\af_{{1}}(x_i)d_i=\sum_{i=1}^m t(x_i)d_i>0
\eneq
for all $\tau\in T(A_{{1}}).$  This shows that ${{\tilde \af}}_1$ is strictly positive.
For any \CA\, $A',$ in this proof, we will use $j_{A'}: A'\to A'\otimes U$ for the \hm\, $j_{A'}(a)=a\otimes 1_U$
for all $a\in A'.$
Evidently,
\beq\label{prebot1-n3}
\alpha= {\tilde \af}_1\circ j_C=j_{A_1}\circ \af_1.
\eneq

Let $\mathfrak{d}:=p_1^{n_1}p_2^{n_2}\cdots $ be the supernatural number associated with $U$ (and $\D$),
where each $p_i$ is a distinct prime number.
If there are infinitely many of them, we may also write $\mathfrak{d}:=\prod_{i=1}^\infty l_i,$
where each $l_i$ is an integer. We define $m_1:=l_1p_{k_1}$ so that the prime number 
is not a factor of $l_1,$ and $m_i:=l_i p_{k_i}$ so that $p_{k_i}$ is not a factor of $l_i$ and $m_1m_2\cdots m_{i-1}.$ 
Since $p_{k_i}\to \infty$ as $i\to\infty,$ $\lim_{i\to \infty} \frac{m_i}{l_i}=\infty.$ Moreover
$\prod_{i=1}^\infty m_i=\mathfrak{d}.$
If there are only finitely many distinct $p_i$'s, write $\mathfrak{d}=p_1^{n_1}p_2^{n_2}\cdots p_{f}^{n_f}p_{k_1}^\infty \cdots p_{k_I}^{\infty},$ where $n_i<\infty$ ($1\le i\le f$). Let $l_1:=p_1^{n_1}p_2^{n_2}\cdots p_{f}^{n_f},$
$m_1:=l_1p_{k_1}\cdots p_{k_I},$ $l_i:=p_{k_1}p_{k_2}\cdots p_{k_I}$ and
$m_i:=p_{k_1}^ip_{k_2}^i \cdots p_{k_l}^i$ for $i\ge 2.$ Then $\mathfrak{d}=\prod_{i=1}^\infty l_i=\prod_{i=1}^\infty m_i$
and $\lim_{i\to \infty} \frac{m_i}{l_i}=\infty.$

Write  $U=\lim_{n\to\infty}(M_{r_n}, \imath_n),$  where $r_1=1,$  and $r_n=\prod_{i=1}^{n-1}m_i$ ($n>1$),
$r_{n+1}=m_nr_n$ and
$\imath_n(a)=a\otimes 1_{M_{m_n}}$ for $a\in M_{r_n},$ $n=1,2,....$   We may assume that $r_1=1.$
Let $r_1'=1,$ ${{r_n'}}=\prod_{i=1}^{n-1}l_i$ ($n>1$). Let
$U_1:=\lim_{n\to\infty}(M_{r_n'}, \imath_n'),$ where
$\imath_n'(b)=b\otimes M_{l_n}$ for $b\in M_{r_n'}.$

Let 
$\eta_n: M_{r_i'}\to M_{r_i}$ by $\eta_i(a)=a\otimes 1_{m_i/l_i}$ for $a\in M_{r_i'},$ respectively.
%
%
Then $\{\eta_n\}$ induces  a unital \hm\,  $\eta: U_1\to U$
which induces an isomorphism from $K_0(U_1)$ onto $K_0(U).$

%
%
Recall that we assume that $X$ is connected. Fix a base point
$x_0\in X.$   Let $C_0:=C_0(X\setminus \{x_0\}).$  Then $C$ is $KK$-equivalent
to $\C\oplus C_0$ and
$\underline{K}(C)=\underline{K}(\C)\oplus \underline{K}(C_0).$
 Let {{$\{x_n\}$}} be a sequence of points in {{$X\setminus \{x_0\}$}} such that {{$\{x_k,x_{k+1},...,x_n,...\}$}} is dense in $X$ for each $k$
and  each point in {{$\{x_n\}$}} repeated infinitely many times.
Let ${ {B}}=\lim_{n\to\infty} (C_n:=M_{r_n}(C), \psi_n),$ where
$$
\psi_n(f)={\rm diag}({ {\underbrace{f,f..., f}_{l_n}}}, {{f(x_1),f(x_2),...,f(x_{m_n-{{l_n}}})}})\tforal f\in M_{r_n}(C),
$$
$n=1,2,....$  Note that $\psi_n$ is injective.
Set $e_n={\rm diag}(1_{M_{r_n{{\cdot l_n}}}},0,...,0)\in M_{r_{n+1}}({{C}}),$ $n=1,2,...$

It is standard that ${{B}}$ has tracial rank zero (see \cite{Goodearl-AH} and also, 3.77 and 3.79 of \cite{Lnbok}).
Moreover, $\underline{K}(B)=\underline{K}(U)\oplus \underline{K}(C_0\otimes U_1)$ and
$K_0(B)_+=\{(d, z): d\in \D_+, z\in K_0(C_0\otimes U_1)\}\cup\{(0,0)\}.$
Note that
 $B$ is a unital  simple AH-algebra with no dimension growth, with real rank zero, and with a unique tracial state.
 Also $h:=\psi_{1, \infty}: C\to B$ gives $[h]|_{\underline{K}(\C)}(z)=z\otimes 1_{\D}$
 for $z\in \underline{K}(\C)$ and $[h]|_{\underline{K}(C_0)}(x)=x\otimes [1_{U_1}]$
 for $x\in \underline{K}(C_0).$ Let $\eta_0: C_0\otimes U_1\to C_0\otimes U$  {{be}}
 defined by $\eta_0:={\rm id}_{C_0}\otimes \eta.$
 Define $\kappa\in KL(B, B)$ by $\kappa|_{\underline{K}(\C\otimes U)}=\id_{\underline{K}(\C\otimes U)}$ and
 $\kappa|_{\underline{K}(C_0\otimes U_1)}=\eta_0.$  Note that $\kappa\in KL(B,B)^{++}.$
  Recall $\underline{K}(C\otimes U)=\underline{K}(\C\otimes U)\oplus \underline{K}(C_0\otimes U).$
  One may also view $\kappa$ as an element
  in ${\rm Hom}_{\Lambda}(\underline{K}(\C\otimes U)\oplus  \underline{K}(C_0\otimes U_1), \underline{K}(C\otimes U))
  =KL(B, C\otimes U).$ It has an inverse
  $\kappa^{-1}\in KL(C\otimes U, B).$  We have $\kappa^{-1}\circ [j_C]=[h].$

 %
%
%
Note  that $1-\psi_{n, \infty}(e_n)$ commutes with the image of $h$  for all $n\ge N.$ Moreover,
$(1-\psi_{n, \infty}(e_n))h(c)(1-\psi_{n, \infty}(e_n))=\psi_{n, \infty}((1-e_n)\psi_{N, n}\circ
 \phi_{1, N}(c)(1-e_n))$ for all $c\in C.$
Therefore the map $(1-\psi_{n, \infty}(e_n))h(C)(1-\psi_{n, \infty}(e_n))$ has finite dimensional range.

We also have ${{{\tilde\af}_1\circ \kappa}}\in KL_e(B, {{A}})^{++}, $ where (recall) $A=A_1\otimes U.$
We also note that $B$ has a unique tracial state. Let $\gamma: T(A)\to T(B)$ be defined  by
$\gamma(\tau)=t_0$ where $t_0\in T(B)$ is the unique tracial state. It follows that
${{{\tilde\af}_1\circ \kappa}}$ and $\gamma$ are compatible.  By
Corollary 21.11 of \cite{GLN-I}, 
there is a unital \hm\, $H: B\to {{A}}$ such that
$[H]={{{\tilde\af}_1\circ \kappa.}}$
Define $\phi: C\to A$ by $\phi= H\circ h.$
Then,  $\phi$ is injective,  and, by (\ref{prebot1-n3}) and {{${{[h]}}={\kappa}^{-1}\circ {{[j_C]}}$ , we have}}
$[\phi]=\af.$

To show the last part,  define $q_n=\psi_{n+1, \infty}{{(e_n)}}\in B,$  $n=N+1, N+2,....$
Define $p_n={ {1- H(q_n)}},$ $n=N+1, N+2,....$ One checks that
\beq\label{prebot1-n6}
\lim_{n\to\infty}\max\{\tau(1-p_n):\tau\in T(A)\}=\lim_{n\to\infty} \frac{l_n}{m_n}=0.
\eneq
Note that for $n>N$, $q_n$ commutes with the image of $h$ and the homomorphism $(1-q_n)h(1-q_n): C\to (1-q_n)B(1-q_n)$  has finite dimensional range.
Define $\phi_n': C\to {{ (1-p_n)A(1-p_n)}}$ by $\phi_n'(f)= H(q_n)H\circ {{ h(f)H(q_n)}}$ for $f\in C.$
 Define
{ {$\phi_n''(f)=(1- p_n)H\circ  h(f)(1-p_n),$  which is }}a point-evaluation map. The lemma follows.

\end{proof}




We also have the following:
\begin{lem}\label{Circlef-1}
Let $C=M_k(C(\T))$ and let $A$ be a unital  infinite dimensional simple \CA\, with stable rank one and with the property (SP).  Then the conclusion of \ref{preBot1}
also holds for a given $\af\in KK_e(C,A)^{++}.$
\end{lem}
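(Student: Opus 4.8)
\textbf{Proof plan for Lemma \ref{Circlef-1}.}
The plan is to reduce to the situation of Lemma \ref{preBot1} by exploiting that $C=M_k(C(\T))$ has an especially simple $K$-theory ($K_0(C)=\Z$ generated by a rank-one projection times a scalar, $K_1(C)=\Z$) and that every element of $\underline{K}(C)$ is determined on the finitely generated subgroup coming from $K_0(C)\oplus K_1(C)$ together with finitely many coefficient groups. First I would dispose of the matrix amplification exactly as in the proof of \ref{preBot1}: given $\af\in KK_e(C,A)^{++}$, pick a projection $q\in M_m(A)$ with $[q]=\af([1_C])$, note that since $A$ has stable rank one and $[q]=[1_A]$ in $K_0(A)$ the projection $q$ is Murray--von Neumann equivalent to $1_A$ (here we need $q\preceq 1_A$ and $1_A\preceq q$, which follows from stable rank one and $[q]=[1_A]$), and conjugate to land inside $A$ itself. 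This reduces everything to $C=C(\T)$ and a unital $\af\in KK_e(C(\T),A)^{++}$.

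Next I would build a concrete model algebra playing the role that $B$ (the AH-algebra of tracial rank zero) played in \ref{preBot1}. Since $C(\T)$ is $KK$-equivalent to $\C\oplus C_0(\T\setminus\{1\})$ with $K_0(C_0(\T\setminus\{1\}))=0$ and $K_1(C_0(\T\setminus\{1\}))=\Z$, only the $K_1$-part carries nontrivial data. The natural model is a simple unital AT-algebra $B$ of real rank zero with a unique trace, realized as $B=\lim_n (M_{r_n}(C(\T)),\psi_n)$ where $\psi_n$ has one "multiplicity-$l_n$ identity block" together with $m_n-l_n$ point-evaluation blocks at a dense sequence of points of $\T$ repeated infinitely often; choosing $l_n/m_n\to 0$ as in \ref{preBot1}. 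Here we must use that $A$ has property (SP) and stable rank one to guarantee that $B$ (or rather its image) can be approximately located inside $A$ via the existence theorem — but more directly, as in \ref{preBot1}, I would instead transfer $\af$ to an element $\beta\in KL_e(B,A)^{++}$ via a $KL$-equivalence $\kappa^{-1}\in KL(C(\T)\otimes U,B)$ after tensoring with a suitable UHF algebra $U$; however, since here $A$ is not assumed $U$-stable, this step is the one that needs care.

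The key issue, and what I expect to be the main obstacle, is that Lemma \ref{preBot1} crucially used $A=A_1\otimes U$ and the classification/existence result Corollary 21.11 of \cite{GLN-I} to produce the unital homomorphism $H:B\to A$ lifting the prescribed $KL$-class, whereas here $A$ is only assumed infinite dimensional simple with stable rank one and property (SP). So I would not be able to quote 21.11 directly. Instead the plan is: (i) use property (SP) to find, for any $\ep>0$, a nonzero projection $e\in A$ of small trace, and use stable rank one to control unitary equivalence of projections; (ii) inside a corner, use that the point-evaluation part of the model has finite-dimensional range, so only finitely many mutually orthogonal projections summing to a large projection $1-p_n$ are needed, each of controllably small size — these exist by repeated application of (SP) and comparison; (iii) on the "large" summand, realize the $K_1$-datum of $\af$ by a single unitary in $A$ (possible since $K_1(C(\T))=\Z$ is cyclic, using that for a unitary $u\in A$ one can prescribe $[u]\in K_1(A)$ freely, and the de la Harpe--Skandalis determinant can be adjusted by the $\Aff(T(A))/\overline{\rho_A(K_0(A))}$ freedom); (iv) assemble $\phi=\phi_n'\oplus\phi_n''$ where $\phi_n''$ is the finite-dimensional point-evaluation part on $p_nAp_n$ with $\tau(1-p_n)=l_n/m_n\to 0$, and $\phi_n'$ the unitary-part monomorphism on $(1-p_n)A(1-p_n)$. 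Checking that $[\phi]=\af$ amounts to checking it on $K_0$ (forced by $\af\in KK_e^{++}$ and unitality), on $K_1$ (arranged in step (iii)), and on the torsion coefficient groups $K_i(C,\Z/k\Z)$ (automatic here since $\underline{K}(C(\T))$ is generated over the Bockstein operations by $K_0\oplus K_1$ which is free, so there are no extra constraints). The honest way to organize all this is to observe that $C(\T)\in\mathcal C$ and invoke whichever existence theorem for maps from 1-dimensional NCCW's into simple $C^*$-algebras with (SP) and stable rank one is available in Part I; the decomposition into $\phi_n'\oplus\phi_n''$ then comes for free from the same inductive-limit model as in \ref{preBot1}, with $\T$ in place of the connected CW complex $X$.
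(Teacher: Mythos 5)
Your final construction (steps (i), (iii), (iv)) is essentially the paper's own proof: after reducing to $C=C(\T)$ using $k\,\af([p_1])=[1_A]$ and cancellation (stable rank one), one takes via (SP) a projection $p$ with $\tau(p)<\dt$ for all $\tau\in T(A)$, uses stable rank one and simplicity (so $K_1(pAp)\cong K_1(A)=U(pAp)/U_0(pAp)$) to choose a unitary $u\in pAp$ with $[u]=\af([z])$, and sets $\phi(f)=f(u)\oplus f(1)(1-p)$, the identity $[\phi]=\af$ in $KK$ being automatic since $K_*(C(\T))$ is finitely generated and free. The detour through a model AT-algebra and Part I existence theorems is unnecessary (and, as you yourself note, unavailable under these hypotheses), and be careful that the unitary (monomorphism) summand must sit in the \emph{small} corner while the point-evaluation part occupies the large corner, as in your step (iv); your step (iii) has the roles reversed.
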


\begin{proof}
Let $p_1\in C$ be a minimal rank one projection. Since $k\af([p_1])=\af([1_C])=[1_A],$
$A$ contains mutually equivalent and mutually orthogonal projections
$e_1, e_2,...,e_k$ such that $\sum_{i=1}^k e_i=1_A.$
Thus $A=M_k(A'),$ where $A'\cong e_1Ae_1.$
Since $e_iAe_i$ are unital infinite dimensional simple \CA s with stable rank one and with
(SP), the general case can be reduced to the case that $k=1.$
Fix $1>\dt>0.$ Choose a non-zero projection $p\in A$ such that $\tau(p)<\dt$ for all $\tau\in T(A).$
Note $K_1(pAp)=K_1(A),$ since $A$ is simple.  Let $\af_1: K_1(C(\T))\to K_1(pAp)$ be the \hm\, given by $\af.$
Let $z\in C(\T)$ be the standard unitary generator. Let $x=\af_1([z])\in K_1(pAp).$ Since
$pAp$ has stable rank one, there is a unitary $u\in pAp$ such that $[u]=x$ in $K_1(pAp)=K_1(A).$
Define $\phi': C(\T)\to pAp$ by $\phi'(f)=f(u)$ for all $f\in C(\T).$ Define $\phi'': C(\T)\to (1-p)A(1-p)$ by
$\phi''(f)=f(1)(1-p)$ for all $f\in C(\T)$ (where $f(1)$ is the point evaluation at $1$ on the unit circle).
Define $\phi=\phi'\oplus \phi'': C(\T)\to A.$  The map $\phi$ verifies the conclusion of lemma follows.

\end{proof}



\begin{cor}\label{istBotC}
Let  $X$ be a  connected  finite CW complex, let $C=PM_m(C(X))P,$
where $P\in M_m(C(X))$ is a projection, let $A_1\in {\cal B}_0$ be a unital separable simple \CA\, which satisfies
the UCT, and let $A=A_1\otimes U,$ where $U$ is a UHF-algebra of infinite type.
Suppose that $\af\in KK_e(C, A)^{++}$ and $\gamma: T(A)\to T_f(C(X))$ is a continuous affine map.
Then
there exists a
sequence of \morp  s $h_n: C\to A$ such that
\begin{enumerate}
\item $\lim_{n\to\infty}\|h_n(ab)-h_n(a)h_n(b)\|=0,$ for any $a,b\in C$,
\item for each $h_n$, the map $[h_n]$ is well defined and $[h_n]=\alpha$, and
\item $\lim_{n\to\infty}\max\{|\tau\circ h_n(f)-\gamma(\tau)(f)|: \tau\in T(A)\}=0$  for any $f\in C$.
\end{enumerate}
\end{cor}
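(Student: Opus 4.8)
The plan is to reduce Corollary \ref{istBotC} to Lemma \ref{preBot1} by showing that the prescribed trace map $\gamma$ can be achieved up to small error, after passing to a corner. First I would invoke Lemma \ref{preBot1} to obtain a unital monomorphism $\phi_0\colon C\to A$ with $[\phi_0]=\af$, together with the decomposition $\phi_0=\phi_n'\oplus\phi_n''$, where $\phi_n''\colon C\to p_nAp_n$ is a point-evaluation-type homomorphism with finite-dimensional range and $\lim_n\max\{\tau(1-p_n):\tau\in T(A)\}=0$. The homomorphism $\phi_n''$ has the form $f\mapsto \sum_j \Phi_j(f(x_j))$ for finitely many points $x_j\in X$ and mutually orthogonal projections summing to $1-p_n$; its trace is a convex combination (weighted by the traces of those projections) of point evaluations. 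The key flexibility is that we may adjust which points $x_j$ are used and with what weights, since $A=A_1\otimes U$ is $U$-stable and $U$ has a unique trace, so we can subdivide projections into pieces of any prescribed (dyadic-in-$U$) relative trace and relocate each piece to a different point of $X$ without changing the $KK$-class or the fact that the range is finite-dimensional.

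The heart of the matter is then a weak$^*$-approximation argument: given $\gamma\colon T(A)\to T_f(C(X))$ continuous affine, I want, for each $n$, to choose points $x_1,\dots,x_{k(n)}\in X$ and weights $t_1(\tau),\dots,t_{k(n)}(\tau)$ realized by traces of projections in $A$ so that $\sum_i t_i(\tau)\delta_{x_i}$ is within $1/n$ (in the weak$^*$ metric on a fixed finite subset $\mathcal F\subset C$) of $\gamma(\tau)$, uniformly in $\tau$. Because $T(A)$ is compact and $\gamma$ is continuous, a finite partition-of-unity argument on $T(A)$ reduces this to approximating each of finitely many probability measures $\gamma(\tau_\ell)$ on $X$ by finitely supported measures with rational-in-$\D$ weights (possible since $X$ is compact metric and such measures are weak$^*$-dense), and then patching the weights affinely over the partition of $T(A)$; the fact that $\af$ lands in $KK_e^{++}$ with $T_f(C(X))$ in the target guarantees the relevant projections with the prescribed traces actually exist in the simple $U$-stable algebra $A$. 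I would then replace $\phi_n''$ by a new finite-dimensional-range map $\psi_n''$ built from these relocated point evaluations, note $[\psi_n'']=[\phi_n'']$ in $KK$ (both factor through a finite-dimensional algebra and have the same $K$-theory by construction, or are homotopic), and set $h_n=\phi_n'\oplus\psi_n''$. Since $\tau(1-p_n)\to 0$ uniformly, the contribution of $\phi_n'$ to $\tau\circ h_n(f)$ is $O(\|f\|\cdot\max_\tau\tau(1-p_n))\to 0$, so $\tau\circ h_n(f)\to \gamma(\tau)(f)$ uniformly on $T(A)$ for each $f$, giving (3); (1) holds because each $h_n$ is an honest homomorphism (even better than asymptotically multiplicative); and (2) holds because $[h_n]=[\phi_n']+[\psi_n'']=[\phi_n']+[\phi_n'']=[\phi_0]=\af$.

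The main obstacle I anticipate is (2) combined with (3): matching the $KK$-class \emph{exactly} while simultaneously controlling the trace. The trace adjustment is done purely inside the finite-dimensional-range summand $\phi_n''$, and the subtlety is to guarantee that relocating point evaluations and subdividing projections does not perturb $[\phi_n'']\in KK(C,p_nAp_n)$. This is fine because any homomorphism from $C=PM_m(C(X))P$ with finite-dimensional range into a unital $C^*$-algebra $D$ is, on $K$-theory, determined by the images of the projections involved and the chosen eigenvalue-points only through $\af|_{K_0}$ restricted to the rank information — more precisely, two such maps with the same $1_D$-class and the same composite $K_0(C)\to K_0(D)$ are homotopic, and the homotopy of points in the connected space $X$ does not move the $KK$-class. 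So the real work is organizing the uniform-over-$T(A)$ weak$^*$ approximation and verifying the existence of projections in $A=A_1\otimes U$ with the required traces, which follows from $U$-stability, simplicity, $\af\in KK_e^{++}$, and the density of finitely-supported $\D$-weighted measures; I would present this as a short lemma and then assemble $h_n=\phi_n'\oplus\psi_n''$ as above.
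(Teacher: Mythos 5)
There is a genuine gap, and it sits exactly at the step you call ``the real work'': the uniform weak$^*$ approximation of $\gamma$ by maps of the form $\tau\mapsto\sum_j t_j(\tau)\delta_{x_j}$. If you correct the trace only inside a summand with finite dimensional range, then the weights available to you are forced to be $t_j(\tau)=\tau(q_j)$ for projections $q_j$ (mutually orthogonal, summing to the unit of the corner), i.e., elements of $\rho_A(K_0(A))_+$. But for $A=A_1\otimes U$ with $A_1\in{\cal B}_0$ there is no reason for $\rho_A(K_0(A))$ to be dense in $\Aff(T(A))$, or even to contain any nonconstant function: $K_0(A)$ can be as small as the dense subgroup $\D\subset\Q$ with a unique state, while $T(A)$ is a nontrivial Choquet simplex and $\gamma$ genuinely depends on $\tau$ (different extreme traces sent to different measures on $X$). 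In that situation every map you can build from relocated point evaluations weighted by projections (or by subdividing inside $1\otimes U$, which again only produces constant proportions) has $\tau\circ\psi_n''(f)$ constant in $\tau$, so it cannot approximate $\gamma$ uniformly. The auxiliary device of ``patching the weights affinely over a partition of $T(A)$'' does not rescue this: a partition of unity on $T(A)$ is not affine, and in any case the weights realized by a single map must be trace evaluations of fixed projections, not arbitrary affine functions. Neither $\af\in KK_e(C,A)^{++}$ nor simplicity/$U$-stability supplies projections whose trace functions approximate the prescribed $\tau$-dependence.

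This is precisely why the paper does not correct with a finite-dimensional-range map. After first reducing (via the classification theorem, Theorem 21.10 of \cite{GLN-I}) to the case where $A$ is a model algebra, it replaces $h_n''$ by a homomorphism $\phi\colon C\to D$ into a C*-subalgebra $D\subset p_nAp_n$ with $D\in{\cal C}_0$ (a one-dimensional NCCW, not finite dimensional), supplied by Lemma 16.12 of \cite{GLN-I} together with a continuous affine map $\gamma'\colon T(D)\to T(C)$ approximating $\gamma$; the richness of $T(D)$ is what lets the corrected map's traces follow $\gamma(\tau)$ as $\tau$ varies. The existence of $\phi$ with $|\tau\circ\phi(f)-\gamma'(\tau)(f)|$ small and with $[\phi]=[\Phi]$ for a point-evaluation $\Phi$ (so the total class stays $\af$) is Lemma 17.1 of \cite{GLN-I}, and it is here that the faithfulness assumption $\gamma(T(A))\subset T_f(C(X))$ is actually used, to get the lower bounds $\sigma_{1,1},\sigma_{1,2}$ on the test functions and the dimension bound $M$ on irreducible representations of $D$. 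Your outer frame (keep $\phi_n'$, note $\tau(1-p_n)\to0$, add a correction of point-evaluation $KK$-type on the large corner) matches the paper, but the correction itself must land in a ${\cal C}_0$-subalgebra rather than a finite-dimensional one, and that substitution is the essential content you are missing.
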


\begin{proof}
By Theorem
21.10 of \cite{GLN-I},
one may assume that $A$ is a unital \CA\, as described in
Theorem 14.10 of \cite{GLN-I}.
It follows from Lemma \ref{preBot1} that there is a unital \hm\,  $h_n: C\to A$ such that
$[h_n]=\alpha$. Moreover, $$h_n=h_n'\oplus h_n'',$$ where $h''_n: C\to p_nAp_n$ is a homomorphism with $[h''_n]=[\Phi^{{'}}]$ in $KK(C, p_nAp_n)$ for some point evaluation map $\Phi^{{'}}$, where $p_n$ is a projection in $A$ with $\tau(1-p_n)$ converging to $0$ uniformly as $n\to\infty$. We will modify the map $ h_n=h_n'\oplus h_n''$ to get the  \hm.

We assert that for any finite subset $\mathcal H\subset C_{s.a}$, and $\epsilon>0$, and any sufficiently large $n$, there is a unital homomorphism $\tilde{h}_n: C\to p_nAp_n$ such that  $[\tilde{h}_n]=[\Phi]$ in $KK(C,p_n Ap_n)$ for 
a homomorphism $\Phi$ with finite dimensional range, and
$$|\tau\circ \tilde{h}_n(f)-\gamma(\tau)(f)|<\epsilon\rforal \tau\in T(A)$$ for all $f\in\mathcal H$. The corollary then  follows by replacing the map $h_n''$ by the map $\tilde{h}_n$---of course, we use the fact that $\lim_{n\to \infty}\tau(1-p_n)=0$.

Let $\mathcal H_{1, 1}$ (in place of $\mathcal H_{1, 1}$) be the finite subset of Lemma 17.1 of \cite{GLN-I} 
with respect to $\mathcal H$ (in place of $\mathcal H$), $\epsilon/{8}$ (in place of $\sigma$), and $C$
 (in place of $C$). Since $\gamma(T(A))\subset T_f(C(X))$, there is $\sigma_{1, 1}>0$ such that $$\gamma(\tau)(h)>\sigma_{1, 1}\rforal h\in\mathcal H_{1, 1}\rforal \tau\in T(A).$$

Let $\mathcal H_{1, 2}\subset C^+$ (in  place of $\mathcal H_{1, 2}$) be the finite subset of Lemma 17.1 of \cite{GLN-I} 
with respect to $\sigma_{1, 1}$. Since $\gamma(T(A))\subset T_f(C(X))$, there is $\sigma_{1, 2}>0$ such that $$\gamma(\tau)(h)>\sigma_{1, 2}\rforal h\in\mathcal H_{1, 2} \rforal \tau\in T(A).$$

Let $M$ be the constant of Lemma 17.1 of \cite{GLN-I} 
with respect to $\sigma_{1, 2}$. 
By Lemma 16.12 of \cite{GLN-I} 
(also see the proof of Lemma 16.12 of \cite{GLN-I})
for sufficiently large $n$, there are a C*-subalgebra $D\subset p_nAp_n\subset A$ such that $D\in \mathcal C_0$, and a continuous affine map $\gamma': T(D)\to T(C)$ such that
$$|\gamma'(\frac{1}{\tau(p)}\tau|_D)(f)-\gamma(\tau)(f)|<\epsilon/4 \rforal \tau\in T(A)\rforal f\in\mathcal H,$$
where $p=1_D$, $\tau(1-p)<\epsilon/(4+\epsilon)$, and further (see part (2) of Lemma 16.12 of \cite{GLN-I})
\beq
&&\gamma'(\tau)(h)>\sigma_{1, 1}\rforal \tau\in T(D) \rforal h\in\mathcal H_{1, 1},\andeqn\\
&&\gamma'(\tau)(h)>\sigma_{1, 2}\rforal \tau\in T(D)\rforal h\in\mathcal H_{1, 2}.
\eneq
Since $A$ is simple and not elementary, one may assume that the dimensions of the irreducible representations of $D$ are at least $M$. Thus, by Lemma 17.1 of \cite{GLN-I}, 
there is a homomorphism $\phi: C\to D$ such that
$[\phi]=[\Phi]$ in $KK(C, D)$ for a point evaluation map $\Phi$, and that
 $$|\tau\circ\phi(f)-\gamma'(\tau)(f)|<\epsilon/{{4}}\rforal f\in \mathcal H\rforal \tau\in T(D).$$

Pick a point $x\in X$, and define ${\tilde h}: C\to p_nAp_n$ by
$$f\mapsto f(x)(p_n-p)\oplus\phi(f)\rforal f\in C.$$ Then a calculation as in the proof of Theorem 17.3 of \cite{GLN-I} 
shows that the homomorphism $h_n'\oplus {\tilde h}$ verifies
the assertion.
\end{proof}




\begin{cor}\label{istBotC+}
Let   $C\in {\bf H}$
(see Definition 14.5 of \cite{GLN-I})
and let $A_1\in {\cal B}_0$ be a unital separable simple \CA\, which satisfies
the UCT and let $A=A_1\otimes U$ { for some UHF-algebra $U$ of infinite type.}
Suppose that $\af\in KK_e(C, A)^{++},$ $\lambda: U(C)/CU(C)\to U(A)/CU(A)$ is
a continuous \hm, and $\gamma: T(A)\to T_f({{C}})$ is a continuous affine map such that
$\af, \lambda,$ and $\gamma$ are compatible.
Then there exists a
sequence of  unital completely positive linear maps $h_n: C\to A$ such that
\begin{enumerate}
\item $\lim_{n\to\infty}\|h_n(ab)-h_n(a)h_n(b)\|=0$ for any $a,b\in C$,
\item for each $h_n$, the map $[h_n]$ is well defined and $[h_n]=\alpha$,
\item $\lim_{n\to\infty}\max\{|\tau\circ h_n(f)-\gamma(\tau)(f) |: \tau\in T(A)\}=0$ for all $f\in C,$  and
\item $\lim_{n\to\infty}{\rm dist}(h_n^{\ddag}({\bar u}), \lambda({\bar u}))=0$
for any $u\in U(C).$
\end{enumerate}
\end{cor}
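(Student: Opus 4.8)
\quad The plan is to produce the maps $h_n$ by correcting, in the $U/CU$-direction, the maps furnished by Corollary~\ref{istBotC}, which already take care of the $KK$-class and of the tracial data. Since $C$ is a finite direct sum of blocks $PM_m(C(X))P$ with $X$ one of $\{\mathrm{pt},[0,1],S^1,S^2,T_{2,k},T_{3,k}\}$, and $K_*(C)$, $T(C)$ and $U(C)/CU(C)$ all split over the summands (compatibly with the splitting of \ref{Dcu}), I would first reduce to a single block. Here $A=A_1\otimes U$ has stable rank one, so $1_A$ can be written as a finite sum of mutually orthogonal projections $e_i$ with $[e_i]=\af([1_{C_i}])$, one places the $i$-th block on the corner $e_iAe_i$, and one assembles the $h_n$ blockwise; and, exactly as in the proofs of \ref{preBot1} and \ref{Circlef-1}, one further reduces $PM_m(C(X))P$ to $M_k(C(X))$ and then (splitting $1_A$ once more) to $C(X)$.

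So let $C=C(X)$ and apply \ref{istBotC} to get \morp s $g_n\colon C\to A$ with $\lim_n\|g_n(ab)-g_n(a)g_n(b)\|=0$, $[g_n]=\af$ for all large $n$, and $\lim_n\max_{\tau\in T(A)}|\tau\circ g_n(f)-\gamma(\tau)(f)|=0$. For large $n$, $g_n^{\ddag}$ is defined, and I would compare it with $\lambda$ through the splitting $U(C)/CU(C)=\Aff(T(C))/\overline{\rho_C(K_0(C))}\oplus J_c(K_1(C))$. On the first summand, for $u=e^{\mathrm{i}h}\in U_0(C)$ one has $g_n^{\ddag}(\overline u)\approx\overline{e^{\mathrm{i}g_n(h)}}$ with vanishing error, whose de la Harpe--Skandalis determinant at $\tau$ is $\tfrac1{2\pi}\tau(g_n(h))\to\tfrac1{2\pi}\gamma(\tau)(h)$; by compatibility of $\lambda$ and $\gamma$ this limit is $\lambda(\overline u)$, so $g_n^{\ddag}\to\lambda$ on $\Aff(T(C))/\overline{\rho_C(K_0(C))}$. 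On $J_c(K_1(C))$ both $g_n^{\ddag}$ and $\lambda$ lift $\af_{*1}$, so their difference there is a homomorphism $K_1(C)\to\Aff(T(A))/\overline{\rho_A(K_0(A))}$; since $K_0(A)=K_0(A_1)\otimes K_0(U)$ with $K_0(U)$ a subgroup of $\Q$ dense in $\R$, the subgroup $\overline{\rho_A(K_0(A))}$ is a closed $\R$-linear subspace of $\Aff(T(A))$, so the quotient is a real vector space, in particular torsion free, and this difference vanishes on the torsion subgroup. Among the six spaces only $S^1$ has free $K_1$, so for $X\neq S^1$ the maps $g_n$ already satisfy (1)--(4) in the limit and it remains only to treat $C=C(\T)$.

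For $C=C(\T)$ the $K$-theory is free, so $KK(C,A)=\Hom(K_0(C),K_0(A))\oplus\Hom(K_1(C),K_1(A))$ and $\underline K(C)$ imposes no further constraint; hence any unital homomorphism $C(\T)\to A$ carrying the generator $z$ to a unitary of class $\af_{*1}([z])$ realizes $\af$. So, rather than perturbing $g_n$, I would build $h_n$ directly by functional calculus, $h_n(f)=f(w_n)$, where $w_n\in U(A)$ is chosen with $[w_n]=\af_{*1}([z])$ in $K_1(A)$, with $\overline{w_n}=\lambda(\overline z)$ in $U(A)/CU(A)$, and with spectral distribution with respect to each $\tau\in T(A)$ within $1/n$ of the faithful probability measure on $\T$ representing $\gamma(\tau)$. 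Then $h_n$ is a genuine homomorphism, so (1) is automatic; $[h_n]=\af$ by the $KK$-computation, giving (2); $\tau\circ h_n(f)=\int_\T f\,d\mu_{w_n,\tau}\to\gamma(\tau)(f)$ uniformly, giving (3); and $h_n^{\ddag}(\overline z)=\overline{\langle h_n(z)\rangle}=\overline{w_n}=\lambda(\overline z)$, while on the affine part $h_n^{\ddag}(\overline{e^{\mathrm{i}h}})=\overline{e^{\mathrm{i}h(w_n)}}$ has determinant $\tfrac1{2\pi}\int_\T h\,d\mu_{w_n,\tau}\to\tfrac1{2\pi}\gamma(\tau)(h)=\lambda(\overline{e^{\mathrm{i}h}})(\tau)$; writing a general $u\in U(C(\T))$ as $z^ke^{\mathrm{i}h}$ and using that the $z^k$-parts of $h_n^{\ddag}(\overline u)$ and $\lambda(\overline u)$ cancel exactly, one gets $\mathrm{dist}(h_n^{\ddag}(\overline u),\lambda(\overline u))\to0$, i.e. (4).

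The main obstacle is the construction of the unitaries $w_n$: one needs an existence result — which I would invoke from Part~I — producing unitaries in a unital simple $C^*$-algebra $A=A_1\otimes U$ of stable rank one with a prescribed class in $U(A)/CU(A)$, a prescribed $K_1$-class, and an approximately prescribed faithful spectral distribution, the joint realizability being precisely the content of the compatibility of $\af$, $\lambda$ and $\gamma$. The underlying mechanism is that the $K_1$-class is realized by some unitary because $A$ has stable rank one, and that, since $A\cong A\otimes U$, one may absorb a diffuse UHF-type factor whose de la Harpe--Skandalis determinant equals any prescribed element of the real vector space $\Aff(T(A))/\overline{\rho_A(K_0(A))}$ without appreciably perturbing either the $K_1$-class or the spectral distribution. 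Everything else is bookkeeping of the direct-sum and matrix reductions and of the metric estimates in $U(A)/CU(A)$.
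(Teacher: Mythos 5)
Your strategy is genuinely different from the paper's (which feeds the map from Corollary \ref{istBotC} through the model decomposition of $A$ coming from Theorems 21.9 and 14.10 of \cite{GLN-I} and corrects the $\ddag$-data on a finite generating set of $J_c(K_1(C))$ by Lemma 21.5 of \cite{GLN-I}), and your observation that on the blocks with torsion or trivial $K_1$ the map $\lambda$ is already forced by $(\af,\gamma)$ is a sound one. The genuine gap sits exactly where the corollary has content beyond \ref{istBotC}, namely on the $S^1$ blocks. There you delegate the whole construction to ``an existence result --- which I would invoke from Part I'' producing a unitary $w_n$ with simultaneously prescribed $K_1$-class, prescribed class in $U(A)/CU(A)$, and approximately prescribed spectral distribution. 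No such statement is available to quote in \cite{GLN-I}; it must be assembled, and the mechanism you offer --- ``absorbing a diffuse UHF-type factor whose de la Harpe--Skandalis determinant equals any prescribed element of $\Aff(T(A))/\overline{\rho_A(K_0(A))}$ without appreciably perturbing \ldots the spectral distribution'' --- does not do the job: a unitary of the form $w'\otimes v$ or a correction supported in $1_A\otimes U$ only shifts the determinant by a \emph{constant} affine function, so it cannot reach an arbitrary class of $\Aff(T(A))/\overline{\rho_A(K_0(A))}$. The correction that works is by orthogonal small corners of $A$ itself, exactly as in the proof of Lemma \ref{Vpair2} of this paper: realize the distribution by a unitary in a corner of trace close to $1$ (Theorem 17.3 of \cite{GLN-I} applied to $X=\T$), adjust the $K_1$-class on one small corner using stable rank one, and adjust the $U_0/CU$-class on another small corner using Lemma \ref{alg-cut} (or Theorem 3.10 of \cite{GLX}), the smallness of the corners preserving the tracial estimate. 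Until this is carried out, the heart of your proof is an assertion, not an argument.

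Two further points need repair or detail. First, on the torsion blocks you claim that $g_n^{\ddag}-\lambda$ is a homomorphism of $K_1(C)$ into the torsion-free group $U_0(A)/CU(A)$ and hence vanishes; but for a merely approximately multiplicative $g_n$ the map $g_n^{\ddag}$ cannot in general be an exact homomorphism taking the value $\overline{\langle g_n(u)\rangle}$ at a torsion element $\bar u$ (if $\bar u^k=\bar 1$ nothing forces $\overline{\langle g_n(u)\rangle}^k=\bar 1$), so exact vanishing is not available. What is true, and suffices, is the approximate version: with $y_n=\overline{\langle g_n(u)\rangle}\,\lambda(\bar u)^{-1}\in U_0(A)/CU(A)$ one has $y_n^k=\overline{\langle g_n(u)\rangle^k}$, which has small distance to $\bar 1$ because $u^k\in CU(C)$ and $g_n$ is almost multiplicative on a fixed finite set, and since $\overline{\rho_A(K_0(A))}$ is a closed real subspace of $\Aff(T(A))$ (cf.\ Remark \ref{RBHfull} and Lemma 11.5 of \cite{GLN-I}) the metric of \ref{Dcu} is homogeneous, so ${\rm dist}(g_n^{\ddag}(\bar u),\lambda(\bar u))\le \frac{1}{k}\,{\rm dist}(\overline{\langle g_n(u)\rangle^k},\bar 1)\to 0$. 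Second, the reductions you call bookkeeping --- splitting $1_A$ into projections of classes $\af([1_{C_i}])$, passing from $PM_m(C(X))P$ to $C(X)$, and transporting $\lambda$ and $\gamma$ through corners --- are metric statements about $T(A)$ and $U(A)/CU(A)$, not only $K$-theoretic ones, and should be routed through the compatibility $\tau(e_i)=\gamma(\tau)(1_{C_i})$, Corollary 11.11 of \cite{GLN-I}, and Lemma \ref{alg-cut}; as written they are asserted rather than checked.
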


\begin{proof}
Let $\epsilon>0$. Let $\mathcal U$ be a finite subset of $U(C)$ such that $\overline{U}$ generates $J_c(K_1(C))$, where $J_c(K_1(C))$ is as in Definition 2.16 of \cite{GLN-I}.
Let $\sigma>0$, $\delta>0$ and $\mathcal G$ be the constant and finite subset of Lemma 21.5 of \cite{GLN-I} 
with respect to $\mathcal U$, $\epsilon,$ and $\lambda$ (in the place of $\alpha$). Without loss of generality, one may assume that $\delta<\epsilon$.

Let $\mathcal F$ be a finite subset such that $\mathcal F\supset \mathcal G$. Let $\mathcal H\subset C$ be a finite subset of self-adjoint elements with norm at most one. By Corollary \ref{istBotC}, there is a completely positive  linear map $h': C\to A$ such that $h'$ is $\mathcal F$-$\delta$-multiplicative, $[h']$ is well defined and $[h']=\alpha$, and
\begin{equation}\label{tr-eqn-09-27}
|\tau(h'(f))-\gamma(\tau)(f)|<\epsilon,\quad\tau\in\mathrm{T}(A), \ f\in\mathcal H.
\end{equation}

By  Theorem 21.9 of \cite{GLN-I}, 
the C*-algebra $A$ is isomorphic to one of the model algebras constructed in
Theorem 14.10 of \cite{GLN-I}, 
and therefore there is an inductive limit decomposition $A=\varinjlim(A_i, \phi_i)$, where $A_i$ and $\phi_i$ are as described in
Theorem 14.10 of \cite{GLN-I}.
Without loss of generality, one may assume that $h'(C)\subset A_i$ for some $i.$ Therefore, by
Theorem 14.10 of \cite{GLN-I},
the map $\phi_{i, \infty}\circ h'$ has a decomposition $$\phi_{i, \infty}\circ h'=\psi_0\oplus\psi_1$$ such that $\psi_0, \psi_1$ satisfy (1)--(4) of Lemma 21.5 of \cite{GLN-I} 
with the $\sigma$ and $\delta$ above.

It then follows from Lemma 21.5 of \cite{GLN-I} 
that there is a homomorphism $\Phi: C\to e_0Ae_0$, where $e_0=\psi_0(1_C)$, such that

(i) $\Phi$ is homotopic to a homomorphism with finite dimensional range and
\begin{equation}\label{same-k0}
[\Phi]_{*0}=[\psi_0],\andeqn
\end{equation}

(ii) for each $w\in \mathcal U$, there is $g_w\in \text{U}_0(B)$ with $\mbox{cel}(g_w)<\epsilon$ such that
        \begin{equation}\label{k1-eqn-09-27-01}
         \lambda(\bar{w})^{-1}(\Phi\oplus \psi_1)^\ddagger(\bar{w})=\bar{g}_w.
         \end{equation}

Consider the map $h:=\Phi\oplus\psi_1.$ Then $h$ is $\mathcal F$-$\epsilon$-multiplicative. By \eqref{same-k0}, one has
$$[h]=[\psi_0]\oplus[\psi_1]=[h']=\alpha.$$
By \eqref{tr-eqn-09-27} and Condition (4) of Lemma 21.5 of \cite{GLN-I}, 
one has, for all $f\in {\cal H},$
$$|\tau(h(f))-\gamma(\tau)(f)|\leq |\tau(h'(f))-\gamma(\tau)(f)|+\delta <\epsilon+\delta<2\epsilon.
$$
It follows from \eqref{k1-eqn-09-27-01} that, for all $u\in {\cal U},$
$$\mathrm{dist}(\overline{h(u)}, \lambda(\overline{u}))<\epsilon.$$
Since $\mathcal F$, $\mathcal H$, and $\epsilon$ are arbitrary, this proves  the corollary.
\end{proof}

\begin{cor}\label{istBotC++}
Let $C\in {\bf H}$
and let $A_1\in {\cal B}_0$ be a unital separable simple \CA\, which satisfies
the UCT, and let $A=A_1\otimes U$ for some UHF-algebra $U$ of infinite type.
Suppose that $\af\in KL_e(C, A)^{++}$ and $\lambda: U(C)/CU(C)\to U(A)/CU(A)$ is
a continuous \hm, and $\gamma: T(A)\to T_f({{C}})$ is a continuous affine map such that
$\af, \lambda, $ and $\gamma$ are compatible.
Then
there exists a  unital \hm\, $h: C\to A$
 such that
\begin{enumerate}
 \item $[h]=\af, $
\item $\tau\circ h(f)=\gamma(\tau)(f) $ for any $f\in C,$ and
\item $h_n^{\ddag}=\lambda.$
\end{enumerate}
\end{cor}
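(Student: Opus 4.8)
\textbf{Proof proposal for Corollary \ref{istBotC++}.}
The plan is to upgrade the ``approximate'' conclusion of Corollary \ref{istBotC+} to an exact homomorphism by a standard approximate-intertwining/convergence argument, exploiting that $A\cong\varinjlim(A_i,\phi_i)$ for the model algebras of Theorem 14.10 of \cite{GLN-I} and that such $A$ is classifiable via Theorem 21.10 of \cite{GLN-I}. First I would record that since $K_*(C)$ is finitely generated, $KL_e(C,A)^{++}=KK_e(C,A)^{++}$ on the relevant finite subgroup, so $\af$ is determined by its values on a fixed finite subset $\mathcal P\subset\underline{K}(C)$; fix also a finite generating set $\mathcal U\subset U(C)$ for $J_c(K_1(C))$ and an exhausting increasing sequence of finite subsets $\mathcal F_n\subset C$ of self-adjoint contractions (together generating a dense set) with $\ep_n\to 0$. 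By Corollary \ref{istBotC+} applied with data $(\af,\lambda,\gamma)$ I obtain a sequence of unital $\mathcal F_n$-$\ep_n$-multiplicative maps $h_n:C\to A$ with $[h_n]=\af$ (well-defined for $n$ large), $\max_{\tau}|\tau\circ h_n(f)-\gamma(\tau)(f)|\to 0$ for $f\in C$, and $\mathrm{dist}(h_n^\ddag(\bar u),\lambda(\bar u))\to 0$ for $u\in\mathcal U$.

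The main step is then a one-sided approximate-intertwining: I would use the uniqueness theorem available for $C\in{\bf H}$ mapping into $A\in{\cal B}_0\otimes U$ (the same uniqueness/stability machinery behind Theorem 21.10 and Lemma 21.5 of \cite{GLN-I}) to show that for a suitable choice of control data $(\ep_n,\mathcal F_n,\mathcal P,\mathcal U)$, consecutive maps $h_n$ and $h_{n+1}$ — which agree on $KL$, asymptotically agree on traces, and asymptotically agree on the de la Harpe–Skandalis data $U(C)/CU(C)\to U(A)/CU(A)$ — are approximately unitarily equivalent to within $\ep_n$ on $\mathcal F_n$. Concretely: after replacing $h_n$ by a composition with $\phi_{i_n,\infty}$ so that each $h_n$ factors through some $A_{i_n}$ with $i_n$ increasing, I apply the uniqueness statement to produce unitaries $w_n\in A$ with $\mathrm{Ad}\,w_n\circ h_{n+1}\approx_{\ep_n}h_n$ on $\mathcal F_n$. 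Then $\mathrm{Ad}(w_1w_2\cdots w_n)\circ h_{n+1}$ is a Cauchy sequence of (asymptotically multiplicative) maps converging in point-norm to a unital homomorphism $h:C\to A$. Because $[h_n]=\af$ for all $n$ and $KK$ is locally determined on the finitely generated $\mathcal P$ (and unitary conjugation does not change $[h_n]$ nor $h_n^\sharp$ nor $h_n^\ddag$), the limit satisfies $[h]=\af$; continuity of traces gives $\tau\circ h(f)=\gamma(\tau)(f)$ for all $f\in C$; and the convergence of $h_n^\ddag$ on the generating set $\mathcal U$ together with continuity of $\ddag$ under point-norm limits gives $h^\ddag=\lambda$ on $U(C)/CU(C)$.

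I expect the genuine obstacle to be the bookkeeping in the uniqueness step, namely verifying that the hypotheses of the relevant uniqueness theorem are met with control constants that can be made to decrease along the sequence: one must check that the $KK$-element, the tracial data, and the $U(C)/CU(C)$-data of $h_n$ and $h_{n+1}$ are close \emph{in the precise quantitative sense} the uniqueness theorem demands, and that the rotation/determinant obstruction vanishes — this is exactly where compatibility of $(\af,\lambda,\gamma)$ and the structure of the model algebras of Theorem 14.10 of \cite{GLN-I} (in particular the point-evaluation summands furnished by Lemma \ref{preBot1} and Corollary \ref{istBotC}, which make the maps homotopic to ones with finite-dimensional range) are essential. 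A secondary technical point is arranging, via the telescoping $A=\varinjlim(A_i,\phi_i)$, that the $h_n$ may be taken to factor through finite stages so that the uniqueness theorem (typically stated for maps into building blocks) applies; this is routine given Theorem 21.9 of \cite{GLN-I} but must be stated carefully. Finally, one checks that ``$h_n^\ddag$'' in clause (3) of the statement — which I read as $h^\ddag=\lambda$ — is indeed the correct limiting conclusion, using that $CU(A)$ is well-behaved under the inductive limit and that $J_c(K_1(C))$ together with $\mathrm{Aff}(T(C))/\overline{\rho_C(K_0(C))}$ (controlled by $\gamma$) span $U(C)/CU(C)$.
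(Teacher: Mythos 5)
Your proposal is essentially the paper's proof: the paper likewise produces the approximately multiplicative maps from Corollary \ref{istBotC+} and runs a one-sided intertwining, the approximate uniqueness input being Theorem 12.7 of \cite{GLN-I} applied directly to maps into $A$ (with the fullness hypothesis supplied by $\Delta(f)=\min\{\gamma(\tau)(f):\tau\in T(A)\}>0$, which is where $\gamma(T(A))\subset T_f(C)$ enters), and with the constants $\gamma_1(n),\gamma_2(n),\delta(n)$ of that theorem dictating the tolerances at stage $n$ exactly as in your ``bookkeeping'' step. Your two incidental worries are unnecessary rather than wrong: there is no need to factor the $h_n$ through finite stages of $A=\varinjlim(A_i,\phi_i)$ since the uniqueness theorem is stated for maps into $A$ itself, and no rotation/determinant obstruction appears at the level of approximate unitary equivalence (only $KL$, traces, and $U(C)/CU(C)$ data are required).
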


\begin{proof}
Let us construct a sequence of unital completely positive linear maps
$h_n: C\to A$ which satisfies (1)--(4) of Corollary \ref{istBotC+}, and moreover,
is such that the sequence $\{h_n(f)\}$ is Cauchy for any $f\in C$. Then the limit map $h=\lim_{n\to\infty} h_n$ is the desired homomorphism.


Let $\{{\cal F}_n\}$ be an increasing sequence in the unit ball of $C$ with
union
dense in  the unit ball of $C.$
Define $\Delta(a)=\min\{\gamma(\tau)(a): \tau\in\mathrm{T}(A)\}$. Since $\gamma$ is continuous and $\mathrm{T}(A)$ is compact, the map $\Delta$ is an order preserving map from $C_+^{1, q}\setminus\{0\}$ to $(0, 1)$.
Let $\mathcal G(n), \mathcal H_1(n), \mathcal H_2(n) \subset C$,
$\mathcal U(n)\subset U_\infty(C)$, $\mathcal P(n)\subset\underline{K}(C)$, $\gamma_1(n)$, $\gamma_2(n)$,
and $\delta(n)$ be the finite subsets and constants of  Theorem 12.7 of \cite{GLN-I} 
with respect to ${\cal F}_n$, $1/2^{n+1},$ and $\Delta/2$.
We may assume that $\delta(n)$ decreases to $0$ if $n\to\infty,$  $\mathcal P(n)\subset \mathcal P(n+1),$ $n=1,2,...,$ and
$\bigcup_{n=1}^{\infty} {\cal P}(n)=
\underline{K}(C).$

Let $\mathcal G_1\subset\mathcal G_2\subset\cdots$ be an increasing sequence of finite subsets of $C$ such that $\bigcup\mathcal  G_n$ is dense in $C$, and let $\mathcal U_1\subset\mathcal U_2\subset\cdots$ be an increasing sequence of finite subsets of $U(C)$ such that $\bigcup\mathcal  U_n$ is dense in $U(C)$. One may assume that $\mathcal G_{n}\supset \mathcal G(n)\cup \mathcal G(n-1)$, $\mathcal G_{n}\supset{\cal H}_1(n)\cup {\cal H}_1(n+1)\cup \mathcal H_2(n)\cup \mathcal H_2(n-1) $, and $\mathcal U_n\supset \mathcal U(n)\cup \mathcal U(n-1)$.

By Corollary  \ref{istBotC+}, there is a $\mathcal G_1$-$\delta(1)$-multiplicative map $h'_1: C\to A$ such that
\begin{enumerate}\setcounter{enumi}{3}
\item the map $[h'_1]$ is well defined and $[h_1]=\alpha$,
\item $|\tau\circ h_n(f)-\gamma(\tau)(f)|<\min\{\gamma_1(1), \frac{1}{2}\Delta(f): f\in \mathcal H_{1}\}$ for any $f\in \mathcal G_1,$  and
\item  ${\rm dist}(h_n^{\ddag}({\bar u}), \lambda({\bar u}))<\gamma_2(1)$
for any $u\in \mathcal U_n.$
\end{enumerate}

Define $h_1=h_1'$. Assume that $h_1, h_2, ..., h_n: C\to A$ are constructed such that
\begin{enumerate}\setcounter{enumi}{6}
\item $h_i$ is $\mathcal G_i$-$\delta(i)$-multiplicative, $i=1, ..., n$,
\item the map $[h_i]$ is well defined and $[h_i]=\alpha$, $i=1, ..., n$,
\item  $|\tau\circ h_i(f)-\gamma(\tau)(f)|<\min\{\frac{1}{2}\gamma_1(i), \frac{1}{2}\Delta(f): f\in \mathcal H_1({i})\}$ for any $f\in \mathcal G_i$, $i=1, ..., n$,
\item ${\rm dist}(h_i^{\ddag}({\bar u}), \lambda({\bar u}))<\frac{1}{2}\gamma_2(i)$ for any $u\in \mathcal U_i,$ $i=1, ..., n$, and
\item $\|h_{i-1}(g)-h_{i}(g)\|<\frac{1}{2^{i-1}}$ for all $g\in {\cal G}_{i-1},$ $i=2, 3, ..., n$.
\end{enumerate}

Let us construct $h_{n+1}: C\to A$ such that
\begin{enumerate}\setcounter{enumi}{11}
\item $h_{n+1}$ is $\mathcal G_{n+1}$-$\delta(n+1)$-multiplicative,
\item the map $[h_{n+1}]$ is well defined and $[h_{n+1}]=\alpha$,
\item $|\tau\circ h_{n+1}(f)-\gamma(\tau)(f)|<\min\{\frac{1}{2}\gamma_1(n+1), \frac{1}{2}\Delta(f): f\in \mathcal H_1({n+1})\}$ for any $f\in \mathcal G_{n+1}$,
\item  ${\rm dist}(h_{n+1}^{\ddag}({\bar u}), \lambda({\bar u}))<\frac{1}{2}\gamma_2(n+1)$ for any $u\in \mathcal U_,$ $i=1, ..., n$, and
\item $\|h_{n}(g)-h_{n+1}(g)\|<\frac{1}{2^{n}}$ for all $g\in {\cal F}_n$.
\end{enumerate}

Then the statement follows.

By Corollary \ref{istBotC+}, there is $\mathcal G(n+1)$-$\delta(n+1)$-multiplicative map $h'_{n+1}: C\to A$ such that

 $h'_{n+1}$ is $\mathcal G_{n+1}$-$\delta(n+1)$-multiplicative,
 the map $[h'_{n+1}]$ is well defined and $[h_{n+1}']=\alpha$,

 \begin{equation}\label{eqn-n-09-27-02}
        |\tau\circ h'_{n+1}(f)-\gamma(\tau)(f)|<\min\{\frac{1}{2}\gamma_1(n+1), \frac{1}{2}\Delta(f): f\in \mathcal H_2({n+1})\}
        \end{equation}
        for any $f\in \mathcal G_{n+1},$
and
$$
{\rm dist}((h'_{n+1})^{\ddag}({\bar u}), \lambda({\bar u}))<\frac{1}{2}\gamma_2(n+1)
$$ for any $u\in \mathcal U_,$ $i=1, ..., n$.
In particular, this implies that
$$[h'_{n+1}]|_{\mathcal P_n}=[h_n]|_{\mathcal P_n},$$
and for any $f\in \mathcal H_2(n)$ (note that $\mathcal H_2(n)\subset \mathcal G_n$),
\begin{eqnarray*}
|\tau\circ h_n(f)-\tau\circ h'_{n+1}(f)| & < & {{\gamma_1(n)/2}}+|\gamma(\tau)(f)-\tau\circ h'_{n+1}(f)| \\
&< & \gamma_1(n)/2 + \gamma_1(n+1)/2
<\gamma_1(n).
\end{eqnarray*}

Also by \eqref{eqn-n-09-27-02}, for any $f\in\mathcal H_1(n)$, one has
$$\tau(h'_{n+1}(f))\geq \gamma(\tau)(f)-\frac{1}{2}\Delta(f)> \frac{1}{2}\Delta(f).$$
By the inductive hypothesis,  one also has
$$\tau(h_{n}(f))\geq \gamma(\tau)(f)-\frac{1}{2}\Delta(f)> \frac{1}{2}\Delta(f)\rforal f\in\mathcal H_1(n).$$

For any $u\in\mathcal U(n)$, one has
\begin{eqnarray*}
\mathrm{dist}(\overline{h'_{n+1}(u)}, \overline{h_{n}(u)}) & < & \frac{1}{2}\gamma_2(n+1)+\mathrm{dist}(\gamma(\overline{u}),\overline{h_{n}(u)})\\
&< & \frac{1}{2}\gamma_2(n+1) + \frac{1}{2}\gamma_2(n) <\gamma_1(n).
\end{eqnarray*}

Note that both $h'_{n+1}$ and $h_n$ are $\mathcal G(n)$-$\delta(n)$-multiplicative,  and so, by Theorem 12.7 of \cite{GLN-I}, 
there is a unitary $W\in A$ such that
$$\|W^*h_{n+1}'(g)W-h_n(g)\|<1/2^n \rforal g\in {\cal F}_n.$$
Then the map $h_{n+1}:={{AdW\circ h'_{n+1} }}$ satisfies the desired conditions, and the statement is proved.
\end{proof}

\begin{lem}\label{CtimesText00}
Let $C\in\mathcal C_0.$  Let $\ep>0,$ ${\cal F}\subset  C$ be
 any finite subset.
Suppose that $B$ is a unital separable simple \CA\, in ${\mathcal B}_0,$  $A=B\otimes U$ for some UHF-algebra of infinite
type, and $\af\in KK_e(C\otimes C(\T),A)^{++}$.
Then there is a unital $\ep$-${\cal F}$-multiplicative \cp\,
$\phi: C\otimes C(\T)\to A$ such that
\beq
[\phi]=\af.
\eneq
\end{lem}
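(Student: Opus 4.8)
The goal is to realize a prescribed $KK$-element $\af\in KK_e(C\otimes C(\T),A)^{++}$ by a unital $\ep$-$\mathcal F$-multiplicative completely positive contraction $\phi\colon C\otimes C(\T)\to A$, where $C\in\mathcal C_0$ and $A=B\otimes U$ with $B\in\mathcal B_0$ simple and $U$ a UHF-algebra of infinite type. The natural strategy is to first peel off the circle: write $C\otimes C(\T)$ in terms of the suspension-type pieces and reduce the construction of $\phi$ to (a) a homomorphism realizing the ``$C$-part'' of $\af$ and (b) a Bott-type perturbation by a unitary realizing the ``$C(\T)$-part'' of $\af$. Concretely, by the Künneth formula $\underline{K}(C\otimes C(\T))=\underline{K}(C)\oplus{\boldsymbol\bt}(\underline{K}(C))$ (see \ref{Dbeta}), so $\af$ restricts to $\af_0:=\af|_{\underline{K}(C)}\in KK_e(C,A)^{++}$ together with the extra data $\af_1:=\af\circ{\boldsymbol\bt}$, which records a $KK$-class $\underline{K}(C)\to\underline{K}(A)$ of ``degree shift one'' — equivalently a prescribed ${\rm Bott}$ map.

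\textbf{Step 1: realize the $C$-part.} Since $C\in\mathcal C_0\subset{\bf H}$ (the class $\mathcal C_0$ consists of $1$-dimensional NCCW complexes with trivial $K_1$, and these sit inside the building blocks of Definition \ref{AHblock}), apply Corollary \ref{istBotC++} — or at the level of approximate maps, Corollary \ref{istBotC+} — to produce a unital homomorphism (or a sequence of $\mathcal G$-$\delta$-multiplicative unital c.p.c.\ maps) $\phi_0\colon C\to A$ with $[\phi_0]=\af_0$. One must first choose a compatible pair $(\lambda,\gamma)$: any continuous affine $\gamma\colon T(A)\to T_f(C)$ compatible with $\af_0$ on $K_0$, and $\lambda$ the induced map on $U(C)/CU(C)$; compatibility is automatic here because $\af_0$ is the restriction of a genuine $KK$-class. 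This gives the ``base'' map.

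\textbf{Step 2: realize the $C(\T)$-part by a commuting unitary with prescribed Bott map.} What remains is to find a unitary $v\in A$ that almost commutes with $\phi_0(C)$ (on the relevant finite set) and satisfies ${\rm Bott}(\phi_0,v)=\af_1$ on the relevant finite subset of $\underline K(C)$; then $h(c\otimes f):=\phi_0(c)f(v)$ (via the universal property, up to $\ep$) is the desired $\phi$. The required unitary is produced the same way the other maps in this section are built: because $A=B\otimes U$ with $B\in\mathcal B_0$ is classifiable and has the good approximation structure of Theorem 14.10 of \cite{GLN-I}, one can solve the ``Bott obstruction'' — this is exactly the kind of existence statement underlying Corollary \ref{istBotC+}, applied now to $C\otimes C(\T)$ or, equivalently, arranged by absorbing $v$ into the homomorphism $C\otimes C(\T)\to A$ coming from Lemma \ref{preBot1}/Lemma \ref{Circlef-1} style constructions. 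Indeed, since $\underline K(C)$ is finitely generated (as $C$ is a $1$-dimensional NCCW complex), $KK=KL$ here, and $\af$ is determined on a fixed finite subset $\mathcal P$; choosing $\mathcal F,\ep$ fine enough relative to $\mathcal P$ makes the ``$[\phi]=\af$'' conclusion exact as a $KK$-statement, not merely $KL$.

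\textbf{Main obstacle.} The delicate point is Step 2: arranging the commuting unitary $v$ to hit the \emph{prescribed} Bott element $\af_1$ on the nose (as a $KK$-, not just $KL$-, datum), while keeping $\phi_0$ essentially unchanged. This is precisely where one needs the fine structure of the model algebras of Theorem 14.10 of \cite{GLN-I} — the decomposition $\phi_{i,\infty}\circ h'=\psi_0\oplus\psi_1$ with $\psi_0$ homotopic to a finite-dimensional-range map, so that $\psi_0$ absorbs any $K_1$-discrepancy for free — combined with an Exel–Loring / Bott-obstruction vanishing argument. One then runs the same inductive Cauchy-sequence bookkeeping as in the proof of Corollary \ref{istBotC++}, but here we only need the weaker ``$\ep$-$\mathcal F$-multiplicative'' conclusion for a \emph{single} $\phi$, so no limit/intertwining is required and the proof is correspondingly shorter: pick $\mathcal F'\supset\mathcal F$ and $\delta<\ep$ large/small enough to control a $KK$-triple for $C\otimes C(\T)$ containing $\mathcal P$, apply the existence results above to get the approximate map, and read off $[\phi]=\af$.
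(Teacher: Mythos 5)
Your overall plan (a homomorphism on the $C$-factor plus an almost commuting unitary carrying the prescribed Bott data) is a reasonable heuristic, but both steps rest on citations that do not apply, and the step carrying all the weight is precisely an existence theorem you are not entitled to at this point. First, $\mathcal C_0\not\subset{\bf H}$: algebras in $\mathcal C_0$ are Elliott--Thomsen pullbacks $A(F_1,F_2,\phi_0,\phi_1)$ (one-dimensional NCCWs), not homogeneous algebras $PM_n(C(X))P$, so Corollaries \ref{istBotC+} and \ref{istBotC++}, which are stated for ${\bf H}$, do not give you $\phi_0$ (this is why the paper consistently treats ${\bf H}$ and $\mathcal C_0$ summands by different results, e.g.\ in the proof of Lemma \ref{L85}). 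Second, and more seriously, the unitary $v$ of your Step 2, with $\|[\phi_0(c),v]\|$ small on $\mathcal F$ and ${\rm Bott}(\phi_0,v)$ equal to the prescribed class $\af\circ\boldsymbol{\bt}$ (here a homomorphism $K_0(C)\cong\Z^k\to K_1(A)$ with no smallness whatsoever), is not furnished by anything proved before this lemma: Lemma \ref{preBot1} and Lemma \ref{Circlef-1} have domains $PM_k(C(X))P$ and $M_k(C(\T))$, Lemma \ref{Vpair2} and Corollary \ref{Vpair2+1} concern circle algebras and carry a trace-smallness hypothesis, and the results that do produce such unitaries for $\mathcal C_0$-domains (Theorem \ref{BB-exi}, Lemmas \ref{Extbot1}--\ref{Extbot3}) come later and their proofs pass through Lemmas \ref{CtimesText} and \ref{CtimesText0}, which invoke the very Lemma \ref{CtimesText00} you are proving. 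So, as written, your argument is either circular or rests on inapplicable statements; the ``main obstacle'' you flag is not overcome but deferred to machinery built on top of this lemma. (The appeal to ``compatibility is automatic'' for the affine map $\gamma$ is also unsubstantiated, though that is a smaller matter.)

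The paper's actual proof avoids constructing any almost commuting unitary. By the stable existence theorem (Theorem 18.2 of \cite{GLN-I}) there are an $\mathcal F$-$\ep$-multiplicative map $\phi_1$ and a homomorphism $\phi_2$ with finite dimensional range such that $[\phi_1]=\af+[\phi_2]$; in particular $(\phi_1)_{*1}=\af_1$ already. Using that $A\in\mathcal B_0$, one cuts $\phi_1$ by $L_1\oplus L_2$ with $L_2$ landing in a corner subalgebra $S_0\in\mathcal C_0$ of small trace: since $K_1(S_0)=0$ the $K_1$-data of $\phi_1$ survives in $L_1\circ\phi_1$, and since $K_0(C\otimes C(\T))_+$ is finitely generated and the corner has small trace, the defect $\kappa=\af_0-[L_1\circ\phi_1]|_{K_0}$ is positive. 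Because $K_1(C)=0$, the point evaluation $\pi:C\otimes C(\T)\to C$ at $x_0\in\T$ is an order isomorphism on $K_0$, so $\kappa$ is realized by $h\circ\pi$ with $h:C\to qAq$ supplied by Corollary 18.9 of \cite{GLN-I}; finally, since $K_*(C\otimes C(\T))$ is finitely generated and torsion free, matching the $K_0$- and $K_1$-maps already forces $[\phi]=\af$ in $KK$. If you wish to keep your two-step strategy, you would have to prove the Bott-existence statement of Step 2 for $\mathcal C_0$-domains independently of Lemmas \ref{CtimesText0} and \ref{CtimesText}; within this paper that statement is obtained only after, and by means of, the lemma at hand.
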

\begin{proof}
Denote by $\alpha_0$ and $\alpha_1$ the induced maps induced by $\alpha$ on $K_0$-groups and $K_1$-groups.

By Theorem 18.2 of \cite{GLN-I}, 
there exist an $\mathcal F$-$\epsilon$-multiplicative map $\phi_1: C\otimes C(\T)\to A\otimes\cal K$ and a homomorphism $\phi_2: C\otimes C(\T)\to A\otimes\cal K$ with finite dimensional range such that
$$[\phi_{1}]=\alpha+[\phi_{2}]\ \mathrm{in}\ KK(C, A).$$ In particular, one has  $(\phi_1)_{*1}=\alpha_1$.
Without loss of generality, one may assume that both  $\phi_1$ and $\phi_2$ map $C$ into  $M_r(A)$ for some integer $r$.

Since $M_r(A)\in \mathcal B_0$, for any finite subset $\mathcal G\subset M_r(A)$ and any $\epsilon'>0$, there are  $\mathcal G$-$\epsilon'$-multiplicative maps $L_1: M_r(A)\to (1-p)M_r(A)(1-p)$ and $L_2: M_r(A)\to S_0\subset  pM_r(A)p$ for a C*-subalgebra $S_0\in \mathcal C_0$ with $1_{S_0}=p$ such that
\begin{enumerate}
\item $||a-L_1(a)\oplus L_2(a)||<\epsilon'$ for any $a\in\mathcal G$ and
\item $\tau((1-p))<\epsilon'$ for any $\tau\in T(M_r(A)).$
\end{enumerate}
Since $K_1(S_0)=\{0\}$, choosing $\mathcal G$ sufficiently large and $\epsilon'$ sufficiently small, one may assume that $L_1\circ\phi_1$ is {{$\cal F$-$\ep$}}-multiplicative, and
$$[L_1\circ \phi_1]|_{K_1(C\otimes C(\T))}=(\phi_1)_{*1}=\alpha_1.$$ Moreover, since the positive cone of $K_0(C\otimes C(\T))$ is finitely generated, choosing $\epsilon'$ even smaller, one may assume that the map
$$\kappa:=\alpha_0-[L_1\circ\phi_1]|_{K_0(C\otimes C(\T)}: K_0(C\otimes C(\T))\to K_0(A)$$ is positive.
Pick a point $x_0\in \T$, and consider the evaluation map $$\pi: C\otimes C(\T) \in f\otimes g \mapsto f\cdot g(x_0)\in C.$$ Then $\pi_{*0}: K_0(C\otimes C(\T))\to K_0(C)$ is an order isomorphism, since $K_1(C)=0$.

Choose a projection $q\in A$ with $[q]=\kappa([1])$. Since $qAq\in\mathcal B_0$, by Corollary 18.9 
of \cite{GLN-I}, 
there is a unital homomorphism $h: C\to qAq$ such that $$[h]_0=\kappa\circ \pi_{*0}^{-1}\quad\textrm{on $K_0(C)$},$$ and hence one has $$(h\circ\pi)_{*0}=\kappa,\quad \textrm{on $K_0(C\otimes C(\T))$.}$$
Put $\phi=(L_1\circ\phi_1)\oplus (h\circ\pi): C\otimes C(\T)\to A.$ Then  it is clear that
$$
\phi_{*0}=[L_1\circ\phi_1]|_{K_0(C\otimes C(\T))}+\kappa=[L_1\circ\phi_1]|_{K_0(C\otimes C(\T))} + \alpha_0-[L_1\circ\phi_1]|_{K_0(C\otimes C(\T))}=\alpha_0
$$
$$ \andeqn[\phi]_1=[L_1\circ\phi_1]|_{K_1(C\otimes C(\T))}=\alpha_1.$$
Since $K_*(C\otimes C(\T))$ is finitely generated and torsion free, one has that $[\phi]=\alpha$ in $KK(C\otimes C(\T), A)$.
\end{proof}

\begin{lem}\label{CtimesText0}
Let $C\in\mathcal C_0.$
Let $\ep>0,$ ${\cal F}\subset  C\otimes\mathrm{C}(\T)$ be
 a  finite subset, $\sigma>0,$ and
${\cal H}\subset (C\otimes C(\T))_{s.a.}$ be a finite subset.
Suppose that $A$ is a unital
\CA\, in ${\cal B}_0,$  $B=A\otimes U$ for some UHF-algebra {$U$} of infinite
type, $\af\in KK_e(C\otimes C(\T), B)^{++}$, and $\gamma: T(B)\to T_{\rm f}(C\otimes C(\mathbb T))$ is a continuous affine
map such that $\af$ and $\gamma$ are compatible.
Then there is a unital {{$\cal F$-$\ep$}}-multiplicative
\cp\ $\phi: C\otimes C(\T)\to B$ such that
\begin{enumerate}
\item $[\phi]=\alpha$ and
\item  $|\tau\circ \phi(h)-\gamma(\tau)(h)|<\sigma$ for any $h\in {\cal H}$.
\end{enumerate}
Moreover, if $A\in {\cal B}_1,$ $\bt\in KK_e(C, A)^{++},$ $\gamma': T(A)\to T_f(C)$ is a continuous affine map
which is compatible with $\bt,$  and ${\cal H}'\subset C_{s.a.}$ is a finite subset,
then there is also a unital \hm\, $\psi: C\to A$ such that
\beq\label{Ct-2}
[\psi]=\bt \tand |\tau\circ \psi(h)-\gamma'(\tau)(h)|<\sigma \rforal f\in {\cal H}'.
\eneq
\end{lem}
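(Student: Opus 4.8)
The plan is to treat the two assertions in turn, building the second (about $\psi\colon C\to A$) on the machinery of Corollary \ref{istBotC++} and the first (about $\phi\colon C\otimes C(\T)\to B$) on Lemma \ref{CtimesText00} together with a trace-adjustment argument. For the map $\psi\colon C\to A$ with $A\in\mathcal B_1$: note $C\in\mathcal C_0$, so $K_1(C)=\{0\}$ and the unitary-group datum is vacuous in the sense that we may take $\lambda\colon U(C)/CU(C)\to U(A)/CU(A)$ to be the canonical map induced by $\beta$ and $\gamma'$, compatible automatically since there is no $J_c(K_1(C))$ part to worry about. However, Corollary \ref{istBotC++} is stated for $A=A_1\otimes U$ with $A_1\in\mathcal B_0$, while here $A$ is only assumed in $\mathcal B_1$. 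First I would reduce to the $\mathcal B_0$ case: since $C$ is weakly semiprojective (it is a $1$-dimensional NCCW with trivial $K_1$) and finitely generated, an $\mathcal F$-$\ep$-multiplicative map into $A$ can be perturbed to an honest homomorphism once we have approximated well enough inside a subalgebra $D\in\mathcal C_0$ supplied by the definition of $\mathcal B_1$ applied to $A$; but more directly, I expect the intended route is: the statement only claims existence of a unital \emph{homomorphism} $\psi$ realizing $\beta$ up to the stated trace estimate, so one should use the existence theorem for $\mathcal B_1$ algebras (the $\mathcal B_0$ version, Corollary \ref{istBotC++}, applied after tensoring, or directly the analog of Theorem 17.3 of \cite{GLN-I} for $C\in\mathcal C_0$ mapping into a $\mathcal B_1$-algebra with a $\mathcal C_0$ subalgebra carrying most of the trace). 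Concretely: apply the definition of $\mathcal B_1$ to $A$ to get $D\in\mathcal C_0$ (or finite-dimensional) with $1_D=p$, $\tau(1-p)$ small; use a version of Lemma 17.1 / Corollary 18.9 of \cite{GLN-I} to build $\psi_0\colon C\to D$ realizing (a perturbation of) $\beta$ and matching $\gamma'$ on $\mathcal H'$ up to $\sigma/2$; and a point-evaluation on the small complement $(1-p)A(1-p)$; patch $[\psi]$ back to $\beta$ using that $\tau(1-p)$ is negligible and $K_*(C)$ is finitely generated. I expect this is where one must be slightly careful that the $K$-theory bookkeeping closes — that the perturbation on $D$ plus the point evaluation on $1-p$ indeed sums to the prescribed $\beta\in KK_e(C,A)^{++}$ and not merely to the correct $KL$ class; as in the proof of \ref{preBot1}, this uses finite generation of $\underline K(C)$ modulo Bockstein operations.

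For the map $\phi\colon C\otimes C(\T)\to B$: start from Lemma \ref{CtimesText00}, which already gives a unital $\ep$-$\mathcal F$-multiplicative \cp\ map $\phi_0\colon C\otimes C(\T)\to B$ with $[\phi_0]=\alpha$, but with no control over traces. The plan is to absorb a trace-correcting homomorphism exactly as in the proof of Corollary \ref{istBotC} (and \ref{istBotC+}): decompose $B$ via its model structure from Theorem 14.10 of \cite{GLN-I}, push $\phi_0$ into a finite stage $A_i$, split off a large-trace finite-dimensional (or $\mathcal C_0$) corner $\psi_0\oplus\psi_1$, and on the large corner replace a point-evaluation-type piece by a homomorphism $C\otimes C(\T)\to D$ (using that the cone $K_0(C\otimes C(\T))_+$ is finitely generated and $K_*(C\otimes C(\T))$ is torsion free, as noted in \ref{CtimesText00}) whose induced trace matches $\gamma$ on $\mathcal H$ to within $\sigma$. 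Since $\gamma(T(B))\subset T_f(C\otimes C(\T))$, a preliminary lower bound $\gamma(\tau)(h)>\sigma'$ on the relevant finite sets of positive elements is available, which is exactly the input needed to run the $\mathcal C_0$-realization lemma (Lemma 17.1 of \cite{GLN-I}) in the corner. The class is preserved because the replaced piece has finite-dimensional range up to homotopy and $K_*(C\otimes C(\T))$ is finitely generated and torsion free; the trace estimate is preserved because the correction lives on a corner of trace close to $1$ while the error from $\phi_0$ lives on a corner of trace close to $0$.

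The main obstacle, as I see it, is not any single hard estimate but the matching of the $KK$-class rather than merely the $KL$-class after all the cutting, corner-replacing, and perturbing: one must carry the argument from \ref{preBot1} (reduction through a finite inductive stage $A_1\otimes M_{r_n}$, using that $\underline K(C\otimes C(\T))$ is finitely generated modulo $\Lambda$) all the way through, so that the point-evaluation summand added on the small complement and the homomorphism added on the large corner sum to precisely $\alpha$ and $\beta$ in $KK$, not up to a pure extension. The second, more technical obstacle is the statement's passage from $A_1\otimes U$-type algebras to general $A\in\mathcal B_1$ in the "Moreover" clause: this forces one to invoke the $\mathcal B_1$ structure and the existence theorems in their $\mathcal B_1$ form (or to first note that the needed trace/$K$-theory data only sees $A$ up to the properties shared by all $\mathcal Z$-stable $\mathcal B_1$ algebras), and to check that the homomorphism produced is genuinely unital and genuinely a homomorphism, which is where weak semiprojectivity of $C\in\mathcal C_0$ is used to promote the approximate map to an exact one.
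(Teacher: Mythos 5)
There is a genuine gap in the trace-correction step, and it is exactly the step where the actual proof needs a different existence theorem than the one you invoke. Your plan is to take the map $\phi_0$ from Lemma \ref{CtimesText00} with $[\phi_0]=\alpha$, split off a large-trace $\mathcal C_0$ (or finite-dimensional) corner, and replace the piece living there by a map with the right traces, "the class being preserved because the replaced piece has finite-dimensional range up to homotopy," so that Lemma 17.1 of \cite{GLN-I} applies. But that hypothesis fails here: in the output of \ref{CtimesText00} (and in any decomposition of a map realizing $\alpha$ in which the complementary corner has small trace), the large-trace summand carries essentially all of $\alpha_0$, a general strictly positive homomorphism on $K_0(C\otimes C(\T))$, not a point-evaluation class. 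The situation is not analogous to Corollary \ref{istBotC}, where Lemma \ref{preBot1} was specifically engineered (via the tracial-rank-zero model $B$ and the eigenvalue pattern $\psi_n$) so that the large corner carries only a point-evaluation class; here no such structure is available, and replacing the large piece by a Lemma 17.1-type map would destroy the $K_0$ class, with no way to repair it by "$\tau(1-p)$ negligible." What the paper actually does is split the \emph{class} rather than a map: using the $\mathcal B_0$ property of $B$ it writes $\alpha=[L_1]\circ\alpha+[j]\circ[L_2]\circ\alpha$, realizes the small compression $[L_1]\circ\alpha$ in the small corner by Lemma \ref{CtimesText00}, and realizes the large compression $\kappa'=[L_2]\circ\alpha|_{K_0}$ \emph{together with} the prescribed traces $\gamma$ on the large corner by Theorem 16.10 of \cite{GLN-I} — an existence theorem for maps into $M_K(D)$, $D\in\mathcal C_0$, with prescribed positive $K_0$-data and prescribed traces. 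For this the corner is deliberately arranged to have the form $D\otimes M_K\subset pBp$ (using that $U$ has infinite type), so that $\kappa'=K\kappa$ is divisible by $K$, which is the hypothesis of 16.10; none of this divisibility bookkeeping appears in your plan, and without it the corner realization does not go through.

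Two smaller points. First, your stated "main obstacle" (matching $KK$ rather than $KL$ after cutting) is not the issue: $K_*(C\otimes C(\T))$ and $K_*(C)$ are finitely generated and torsion free, so $KK=KL$ and the class is determined by $\alpha_0,\alpha_1$; the paper dispatches this in one line. Second, for the "Moreover" clause your roles are inverted: you put a point evaluation on the small complement and try to realize (a perturbation of) $\beta$ itself by a map $C\to D$ into the $\mathcal C_0$ corner, but there is no reason $\beta_0$ (even after subtracting a point-evaluation class) factors positively through $K_0(D)$ for a corner handed to you by the $\mathcal B_1$ definition. The paper runs the same split as in the first part: the compressed class $[L_2]\circ\beta$ plus the traces go into the $D\otimes M_K$ corner via Theorem 16.10, while the small corner receives $[L_1]\circ\beta$, realized now by Theorem 18.7 of \cite{GLN-I} (since \ref{CtimesText00} is unavailable for a general $A\in\mathcal B_1$), upgraded to a genuine unital homomorphism by semiprojectivity of $C$ (Corollary 18.9 of \cite{GLN-I}); your appeal to semiprojectivity is in the right spirit, but it is attached to the wrong summand and does not fix the $K_0$-factorization problem above.
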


\begin{proof}
Since $K_*(C\otimes C(\T))$ is finitely generated and torsion free, by the UCT, the element $\alpha\in KK(C\otimes C(\T), A)$ is determined by the induced maps $\alpha_0\in \mathrm{Hom}(K_0(C\otimes C(\T)), K_0(A))$ and $\alpha_1\in \textrm{Hom}(K_1(C\otimes C(\T)), K_1(A))$.
We may assume that projections in $M_{r}(C\otimes C(\T))$ (for some fixed integer $r>0$) generate $K_0(C\otimes C(\T)).$

We may also assume
that $\|h\|\le 1$ for all $h\in {\cal H}.$ Fix a finite generating set $\mathcal G$ of $K_0(C\otimes C(\T))$. Since $\gamma(\tau)\in T_{\mathrm{f}}(C\otimes\mathrm{C}(\mathbb T))$ for all $\tau\in T(B)$ and $\tau(B)$ is compact, one is able to define $\Delta: {{(C\otimes C(\T))}}_+^{q, 1}\setminus\{0\} \to (0, 1)$
by
$$\Delta(\hat{h})=\frac{1}{2}\inf\{\gamma(\tau)(h):\ {{\tau}}\in T(B)\}.$$
Fix a finite generating set $\mathcal G$ of $K_0(C\otimes C(\T))$. Let $\mathcal H_1\subset C\otimes C(\mathbb T)$, $\delta>0$, and $K\in\mathbb N$ be the finite subset and the constants of Lemma 16.10 of \cite{GLN-I} 
with respect to $\mathcal F$, $\mathcal H$,  $\epsilon$, $\sigma/4$ (in place of $\sigma$), and $\Delta$.

Since $A\in \mathcal B_0$ and  $U$ is {{of infinite type,}}
for any finite subset $\mathcal G'\subset B$ and any $\epsilon'>0$, there are  unital $\mathcal G'$-$\epsilon'$-multiplicative \cp s $L_1: B\to (1-p)B(1-p)$ and $L_2: B\to D\otimes 1_{M_K}\subset D\otimes M_K\subset  pBp$ for a C*-subalgebra $D\in \mathcal C_0$ with $1_{D\otimes M_K}=p$ such that
\begin{enumerate}\setcounter{enumi}{2}
\item $||a-L_1(a)\oplus L_2(a)||<\epsilon'$ for any $a\in\mathcal G'$, and
\item $\tau((1-p))<\min\{\epsilon', \sigma/4\}$ for any $\tau\in T(B).$
\end{enumerate}
Put $S=D\otimes M_K.$ Since $K_i(C\otimes C(\T))$ is finite generated,
${\rm Hom}_{\Lambda}(\underline{K}(C\otimes C(\T)), \underline{K}(C\otimes C(\T)))$ is determined
on a finitely generated subgroup $G^{K}$  of $\underline{K}(C\otimes C(\T))$
(see Corollary 2.12 of \cite{DL}).
Choosing $\mathcal G'$ large enough and $\epsilon'$ small enough, one may assume
$[L_1]$ and $[L_2]$ are well defined on $\alpha(G^K),$
and
\begin{equation}\label{July20-1}
\alpha=[L_1]\circ\alpha+[j]\circ [L_2]\circ\alpha,
\end{equation}
where $j: S\to A$ is the embedding. Note that since $K_1(S)=\{0\}$, one has
$$\alpha_1=[L_1]\circ\alpha|_{K_1(C\otimes C(\T))}.$$
Define $\kappa'=[L_2]\circ\alpha|_{K_0(C\otimes C(\T))}$, which  is a homomorphism  from $K_0(C\otimes C(\T))$ to {{$K_0(D\otimes 1_{M_K})=K_0(D)$ (here we identify $D\otimes 1_{M_K}$ with $D$)}}
which maps  $[1_{C\otimes C(\T)}]$ to {{$[1_{D\otimes 1_{M_K}}]$}}. Let $\{e_{i,j}: 1\le i,j\le K\}$ be a system of matrix units for $M_K.$
View  $e_{i,j}\in D\otimes M_K.$ Then $e_{i,j}$ commutes with the image of $L_2.$
Define $L_2': B\to D\otimes e_{1,1}$ by $L_2'(a)={{e_{11}L_2(a) e_{1,1}}}$ for all $a\in B$. 

Put $\kappa=[L_2']\circ \af|_{K_0(C\otimes C(\T))}.$ Put $D'=D\otimes e_{1,1}.$

Choosing $\mathcal G'$ larger and $\epsilon'$ smaller,  if necessary, one has
a continuous affine map $\gamma': T(D')\to T(C\otimes C(\T))$ such that, for all $\tau\in T(A),$
\begin{enumerate}\setcounter{enumi}{4}
\item $|\gamma'(\frac{1}{\tau(e_{1,1})}\tau|_{D'})(f)-\gamma(\tau)(f)|<\sigma/4$ for any $f\in\mathcal H$,
\item $\gamma'(\tau)(h)>\Delta(\hat{h})$ for any $h\in\mathcal H_{1}$, and
\item $|\gamma'(\frac{1}{\tau(e_{1,1})}\tau|_{D'})(p)-\tau(\kappa([p]))|<\dt$ for all projections $p\in M_r(C\otimes C(\T)).$ 
\end{enumerate}
Then it follows from Theorem 16.10 of \cite{GLN-I} 
that there is an ${\cal F}$-$\epsilon$-multiplicative
\morp\, $\phi_2: C\otimes C(\T)\to M_K(D)=S$ such that
\begin{equation*}
(\phi_2)_{*0}=K\kappa=\kappa'
\end{equation*}
and
\begin{equation*}
|(1/K)t\circ  \phi_2(h)-\gamma'(t)(h)|<\sigma/4,\quad h\in\mathcal H,\ t\in T(D').
\end{equation*}

On the other hand, since $(1-p)A(1-p)\in \mathcal B_0$, by Lemma \ref{CtimesText00}, there is a unital  $\mathcal F$-$\epsilon$-multiplicative \cp\, $\phi_1: C\otimes C(\T)\to (1-p)A(1-p)$ such that
$$[\phi_1]=[L_1]\circ\alpha\quad\mathrm{in}\ KK(C\otimes C(\T), A).$$
Define $\phi=\phi_1\oplus j\circ \phi_2: C\otimes C(\T)\to (1-p)A(1-p)\oplus S\subset A$. Then, by (\ref{July20-1}), one has
$$\phi_{*0}=(\phi_1)_{*0}+(j\circ \phi_2)_{*0}=([L_1]\circ\alpha)|_{K_0(C\otimes C(\T))}+([j\circ L_2]\circ\alpha)|_{K_0(C\otimes C(\T))}=\alpha_0$$ and
$$\phi_{*1}=(\phi_1)_{*1}+(j\circ \phi_2)_{*1}=([L_1]\circ\alpha|_{K_1(C\otimes C(\T))}=\alpha_1.$$ Hence $[\phi]=\alpha$ in $KK(C\otimes C(\T))$.

For any $h\in \mathcal H$ and any $\tau\in T(A)$, one has (note that
$\|h\|\le 1$ for all $h\in {\cal H}$, and $\tau(1-p)<\dt/4$),
\begin{eqnarray*}
&&|\tau\circ\phi(h)-\gamma(\tau)(h)|\\
&<& |\tau\circ\phi(h)-\tau\circ j\circ \phi_2(h)|+|\tau\circ j\circ \phi_2(h)-\gamma(\tau)(h)|\\
&<& \sigma/4+|\tau\circ j\circ \phi_2(h)-\gamma'({1\over{\tau(e_{1,1})}}\tau|_{D'})(h)|+
|\gamma'({1\over{\tau(e_{1,1})}}\tau|_{D'})(h)-\gamma(\tau)(h)|\\
&<& \sigma/4+|\tau\circ j\circ \phi_2(h)-\gamma'(\frac{1}{\tau(p)}\tau|_S)(h)|+ |\gamma'(\frac{1}{\tau(p)}\tau|_S)(h)-  \gamma(\tau)(h)|\\
&<& \sigma/4+\sigma/4+\sigma/4<\sigma,
\end{eqnarray*}
where we identify $T(D')$ with $T(S)$ in a standard way for $S=D'\otimes M_K$.
Hence the map $\phi$ satisfies the requirements of the lemma.

To see the last part of the lemma holds, we note that, when $C\otimes C(\T)$ is replaced by $C$ and $A$ is assumed
to be in ${\cal B}_1,$ the only difference is that we cannot use \ref{CtimesText00}.  But then we can appeal to  Theorem 18.7 of \cite{GLN-I} 
to obtain $\phi_1.$   The semiprojectivity of $C$ allows us actually to obtain a unital homomorphism (see Corollary 18.9 of \cite{GLN-I}).
\end{proof}


\begin{cor}\label{CorCinjj}
Let $C\in\mathcal C_0.$
Suppose that $A$ is a unital separable simple \CA\, in $\mathcal B_{{0}}$,  $B=A\otimes U$ for some UHF-algebra of infinite
type, $\af\in KK_{e}(C, B)^{++},$ and
 $\gamma: T(B)\to T_{\rm f}(C))$ is a continuous affine
map. Suppose that $(\af, \lambda, \gamma)$ is a compatible triple. Then there is a unital
\hm\, $\phi: C\to B$ such that
$$
[\phi]=\af \tand \phi_T=\gamma.
$$
In particular, $\phi$ is a monomorphism.
\end{cor}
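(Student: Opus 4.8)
The plan is to follow the proof of Corollary~\ref{istBotC++} almost verbatim, with the existence input Corollary~\ref{istBotC+} replaced by the last assertion of Lemma~\ref{CtimesText0} and the intertwining driven by the uniqueness theorem, Theorem 12.7 of \cite{GLN-I}. First note that $B=A\otimes U$ with $A\in{\cal B}_0$, so $B\in{\cal B}_0$ by Theorem~\ref{B0=B1tU}; in particular $B$ is a unital separable simple amenable \CA\ satisfying the UCT, whence both Lemma~\ref{CtimesText0} (applied with $B$ in place of ``$A$'' and $\af$ in place of ``$\bt$'', using $B\in{\cal B}_0\subset{\cal B}_1$) and Theorem 12.7 of \cite{GLN-I} apply to it. Put $\Delta(\hat a)=\tfrac12\inf\{\gamma(\tau)(a):\tau\in T(B)\}$; since each $\gamma(\tau)$ is faithful on $C$ and $T(B)$ is compact, $\Delta$ is a well-defined order preserving map from $C_+^{q,1}\setminus\{0\}$ into $(0,1)$, and this is the datum feeding the uniqueness theorem.

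Now fix an increasing sequence $\{{\cal F}_n\}$ of finite subsets of $C$ with dense union, and let ${\cal G}(n),{\cal H}_1(n),{\cal H}_2(n),{\cal U}(n),{\cal P}(n),\gamma_1(n),\gamma_2(n),\delta(n)$ be the finite subsets and constants provided by Theorem 12.7 of \cite{GLN-I} with respect to ${\cal F}_n$, $1/2^{n+1}$ and $\Delta/2$, arranged so that $\bigcup_n{\cal P}(n)=\underline{K}(C)$, ${\cal P}(n)\subset{\cal P}(n+1)$, $\delta(n)\to 0$, and so that the auxiliary working finite subsets ${\cal G}_n\subset C$, ${\cal U}_n\subset U(C)$ contain everything needed at stage $n$ (in particular, self-adjoint elements of $C$ approximating the de la Harpe--Skandalis determinants of the elements of ${\cal U}(n)$, so that control of tracial values yields control of the $\ddag$-maps). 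I would then construct, inductively on $n$, unital homomorphisms $\psi_n:C\to B$ with: $[\psi_n]=\af$; $|\tau\circ\psi_n(f)-\gamma(\tau)(f)|<\min\{\tfrac12\gamma_1(n),\tfrac12\Delta(f)\}$ for all $\tau\in T(B)$ and all relevant $f$; $\mathrm{dist}(\psi_n^{\ddag}(\bar u),\lambda(\bar u))<\tfrac12\gamma_2(n)$ for $u\in{\cal U}(n)$; and $\|\psi_{n-1}(g)-\psi_n(g)\|<2^{-(n-1)}$ for $g\in{\cal F}_{n-1}$. Both the base case and the step from $n$ to $n+1$ start by invoking the last assertion of Lemma~\ref{CtimesText0}: since $B\in{\cal B}_1$, $\af\in KK_e(C,B)^{++}$ and $\gamma$ is compatible with $\af$, it produces a unital homomorphism $\psi'_{n+1}:C\to B$ with $[\psi'_{n+1}]=\af$ and $|\tau\circ\psi'_{n+1}(h)-\gamma(\tau)(h)|$ as small as prescribed on any given finite set of self-adjoint elements; since $K_1(C)=0$, the induced map on the affine part $\Aff(T(C))/\overline{\rho_C(K_0(C))}$ of $U(C)/CU(C)$ is just the push-forward of $\psi'_{n+1}$ on traces, which by compatibility of $(\af,\lambda,\gamma)$ is automatically close to $\lambda$. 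Now $\psi_n$ and $\psi'_{n+1}$ are homomorphisms (hence ${\cal G}(n)$-$\delta(n)$-multiplicative) which agree on ${\cal P}(n)$ (both induce $\af$ in $KK(C,B)=KL(C,B)$), whose tracial data are $\gamma_1(n)$-close on ${\cal H}_2(n)$ and both dominate $\tfrac12\Delta$ on ${\cal H}_1(n)$, and whose $\ddag$-maps are $\gamma_2(n)$-close on ${\cal U}(n)$; so Theorem 12.7 of \cite{GLN-I} gives a unitary $W\in B$ with $\|W^*\psi'_{n+1}(g)W-\psi_n(g)\|<2^{-n}$ for $g\in{\cal F}_n$, and one sets $\psi_{n+1}={\rm Ad}\,W\circ\psi'_{n+1}$.

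With this telescoping estimate, $\{\psi_n(f)\}$ is Cauchy for every $f$ in the dense union $\bigcup_n{\cal F}_n$, hence (the $\psi_n$ being contractive) for every $f\in C$, so $\phi:=\lim_{n\to\infty}\psi_n$ is a well-defined unital homomorphism. Passing to the limit, $[\phi]|_{{\cal P}(n)}=\af|_{{\cal P}(n)}$ for every $n$ (the class in $KK(C,B)=KL(C,B)$ is locally determined by a finite subset of $\underline{K}(C)$, since $K_*(C)$ is finitely generated), so $[\phi]=\af$; and $\tau(\phi(f))=\lim_n\tau(\psi_n(f))=\gamma(\tau)(f)$ for all $f\in C$ and $\tau\in T(B)$, that is, $\phi_T=\gamma$ (and, consequently, $\phi^{\ddag}=\lambda$, as $K_1(C)=0$). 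Finally, if $c\in C_+\setminus\{0\}$ then $\tau(\phi(c))=\gamma(\tau)(c)>0$ for every $\tau\in T(B)$, so $\phi(c)\ne 0$; hence $\phi$ is injective, i.e.\ a monomorphism. The only point requiring genuine care is to verify, at each stage, that $\psi_n$ and $\psi'_{n+1}$ satisfy the three hypotheses of Theorem 12.7 of \cite{GLN-I} (agreement of $KK$-classes, proximity of the tracial data controlled through $\Delta$, and proximity of the $U/CU$ data); this is precisely the bookkeeping already carried out in the proof of Corollary~\ref{istBotC++}, and it is in fact simpler here because $K_1(C)=0$ removes the $J_c(K_1(C))$-part of $U(C)/CU(C)$, making the $\ddag$-control a consequence of the tracial control.
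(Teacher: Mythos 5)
Your proposal is correct and takes essentially the same route as the paper: its proof of Corollary \ref{CorCinjj} is precisely the intertwining argument of Corollary \ref{istBotC++} with the existence input replaced by the second (``Moreover'') part of Lemma \ref{CtimesText0}, the uniqueness step supplied by Theorem 12.7 of \cite{GLN-I}, and injectivity coming from the faithfulness of each $\gamma(\tau)$. Your spelled-out bookkeeping, including the observation that $K_1(C)=0$ removes the $U/CU$-data issue, is exactly what the paper leaves implicit.
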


\begin{proof}
The proof is exactly the same as the argument employed in \ref{istBotC++} but using  the second part of Lemma \ref{CtimesText0} instead of \ref{istBotC+}.
The reason $\phi$ is a monomorphism is because $\gamma(\tau)$ is faithful for each $\tau\in T(A).$
\end{proof}

\begin{lem}\label{alg-cut}
Let $C$ be a unital C*-algebra.
Let $p\in C$ be a full projection. Then, for any $u\in U_0(C)$, there is a unitary $v\in pCp$ such that $$\overline{u}=\overline{v\oplus (1-p)}\quad\textrm{in $U_0(C)/CU(C)$}.$$
If, furthermore, $C$ is separable and has stable rank one, then, for any $u\in U(C)$, there is a unitary $v\in pCp$ such that $$\overline{u}=\overline{v\oplus (1-p)}\quad\textrm{in $U(C)/CU(C)$}.$$
\end{lem}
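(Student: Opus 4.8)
The plan is to reduce the statement about $U(C)/CU(C)$ (and $U_0(C)/CU(C)$) to a purely $K$-theoretic and tracial bookkeeping exercise, using the splitting exact sequence \eqref{Dcu-4} and the de la Harpe--Skandalis determinant. First I would observe that since $p$ is a full projection, $pCp$ is stably isomorphic to $C$, so $K_0(pCp)\cong K_0(C)$ and $K_1(pCp)\cong K_1(C)$; moreover the inclusion $\iota\colon pCp\to C,$ $a\mapsto a\oplus(1-p),$ (more precisely the map induced by $v\mapsto v\oplus(1-p)$ on unitaries) induces an isomorphism on $K_1$. One checks that the induced map $\iota^{\ddag}\colon U(pCp)/CU(pCp)\to U(C)/CU(C)$ respects the exact sequences \eqref{Dcu-1} and \eqref{Dcu-4}: on the $K_1$ quotient it is the isomorphism just mentioned, and on the affine part $\Aff(T(pCp))/\overline{\rho_{pCp}(K_0(pCp))}$ it is induced by the restriction isomorphism $T(C)\xrightarrow{\ \sim\ }T(pCp),$ $\tau\mapsto\frac{1}{\tau(p)}\tau|_{pCp}$ (using that $p$ is full so every trace on $C$ restricts to a nonzero, hence faithful after normalization, trace on $pCp$, and conversely every trace on $pCp$ extends). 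Since $\iota^{\ddag}$ is thus an isomorphism of the relevant quotient groups, given $u\in U(C)$ there is $v\in U(pCp)$ with $\iota^{\ddag}(\bar v)=\bar u,$ which is exactly the claimed identity $\overline{u}=\overline{v\oplus(1-p)}.$

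For the first (weaker) assertion, where only $u\in U_0(C)$ is assumed and $C$ is not assumed to have stable rank one, I would argue more directly without invoking the stable rank hypothesis. Write $u=\prod_{j}\exp(ia_j)$ with $a_j\in C_{s.a.}$ up to an element of $CU(C)$; actually it is cleaner to use that for $u\in U_0(C)$ the class $\bar u\in U_0(C)/CU(C)$ is detected by the de la Harpe--Skandalis determinant $\overline{D}_C(\bar u)\in\Aff(T(C))/\overline{\rho_C(K_0(C))}$ (this is the content of \eqref{Dcu-4} restricted to $U_0$, i.e. \eqref{Dcu-3}). So it suffices to produce $v\in U_0(pCp)$ with $\overline{D}_{pCp}(\bar v)$ matching $\overline{D}_C(\bar u)$ under the identification $\Aff(T(C))\cong\Aff(T(pCp))$; since $\overline{D}_{pCp}$ is surjective onto $\Aff(T(pCp))/\overline{\rho_{pCp}(K_0(pCp))}$ (again by \eqref{Dcu-4}), and since $v\oplus(1-p)\in U_0(C)$ with $\overline{D}_C$ of it equal to the image of $\overline{D}_{pCp}(\bar v)$ (the path in $pCp$ extended by $1-p$ has the same determinant integral), we get $\overline{v\oplus(1-p)}=\bar u$ in $U_0(C)/CU(C).$ Here no condition beyond $T(C)\neq\emptyset$ and fullness of $p$ is used, and if $T(C)=\emptyset$ the statement is trivial since then $U_0(C)=CU(C).$

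For the second assertion one additionally needs to handle the $K_1$-component: given $u\in U(C),$ let $x=[u]\in K_1(C)$. Using stable rank one and fullness of $p,$ $K_1(pCp)=U(M_m(pCp))/U_0(\cdots)$ and the isomorphism $K_1(pCp)\cong K_1(C)$ lets us pick $v_0\in U(pCp)$ (here I use that stable rank one of $pCp$—inherited from $C$ being separable of stable rank one, as a full corner of a stable-rank-one algebra has stable rank one—lets us represent $x$ by a unitary in $pCp$ itself rather than a matrix over it) with $[v_0\oplus(1-p)]=[u]$ in $K_1(C).$ Then $u(v_0\oplus(1-p))^*\in U_0(C),$ and applying the first assertion (with $u$ replaced by this product) yields $w\in U_0(pCp)$ with $\overline{w\oplus(1-p)}=\overline{u(v_0\oplus(1-p))^*}$; setting $v=wv_0\in U(pCp)$ gives $\overline{v\oplus(1-p)}=\overline{w\oplus(1-p)}\cdot\overline{v_0\oplus(1-p)}=\bar u$ in $U(C)/CU(C),$ where I use that the group $U(C)/CU(C)$ is abelian so the product decomposes. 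The main obstacle I anticipate is the careful verification that restriction/extension of traces between $C$ and $pCp$ induces compatible isomorphisms on the $\Aff$-quotients and that these are genuinely compatible with the determinant maps (so that the path-integral defining $\overline{D}$ behaves correctly under $v\mapsto v\oplus(1-p)$); the $K$-theory bookkeeping for a full corner and the splitting of \eqref{Dcu-4} are standard, but matching the metric/determinant data is where the real care is needed.
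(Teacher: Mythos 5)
Your reduction of the second assertion to the first (represent the $K_1$-class of $u$ by $v_0\oplus(1-p)$ with $v_0\in U(pCp)$, using stable rank one of the full corner, and then correct by an element of $U_0(C)$) is fine and is essentially how the paper itself passes from the first assertion to the second. The problem is your proof of the first assertion. It rests on the claim that for an arbitrary unital $C^*$-algebra $C$ the class $\bar u\in U_0(C)/CU(C)$ is detected by the de la Harpe--Skandalis determinant $\overline{D}_C$ with values in $\Aff(T(C))/\overline{\rho_C(K_0(C))}$, citing \eqref{Dcu-3} and \eqref{Dcu-4}. But those identifications are stated (and are only available) under a finite stable rank hypothesis: Thomsen's exact sequence \eqref{Dcu-1} concerns the stabilized group $\bigcup_n U(M_n(C))/CU(M_n(C))$, and passing down to $U_0(C)/CU(C)$ itself uses the csr estimate, i.e.\ exactly the stable rank one assumption that the first assertion does not have. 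For a general unital $C$ the map $\overline{D}_C\colon U_0(C)/CU(C)\to \Aff(T(C))/\overline{\rho_C(K_0(C))}$ is surjective but need not be injective: the image of $\pi_1(U_0(C))$ under the determinant can be strictly smaller than $\rho_C(K_0(C))$, equivalently $U_0(C)/CU(C)\to U_0(M_n(C))/CU(M_n(C))$ need not be injective. So matching determinant classes of $u$ and $v\oplus(1-p)$ does not give $\bar u=\overline{v\oplus(1-p)}$ in $U_0(C)/CU(C)$, which is precisely what must be proved.

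The paper avoids this entirely by working with exponents rather than with the determinant: by the argument of 4.5 of \cite{GLX}, fullness of $p$ gives, for every $b\in C_{s.a.}$, an element $c\in pCp$ with $b-c$ in the Cuntz--Pedersen subspace $C_0$ (closure of sums $\sum c_n^*c_n-\sum c_nc_n^*$); writing $u=\prod_k\exp(ib_k)$ and setting $v+(1-p)=\prod_k\exp(ic_k)$, Lemma 3.1 of \cite{Thomsen-rims} yields directly that $u^*(v+(1-p))\in CU(C)$, with no tracial or stable rank hypotheses and no injectivity statement about $\overline{D}_C$. If you want to salvage your approach you would have to either prove such an unstabilized detection result (which is not true in this generality) or replace it by the exponent-level argument above. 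A smaller point: the normalized restriction $\tau\mapsto\frac{1}{\tau(p)}\tau|_{pCp}$ is a homeomorphism $T(C)\to T(pCp)$ but not an affine map, so the compatibility on the $\Aff$-parts should be phrased via $\hat a\mapsto\hat a$ for $a\in(pCp)_{s.a.}$ (extension of trace functionals), not as composition with that homeomorphism; this is fixable but as written it is not correct.
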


\begin{proof}
It suffices to prove the first part of the statement. This is essentially contained in the proof of  4.5 and 4.6 of \cite{GLX}.
As in the proof of 4.5 of \cite{GLX}, for any $b\in C_{s.a.},$ there is $c\in pCp$ such that
$b-c\in C_0,$ where
$C_0$ is the closed subspace of $A_{s.a.}$ consisting of elements
of the form $x-y,$ where $x=\sum_{n=1}^{\infty}c_n^*c_n$ and $y=\sum_{n=1}^{\infty}c_nc_n^*$
(convergence in norm) for  some sequence $\{c_n\}$ in $C.$

Now let $u=\prod_{k=1}^n\exp(ib_k)$ for some $b_k\in C_{s.a.},$ $k=1,2,...,n.$
Then there are $c_k\in pCp$ such that $b_k-c_k\in  C_0,$ $k=1,2,...,n.$
Put $v=p(\prod_{k=1}^n \exp(ic_k))p.$ Then $v\in U_0(pCp)$ and
$v+(1-p)=\prod_{k=1}^n\exp(i c_k).$   By 3.1 of \cite{Thomsen-rims},
$u^*(v+(1-p))\in CU(C).$
\end{proof}

\begin{lem}\label{Popa}
Let ${\cal D}$ be the family of unital separable  residually finite dimensional \CA s  and let
$A$ be a unital simple separable \CA\, which has the property ($L_{\cal D}$) {{(see 9.4 of \cite{GLN-I})}} and  the property  (SP). Then
$A$ satisfies  the    Popa condition:
Let $\ep>0$ and let ${\cal F}\subset A$ be a finite subset.
There exists a finite dimensional \SCA\, $F\subset A$  with $P=1_F$ such that
\beq\label{Popa-1}
\|[P, x]\|<\ep,\,\,\, PxP\in_{\ep}F\andeqn \|PxP\|\ge \|x\|-\ep
\eneq
for all $x\in {\cal F}.$  In particular, if $A\in {\cal B}_1$ and $A$ has the property (SP), then $A$ satisfies the Popa condition.
\end{lem}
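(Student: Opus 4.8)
The plan is to reduce to self-adjoint generators, to capture $\mathcal F$ (up to a small, approximately central corner) inside a residually finite dimensional subalgebra by means of property $(L_{\mathcal D})$, and then to replace that subalgebra by a genuine finite-dimensional $C^*$-subalgebra of $A$ using simplicity and the (SP) property, keeping careful track of norms throughout. First I would enlarge $\mathcal F$ so that it is self-adjoint and contains the products of its pairs, and shrink $\ep$ accordingly; once the conclusion is known for self-adjoint contractions it follows for a general $x$ by applying it to $x^*x$ and using $\|PxP\|^2\ge\|Px^*xP\|-\ep$ together with $\|[P,x]\|<\ep$. Next, for each self-adjoint $x\in\mathcal F$, after replacing $x$ by $-x$ if necessary so that $\sup\operatorname{sp}(x)=\|x\|$, I would apply the (SP) property to the nonzero positive element $(x-(\|x\|-\ep/4))_+$ to obtain a nonzero projection $q_x\in A$ in its hereditary subalgebra, so that $q_xxq_x\ge(\|x\|-\ep/4)q_x$; I then adjoin the $q_x$ and the products $q_xxq_x$ to $\mathcal F$. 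The point of the $q_x$ is that any projection $P$ which almost commutes with them and whose corner almost contains them will automatically retain the norm of each $x$.

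Applying property $(L_{\mathcal D})$ (see 9.4 of \cite{GLN-I}) to this enlarged finite set, to a sufficiently small tolerance, and to an arbitrary nonzero positive element of $A$, I obtain a nonzero projection $P_0$ that almost commutes with the enlarged set and a unital $C^*$-subalgebra $D\in\mathcal D$ with $1_D=P_0$ such that $P_0yP_0$ lies within the tolerance of $D$ for every $y$ in the enlarged set. Since $P_0$ almost commutes with each $q_x$, the element $P_0q_xP_0$ is an approximate projection; replacing it by a nearby projection inside $D$ and repeating the estimate above inside the corner gives $\|P_0xP_0\|\ge\|x\|-\ep/3$ for all $x\in\mathcal F$. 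It therefore remains to treat the following: $D$ is a unital separable residually finite dimensional $C^*$-algebra, a unital $C^*$-subalgebra of $A$ with $1_D=P_0$, and $\mathcal G\subset D$ is a finite subset; one must find a finite-dimensional $C^*$-subalgebra $F$ of $P_0AP_0$, with unit $P$ approximately central for $\mathcal G$, such that $PdP\in_{\ep}F$ and $\|PdP\|\ge\|d\|-\ep$ for all $d\in\mathcal G$.

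Since $D$ is separable and residually finite dimensional, its finite-dimensional representations separate points, so taking a direct sum of finitely many of them yields a single finite-dimensional representation $\pi\colon D\to M:=\bigoplus_{j=1}^{N}M_{k_j}$ which is a $*$-homomorphism and is $\delta$-isometric on $\mathcal G\cup\mathcal G\mathcal G$. The \emph{main obstacle} is that $\pi$ does not factor through the inclusion $D\subset A$, so $M$ cannot merely be transported into $A$; I would instead excise the data of $\pi$ inside $A$. Concretely: decompose $\pi$ into irreducible summands; represent each as (a corner of) a pure state on $D$; excise these states, following Akemann, Anderson and Pedersen, by positive contractions living in prescribed full hereditary subalgebras of $P_0AP_0$; use the (SP) property to replace the excising contractions by nonzero projections; and use simplicity of $A$ (hence comparison of projections) to arrange orthogonal families and connecting partial isometries, thereby assembling a finite-dimensional $C^*$-subalgebra $F\cong\bigoplus_{j}M_{k_j}$ of $P_0AP_0$ with unit $P$ for which $PdP$ is $\ep$-close to the element $\sum_{j,s,t}(\pi_j(d))_{st}\,e^{(j)}_{st}$ of $F$, whose norm equals $\|\pi(d)\|\ge\|d\|-\delta$. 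Carrying the projections $q_x$ (together with $q_xxq_x$, on which $\pi$ is also arranged to be $\delta$-isometric) along the construction guarantees the bound $\|PdP\|\ge\|d\|-\ep$ is not lost.

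With $F$ and $P=1_F$ thus produced: $PxP\in_{\ep}F$ is immediate; $\|[P,x]\|<\ep$ holds because $x$ is $\ep$-close to an element of $F$, which commutes with its unit $P$; and $\|PxP\|\ge\|x\|-\ep$ follows from the norm tracking of the previous steps. Finally, for the ``in particular'' assertion: every Elliott--Thomsen building block (one-dimensional NCCW) is subhomogeneous, hence residually finite dimensional, so $\mathcal C\subset\mathcal D$; consequently the defining approximation property of $\mathcal B_1$ is a special case of property $(L_{\mathcal D})$, and the general statement applies to any $A\in\mathcal B_1$ having the (SP) property. The hard part of the whole argument is the excision/realization step: turning the abstract finite-dimensional representation of the RFD subalgebra into an honest finite-dimensional subalgebra of the simple algebra $A$; everything else is $\ep$-bookkeeping.
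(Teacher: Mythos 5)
Your outline coincides with the paper's: compress the finite set by the projection coming from property $(L_{\cal D})$, use residual finite dimensionality of $D$ to produce a finite-dimensional model that is almost isometric on the relevant elements, and then use (SP) and simplicity to plant that model inside $A$ as a genuine finite-dimensional subalgebra compatible with compression. For the planting step the paper sets $B=\overline{(\ker\pi)A(\ker\pi)}$ and $C=\overline{D+B}$, so that $B$ is an ideal of $C$ with $C/B\cong F_1$, and then quotes Lemma 2.1 of \cite{Niu}; the excision/(SP)/comparison construction you sketch is essentially a reproof of that lemma, so this part of your plan is viable (though you could simply cite it instead of redoing the hard work).

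The genuine gap is in your norm-preservation mechanism. The step ``since $P_0$ almost commutes with each $q_x$, the element $P_0q_xP_0$ is an approximate projection; replacing it by a nearby projection \dots gives $\|P_0xP_0\|\ge \|x\|-\ep/3$'' does not follow: a projection can commute with $q_x$ \emph{exactly} and still satisfy $P_0q_xP_0=0$ (take $P_0$ orthogonal to $q_x$), and nothing in your construction forces the corner $P_0AP_0$ to ``almost contain'' the $q_x$ --- indeed you apply $(L_{\cal D})$ to an \emph{arbitrary} nonzero positive element $a$, so the clause $1-P_0\lesssim a$ gives you no leverage either. Since your final estimate only recovers $\|PxP\|\ge \|P_0xP_0\|-\mathrm{(small)}$ (because $P\le P_0$ and $PdP$ is modeled on $\pi(d)$ with $d$ close to $P_0xP_0$), the desired bound $\|PxP\|\ge\|x\|-\ep$ is left unproved. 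The paper avoids the issue by invoking the strengthened form of property $(L_{\cal D})$ from 9.5 of \cite{GLN-I}, which already delivers $\|pxp\|\ge(1-\ep/16)\|x\|$. If you want to derive it yourself, the correct tool is simplicity (fullness), not the $q_x$: for each $x$ put $c=f(|x|)$ with $f$ supported near $\|x\|$, choose finitely many $r_i$ with $\sum_i r_i^*cr_i=1$, and feed $c$, the $r_i$, and the relevant products into the finite set handed to $(L_{\cal D})$; if $\|P_0xP_0\|<\|x\|-\ep$, then almost-commutation forces $P_0cP_0\approx 0$, hence $P_0=\sum_i P_0r_i^*cr_iP_0\approx\sum_i r_i^*P_0cP_0r_i\approx 0$, contradicting $\|P_0\|=1$.
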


\begin{proof}
We may assume that ${\cal F}\subset A^{\bf 1}$ and $0<\ep<1/2.$
Without loss of generality, we may assume that
$$
d=\min\{\|x\|: x\in {\cal F}\}>0.
$$
Since $A$ has property ($L_{\cal D}$), there are a projection $p\in A$ and a \SCA\, $D\subset A$ with $D\in\mathcal {\cal D}$ and $p=1_D$ such that

\beq\label{popa-0}
\|px-xp\|<d\epsilon/16,
\,\, pxp\in_{d\epsilon/16} D,\andeqn
\|pxp\|\ge (1-\ep/16)\|x\|
\eneq
for all $x\in {\cal F}$ (see  {{9.5 of  \cite{GLN-I}}}).

Let ${\cal F}'\subset D$ be a finite subset such that, for each $x\in {\cal F},$ there exists $x'\in {\cal F}'$ such
that $\|pxp-x'\|<d\ep/16.$
Since $D\in {\cal D},$ there is a unital surjective \hm\,
$\pi: D\to D/{\rm ker}\pi$ such that $F_1:=D/{\rm ker}\pi$ is a finite dimensional \CA\, and
\beq\label{popa-2}
 \|\pi(x')\|\ge (1-\ep/16)\|x'\|\rforal x'\in {\cal F}'.
\eneq
Let $B=\overline{({\rm ker}\pi) A({\rm ker}\pi)}.$ $B$ is a hereditary \SCA\, of $A.$
Let $C$ be the closure of $D+B.$
Note that $1_C=1_D=p.$ As in the proof of 5.2 of \cite{LnTAF},   $B$ is an ideal of $C$ and
$C/B\cong D/{\rm ker}\pi=F_1.$
The lemma then follows from Lemma 2.1 of \cite{Niu}. In fact, since $pAp$ has property (SP), by
Lemma 2.1 of \cite{Niu}, there are a projection $P\in pAp$  and a monomorphism $h: F_1\to PAP$ such that
\beq\label{popa-n3}
&&h(1_{F})=P,\,\,\, \|Px'-x'P\|<\ep/16 \andeqn\\
&& \|h\circ \pi(x')-Px'P\|<\ep\cdot d/16
\eneq
for all  $x'\in {\cal F}'.$
Put $F=h(F_1).$ Then, one estimates that, for all $x\in {\cal F},$
\beq\label{popa-n4}
&&\|Px-xP\|\le \|Ppx-Px'\|+\|Px'+x'P\|+\|x'P-xpP\|\\
&&<\ep/16+\ep/16 +\ep/16<\ep,\\
&&PxP\approx_{\ep/16} Px'P\in_{\ep/16} {{F_1}},\andeqn\\
&& \|PxP\|=\|PpxpP\|\ge \|Px'P\|-d\ep/16\ge \|h\circ \pi(x')\|-d\ep/8\\
&&=\|\pi(x')\|-d\ep/8
\ge \|x'\|-d\ep/16-d\ep/8\ge \|pxp\|-d\ep/4\\
&&\ge (1-\ep/16)\|x\|-d\ep/4
\ge \|x\|-\ep.
\eneq
\end{proof}

\begin{lem}\label{CtimesText}
Let $C\in\mathcal C_0.$
Let $\ep>0,$ ${\cal F}\subset  C$ be
a finite subset, $1>\sigma_1>0,$  $1>\sigma_2>0,$  $\overline{{\cal U}}\subset J_c(K_1(C\otimes C(\T)))\subset U(C\otimes C(\T))/CU(C\otimes C(\T))$ be a finite subset (see Definition 2.16 of \cite{GLN-I})
and
${\cal H}\subset (C\otimes C(\T))_{s.a.}$ be a finite subset.
Suppose that $A$ is a unital separable simple \CA\, in $\mathcal B_0$,  $B=A\otimes U$ for some UHF-algebra {{$U$ of infinite}}
type, $\af\in KK_{e}(C\otimes C(\T), B)^{++},$
$\lambda: J_c(K_1(C\otimes C(\T))) \to U(B)/CU(B)$ is a homomorphism, and $\gamma: T(B)\to T_{\rm f}(C\otimes C(\mathbb T))$ is a continuous affine
map. Suppose that $(\af, \lambda, \gamma)$ is a compatible triple. Then there is a unital ${\cal F}$-$\ep$-multiplicative
\cp\, $\phi: C\otimes C(\T)\to B$ such that
\begin{enumerate}
\item $[\phi]=\alpha$,
\item ${\rm dist}(\phi^{\ddag}(x), \lambda(x))<\sigma_1$, for any $x\in \overline{{\cal U}}$, and
\item  $|\tau\circ \phi(h)-\gamma(\tau)(h)|<\sigma_2$, for any $h\in {\cal H}$.
\end{enumerate}

\end{lem}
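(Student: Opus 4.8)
The plan is to follow the scheme of the proof of Corollary~\ref{istBotC+}, with Lemma~\ref{CtimesText0} playing the role of Corollary~\ref{istBotC} as the basic existence input, and with the information about $U(B)/CU(B)$ supplied afterwards by the model-algebra decomposition together with the adjustment lemma (Lemma~21.5 of \cite{GLN-I}). A preliminary observation keeps the $KK$-bookkeeping clean: since $C\in\mathcal C_0,$ the six-term sequence of $0\to SF_2\to C\to F_1\to 0$ realizes $K_0(C)$ as a subgroup of the finitely generated free group $K_0(F_1),$ so $K_0(C)$ — and hence $K_*(C\otimes C(\T))\cong K_0(C)\oplus K_0(C)$ — is finitely generated and torsion free. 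Thus, by the UCT, $KK(C\otimes C(\T),D)=KL(C\otimes C(\T),D)=\Hom(K_0(C\otimes C(\T)),K_0(D))\oplus\Hom(K_1(C\otimes C(\T)),K_1(D))$ for any $\sigma$-unital $D,$ so a sufficiently multiplicative \cp\ from $C\otimes C(\T)$ has a well-defined $KK$-class, and two $KK$-classes agree as soon as they agree on $K_0$ and on $K_1.$

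First I would fix unitaries $w_1,\dots,w_m\in U(M_N(C\otimes C(\T)))$ (for some $N$) whose images in $U_\infty(C\otimes C(\T))/CU(C\otimes C(\T))$ are precisely the given elements of $\overline{\mathcal U}\subset J_c(K_1(C\otimes C(\T))).$ Then, applying Lemma~\ref{CtimesText0} with a large finite set $\mathcal F_0\supseteq\mathcal F,$ a small $\ep_0<\ep$ (small enough that $\psi^{\ddag}$ is defined on $\overline{\mathcal U}$ for every unital $\mathcal F_0$-$\ep_0$-multiplicative $\psi$), $\sigma_2/2$ in place of $\sigma,$ and a suitable self-adjoint finite set $\mathcal H_0\supseteq\mathcal H,$ produces a unital $\mathcal F_0$-$\ep_0$-multiplicative \cp\ $\phi_1:C\otimes C(\T)\to B$ with $[\phi_1]=\af$ and $|\tau\circ\phi_1(h)-\gamma(\tau)(h)|<\sigma_2/2$ for all $h\in\mathcal H_0$ and $\tau\in T(B).$ This $\phi_1$ satisfies (1) and, to within $\sigma_2/2,$ (3), but in general not (2).

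To repair (2) I would argue as in the proof of Corollary~\ref{istBotC+}. Using Theorem~21.9 of \cite{GLN-I}, identify $B$ with a model inductive limit $\lim_{i\to\infty}(A_i,\phi_i)$ of the type produced by Theorem~14.10 of \cite{GLN-I}, whose connecting maps split as $\phi_{i,i+1}=\psi^{(0)}_{i,i+1}\oplus\psi^{(1)}_{i,i+1}$ with $\psi^{(0)}_{i,i+1}$ of finite-dimensional range. After a small perturbation we may assume $\phi_1$ maps into some $A_i,$ and then $\phi_{i,\infty}\circ\phi_1=\psi_0\oplus\psi_1,$ where $\psi_0$ has finite-dimensional range and arbitrarily small trace (achieved by taking $i$ large) and where $\psi_0,\psi_1$ satisfy hypotheses (1)--(4) of Lemma~21.5 of \cite{GLN-I}; it is here that the compatibility of $(\af,\lambda,\gamma)$ is used. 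That lemma (in a form valid with $C\otimes C(\T)$ as domain) then furnishes a unital \hm\ $\Phi:C\otimes C(\T)\to e_0Be_0,$ $e_0:=\psi_0(1),$ which is homotopic to a \hm\ with finite-dimensional range, satisfies $[\Phi]_{*0}=[\psi_0],$ and for which, for each $j,$ there is $g_j\in U_0(B)$ with $\mathrm{cel}(g_j)<\min\{\sigma_1,\sigma_2\}/8$ and $\lambda(\overline{w_j})^{-1}(\Phi\oplus\psi_1)^{\ddag}(\overline{w_j})=\overline{g_j}.$

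Finally set $\phi:=\Phi\oplus\psi_1,$ transported back to $B$ along the identification above (which carries $T(B)$ to $T(B),$ so leaves the comparison with $\gamma$ unaffected). For (1): both $\psi_0$ and $\Phi$ are, up to homotopy, of finite-dimensional range, so their induced maps on $K_1$ vanish, and together with $[\Phi]_{*0}=[\psi_0]$ and the torsion-freeness of $K_*(C\otimes C(\T))$ this forces $[\Phi]=[\psi_0]$; hence $[\phi]=[\psi_0]\oplus[\psi_1]=[\phi_{i,\infty}\circ\phi_1]$, which, transported back, equals $\af.$ For (3): combine $|\tau\circ(\psi_0\oplus\psi_1)(h)-\gamma(\tau)(h)|<\sigma_2/2$ with $|\tau\circ\phi(h)-\tau\circ(\psi_0\oplus\psi_1)(h)|\le 2\|h\|\sup_{\tau}\tau(e_0)<\sigma_2/2,$ the last bound holding once $\tau(e_0)$ is small enough. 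For (2): for $x=\overline{w_j}\in\overline{\mathcal U},$ $\mathrm{dist}(\phi^{\ddag}(x),\lambda(x))=\|\overline{D}_B(\overline{g_j})\|\le\frac{1}{2\pi}\mathrm{cel}(g_j)<\sigma_1.$ Choosing $\mathcal F_0$ large and $\ep_0$ small at the outset makes $\phi$ $\mathcal F$-$\ep$-multiplicative and keeps $\phi^{\ddag}$ defined where needed. The step I expect to be the main obstacle is the middle one: one has to make sure that the $U/CU$-adjustment of Lemma~21.5 of \cite{GLN-I} really is available with $C\otimes C(\T)$ as domain (this rests on the near-factorization of $C\otimes C(\T)$ through the $A_i$ and on the finiteness of $K_*(C\otimes C(\T))$), and one must propagate the various small errors so that they add to the prescribed $\sigma_1,\sigma_2$ while keeping $[\phi]=\af$ exactly — the last point being precisely where torsion-freeness of $K_0(C)$ is needed.
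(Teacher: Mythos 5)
Your outer steps are fine (Lemma \ref{CtimesText0} does give a map with the right $KK$-class and traces up to $\sigma_2/2$, and the final error bookkeeping with a small-trace corner and small exponential length is harmless), but the middle step is a genuine gap, not just a point to be checked. Lemma 21.5 of \cite{GLN-I} is a statement about domains in ${\bf H}$ (this is why Corollary \ref{istBotC+} can quote it: there $C\in{\bf H}$). For $C\in\mathcal C_0$ the algebra $C\otimes C(\T)$ is a subhomogeneous algebra with two-dimensional spectrum that is neither in ${\bf H}$ nor in $\mathcal C_0$, so the ``form valid with $C\otimes C(\T)$ as domain'' that you invoke is not available in the literature you cite; indeed, within the architecture of this paper the present Lemma \ref{CtimesText} is precisely the substitute for Lemma 21.5 in the case of domains $\mathcal C_0\otimes C(\T)$, so assuming such a variant is essentially assuming the conclusion. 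Everything else in your argument (the $K$-theory bookkeeping, torsion-freeness of $K_*(C\otimes C(\T))$, the estimate $\mathrm{dist}(\phi^{\ddag}(x),\lambda(x))\le \|\overline D_B(\bar g)\|$) is routine; the whole difficulty of the lemma is concentrated in producing the corrected summand $\Phi$, and that is exactly what you have deferred to an unproved statement.

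For comparison, the paper produces the correction by hand, exploiting the special structure of $C\otimes C(\T)$: after reducing $\af$ and $\lambda$ to finite stages of $B=A\otimes U$, it chooses $J_c$ so that $\overline{\mathcal U}$ consists of the unitaries $((1-p_i)+p_i\otimes z)((1-q_i)+q_i\otimes z^*)$, writes $B=B_0\otimes U_2$, applies Lemma \ref{CtimesText0} into $B_0$, and cuts off a small corner using the Popa condition (Lemma \ref{Popa}). The discrepancy $\lambda_0$ between $\lambda$ and the $\ddag$-map of the uncorrected part lands in $U_0(B)/CU(B)$ by compatibility; since this group is divisible (hence injective), $\lambda_0$ extends along the embedding $K_1(C\otimes C(\T))\cong K_0(C)\hookrightarrow \Z^l\cong K_1(F_1\otimes C(\T))$ induced by the evaluation map $\pi_e:C\to F_1$, and Corollary 15.3 of \cite{GLN-I} (with the multiplicity $MK$ built into the corner $E_c$) extends the relevant $K_0$-map positively to $K_0(F_1)$. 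The extended data are then realized by an explicit homomorphism factoring through $F_1\otimes C(\T)$, obtained by choosing unitaries $y_j$ in small corners with prescribed classes $\tilde\lambda(w_j)$ in $U_0/CU$. If you want to salvage your outline, the content you must supply at the middle step is exactly this construction (or an independent proof of a 21.5-type adjustment lemma for $\mathcal C_0\otimes C(\T)$), which amounts to reproducing the paper's argument.
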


\begin{proof} Note that $\underline{K}(C\otimes C(\T))$ is finitely generated modulo Bockstein operations and $K_0(C\otimes C(\T))_+$ is a finitely generated semigroup.  Using the inductive limit $B=\lim_{n\to\infty}(A\otimes M_{r_n}, \imath_{n,n+1}),$  one can find, for $n$ large enough,  $\alpha_n\in KK_e(C\otimes C(\T), A\otimes M_{r_n})^{++}$  such that $\alpha=\alpha_n\times [  \imath_n]$ where $ [  \imath_n] \in KK(A\otimes M_{r_n}, B)$ is induced by the inclusion $ \imath_n: A\otimes M_{r_n} \to B$. Replacing  $A$ by $A\otimes M_{r_n}$,
we may assume that $\alpha=\alpha_1 \times [\imath]$, where $\alpha_1\in KK_e(C\otimes C(\T), A)^{++}$ and $ \imath: A \to A\otimes U=B$ is the inclusion.  Note that  $\lambda: J_c(K_1(C\otimes C(\T)))\to U(A\otimes U)/CU(A\otimes U).$ By
the same argument as above, we know that if the integer $n$ above is large enough, then there is a  map  $\lambda_n: J_c(K_1(C\otimes C(\T))) \to U(A\otimes M_{r_n})/CU(A\otimes M_{r_n})$ such that $|\imath_n^{\ddag}\circ \lambda_n(u)-\lambda(u)|$ is arbitrarily small (e.g smaller than {$\sigma_1/4$}) for all $u\in  \overline{{\cal U}}$.
Replacing $A$ by $ A\otimes M_{r_n}$, we may assume $\lambda=\imath^{\ddag}\circ \lambda_1$ 
 with $\lambda_1: J_c(K_1(C\otimes C(\T))) \to U(A)/CU(A)$ and $ \imath^{\ddag}: U(A)/CU(A)\to U(B)/CU(B)$
induced by the inclusion map.  Furthermore, we may assume that $\lambda_1$ is compatible with $\af_1$.

Without loss of generality, we may assume that $\|h\|\le 1$ for all $h\in {\cal H}.$ Let $p_i, q_i\in M_k(C)$ be projections such that
$\{[p_1]-[q_1], ..., [p_d]-[q_d]\}$ forms a set of  independent
generators of $K_0(C)$ (as an abelian group)
for some integer $k\ge 1$.
Choosing a specific $J_c,$
one may assume that
$$
\overline{\mathcal U}=\{(({\bf 1}_k-p_i)+p_i\otimes z)(({\bf 1}_k-q_i)+q_i\otimes z^*): 1\le i\le d\},
$$
where $z\in C(\T)$ is the identity function on the unit circle.
Put $u_i'=({\bf 1}_k-p_i)+p_i\otimes z)(({\bf 1}_k-q_i)+q_i\otimes z^*).$ Hence, $\{[u_1'], ..., [u_d']\}$ is a set of standard generators of $K_1(C\otimes C(\T))\cong K_0(C)\cong \mathbb Z^d.$ Then $\lambda$ is a homomorphism from $\mathbb{Z}^d$ to $U(B)/CU(B)$.

Let $\pi_e: C\to F_1=\bigoplus_{i=1}^l M_{n_i}$ be the standard evaluation map defined in Definition 3.1 of \cite{GLN-I}. 
By Proposition 3.5 of \cite{GLN-I}, 
the map $(\pi_e)_{*0}$ induces an embedding of $K_0(C)$ in $\mathbb{Z}^l$, and the map $(\pi_e\otimes\id)_{*1}$ induces an embedding of $K_1(C\otimes C(\T))\cong \mathbb{Z}^d$ in $K_1(\bigoplus_{i=1}^lM_{n_i}\otimes C(\T)) \cong \mathbb{Z}^l$. 
Define
$J_c({{K_1(}}\bigoplus_{i=1}^lM_{n_i}\otimes C(\T){{)}})$ to be the subgroup generated by $\{\overline{e_i\otimes z_i\oplus (1-e_i)}; i=1, ..., l\}$, where $e_i$ is a rank one projection of $M_{n_i}$ and $z_i$ is the standard unitary generator of  the $i$-th copy of $C(\T).$  Note that the image of $J_c(K_1(C\otimes C(\T)))$ under $\pi_e$ is contained in $J_c(K_1(\bigoplus_{i=1}^lM_{n_i}\otimes C(\T)))$. 
Write $w_j=e_j\otimes z_j\oplus (1-e_j)$, $1\leq j\leq l$.

Let $U$ be as in the lemma. We write $B=B_0\otimes U_{{2}},$ and {{ $B_0=A\otimes U_{1},$
with $U=U_1\otimes  U_2$,  and
both $U_1$ and $U_2$ UHF algebras of infinite type. Denote by $\imath_1:A \to  B_0$, $\imath_2:B_0 \to  B,$ and $\imath=\imath_2\circ \imath_1:A \to  B$ the inclusion maps. Recall $\af=\af_1\times [\imath] \in KK(C,B)$.}}


Applying Lemma \ref{CtimesText0},
one obtains a unital ${\cal F}'$-$\ep'$-multiplicative \cp~
$\psi: C\otimes\mathrm{C}(\T) \to B_0$ such that
\begin{equation}\label{July22-a1}
[\psi]=\alpha_{{1}}{{\times [\imath_1]}}\andeqn
\end{equation}
\begin{equation}\label{July22-a2}
|\tau\circ \psi(h)-\gamma(\tau)(h)|<\min\{\sigma_1, \sigma_2\}/3 \rforal  h\in {\cal H},\andeqn \rforal \tau\in\mathrm{T}(B_0),
\end{equation}
where $\ep/2>\ep'>0$ and ${\cal F}_1\supset {\cal F}.$  (Note that $T(B_0)=T(A)=T(B)$, and the map $\gamma: T(B)\to T_{\rm f}(C\otimes C(\mathbb T))$ can be regarded as a map with domain $T(B_0)$).
We may assume that $\ep'$ is sufficiently   small and ${\cal F}_1$ is sufficiently  large  that
not only (\ref{July22-a1}) and (\ref{July22-a2}) make sense
but also  that $\psi^{\ddag}$ can be defined on $\bar {\mathcal U},$ and induces a
homomorphism from $J_c(K_1(C\otimes C(\T)))$ to $U(B_{{0}})/CU(B_{{0}})$ (see 2.17 of \cite{GLN-I}).

Let $M$ be the integer of Corollary 15.3 of \cite{GLN-I} 
for $K_0(C)\subset \Z^l$ (in place of $G\subset \Z^l$).

For any $\ep''>0$ and any finite subset ${\cal F}''\subset B_0,$ since $B_0$ has the Popa condition and has the property (SP)
(see \ref{Popa}),
there exist a non-zero projection $e\in B_0$ and a unital ${\cal F}''$-$\ep''$-multiplicative \cp\, $L_0: B_0\to F\subset eB_0e,$
where $F$ is a finite  dimensional and $1_{{F}}=e,$ and a  unital ${\cal F}''$-$\ep''$-multiplicative \cp\, $L_1: B_0\to
(1-e)B_0(1-e)$ such that
\beq\label{July22-b1}
\|b-\imath\circ L_0(b)\oplus L_1(b)\|<\ep''\rforal b\in {\cal F}'',\\
\|L_0(b)\| \ge \|b\|/2\rforal b\in {\cal F}'', \andeqn\\ \label{Dec1407-n1}
\tau(e)<\min\{\sigma_1/2, \sigma_2/2\}\rforal \tau\in T(B_0),
\eneq
where $\imath: F\to eB_0e$ is the embedding and $L_1(b)=(1-p)b(1-p)$ for all $b\in B_0.$

Since the positive cone of $K_0(C\otimes C(\T))$ is finitely generated, with sufficiently small
$\ep''$ and sufficiently large ${\cal F}''$, one may assume that
$[L_0\circ \psi]|_{K_0(C\otimes C(\T))}$ is positive. Moreover, one may assume that $(L_0\circ \psi)^{\ddag}$ and
$(L_1\circ \psi)^{\ddag}$ are  well defined and induce homomorphisms from $J_c(K_1(C\otimes C(\T)))$ to $U(B_{{0}})/CU(B_{{0}})$. One may also assume that $[L_1\circ \psi]$ is well defined.  Moreover, we may assume that $L_i\circ \psi$ is ${\cal F}$-$\ep$-multiplicative for $i=0,1.$

There is a projection $E_c\in U_{{2}}$ such that $E_c$ is a direct sum of $M$ copies of some non-zero projections $E_{c,0}\in U_{{2}}.$
Put $E=1_{{U_2}}-E_c.$

Define $\phi_0: C\otimes C(\T)\to F\otimes EU_{{2}}E\to eB_0e\otimes EU_{{2}}E$ by $\phi_0(c)= L_0\circ \psi(c)\otimes E{{(\in B)}}$ for all $c\in C\otimes C(\T)$ and
define $\phi_1': C\to F\otimes E_cU_{{2}}E_c$ by $\phi_1'(c)=L_0\circ \psi(c)\otimes E_c$ for all $c\in C.$
Note that
$\phi_0$ is also { {${\cal F}$-$\ep$-}}multiplicative and $\phi_0^{\ddag}$ is also well defined as
$(L_0\circ \psi)^{\ddag}$ is.  Moreover $[\phi_1']$ is well defined.  Define
$$L_2={{\imath_2 \circ}}L_1\circ \psi+\phi_0{{: C\otimes C(\T) \to \big((1-e)B_0(1-e)\otimes 1_{U_2}\big)\oplus \big(eB_0e\otimes EU_2E\big)~~(\subset B)}}.$$

Denote by
$$\lambda_0=\lambda-L_2^{\ddag}=\lambda-\phi_0^{\ddag}-({{\imath_2 \circ}}L_1\circ \psi)^{\ddag}: J_c(K_1(C\otimes C(\T)))\to U(B)/CU(B).$$
{{Note that $L_0$ factors through the finite dimensional algebra $F$ and therefore $[L_0]=0$ on $K_1(B_0).$ Consequently $[\phi_0]|_{K_1(C\otimes C(\T))}=0$  and $[L_1\circ \psi]=[\imath_1]\circ[\alpha_1]$ on ${K_1(C\otimes C(\T))}$. Hence, $[\imath_2\circ L_1\circ \psi]=\alpha$ on $K_1$. Furthermore, $\alpha$ is compatible with $\lambda$. We know that the image of $\lambda_0$ is in $U_0(B)/CU(B).$}}

Note that, by Lemma 11.5 of \cite{GLN-I}, 
the group $U_0(B)/CU(B)$ is divisible. It is an injective abelian group.
Therefore there  is a homomorphism ${{\tilde \lambda}}: J_c(\bigoplus_{i=1}^lM_{n_i}\otimes C(\T))\to U_{{0}}(B)/CU(B)$
such that
\begin{equation}\label{July22-a4}
{{\tilde \lambda}}\circ (\pi_e)^{\ddag}=\lambda_0-L_2^{\ddag}.
\end{equation}

Let $\bt=[{ {L_0\circ \psi}}]|_{K_0(C)}{{:K_0(C) \to}}K_0(F)=\Z^n.$  Let $R_0\ge 1$ be the integer given by Corollary 15.3 of \cite{GLN-I} 
for $\bt: K_0(C)\to \Z^n$ (in place of $\kappa: G\to \Z^n$; note that that $[E_c]$ is divisible by $M$ implies that every element in $\bt(K_0(C))$ is divisible by $M$).
There is a unital \SCA\, $M_{MK}\subset E_cU_{{2}}E_c$ such that $K\ge R_0$ { {and such that $E_cU_{2}E_c$ can be written as $M_{MK}\otimes U_3$}}.  It follows from Corollary 15.3 of \cite{GLN-I} 
that there is a positive \hm\, $\bt_1: K_0(F_1)\to K_0(F)$ such that
$\bt_1\circ (\pi_e)_{*0}=MK\bt.$
Let $h: F_1\to F\otimes M_{MK}$ be the unital \hm\, such that $h_{*0}=\beta_1.$
Put $\phi_1''=h\circ \pi_e{{: C\to F\otimes M_{MK}}}$, and then one has $(\phi_1'')_{*0}=MK\bt.$ Let $J: M_{MK}\to E_cU_{{2}}E_c$ be the embedding.
One verifies that
\beq\label{120714-18nn2}
(\imath_{{F}}\otimes J)_{*0}\circ (\phi_1'')_{*0}=(\imath_{{F}}\otimes J)_{*0}\circ MK\bt={{\tilde\imath}}_{*0}\circ (\phi_1')_{*0}{{,}}
\eneq
 where $\imath_{F}:F\to eB_0e$ and ${\tilde \imath}: F\otimes E_cU_2E_c \to eB_0e\otimes E_cU_2E_c $
is  the unital embedding.

Choose a unitary  $y_i\in { {(\imath_F\otimes J\circ h)}}(e_j)B{ {(\imath_F\otimes J\circ h)}}(e_j)$ such that
$${\bar y}_j={{\tilde \lambda}}(w_j),\,\,\,j=1,2,...,l,$$
where we recall that $w_j=e_j\otimes z_j\oplus (1-e_j)\in F_1\otimes C(\T)=\oplus_j M_{n_j}\otimes C(\T)$ is
one of the chosen
generator of $K_1(M_{n_j}\otimes C(\T))$. Let $1_j$ be the unit of $M_{n_j}\subset F_1;$ then $1_j=\underbrace{e_j\oplus e_j\oplus \cdots\oplus e_j}_{n_j}$.

Define ${ {\tilde y}}_j={\rm diag}(\overbrace{y_j,y_j,...,y_j}^{{n_j}}){{\in  (\imath_F\otimes J\circ h)(1_j)B (\imath_F\otimes J\circ h)(1_j)}},$ $j=1,2,...,l.$ Then ${\tilde y}_j$ commutes with $(\imath_F\otimes J )(F_1)$.

Define ${\tilde  \phi}_1: F_1\otimes C(\T)\to { {\big((\imath_F\otimes J)\circ \phi_1''\big)}}(1_C)B{ {\big((\imath_F\otimes J)\circ \phi_1''\big)}}(1_C)$ by\\
 ${\tilde \phi}_1(c_j\otimes f)={ {\big((\imath_F\otimes J)\circ \phi_1''\big)}}(c_j)f({{\tilde y}}_j)$ for
all $c_j\in M_{n_j}$ and $f\in C(\T).$ Define $\phi_1={\tilde \phi}_1\circ (\pi_e\otimes {\rm id}_{C(\T)}).$
Then, by identifying $K_0(C\otimes C(\T))$ with $K_0(C),$ one has
\begin{equation}\label{July23-4}
(\phi_1)_{*0}={{\tilde \imath}}_{*0}\circ (\phi_1')_{*0}\andeqn (\phi_1)^{\ddag}={{\tilde \lambda}}.
\end{equation}
Define $\phi=\phi_0\oplus \phi_1\oplus {{\imath_2 \circ}} L_1\circ \psi.$
By \eqref{July22-a2} and \eqref{Dec1407-n1},
\begin{equation*}
|\tau\circ \phi(h)-\gamma(\tau)(h)|<\sigma_2/3+\sigma_2/3=2\sigma_2/3 \rforal  h\in {\cal H}.
\end{equation*}
It is ready to verify that
$\phi_{*0}=\af|_{K_0(C\otimes C(\T))}\andeqn \phi^{\ddag}=\lambda.$
Thus, since $\lambda$ is compatible with $\af,$
\beq\label{120714-18-nn4}
\phi_{*1}=\af|_{K_1(C\otimes C(T))}.
\eneq
Since  $K_{*i}(C\otimes C(\T))\cong K_0(C)$ is free and finitely generated,
one concludes that
\begin{equation*}
[\phi]=\af.
\end{equation*}
\end{proof}

\begin{cor}\label{CorCinj}
Let $C\in\mathcal C_0$ and $C_1=C\otimes C(\T).$
Suppose that $A$ is a unital separable simple \CA\, in $\mathcal B_0$,  $B=A\otimes U$ for some UHF-algebra $U$ of infinite
type, $\af\in KK_{e}(C_1, B)^{++},$
$\lambda: J_c(K_1(C)) \to U(B)/CU(B)$ is a homomorphism, and
 $\gamma: T(B)\to T_f(C_1))$ is a continuous affine
map. Suppose that $(\af, \lambda, \gamma)$ is a compatible triple. Then there is a unital
\hm\, $\phi: C_1\to B$ such that
$$ [\phi]=\af,\,\,
 \phi^{\ddag}|_{J_c(K_1(C))}=\lambda\andeqn
\phi_T=\gamma.$$
In particular, $\phi$ is a monomorphism.
\end{cor}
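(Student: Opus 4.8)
The plan is to run the same inductive scheme used in the proof of \ref{istBotC++} (equivalently of \ref{CorCinjj}), but with the existence ingredient Corollary \ref{istBotC+} replaced by Lemma \ref{CtimesText}, which is precisely the version that, in addition to the $K$-theory and trace data, also controls the induced map on $U(\,\cdot\,)/CU(\,\cdot\,)$ through the datum $\lambda$. Concretely, I would construct a sequence of unital $\mathcal F_n$-$\delta(n)$-multiplicative completely positive linear maps $\phi_n\colon C_1\to B$, with $\mathcal F_n$ increasing to a dense subset of the unit ball of $C_1$ and $\delta(n)\downarrow 0$, so that each $\phi_n$ satisfies conditions (1)--(3) of Lemma \ref{CtimesText} with progressively tighter tolerances and $\|\phi_{n+1}(g)-\phi_n(g)\|<2^{-n}$ for all $g\in\mathcal F_n$. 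Then $\{\phi_n(f)\}$ is Cauchy for $f$ in the dense union, hence for every $f\in C_1$ by contractivity, and $\phi:=\lim_n\phi_n$ is a unital homomorphism because the $\phi_n$ are asymptotically multiplicative.

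For the bookkeeping, fix $\mathcal F_1\subset\mathcal F_2\subset\cdots$ in the unit ball of $C_1$ with dense union and put $\Delta(\hat h)=\tfrac12\inf\{\gamma(\tau)(h):\tau\in T(B)\}$ for $h\in (C_1)^{q,1}_+\setminus\{0\}$; since each $\gamma(\tau)$ is faithful and $T(B)$ is compact, $\Delta$ is a well-defined order preserving map into $(0,1)$. For each $n$, apply the uniqueness theorem (Theorem 12.7 of \cite{GLN-I}) to $C_1$ with respect to $\mathcal F_n$, $2^{-(n+1)}$ and $\Delta/2$, obtaining finite subsets $\mathcal G(n),\mathcal H_1(n),\mathcal H_2(n)\subset C_1$, $\mathcal U(n)\subset U_\infty(C_1)$, $\mathcal P(n)\subset\underline K(C_1)$ and constants $\gamma_1(n),\gamma_2(n),\delta(n)>0$, arranged so that $\mathcal P(n)\subset\mathcal P(n+1)$, $\bigcup_n\mathcal P(n)=\underline K(C_1)$, and pass to enlarged increasing finite subsets $\mathcal G_n\supset\mathcal G(n)\cup\mathcal G(n-1)\cup\mathcal H_1(n)\cup\mathcal H_2(n)\cup\cdots$ and $\mathcal U_n\supset\mathcal U(n)\cup\mathcal U(n-1)$. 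The recursion is then the verbatim analogue of the one in \ref{istBotC++}: start by applying Lemma \ref{CtimesText} to the compatible triple $(\af,\lambda,\gamma)$ to get $\phi_1$ with $[\phi_1]=\af$, with $\mathrm{dist}(\phi_1^{\ddag}(x),\lambda(x))<\tfrac12\gamma_2(1)$ on $\mathcal U_1$ and $|\tau\circ\phi_1(f)-\gamma(\tau)(f)|<\min\{\tfrac12\gamma_1(1),\tfrac12\Delta(f)\}$ on $\mathcal G_1$; given $\phi_1,\dots,\phi_n$, apply Lemma \ref{CtimesText} again to produce a $\mathcal G_{n+1}$-$\delta(n+1)$-multiplicative $h'_{n+1}\colon C_1\to B$ with $[h'_{n+1}]=\af$ whose estimates are tight enough that $[h'_{n+1}]|_{\mathcal P(n)}=[\phi_n]|_{\mathcal P(n)}$, $|\tau\circ h'_{n+1}(f)-\tau\circ\phi_n(f)|<\gamma_1(n)$ on $\mathcal H_2(n)$, $\min\{\tau(h'_{n+1}(f)),\tau(\phi_n(f))\}\ge\Delta(f)$ on $\mathcal H_1(n)$, and $\mathrm{dist}(\overline{h'_{n+1}(u)},\overline{\phi_n(u)})<\gamma_2(n)$ on $\mathcal U(n)$. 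These are exactly the hypotheses of Theorem 12.7 of \cite{GLN-I}, which then yields a unitary $W\in B$ with $\|W^*h'_{n+1}(g)W-\phi_n(g)\|<2^{-n}$ on $\mathcal F_n$; one sets $\phi_{n+1}=\mathrm{Ad}\,W\circ h'_{n+1}$, which still satisfies (1)--(3) of Lemma \ref{CtimesText}.

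Passing to the limit, $[\phi]=\af$ follows from $[\phi_n]|_{\mathcal P(n)}=\af|_{\mathcal P(n)}$ for all $n$ together with $\bigcup_n\mathcal P(n)=\underline K(C_1)$ and $KL(C_1,B)=KK(C_1,B)$ (as $K_*(C_1)$ is finitely generated); $\phi_T=\gamma$ follows from the trace estimates tightening to $0$; and $\phi^{\ddag}$ agrees with $\lambda$ on the prescribed subgroup because the $\ddag$-estimates hold on the dense union $\bigcup_n\mathcal U_n$ and the assignment $L\mapsto L^{\ddag}$ is continuous for the metric of \ref{Dcu}. Finally $\phi$ is a monomorphism: if $\phi(a^*a)=0$ then $\gamma(\tau)(a^*a)=\tau(\phi(a^*a))=0$ for every $\tau\in T(B)$, so faithfulness of $\gamma(\tau)$ gives $a=0$, just as in \ref{CorCinjj}. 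I expect the crux to be the step that the brief proof of \ref{CorCinjj} leaves implicit, namely checking that the extra $U(B)/CU(B)$-valued datum $\lambda$ produced by Lemma \ref{CtimesText} is genuinely compatible with the hypotheses of Theorem 12.7 at each stage and is preserved both under the conjugations $\mathrm{Ad}\,W$ and under the passage to the inductive limit, i.e. that the triple $(\af,\lambda,\gamma)$ can be fed consistently into the uniqueness theorem throughout the construction.
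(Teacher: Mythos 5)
Your proposal is correct and follows essentially the same route as the paper: the paper's proof of this corollary is precisely the inductive approximate-intertwining argument of \ref{istBotC++}, with Lemma \ref{CtimesText} supplying the existence input (including the $U(B)/CU(B)$ datum $\lambda$) and Theorem 12.7 of \cite{GLN-I} supplying the uniqueness/conjugation step, exactly as you describe.
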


\begin{proof}
The proof is exactly the same as the argument employed in \ref{istBotC++}  using   \ref{CtimesText}.
\end{proof}

\begin{cor}\label{CCorCinj}
Let $C\in\mathcal C_0$ and let $C_1=C$ or $C_1=C\otimes C(\T).$
Suppose that $A$ is a unital separable simple \CA\, in $\mathcal B_0$,  $B=A\otimes U$ for some UHF-algebra of infinite
type, $\af\in KK_{e}(C_1, B)^{++},$
and $\gamma: T(B)\to T_f(C_1)$ is a continuous affine
map. Suppose that $(\af, \gamma)$ is  compatible. Then there is a unital
\hm\, $\phi: C_1\to B$ such that
$$
[\phi]=\alpha
\tand
\phi_T=\gamma.$$
In particular, $\phi$ is a monomorphism.
\end{cor}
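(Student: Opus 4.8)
The plan is to deduce this corollary directly from the two already-established existence results, \ref{CorCinjj} and \ref{CorCinj}, depending on which of the two possibilities for $C_1$ we are in. The only thing to observe is that in the hypotheses of \ref{CorCinjj} and \ref{CorCinj} one is given, in addition to $(\af,\gamma)$, a continuous homomorphism $\lambda$ on the relevant unitary-group quotient making $(\af,\lambda,\gamma)$ a compatible triple; here we are only given $(\af,\gamma)$ compatible, so the first step is to manufacture a suitable $\lambda$.

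First I would treat the case $C_1=C$. Here we simply invoke Corollary \ref{CorCinjj}. Strictly, \ref{CorCinjj} is stated with a compatible triple $(\af,\lambda,\gamma)$, but its proof (which runs exactly as in \ref{istBotC++} using the last part of Lemma \ref{CtimesText0}) uses only the pair $(\af,\gamma)$ together with \emph{some} compatible $\lambda$; since $C\in\mathcal C_0$ has $K_1(C)=\{0\}$, the only data attached to $U(C)/CU(C)$ beyond the affine part is trivial, so $\lambda$ is forced once $\gamma$ is fixed and automatically compatible with $\af$. Thus \ref{CorCinjj} already yields a unital homomorphism $\phi\colon C\to B$ with $[\phi]=\af$ and $\phi_T=\gamma$; and $\phi$ is a monomorphism precisely because each $\gamma(\tau)$ is faithful, exactly as remarked at the end of the proof of \ref{CorCinjj}.

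Second, the case $C_1=C\otimes C(\T)$. Now $K_1(C_1)\cong K_0(C)$ need not vanish, so there is a genuine choice of $\lambda\colon J_c(K_1(C))\to U(B)/CU(B)$ to be made. Using the splitting (e\,2.16) / (e\,2.17) of Definition \ref{Dcu}, write $U(M_k(B))/CU(M_k(B))=\Aff(T(B))/\overline{\rho_B(K_0(B))}\oplus J_c(K_1(B))$, and similarly for $C_1$. On the $\Aff$-summand define $\lambda$ by $\gamma^\sharp$ composed with the quotient map (this is well defined since $\gamma$ is an affine continuous map and $\gamma^\sharp$ respects $\rho$); on the $J_c(K_1(C_1))$-summand pick any lift of $\af|_{K_1}$ through $J_c(K_1(B))$, which exists because $\af([1_{C_1}])=[1_B]$ and $B=A\otimes U$ has the relevant divisibility (Lemma 11.5 of \cite{GLN-I}). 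By construction $(\af,\lambda,\gamma)$ is then a compatible triple, so Corollary \ref{CorCinj} applies and gives a unital monomorphism $\phi\colon C_1\to B$ with $[\phi]=\af$, $\phi^\ddag|_{J_c(K_1(C))}=\lambda$, and $\phi_T=\gamma$. Forgetting $\lambda$ gives exactly the asserted conclusion.

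The only point requiring a little care — and the main (though modest) obstacle — is verifying that a compatible $\lambda$ genuinely exists in the case $C_1=C\otimes C(\T)$: one must check that the chosen lift on the $K_1$-part is consistent with the $\Aff$-part $\gamma^\sharp$ under the extension (e\,2.4) defining $U(M_k(B))/CU(M_k(B))$, i.e.\ that the two pieces glue to a genuine homomorphism compatible with $\af$. This is a routine diagram chase using that $\af(\ker\rho_{C_1})\subset\ker\rho_B$ (which follows from $\af$ being compatible with $\gamma$ together with the fact that quasitraces are traces, cf.\ the opening of the proof of Lemma \ref{preBot1}), plus the divisibility/injectivity of $U_0(B)/CU(B)$; no new ideas are needed beyond what is already used in the proofs of \ref{CtimesText} and \ref{CorCinj}.
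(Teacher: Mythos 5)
Your proposal is correct and follows essentially the same route as the paper: the only content is to manufacture $\lambda$ from $\af$ via the splitting $J_c^{(1)}$ of $U(B)/CU(B)$ (the paper simply sets $\lambda=J_c^{(1)}\circ\af|_{K_1(C_1)}\circ\pi$, with compatibility then automatic) and to quote \ref{CorCinjj} when $C_1=C$ and \ref{CorCinj} when $C_1=C\otimes C(\T)$. Your extra construction of $\lambda$ on the $\Aff$-summand and the appeal to divisibility are harmless but unnecessary, since the fixed splitting map already provides the required lift on $J_c(K_1(C_1))$, which is all that \ref{CorCinj} asks for.
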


\begin{proof}
To apply \ref{CorCinj}, one needs a map $\lambda.$
Note that $J_c(K_1(C_1))$ is isomorphic to $K_1(C_1)$ which is  finitely generated.
Let $J_c^{(1)}: K_1(B)\to U(B)/CU(B)$ be the splitting map defined in Definition 2.16 of \cite{GLN-I},
Define $\lambda=J_c^{(1)}\circ \af|_{K_1(C_1)}\circ \pi|_{J_c(K_1(C_1))},$
where $\pi: U(M_2(C_1))/CU(M_2(C_1))\to K_1(C_1)$ is the quotient map (note that $C$ has stable rank one and $C_1=C\otimes C(\T)$ has stable rank two). Then
$(\af, \lambda, \gamma)$ is compatible. The corollary then follows from the previous one.
\end{proof}

\begin{lem}\label{L85}
Let $B\in {\cal B}_1$
be an amenable \CA\, which satisfies the UCT, let  $A_1\in {\cal B}_0,$ let $C=B\otimes U_1,$ and let $A=A_1\otimes U_2,$ where $U_1$ and $U_2$ are UHF-algebras of infinite type. Suppose that $\kappa\in KL_e(C,A)^{++},$
$\gamma: T(A)\to T(C)$ is a continuous affine map and $\af: U(C)/CU(C)\to
U(A)/CU(A)$ is a continuous \hm\, for which $\gamma,\, \af,$ and $\kappa$ are compatible. Then there exists a unital monomorphism $\phi: C\to A$ such that
\begin{enumerate}
\item $[\phi]=\kappa$ in $KL_e(C,A)^{++}$,
\item $\phi_T=\gamma$ and $\phi^{\ddag}=\af.$
\end{enumerate}
\end{lem}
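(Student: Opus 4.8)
The plan is to realize $C$ as an inductive limit of one--dimensional building blocks and then to assemble $\phi$ by a one--sided approximate intertwining of homomorphisms produced by the existence results of this section, the uniqueness being supplied by Part I. First I would record the structure of $C$. Since $B\in\mathcal B_1$ is amenable and satisfies the UCT, $C=B\otimes U_1$ is unital, separable, simple, nuclear and satisfies the UCT; by Theorem \ref{B0=B1tU} it lies in $\mathcal B_0$; moreover $C\cong C\otimes U_1$, and $\mathrm{Ell}(C)$ is weakly unperforated, simple and has the (SP) property (arbitrarily small projections come from $U_1$). Hence, by the classification theorem \ref{IST1} together with the model--algebra structure theorem of Part I (Theorem 14.10 of \cite{GLN-I}; compare the range theorem \ref{RangT}), $C$ is isomorphic to a unital inductive limit $\varinjlim(C_n,\psi_{n,n+1})$ with $C_n=B_n\oplus D_n$, $B_n\in\mathbf H$, $D_n\in\mathcal C_0$, $K_1(D_n)=\{0\}$ and each $\psi_{n,n+1}$ a unital injective homomorphism. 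Fixing such an identification, with limit maps $\psi_{n,\infty}\colon C_n\to C$, I transport the data and put
\[
\kappa_n=\kappa\circ[\psi_{n,\infty}],\qquad \gamma_n=(\psi_{n,\infty})_T\circ\gamma,\qquad \lambda_n=\af\circ\psi_{n,\infty}^{\ddag}.
\]
Each $(\kappa_n,\lambda_n,\gamma_n)$ is again a compatible triple, and because $C$ is simple (so every trace on $C$ is faithful) and $\psi_{n,\infty}$ is injective, $\gamma_n$ takes values in $T_f(C_n)$.

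Next I would realize each $(\kappa_n,\lambda_n,\gamma_n)$ by a genuine unital homomorphism $L_n\colon C_n\to A$. Since $A$ has stable rank one I choose orthogonal projections $e_n,e_n'\in A$ with $e_n+e_n'=1_A$, $[e_n]=\kappa_n([1_{B_n}])$ and $[e_n']=\kappa_n([1_{D_n}])$; the corners $e_nAe_n$ and $e_n'Ae_n'$ are again unital separable simple $\mathcal B_0$--algebras satisfying the UCT and absorbing $U_2$, and by Lemma \ref{alg-cut} their unitary groups modulo closures of commutators are canonically identified with $U(A)/CU(A)$. Applying Corollary \ref{istBotC++} to $B_n\in\mathbf H$ with target $e_nAe_n$ and the restricted (compatible) triple gives a unital homomorphism $h_n^{(0)}$ with $[h_n^{(0)}]=\kappa_n|_{B_n}$, $(h_n^{(0)})_T=\gamma_n|_{B_n}$ and $(h_n^{(0)})^{\ddag}=\lambda_n|_{B_n}$, and applying Corollary \ref{CorCinjj} to $D_n\in\mathcal C_0$ with target $e_n'Ae_n'$ gives a unital monomorphism $h_n^{(1)}$ with $[h_n^{(1)}]=\kappa_n|_{D_n}$ and $(h_n^{(1)})_T=\gamma_n|_{D_n}$; since $K_1(D_n)=\{0\}$, $U(D_n)/CU(D_n)$ is determined by the trace data, so $(h_n^{(1)})^{\ddag}=\lambda_n|_{D_n}$ automatically. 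Then $L_n:=h_n^{(0)}\oplus h_n^{(1)}$ is a unital homomorphism $C_n\to A$ with $[L_n]=\kappa_n$, $(L_n)_T=\gamma_n$ and $L_n^{\ddag}=\lambda_n$.

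Finally I would run the intertwining. Fix finite subsets $\mathcal F_n\subset C_n$ with $\psi_{n,n+1}(\mathcal F_n)\subset\mathcal F_{n+1}$ and $\bigcup_n\psi_{n,\infty}(\mathcal F_n)$ dense in $C$, and tolerances $\epsilon_n>0$ with $\sum_n\epsilon_n<\infty$. Inductively, given $L_n$, the two unital homomorphisms $L_{n+1}\circ\psi_{n,n+1}$ and $L_n$ from $C_n$ to $A$ carry identical $\underline K$--, trace-- and determinant--data, namely $\kappa_n,\gamma_n,\lambda_n$; since $\gamma_n$ takes values in $T_f(C_n)$ and $T(A)$ is compact, they obey a common strict--positivity bound, so the uniqueness theorem of \cite{GLN-I} (Theorem 12.7, applied to the summands $B_n\in\mathbf H$ and $D_n\in\mathcal C_0$ and combined through the decomposition $1_A=e_{n+1}\oplus e_{n+1}'$, exactly as in the proof of Corollary \ref{istBotC++}) provides a unitary $w_n\in A$ with $\|w_n^{*}(L_{n+1}\circ\psi_{n,n+1})(c)w_n-L_n(c)\|<\epsilon_n$ for all $c\in\mathcal F_n$. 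Replacing $L_{n+1}$ by ${\rm Ad}\,w_n\circ L_{n+1}$ does not disturb $[\,\cdot\,]$, $(\,\cdot\,)_T$ or $(\,\cdot\,)^{\ddag}$ (for the last, $w_n^{*}L(u)w_n\,L(u)^{*}$ is a product of commutators of unitaries of $A$, hence in $CU(A)$), so the invariants of $L_{n+1}$ persist while now $L_{n+1}\circ\psi_{n,n+1}\approx_{\epsilon_n}L_n$ on $\mathcal F_n$. Carrying this out for all $n$, the sequences $m\mapsto L_m\circ\psi_{n,m}$ are Cauchy on $\psi_{n,\infty}(C_n)$, so $\phi(\psi_{n,\infty}(c)):=\lim_{m\to\infty}L_m(\psi_{n,m}(c))$ defines a unital homomorphism $\phi\colon C\to A$; its invariants on each $\psi_{n,\infty}(C_n)$ are $\kappa_n,\gamma_n,\lambda_n$, so since $\underline K(C)=\varinjlim\underline K(C_n)$, $T(C)$ is recovered from the $T(C_n)$, and $\bigcup_n\psi_{n,\infty}(U(C_n))$ is dense in $U(C)$, continuity yields $[\phi]=\kappa$, $\phi_T=\gamma$ and $\phi^{\ddag}=\af$. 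As $C$ is simple and $\phi$ unital, $\phi$ is a monomorphism.

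The hard part will be making sure the uniqueness input from Part I genuinely applies in the present form — that two unital homomorphisms from $B_n\oplus D_n$ ($B_n\in\mathbf H$, $D_n\in\mathcal C_0$) into $A=A_1\otimes U_2$ that agree on the whole of $\underline K$, on all traces, and on the de la Harpe--Skandalis determinant are approximately unitarily equivalent — and in threading the $n$--dependent corner decompositions $1_A=e_n\oplus e_n'$ coherently through the successive conjugations; the surrounding bookkeeping, including the elementary point that conjugation by a unitary of $A$ acts trivially on $U(A)/CU(A)$ (so the determinant data survives each step), is routine. All the substantive existence is contained in Corollaries \ref{istBotC++} and \ref{CorCinjj}, and all the substantive uniqueness in \cite{GLN-I}.
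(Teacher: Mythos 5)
Your proposal is correct and follows essentially the same route as the paper: write $C=\varinjlim(C_n,\psi_{n,n+1})$ with $C_n$ a direct sum of a block in ${\bf H}$ and one in ${\cal C}_0$ (via Corollary 19.3 and Theorems 21.9/14.10 of \cite{GLN-I}), realize the transported compatible triples $(\kappa_n,\lambda_n,\gamma_n)$ blockwise by Corollary \ref{istBotC++} and Corollary \ref{CorCinjj} (noting $K_1$ of the ${\cal C}_0$ block vanishes, so the $\ddag$-data is automatic there), and then intertwine with the uniqueness theorem (Theorem 12.7 of \cite{GLN-I}, with $\Delta$ built from $\gamma$) to obtain the limit homomorphism. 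The paper applies that uniqueness theorem directly to $C_n$ rather than summand-by-summand, which sidesteps the corner-threading issue you flag, but this is only a minor difference in bookkeeping.
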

\begin{proof}
The proof follows the same lines as that of Lemma 8.5 of \cite{Lnclasn}.
By Theorem 9.11  of \cite{GLN-I}, every \CA\, $B\in {\cal B}_1$ has weakly unperforated $K_0(A).$
Then, by Corollary 19.3 of \cite{GLN-I},  $B\otimes U_1\in {\cal B}_0.$
By the classification theorem
(Theorem 21.9  and Theorem 14.10 of \cite{GLN-I}), 
one can write
$$C=\varinjlim(C_n, \phi_{n, n+1})$$
where $C_n$ is a direct sum of \CA s in $\mathcal C_0$ or {{in}} $\mathbf H$. Let $\kappa_n=\kappa\circ[\phi_{n, \infty}],$ $\alpha_n=\alpha\circ\phi_{n, \infty}^\ddag$, and $\gamma_n=(\phi_{n, \infty})_T\circ\gamma$. Write $C_n=C_n^1\oplus C_n^2$ with $C_n^1\in {\bf H}$ and $C_n^2\in {\cal C}_0$. By Corollary \ref{istBotC++} applying to $C_n^1$ and Corollary \ref{CorCinj} applying to to $C_n^2$,
there are unital monomorphisms
$\psi_n: C_n\to A$ such that
$$
[\psi_n]=\kappa_n \quad \psi_n^{\ddag}=\alpha_n,\quad\textrm{and}\quad (\psi_n)_T=\gamma_n.$$
(Note that $K_1(C_n^2)=0$, Consequently, $(\psi_n|_{C_n^2})_T=(\imath_{C_n^2,C_n})_T\circ \gamma_n$ implies $(\psi_n|_{C_n^2})^{\ddag}=\alpha_n|_{U(C_n^2)/CU(C_n^2)}$.)
In particular, the sequence of monomorphisms  $\psi_n$ satisfies
$$[\psi_{n+1}\circ\phi_{n, n+1}]=[\psi_n],\quad \psi_{n+1}^\ddag\circ\phi_{n, n+1}=\psi_{n}^\ddag,
\quad\textrm{and}\quad
(\psi_{n+1}\circ\phi_{n, n+1})_T=(\psi_n)_T.$$

Let $\mathcal F_n\subset C_n$ be a finite subset such that $\phi_{n, n+1}(\mathcal F_n)\subset \mathcal F_{n+1}$ and $\bigcup\phi_{n, \infty}(\mathcal F_n)$ is dense in $C$. Applying Theorem 12.7 of \cite{GLN-I} 
with $\Delta(h)=\inf\{\gamma(\tau)(\phi_{n, \infty}(h)): \tau\in T(A)\}$, $h\in C_n^+\setminus \{0\}$, we have
a sequence of unitaries $u_n\in A$ such that
$$\mathrm{Ad} u_{n+1}\circ\psi_{n+1}\circ\phi_{n, n+1}\approx_{1/2^n}\mathrm{Ad} u_n\circ \psi_{n}\quad \textrm{on $\mathcal F_n$}.$$

The  maps $\{\mathrm{Ad} u_n\circ \psi_{n}: n=1, 2, ...\}$ then converge to a unital homomorphism $\phi: C\to A$ which satisfies the lemma.
\end{proof}

\begin{rem}\label{Remark202011-1}
In the first few lines of the proof of Lemma \ref{L85},  we recall
that, if $B\in {\cal B}_1$ is a unital separable simple \CA\, with the UCT.
Then $C:=B\otimes U_1$ (for any infinite dimensional UHF-algebra $U_1$)
is an inductive limit of \CA s $C_n,$ where $C_n$ is a finite direct sum of \CA s in ${\cal C}_0$ or in ${\bf H}.$
This important fact  which proved in the first part of this research (\cite{GLN-I}) will be used
frequently in the rest of the paper. 

\end{rem}

\begin{thm}\label{Next2014/09}
Let $X$ be a finite CW complex and let $C=PM_n(C(X))P,$ where
$n\ge 1$ is an integer and $P\in M_n(C(X))$ is a projection.
Let $A_1\in {\cal B}_0$ and let $A=A_1\otimes U$ for a UHF-algebra {{$U$}} of infinite type.
Suppose $\af\in KL_e(C, A)^{++},$
$\lambda: U_{\infty}(C)/CU_{\infty}(C)\to  U(A)/CU(A)$ is a continuous \hm, and $\gamma: T(A)\to T_f(C)$ is a
continuous affine map such that $(\af, \lambda, \gamma)$ is compatible. Then there exists a unital \hm\,
$h: C\to A$ such that
\beq\label{Next1409-1}
[h]=\af,\,\,\, h^{\ddag}=\lambda \andeqn
h_T=\gamma.
\eneq
\end{thm}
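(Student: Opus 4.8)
The plan is to follow the template already used for Corollaries \ref{istBotC+} and \ref{istBotC++}: first upgrade the approximate existence result \ref{istBotC} so that it also respects the de la Harpe--Skandalis determinant data $\lambda$, obtaining a sequence of better and better approximately multiplicative maps with \emph{all} the prescribed invariants; then promote this sequence to a genuine unital \hm\ by the uniqueness theorem (Theorem 12.7 of \cite{GLN-I}), precisely as in the proof of \ref{istBotC++}. As in \ref{istBotC} I would reduce to the case that $X$ is connected, and, using Theorem 9.11 and Corollary 19.3 of \cite{GLN-I} (so that $A\in{\cal B}_0$) together with Theorem 21.9 and Theorem 14.10 of \cite{GLN-I}, I would assume $A$ is one of the model algebras, with an inductive limit decomposition $A=\varinjlim(A_i,\phi_i)$ of the type described there.

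The core step is the analogue of \ref{istBotC+} for $C=PM_n(C(X))P$ with $X$ an arbitrary connected finite CW complex: produce unital ${\cal F}$-$\ep$-multiplicative \cp\ maps $h\colon C\to A$ with $[h]=\af$ (well defined), $|\tau\circ h(f)-\gamma(\tau)(f)|<\ep$ on a given finite set, and ${\rm dist}(h^{\ddag}(\bar u),\lambda(\bar u))<\ep$ on a given finite subset of $U(C)$. The $KK$-class and the tracial data come verbatim from \ref{istBotC}, giving a map $h'$. I would then push $h'$ into some $A_i$ and use the structure of the model algebra to split $\phi_{i,\infty}\circ h'=\psi_0\oplus\psi_1$, with $\psi_0$ landing in a finite-dimensional corner $e_0Ae_0$ having $\tau(e_0)$ arbitrarily small and $\psi_1$ on the complementary corner. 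Invoking Lemma 21.5 of \cite{GLN-I} (whose proof goes through with $C=PM_n(C(X))P$ in place of a building block in ${\bf H}$, since it uses only that $K_1(C)$ is finitely generated and that $U_0(A)/CU(A)$ is divisible, Lemma 11.5 of \cite{GLN-I}), I would replace $\psi_0$ by a \hm\ $\Phi$ homotopic to one with finite-dimensional range, with $[\Phi]_{*0}=[\psi_0]_{*0}$ and $(\Phi\oplus\psi_1)^{\ddag}$ agreeing with $\lambda$ to within $\ep$ on the prescribed finite set; here compatibility of $(\af,\lambda,\gamma)$ is exactly what forces the discrepancy $\lambda-(\psi_0\oplus\psi_1)^{\ddag}$ to vanish on $J_c(K_1(C))$ and hence to lie in the divisible group $U_0(A)/CU(A)\cong\Aff(T(A))/\overline{\rho_A(K_0(A))}$, which is the part that the homotopy-triviality of $\Phi$ lets us move freely. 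Taking $h:=\Phi\oplus\psi_1$ and using $\tau(e_0)\to0$ to preserve the tracial estimate gives the required maps. With this in hand, the passage to an honest \hm\ is word-for-word the argument of \ref{istBotC++} (and \ref{CorCinjj}): one builds a Cauchy sequence $\mathrm{Ad}\,w_n\circ h_n$ by comparing consecutive $h_n$'s — which are close in $KL$, in trace and in $\ddag$ — via Theorem 12.7 of \cite{GLN-I} with $\Delta$ built from $\gamma$, and the point-norm limit $h$ is a unital \hm\ with $[h]=\af$, $h_T=\gamma$ and $h^{\ddag}=\lambda$.

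The step I expect to be the real obstacle is the determinant correction in the middle paragraph: confirming that Lemma 21.5 of \cite{GLN-I}, stated for the building blocks ${\bf H}$, applies (or that its proof adapts) to a general connected finite CW complex, and checking carefully that the compatibility hypothesis annihilates the obstruction to realizing $\lambda$ exactly rather than merely approximately on the free-plus-torsion group $U_\infty(C)/CU_\infty(C)$. A secondary point is to make sure the uniqueness Theorem 12.7 of \cite{GLN-I} is available in the form needed, namely for maps out of $PM_n(C(X))P$ with the determinant data included among the controlled invariants, so that the intertwining in the last step actually closes up.
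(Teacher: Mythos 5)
The outer architecture of your proposal is right — Corollary \ref{istBotC} for the $KK$-class and the traces, then a Cauchy/intertwining argument via Theorem 12.7 of \cite{GLN-I} exactly as in \ref{istBotC++} (and Theorem 12.7 is indeed available for $C=PM_n(C(X))P$, so your "secondary point" is not an issue; the paper uses it in just this way). The gap is the middle step, and it is precisely where the paper does something different. Lemma 21.5 of \cite{GLN-I} is proved only for blocks in ${\bf H}$, and its mechanism — correcting the determinant data by replacing the small summand $\psi_0$ by a \hm\ homotopic to one with finite-dimensional range — exploits the special structure of those blocks, where each summand carries at most one free generator of $K_1$ (the circle coordinate), so the correction can be implemented by hand by twisting a single canonical unitary. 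For a general finite CW complex, $K_1(C)\cong\Z^r\oplus{\rm Tor}$ with $r$ arbitrary, and producing an honest \hm\ of $C$ into a small corner whose $\ddag$ takes prescribed values in $\Aff(T(A))/\overline{\rho_A(K_0(A))}$ on all $r$ free generators simultaneously, consistently with the relations in $C$, is essentially a corner version of Theorem \ref{Next2014/09} itself; asserting that the proof of Lemma 21.5 "goes through" because $K_1(C)$ is finitely generated and $U_0(A)/CU(A)$ is divisible begs exactly this question. Note also that compatibility does not make the discrepancy small: it only pins down its $K_1(A)$-component and its values on torsion classes; on the free part $J_c(G_1)$ the discrepancy $\lambda-(\psi_0\oplus\psi_1)^{\ddag}$ is an arbitrary homomorphism into $\Aff(T(A))/\overline{\rho_A(K_0(A))}$, so a genuine realization mechanism is required, not a perturbation.

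What the paper actually does is to route this correction through an auxiliary classifiable algebra rather than through an extension of Lemma 21.5. It writes $K_1(C)=G_1\oplus{\rm Tor}(K_1(C))$, uses the range theorem (Corollary 14.14 of \cite{GLN-I}) to build $B\in{\cal B}_0$ with the same $(K_0,K_0{}_+,[1],T,r)$ data as $A$ and with $K_1(B)=G_1\oplus{\rm Tor}(K_1(A))$, and maps $C$ into a corner $B_2=(1-e)B(1-e)$ by an \ref{istBotC}-type map $h_n$ realizing a class $\kappa$ (no $\ddag$ control needed there); the difference class $\beta=\af-\theta\circ\kappa$, which vanishes on $K_0(C_0)$ and on $K_1(C)$, is realized on a small corner $e'Ae'$ by a second \ref{istBotC}-map $\phi_{0,m}$. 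The determinant correction $\chi_{n,m}=\lambda-(h_n)^{\ddag}-\phi_{0,m}^{\ddag}$ on $J_c(G_1)$ is then absorbed into a genuine \hm\ $\psi:B_2\to(1-e')A(1-e')$ supplied by Lemma \ref{L85} (the full existence theorem between the already-classified algebras), whose $\ddag$ can be prescribed to be $J_c\circ\theta$ plus $\chi_{n,m}$ precisely because $G_1$ was built into $K_1(B)$. The sum $L=\phi_{0,m}\oplus\psi\circ h_n$ has the right $KL$-class, traces, and approximate $\ddag$, and the intertwining via Theorem 12.7 finishes as you describe. So to salvage your route you would have to actually prove the extension of Lemma 21.5 beyond ${\bf H}$, which is not a formal consequence of the two facts you cite; the paper's detour through $B$ exists to avoid exactly that.
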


\begin{proof}
The proof is similar to that of 6.6 of \cite{Lnclasn}.
To simplify the notation, without loss of generality, 
let us assume
that $X$ is connected. Furthermore, a standard argument shows that the general case
can be reduced to the case $C=C(X).$
We may assume that $U(M_N(C))/U_0(M_N(C))=K_1(C)$ for some integer $N$ (see \cite{Rf}).
Therefore, in this case, $$U(M_N(C))/CU(M_N(C))=U_{\infty}(C)/CU_{\infty}(C).$$
Write $K_1(C)=G_1\oplus {\rm Tor}(K_1(C)),$  where $G_1$ is the torsion free part of $K_1(C).$
Fix a point $\xi\in X$ and  let $C_0=C_0(X\setminus \{\xi\}).$  Note that $C_0$ is an ideal of $C$ and
$C/C_0{{\cong \C}}$. Write
\beq\label{Next1409-2}
K_0(C)=\Z\cdot [1_C]\oplus K_0(C_0).
\eneq
Let $B\in {\cal B}_0$ be a unital separable simple \CA\, as constructed in Corollary 14.14 of \cite{GLN-I} 
such that
\beq\label{Next1409-3}
(K_0(B), K_0(B)_+, [1_B], T(B), r_B)=(K_0({{A}}), K_0({{A}})_+, [1_{{A}}], T({{A}}), r_{{A}})
\eneq
and $K_1(B)=G_1{{\oplus}} {\rm Tor}(K_1({{A}})).$
Put
\beq\label{Next1510-1}
\Delta(\hat{g})=\inf\{\gamma(\tau)(g): \tau\in T(A)\}.
\eneq
For each $g\in C_+\setminus\{0\},$ since $\gamma(\tau)\in T_f(C),$ $\gamma$ is continuous and $T(A)$ is compact,
$\Delta(\hat{g})>0.$

Let $\ep>0,$ ${\cal F}\subset C$ be a finite subset,   $1>\sigma_1, \sigma_2>0,$
${\cal H}\subset C_{s.a.}$ be a finite
subset, and  ${\cal U}\subset U(M_N(C))/CU(M_N(C))$ be a finite subset.
\Wlog, we may assume that ${\cal U}={\cal U}_0\cup {\cal U}_1,$ where
${\cal U}_0\subset U_0(M_N(C))/CU(M_N(C))$ and ${\cal U}_1\subset  J_c(K_1(C))\subset U(M_N(C))/CU(M_N(C)).$

For each $u\in {\cal U}_0,$ write $u=\prod_{j=1}^{n(u)}\exp(\sqrt{-1} a_i(u)),$ where
$a_i(u)\in M_N(C)_{s.a.}.$  Write
\beq\label{Next1409-4}
a_i(u)=(a_i^{(k,j)}(u))_{N\times N},\,\,\,i=1,2,...,n(u).
\eneq
Write
\beq\label{Next1409-5}
c_{i,k,j}(u)={a_i^{(k,j)}(u)+(a_i^{(k,j)})^*\over{2}}\andeqn d_{i,k,j}={a_i^{(k,j)}(u)-(a_i^{(k,j)})^*\over{2i}}.
\eneq
Put
\beq\label{Next1409-6}
M=\max\{\|c\|, \|c_{i,k,j}(u)\|,\|d_{i,k,j}(u)\|: c\in {\cal H}, u\in {\cal U}_0\}.
\eneq
Choose a non-zero projection $e\in {{B}}$ such that
\begin{equation*}
\tau(e)<{\min\{\sigma_1,\sigma_2\}\over{16N^2(M+1)\max\{n(u): u\in {\cal U}_0\}}}\rforal \tau\in T({{B}}).
\end{equation*}
Let $B_2=(1-e){{B}}(1-e).$

In what follows we will use the identification (\ref{Next1409-3}).
Define $\kappa_0\in {\rm Hom}(K_0(C), K_0(B_2))$ as follows.
Define $\kappa_0(m[1_C])=m[1-e]$ for $m\in \Z$ and
$\kappa_0|_{K_0(C_0)}=\af|_{K_0(C_0)}.$ { {Note that $K_1(B)=G_1\oplus {\rm Tor}(K_1(A))$ and that $\af$ induces a map $\af|_{{\rm Tor}(K_1(C))}: {\rm Tor}(K_1(C)) \to {\rm Tor}(K_1(A))$. Using the given decomposition $K_1(C)=G_1\oplus
{\rm Tor}(K_1(C)$, we can define $\kappa_1: K_1(C) \to K_1(B)$ by $\kappa_1|_{G_1}={\rm id}$ and $\kappa_1|_{{\rm Tor}(K_1(C))}=[\af]|_{{\rm Tor}(K_1(C))}$.}}

 By the Universal Coefficient Theorem, there is
$\kappa\in KL(C, B_2)$ which gives rise to the two \hm s $\kappa_0, \kappa_1$ above.
Note that $\kappa\in KL_e(C,B_2)^{++},$  since $K_0(C_0)= {{\rm ker}\rho_C(K_0(C))}$.
Choose
$$
{\cal H}_1={\cal H}\cup\{c_{i,k,j}(u), d_{i,k,j}(u): u\in {\cal U}_0\}.
$$
Every tracial state $\tau'$ of $B_2$ has the form
$\tau'(b)=\tau(b)/\tau(1-e)$ for all $b\in B_2$ { { for some $\tau\in T(B)$}}.  Let $\gamma': T(B_2)\to
T({{C}})$ be defined { {as follows. For $\tau'\in T(B_2)$ as above, define $\gamma'(\tau')(f)=\gamma(\tau)(f)$ for $f\in C$.}}

It follows from \ref{istBotC} that
there exists a sequence of unital \cp s
$h_n: C\to B_2$ such that
\begin{eqnarray*}
\lim_{n\to\infty}\|h_n(ab)-h_n(a{{)h_n(}}b)\|=0\rforal a,\, b\in  C,\\
{[}h_n{]}=\kappa \,\,\,\,\,\, {\text (-} K_*(C)\,\,\text{is\,\,\, finitely \,\, generated\,)}, \andeqn\\
\lim_{n\to\infty}\max\{ |\tau\circ h_n(c)-\gamma'(\tau)(c)|: \tau\in T(B_2)\}=0{{ \rforal c\in C}}.
\end{eqnarray*}
Here we may assume that $[h_n]$ is well defined for all $n$ and
\beq\label{Next1409-8}
|\tau\circ h_n(c)-\gamma(\tau)(c)|<{\min\{\sigma_1, \sigma_2\}\over{8N^2}},\,\,\, n=1,2,...
\eneq
for all $c\in {\cal H}_1$ and for all $\tau\in T(B_2).$
Choose $\theta\in KL({{B}}, {{A}})$ such that
it gives the identification of (\ref{Next1409-3}), and, $\theta|_{G_1}=\af|_{G_1}$
and  $\theta|_{{\rm Tor}(K_1({{A}}))}={\rm id}_{{\rm Tor}(K_1({{A}}))}.$ {{ Let $e'\in A $ be a projection such that $[e']\in K_0(A)$ corresponds to $[e]\in K_0(B)$ under the identification \eqref{Next1409-3}.}}
Let $\bt=\af-\theta\circ \kappa.$ Then
\beq\label{Next1409-9}
\bt([1_C])={ {[}}e{{']}},\,\,\,\bt_{K_0(C_0)}=0,\andeqn \bt_{K_1(C)}=0.
\eneq
Then $\bt\in KL_e(C, {{e'Ae'}}).$ It follows from \ref{istBotC} that there exists a sequence of unital \cp s
$\phi_{0,n}: C\to {{e'Ae'}}$ such that
\beq\label{Next1409-10}
\lim_{n\to\infty}\|\phi_{0,n}(ab)-\phi_{0,n}(a)\phi_{0,n}(b)\|=0\andeqn [\phi_{0,n}]=\bt.
\eneq
Note that, for each $u\in U(M_N(C))$ with ${\bar u}\in {\cal U}_0,$
\beq\label{Next1409-11}
D_{C}(u)=\overline{\sum_{i=1}^{n(u)}\widehat{a_j(u)}},
\eneq
where $\widehat{c}(\tau)=\tau(c)$ for all $c\in C_{s.a.}$ and $\tau\in T(C).$
Since $\kappa$ and $\lambda$ are compatible,  we compute, for ${\bar u}\in {\cal U}_0,$
\beq\label{Next1409-12}
{\rm dist}((h_n)^{\ddag}({\bar u}), \lambda({\bar u}))<\sigma_2/8.
\eneq
Fix a pair of large integers $n,m,$ and
define $\chi_{n,m}: J_c(G_1) (\subset U(C)/CU(C))\to \Aff(T(A))/\overline{\rho_{A}(K_0(A))}$ to be
\beq\label{Next1409-13}
\lambda|_{J_c(G_1)}-(h_n)^{\ddag}|_{J_c(G_1)}-\phi_{0,m}^{\ddag}|_{J_c(G_1)}.
\eneq
We may may  also view  $J_c(G_1)$ as subgroup of $J_c(K_1({{B}})){{=J_c(K_1(B_2))}}.$ Write $J_c(K_1(B))=J_c(G_1) \oplus J_c({\rm Tor} (K_1(B_2)))$ and  define
$\chi_{n,m}$ to be zero on ${\rm Tor}(K_1(B_2))$, we obtain a \hm\,
$\chi_{n,m}: J_c(K_1(B_2))\to \Aff({{T(A)}})/\overline{\rho_{{{A}}}(K_0({{A}}))}.$
It follows from Lemma \ref{L85} that there is a unital \hm\, $\psi: B_2\to (1-e{{'}}){{A}}(1-e{{'}})$ such that
\beq\label{Next1409-14}
[\psi]=\theta,\,\, \psi_T={\rm id}_{T({{A}})}\andeqn\\
\psi^{\ddag}|_{J_c(K_1(B_2))}=\chi_{n,m}|_{J_c(K_1(B_2))}+J_c\circ \theta|_{K_1(B_2)},
\eneq
where we identify $K_1(B_2)$ with $K_1({{B}}).$
By (\ref{Next1409-14}),
\beq\label{Next1409-15}
\psi^{\ddag}|_{\Aff(T(B_2))/\overline{\rho_{B_2}(K_0(B_2))}}={\rm id}.
\eneq
Define $L(c)=\phi_{0,m}(c)\oplus \psi\circ h_n(c)$ for all $c\in C.$
It follows, on choosing sufficiently large $m$ and $n,$
that $L$ is $\ep$-${\cal F}$-multiplicative,
\beq\label{Next1409-16}
&&[L]=\af,\\
&&\max\{|\tau\circ \psi(f)-\gamma(\tau)(f)|: \tau\in T({{A}})\}<\sigma_1\rforal f\in {\cal H},\andeqn\\
&&{\rm dist}(L^{\ddag}({\bar u}), \lambda({\bar u}))<\sigma_2.
\eneq
This implies that that there is a sequence of \morp s $\psi_n: C\to {{A}}$ such that
\beq\label{Next1409-17}
&&\lim_{n\to\infty}\|\psi_n(ab)-\psi_n(a)\psi_n(b)\|=0\rforal a,b\in C,\\
&&{[}\psi_n{]}=\af,\\
&&\lim_{n\to\infty}\max\{|\tau\circ \psi_n(c)-\gamma(\tau)(c)|: \tau\in T(A_1)\}=0\rforal c\in C_{s.a.},\andeqn\\
&&\lim_{n\to\infty}{\rm dist}(\psi_n^{\ddag}({\bar u}), \lambda({\bar u}))=0\rforal u\in U(M_N(C))/CU(M_N(C)).
\eneq
Finally, applying Theorem 12.7 of \cite{GLN-I}, 
as in the proof of \ref{istBotC++},
using $\Delta/2$ above, we obtain a unital \hm\,
$h: C\to {{A}}$ such that
\beq\label{Next1409-18}
{[}h{]}=\af,\,\,\, h_T=\gamma,\andeqn h^{\ddag}=\lambda,
\eneq
as desired.
\end{proof}

\begin{thm}\label{BB-exi}
Let  $C\in \mathcal C_0$
and let
 $G= K_0(C)$. Write
 ${G}=\Z^k$ with
 $\Z^k$ generated by
$$
\{x_{1}=[p_1]-[q_1], x_2=[p_2]-[q_2], ..., x_{k}=[p_{k}]-[q_{k}]\},
$$
where $p_i, q_i\in M_n(C)$ (for some integer $n\ge 1$) are projections,
$i=1,...,k$.

Let $A$ be a simple C*-algebra in $\mathcal B_0$, and let $B=A\otimes U$ for a UHF algebra $U$ of infinite type. Suppose that $\phi: C\to B$ is a monomorphism. Then, for any finite subsets  $\mathcal F\subset C$ and $\mathcal P\subset \underline{K}(C)$, any $\ep>0$ and $\sigma>0$, and any homomorphism $$\Gamma: \Z^k \to U_0({{B}})/CU({{B}}),$$ there is a unitary $w\in B$ such that
\begin{enumerate}
\item $\|[\phi(f), w]\|<\ep$, for any $f\in\mathcal F$,
\item $\mathrm{Bott}(\phi, w)|_{\mathcal P}=0$, and
\item $\mathrm{dist}(
\overline{\langle(({\mathbf 1}_n-\phi(p_i))+\phi(p_i)\tilde{w})(({\mathbf 1}_n-\phi(q_i))+\phi(q_i)\tilde{w}^*)\rangle},
 \Gamma(x_i)))<\sigma$, for any $1\leq i\leq k,$
{ where $\tilde{w}=\mathrm{diag}(\overbrace{w, ..., w}^n)$.}
\end{enumerate}
\end{thm}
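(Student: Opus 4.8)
The plan is to produce $w$ as (an approximate unitary conjugate of) the image of the canonical unitary $1\otimes z$ under a unital homomorphism $\psi\colon C\otimes C(\T)\to B$ that restricts on $C\otimes 1$ to something close to $\phi$: I would build $\psi$ by Corollary \ref{CorCinj} and then transport it back onto $\phi$ by the uniqueness Theorem 12.7 of \cite{GLN-I}, exactly as in the proof of Corollary \ref{istBotC++}.

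First I would set up a compatible triple for maps $C\otimes C(\T)\to B$. Since $K_1(C)=0$ for $C\in\mathcal C_0$, one has $\underline K(C\otimes C(\T))=\underline K(C)\oplus{\boldsymbol{\bt}}(\underline K(C))$ with $K_0(C\otimes C(\T))=K_0(C)$ and $K_1(C\otimes C(\T))=\bt^{(0)}(K_0(C))\cong\Z^k$, the latter generated by the classes of the unitaries $u_i':=(({\mathbf 1}_n-p_i)+p_i\otimes z)(({\mathbf 1}_n-q_i)+q_i\otimes z^*)$, for a choice of splitting $J_c$ (as in the proof of Lemma \ref{CtimesText}) with $\overline{u_i'}\in J_c(K_1(C\otimes C(\T)))$. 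Let $\af\in KL_e(C\otimes C(\T),B)^{++}$ be determined by $\af|_{\underline K(C)}=[\phi]$ and $\af|_{{\boldsymbol{\bt}}(\underline K(C))}=0$; positivity holds since its restriction to $K_0(C\otimes C(\T))=K_0(C)$ is $[\phi]_{*0}$. Let $\lambda\colon J_c(K_1(C\otimes C(\T)))\to U(B)/CU(B)$ be the homomorphism with $\lambda(\overline{u_i'})=\Gamma(x_i)$, which is well defined because $J_c(K_1(C\otimes C(\T)))\cong\Z^k$ is free; since $\af$ vanishes on $K_1(C\otimes C(\T))$ and $\Gamma$ has range in $U_0(B)/CU(B)$, $\lambda$ is compatible with $\af$. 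Finally, $B$ is simple, so every $\tau\in T(B)$ is faithful and, $\phi$ being injective, $\phi_T(\tau)$ is faithful on $C$; hence $\gamma(\tau):=\phi_T(\tau)\otimes\mu$, with $\mu$ the normalized Lebesgue measure on $\T$, defines a continuous affine map $\gamma\colon T(B)\to T_f(C\otimes C(\T))$ which is compatible with $\af$ and whose restriction to $C\otimes 1$ equals $\phi_T$. Thus $(\af,\lambda,\gamma)$ is a compatible triple.

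Next I would apply Corollary \ref{CorCinj} to $(\af,\lambda,\gamma)$ to obtain a unital homomorphism $\psi\colon C\otimes C(\T)\to B$ with $[\psi]=\af$, $\psi^{\ddag}|_{J_c(K_1(C\otimes C(\T)))}=\lambda$, and $\psi_T=\gamma$. Its restriction $\psi_0:=\psi|_{C\otimes 1}$ then has $[\psi_0]=[\phi]$ in $\underline K$, $(\psi_0)_T=\phi_T$, and (since $K_1(C)=0$, so that $U(M_m(C))/CU(M_m(C))=\Aff(T(C))/\overline{\rho_C(K_0(C))}$ is governed entirely by traces) $\psi_0^{\ddag}=\phi^{\ddag}$. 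Both $\phi$ and $\psi_0$ are non-degenerate for $\Delta(\hat h):=\inf\{\phi_T(\tau)(h):\tau\in T(B)\}>0$. After enlarging $\mathcal F$ to include a finite set witnessing $\mathcal P$ and the matrix entries of the $p_i,q_i$ and fixing a small $\ep_0>0$ (chosen in terms of $\ep$, $\sigma$, $\mathcal F$, $\mathcal P$), I would apply Theorem 12.7 of \cite{GLN-I} (with data $\mathcal F$, $\ep_0$, $\Delta/2$) to the pair $\phi$, $\psi_0$, which agree exactly on $K$-theory, traces and the $\ddag$-map, to get a unitary $v\in B$ with $\|v^*\psi_0(c)v-\phi(c)\|<\ep_0$ for all $c$ in the chosen large finite set. (There is no circularity: Corollary \ref{CorCinj} needs no input finite data, so I can fix $\mathcal F$, $\ep_0$, run the uniqueness theorem to extract its finite-set/constant data, and only then build $\psi$, which automatically satisfies the hypotheses with zero error.)

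Finally, set $w:=v^*\psi(1\otimes z)v$. Then $\|[\phi(f),w]\|<\ep$ for $f\in\mathcal F$ because $\phi(f)\approx_{\ep_0}v^*\psi(f\otimes 1)v$ and $\psi(f\otimes 1)$ commutes with $\psi(1\otimes z)$. Up to ${\rm Ad}\,v^*$ and an $\ep_0$-perturbation the pair $(\phi,w)$ is $(\psi_0,\psi(1\otimes z))$, so ${\rm Bott}(\phi,w)|_{\mathcal P}=[\psi]|_{{\boldsymbol{\bt}}(\mathcal P)}=\af|_{{\boldsymbol{\bt}}(\mathcal P)}=0$. And for each $i$ the unitary $(({\mathbf 1}_n-\phi(p_i))+\phi(p_i)\tilde w)(({\mathbf 1}_n-\phi(q_i))+\phi(q_i)\tilde w^*)$ is an $\ep_0$-perturbation of $\tilde v^*\psi(u_i')\tilde v$ (with $\tilde v={\rm diag}(v,\dots,v)$), so its class in $U/CU$ is within $\sigma$ of $\overline{\psi(u_i')}=\psi^{\ddag}(\overline{u_i'})=\lambda(\overline{u_i'})=\Gamma(x_i)$; here $\psi(u_i')\in U_0$ because $\af$ kills $K_1(C\otimes C(\T))$, so the relevant ${\rm dist}$ is computed by the de la Harpe--Skandalis determinant and is small for $\ep_0$ small. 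The main obstacle is the bookkeeping: fixing the finite sets and tolerances in the correct order so that well-definedness of ${\rm Bott}(\phi,w)|_{\mathcal P}$, stability of $\psi^{\ddag}$ under the $\ep_0$-perturbation, and the hypotheses of the uniqueness theorem all hold simultaneously; the one genuinely structural point to check is that, with $\af$ vanishing on $K_1(C\otimes C(\T))$, the contribution of $w$ to the winding-number unitaries $u_i'$ is exactly $\psi^{\ddag}(\overline{u_i'})$ while its contribution to ${\rm Bott}$ is zero, i.e. that the $K_1$-class and the determinant of $u_i'$ are independent pieces of prescribable data.
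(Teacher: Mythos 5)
Your proposal is correct and follows essentially the same route as the paper's proof: realize the prescribed data $\big(\af=[\phi]\oplus 0,\ \Gamma,\ \gamma=\phi_T\otimes\mu\big)$ by a map on $C\otimes C(\T)$, restrict to $C\otimes 1$, transfer back onto $\phi$ via the uniqueness theorem (Theorem 12.7 of \cite{GLN-I}), and take $w$ to be the conjugated image of $1\otimes z$. The only deviation is that you invoke Corollary \ref{CorCinj} to get an exact unital homomorphism where the paper applies Lemma \ref{CtimesText} directly to an approximately multiplicative map (after a semiprojectivity reduction and using a Haar unitary in $U$ to define the triple); since \ref{CorCinj} precedes \ref{BB-exi} and is itself deduced from \ref{CtimesText}, this is not circular, and it merely trades the paper's $\ep$-bookkeeping for the intertwining already packaged inside that corollary.
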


\begin{proof}{{ Write $B=\lim_{{n\to \infty}}(A\otimes M_{r_n},\imath_{n,n+1})$. Using the fact that $C$ is
{ semiprojective} (see \cite{ELP1}), one can construct a sequence of homomorphisms $\phi_n: C\to A\otimes M_{r_n}$ such that $\imath_n\circ\phi_n(c) \to \phi(c)$ for all $c\in C$. Without loss of generality, we may assume $\phi=\imath\circ \phi_1$ for a homomorphism $\phi_1: C\to A$ (replacing $A$ by $A\otimes M_{r_n}$), where $\imath: A \to A\otimes U=B$ is the standard inclusion.}}

We may assume that $||f||\leq 1$ for any $f\in\mathcal F$.

For any non-zero positive element $h\in C$ with norm at most $1$,  define $$\Delta(h)=\inf\{\tau(\phi(h));\ \tau\in T(B)\}.$$ Since $B$ is simple, one has that $\Delta(h)\in (0, 1)$.

Let $\mathcal H_1\subset C_+^{\bf 1}\setminus\{0\}$, $\mathcal G\subset C$, $\delta>0$, $\mathcal P\subset\underline{K}(C)$, $\mathcal H_2\subset C_{s.a.},$ and $\gamma_1>0$ be the finite subsets and constants of Theorem 12.7 of \cite{GLN-I} 
 with respect to $C,$
$\mathcal F$, $\epsilon/2,$ and $\Delta/2$ (since $K_1(C)=\{0\}$, one does not need $\mathcal U$ and $\gamma_2$).

Note that $B={{A}}\otimes U.$ Pick a unitary $z\in U$
with ${\rm sp}(u)=\T$ and consider the homomorphism $\phi': C\otimes C(\mathbb T)\to B=A\otimes U$ defined by $$a\otimes f \mapsto \phi_1(a)\otimes f(z).$$
(Recall that $\phi(a)=\phi_1(a)\otimes 1_U$.)
Set
$$\gamma=(\phi')_T: T(B)\to T_{\mathrm{f}}(C\otimes C(\mathbb{T})).$$ Also define $$\alpha:=[\phi'] \in KK(C\otimes C(\mathbb T), B).$$

Note that $K_1(C\otimes C(\mathbb T))=K_0(C)=\mathbb Z^k$. Identifying $J_c(K_1(C\otimes C(\mathbb T)))$ with $\mathbb Z^k$, define a map $\lambda: J_c(K_1(U(C\otimes C(\mathbb T))))\to U_{{0}}(B)/CU(B)$
by $\lambda(a)=\Gamma(a)$ for any $a\in\mathbb Z^k$.

Set
$$\mathcal U=\{(1_n-p_i+p_i\tilde{z'})(1_n-q_i+q_i\tilde{z'}^*):\ i=1, ..., k\}\subset J_c(U(C\otimes C(\mathbb T))),$$ where $z'$ is the standard generator of $C(\mathbb T)$, and set
$$\delta=\min\{\Delta(h)/4:\ h\in\mathcal H_1\}.$$
Applying Lemma \ref{CtimesText}, one obtains a $\mathcal F$-$\epsilon/4$-multiplicative map $\Phi: C\otimes C(\mathbb T)\to B$ such that
\beq
&&[\Phi]=\af,\,\,\,{\rm dist}(\Phi^{\ddag}(x), \lambda(x))<\sigma \rforal x\in \overline{\cal U},
\andeqn\\
\label{eq-012-02}
&&|\tau\circ \Phi(h\otimes 1)-\gamma(\tau)(h\otimes 1)|<\min\{\gamma_1,\delta \} \rforal  h\in {\mathcal H_1\cup\mathcal H_2}.
\eneq

Let $\psi$ denote the restriction of $\Phi$ to $C\otimes 1$. Then one has
$$[\psi]|_{\mathcal P}=[\phi]|_{\mathcal P}.$$
By \eqref{eq-012-02}, one has that, for any $h\in\mathcal H_1$,
$$\tau(\psi(h))>\gamma(\tau)(h)-\delta=\tau(\phi'(h\otimes 1))-\delta=\tau(\phi(h))-\delta>\Delta(h)/2,$$
and it is also clear that $$\tau(\phi(h))>\Delta(h)/2\rforal h\in \mathcal H_1.$$
Moreover, for any $h\in\mathcal H_2$, one has
\begin{eqnarray*}
|\tau\circ\psi(h)-\tau\circ\phi(h)|&=& |\tau\circ\Phi(h\otimes 1)-\tau\circ\phi'(h\otimes 1)|\\
&=&  |\tau\circ\Phi(h\otimes 1)-\gamma(\tau)(h\otimes 1)|
< \gamma_1.
\end{eqnarray*}
Therefore, by Theorem 12.7 of \cite{GLN-I}, 
there is a unitary $W\in B$ such that
$$||W^*\psi(f)W-\phi(f)||<\epsilon/2\rforal f\in\mathcal F.$$ Then the element $$w=W^*\Phi(1\otimes z')W$$ is the desired unitary.
\end{proof}

\begin{thm}\label{BB-exi+}
Let  $C$ be a unital  \CA\, which is a finite direct sum of \CA s in $\mathcal C_0$  and \CA s of
the form $PM_n(C(X))P,$  where $X$ is a finite CW complex and $P$ is a projection, and let
 $G= K_0(C)$. Write
 ${G}=\Z^k\bigoplus \mathrm{Tor}(G)$ with a basis for
 $\Z^k$ the set
$$
\{x_{1}=[p_1]-[q_1], x_2=[p_2]-[q_2], ..., x_{k}=[p_{k}]-[q_{k}]\},
$$
where $p_i, q_i\in M_n(C)$ (for some integer $n\ge 1$) are projections,
$i=1,...,k$.

Let $A$ be a simple C*-algebra in the class $\mathcal B_0$, and let $B=A\otimes U$ for a UHF algebra $U$ of infinite type. Suppose that $\phi: C\to B$ is a monomorphism. Then, for any finite subsets  $\mathcal F\subset C$ and $\mathcal P\subset \underline{K}(C)$, any $\ep>0$ and $\sigma>0$, and any homomorphism $$\Gamma: \Z^k \to U_0(M_n(B))/CU(M_n(B)),$$ there is a unitary $w\in B$ such that
\begin{enumerate}
\item $\|[\phi(f), w]\|<\ep$, for any $f\in\mathcal F$,
\item $\mathrm{Bott}(\phi, w)|_{\mathcal P}=0$, and
\item $\mathrm{dist}(
\overline{\langle(({\mathbf 1}_n-\phi(p_i))+\phi(p_i)\tilde{w})(({\mathbf 1}_n-\phi(q_i))+\phi(q_i)\tilde{w}^*)\rangle},
 \Gamma(x_i)))<\sigma$, for any $1\leq i\leq k,$
{ where $\tilde{w}=\mathrm{diag}(\overbrace{w, ..., w}^n)$.}
\end{enumerate}
\end{thm}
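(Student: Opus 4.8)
The plan is to run the argument of Theorem \ref{BB-exi} almost verbatim, the one new ingredient being that the direct summands of $C$ of the form $PM_n(C(X))P$ must be fed into an existence theorem covering homogeneous algebras rather than into Lemma \ref{CtimesText}, which only handles $\mathcal C_0$-summands. So I would write $C=C'\oplus D$, where $C'$ is the sum of the summands in $\mathcal C_0$ and $D$ is the sum of the summands $PM_n(C(X))P$; note that $D\otimes C(\T)$ is again a finite direct sum of algebras $QM_m(C(Y))Q$ with $Y=X\times\T$ a finite CW complex, which is exactly the type of algebra covered by Theorem \ref{Next2014/09}. As in the proof of \ref{BB-exi}, after a routine reduction (replacing $A$ by a matrix amplification $A\otimes M_{r_j}$) I may assume $\phi=\imath\circ\phi_1$ for a unital monomorphism $\phi_1: C\to A$, where $\imath: A\to A\otimes U=B$ is the inclusion; set $\Delta(h)=\inf\{\tau(\phi(h)):\tau\in T(B)\}$ for nonzero positive contractions $h\in C$, which lies in $(0,1)$ by simplicity of $B$, and let $\mathcal H_1$, $\mathcal G$, $\dt$, $\mathcal P_0$ (with $\mathcal P_0\supseteq\mathcal P$), $\mathcal H_2$, $\mathcal U_0$, $\gamma_1$, $\gamma_2$ be the finite sets and constants furnished by Theorem 12.7 of \cite{GLN-I} for $C$, $\mathcal F$, $\ep/2$, $\Delta/2$.

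Next I would fix a unitary $z\in U$ with $\mathrm{sp}(z)=\T$ (hence $z\in U_0(B)$) and define the unital \hm\, $\phi': C\otimes C(\T)\to B$ by $\phi'(a\otimes f)=\phi_1(a)\otimes f(z)$, putting $\af=[\phi']$ and $\gamma=(\phi')_T: T(B)\to T_f(C\otimes C(\T))$. Since $z\in U_0(B)$, the map $f\mapsto f(z)$ is homotopic to the point evaluation at $1$, so $\phi'$ is homotopic to $a\otimes f\mapsto\phi(a)f(1)$; hence in the splitting $\underline K(C\otimes C(\T))=\underline K(C)\oplus{\boldsymbol{\bt}}(\underline K(C))$ one has $\af|_{\underline K(C)}=[\phi]$ and $\af|_{{\boldsymbol{\bt}}(\underline K(C))}=0$. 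Using $K_1(C\otimes C(\T))=K_1(C)\oplus\bt^{(1)}(K_0(C))$ together with $K_0(C)=\Z^k\oplus\mathrm{Tor}(G)$, I would choose $J_c$ so that, with $u_i'=(({\bf 1}_n-p_i)+p_i\otimes z')(({\bf 1}_n-q_i)+q_i\otimes(z')^*)$ and $z'\in C(\T)$ the generator, the set $\overline{\mathcal U}=\{\overline{u_i'}: 1\le i\le k\}$ generates $J_c(\bt^{(1)}(\Z^k))$, and define a continuous \hm\, $\lambda: U_\infty(C\otimes C(\T))/CU_\infty(C\otimes C(\T))\to U(B)/CU(B)$ by $\lambda(\overline{u_i'})=\Gamma(x_i)$ and $\lambda=(\phi')^{\ddag}$ on the complementary summand $\Aff(T(C\otimes C(\T)))/\overline{\rho(K_0)}\oplus J_c(K_1(C))\oplus J_c(\bt^{(1)}(\mathrm{Tor}(G)))$. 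Because $\Gamma$ takes values in $U_0(M_n(B))/CU(M_n(B))=U_0(B)/CU(B)$, whose image in $K_1(B)$ is trivial, and because $\af$ vanishes on ${\boldsymbol{\bt}}(\underline K(C))$, the triple $(\af,\lambda,\gamma)$ is compatible (as are its restrictions to the individual summands of $C\otimes C(\T)$).

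Then I would produce the approximating map summand by summand: apply Lemma \ref{CtimesText} on $C'\otimes C(\T)$ and Theorem \ref{Next2014/09} on each homogeneous summand $QM_m(C(Y))Q$ of $D\otimes C(\T)$, taking $\sigma_1=\min\{\sigma,\gamma_2\}$, $\sigma_2=\min\{\gamma_1,\dt'\}$ with $\dt'=\min\{\Delta(h)/4: h\in\mathcal H_1\}$, and tracial test set $\mathcal H_1\cup\mathcal H_2$. Assembling these gives a unital $\mathcal F$-$\ep/4$-multiplicative \cp\, $\Phi: C\otimes C(\T)\to B$ with $[\Phi]=\af$, $\mathrm{dist}(\Phi^{\ddag}(x),\lambda(x))<\min\{\sigma,\gamma_2\}$ for all $x\in\overline{\mathcal U}\cup\overline{\mathcal U_0\otimes 1}$, and $|\tau\circ\Phi(h\otimes 1)-\gamma(\tau)(h\otimes 1)|<\min\{\gamma_1,\dt'\}$ for $h\in\mathcal H_1\cup\mathcal H_2$. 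Put $\psi=\Phi|_{C\otimes 1}$; then $[\psi]|_{\mathcal P_0}=[\phi]|_{\mathcal P_0}$, while $\tau(\psi(h))>\Delta(h)/2$ and $\tau(\phi(h))>\Delta(h)/2$ for $h\in\mathcal H_1$, $|\tau\circ\psi(h)-\tau\circ\phi(h)|<\gamma_1$ for $h\in\mathcal H_2$, and $\mathrm{dist}(\psi^{\ddag}(\bar u),\phi^{\ddag}(\bar u))<\gamma_2$ for $u\in\mathcal U_0$ (since $\phi'(u\otimes1)=\phi(u)$ gives $\lambda(\overline{u\otimes1})=\phi^{\ddag}(\bar u)$). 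Theorem 12.7 of \cite{GLN-I} then yields a unitary $W\in B$ with $\|W^*\psi(f)W-\phi(f)\|<\ep/2$ on $\mathcal F$. The unitary $w:=W^*\Phi(1\otimes z')W$ satisfies (1), because $\phi(f)\approx_{\ep/2}W^*\Phi(f\otimes1)W$ and $\Phi$ is $\mathcal F$-$\ep/4$-multiplicative; it satisfies (2), because $\mathrm{Bott}(\phi,w)|_{\mathcal P}=[\Phi]|_{{\boldsymbol{\bt}}(\mathcal P)}=\af|_{{\boldsymbol{\bt}}(\mathcal P)}=0$ (the Bott map is unchanged by conjugation by $W$); and it satisfies (3), because the unitary in (3) equals $\Phi^{\ddag}(\overline{u_i'})$ in $U(M_n(B))/CU(M_n(B))$, which is within $\sigma$ of $\lambda(\overline{u_i'})=\Gamma(x_i)$.

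The step I expect to be the main obstacle is producing the map $\Phi$ with the $KK$-class, the de la Harpe--Skandalis determinant on $\overline{\mathcal U}$ (and on $\overline{\mathcal U_0}$), and the prescribed tracial values all realized simultaneously and in a mutually compatible fashion. For the $\mathcal C_0$-summands this is precisely Lemma \ref{CtimesText}, and for the homogeneous summands $QM_m(C(X\times\T))Q$ it is precisely Theorem \ref{Next2014/09}; so once the two existence theorems are invoked, the only genuinely new verification is that $(\af,\lambda,\gamma)$ is a compatible triple, which, as explained, boils down to $\af$ being trivial in the Bott directions and $\Gamma$ landing in the connected component $U_0(B)/CU(B)$. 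A minor bookkeeping point is the reduction $\phi=\imath\circ\phi_1$ for the non-$\mathcal C_0$ summands, handled by a standard approximation through the inductive limit $B=\varinjlim A\otimes M_{r_j}$.
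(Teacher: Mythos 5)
Your proposal is correct and follows essentially the same route as the paper: the paper reduces, via Theorem \ref{BB-exi}, to the case $C=PM_n(C(X))P$ and then repeats the argument of Theorem \ref{BB-exi} with Theorem \ref{Next2014/09} in place of Lemma \ref{CtimesText}, which is exactly the substitution you make (you merely treat the $\mathcal C_0$ summands and the homogeneous summands simultaneously inside one construction of $\Phi$ rather than quoting \ref{BB-exi} for the $\mathcal C_0$ part). Your explicit checks---that $[\phi']$ vanishes on $\boldsymbol{\bt}(\underline{K}(C))$ because $z\in U_0(U)$, and that compatibility of $(\af,\lambda,\gamma)$ reduces to $\Gamma$ landing in $U_0(M_n(B))/CU(M_n(B))$---are details the paper leaves implicit, and they are accurate.
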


\begin{proof}
By Theorem \ref{BB-exi}, it suffices to prove the case that $C=PM_n(C(X))P,$   where
$X$ is a finite CW complex, $n\ge 1$ is an integer, and $P\in M_n(C(X))$ is a projection.
The proof follows the same lines
as that of Theorem \ref{BB-exi} but 
using Lemma  \ref{Next2014/09}
instead of Lemma \ref{CtimesText}.
\end{proof}

\section{A pair of almost commuting unitaries}



\begin{lem}\label{orderK0}
Let $C\in {\cal C}.$ There exists a constant $M_C>0$ satisfying the following condition:
For any $\ep>0,$ any $x\in K_0(C),$ and any $n\ge M_C/\ep,$ if
\beq\label{orderK0-0}
|\rho_C(x)(\tau)| <\ep\tforal \tau\in T(C\otimes M_n),
\eneq
then there are mutually inequivalent
and mutually orthogonal minimal projections $p_1, p_2,...,p_{k_1}$ and $q_1,q_2,...,q_{k_2}$ in
$C\otimes M_n$  and positive integers $l_1, l_2,...,l_{k_1}, m_1, m_2,...,m_{k_2}$ such that
\beq\label{orddeK0-1}
x=[\sum_{i=1}^{k_1}l_ip_i]-[\sum_{j=1}^{k_2}m_jq_{{j}}]\tand\\
\tau(\sum_{i=1}^{k_1}l_ip_i)<4\ep\andeqn \tau(\sum_{j=1}^{k_2}m_jq_{{j}})<4\ep
\eneq
for all $\tau\in T(C\otimes M_n).$

\end{lem}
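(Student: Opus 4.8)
The plan is to translate the statement into a problem about the lattice $K_0(C)$ and to solve that lattice problem by adding a suitable multiple of $[1_C]$. Write $C=A(F_1,F_2,\phi_0,\phi_1)$ with $F_1=\bigoplus_{s=1}^{l}M_{R_s}$, $F_2=\bigoplus_{t=1}^{m}M_{r_t}$, let $(a^{(0)}_{ts})$, $(a^{(1)}_{ts})$ be the partial multiplicity matrices of $\phi_0,\phi_1$, and set $\Phi=(\phi_0)_{*0}-(\phi_1)_{*0}\colon\Z^{l}\to\Z^{m}$. From the six-term sequence of $0\to SF_2\to C\xrightarrow{\pi_e}F_1\to 0$ (equivalently Proposition 3.5 of \cite{GLN-I}), $(\pi_e)_{*0}$ identifies $K_0(C)$ with the free group $G:=\ker\Phi\subseteq\Z^{l}$, identifies $K_0(C)_+$ with $G_+:=G\cap\Z^{l}_{\ge 0}$, and sends $[1_C]$ to $R:=(R_1,\dots,R_l)\in G_+$ (since $\phi_i$ is unital, $(\phi_i)_{*0}[1_{F_1}]=[1_{F_2}]$). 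First I would record the elementary fact that, for $w\in K_0(C)$ with image $\bar w=(w_1,\dots,w_l)\in G$ and any $c>0$, one has $\sup_{\tau\in T(C\otimes M_n)}|\rho_C(w)(\tau)|<c$ if and only if $|w_s|<cR_sn$ for all $s$: indeed $\rho_C(w)(\tau)$ is always an average, over the spectrum of $C$, of the local normalised ranks $w_s/(R_sn)$ (at the fibre over the $s$-th block of $F_1$) and $((\phi_0)_{*0}\bar w)_t/(r_tn)$ (over the interval, constant since $\bar w\in\ker\Phi$), and $|((\phi_0)_{*0}\bar w)_t|\le\sum_s a^{(0)}_{ts}|w_s|<c\,r_tn$ because $\sum_s a^{(0)}_{ts}R_s=r_t$. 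Applying this to the hypothesis yields $|x_s|<\ep R_sn$ for all $s$, where I write $x=(x_1,\dots,x_l)\in G$ for the image of the given class.

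Next I would decompose inside $G$. Choose the integer $k:=\max\{0,\lceil\max_s(-x_s)/R_s\rceil\}$; since $-x_s<\ep R_sn$ this gives $0\le k<\ep n+1$, hence $k<3\ep n$ as soon as $n\ge M_C/\ep$ with $M_C\ge 1$. Put $\bar z:=kR\in G_+$ and $\bar y:=x+\bar z$. By the choice of $k$, $y_s=x_s+kR_s\ge 0$ for every $s$, so $\bar y\in G_+$; moreover $z_s=kR_s<3\ep R_sn<4\ep R_sn$ and $y_s<\ep R_sn+3\ep R_sn=4\ep R_sn$. Thus $x=\bar y-\bar z$ with $\bar y,\bar z\in K_0(C)_+$ whose coordinates are all $<4\ep R_sn$, so by the first paragraph $\rho_C(\bar y)(\tau),\rho_C(\bar z)(\tau)<4\ep$ for all $\tau\in T(C\otimes M_n)$.

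Finally I would realise $\bar y$ and $\bar z$ by minimal projections. The monoid $G_+=\{c\in\Z^{l}_{\ge 0}:\Phi c=0\}$ has a finite Hilbert basis $c^{(1)},\dots,c^{(N)}$ (its indecomposable elements), and a standard bound for solutions of homogeneous linear Diophantine systems gives $\|c^{(i)}\|_\infty\le D_C$ for a constant $D_C$ depending only on $C$; set $M_C:=\max\{1,N D_C\}$. Write $\bar y=\sum_i l_i c^{(i)}$ and $\bar z=\sum_j m_j c^{(j)}$ with non-negative integer coefficients, the $c^{(i)}$, $c^{(j)}$ ranging over the Hilbert basis. Using that $C\in\mathcal C$ has stable rank one and the structure of $\mathcal C$ recalled in Section 3 of \cite{GLN-I}: a class in $G_+$ with all coordinates $\le R_sn$ is represented by a projection of $C\otimes M_n$, projections there are classified up to Murray--von Neumann equivalence by $K_0$, distinct Hilbert-basis elements give pairwise inequivalent \emph{minimal} (indecomposable) projections, and since $\sum_i c^{(i)}_s\le N D_C\le M_C\le n\le R_sn$ the ones occurring can be chosen mutually orthogonal inside $C\otimes M_n$. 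Letting $p_i,q_j$ realise $c^{(i)},c^{(j)}$, we obtain $x=[\sum_i l_i p_i]-[\sum_j m_j q_j]$ in $K_0(C)$, while $\tau(\sum_i l_i p_i)=\sum_i l_i\tau(p_i)=\rho_C(\bar y)(\tau)<4\ep$ and $\tau(\sum_j m_j q_j)=\rho_C(\bar z)(\tau)<4\ep$ for every $\tau\in T(C\otimes M_n)$, as desired. The one thing to be careful about — and what really pins down $M_C$ — is reconciling the coordinatewise bound $4\ep R_sn$ with the demand that the $p_i$ be honest mutually orthogonal projections of $C\otimes M_n$ rather than of a matrix amplification; this goes through cleanly in the regime $\ep<1$ (the only one needed in the applications), and for larger $\ep$ one either allows a matrix amplification of $C\otimes M_n$ or notes the conclusion is vacuous.
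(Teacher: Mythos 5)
Your argument is correct and is essentially the paper's: identify $K_0(C)$ with $\ker\big((\phi_0)_{*0}-(\phi_1)_{*0}\big)\subset K_0(F_1)\cong\Z^l$, convert the trace hypothesis into the coordinatewise bound $|x_s|<\ep R_s n$ via the endpoint traces, add a small multiple of the image of $[1_C]$ to write $x$ as a difference of two positive classes whose normalized traces are $<4\ep$, and then decompose those classes into finitely many mutually inequivalent minimal projections realized orthogonally inside $C\otimes M_n$. The only real divergence is the last step: the paper simply cites Theorem 3.15 of \cite{GLN-I} (there are finitely many inequivalent minimal projections in $C\otimes{\cal K}$, all sitting orthogonally in $M_{N(C)}(C)$, which is where its constant $M_C=N(C)+2r(1)\cdots r(l)$ comes from), whereas you re-derive the needed finiteness and size bound from Gordan's lemma/Hilbert-basis estimates together with stable rank one and cancellation; incidentally, your integer $k=\lceil\cdot\rceil$ is cleaner than the paper's $T=\max_s|x_s|/r(s)$, which as written need not be an integer, and your $\ep<1$ caveat for fitting the projections inside $C\otimes M_n$ is of the same nature as the paper's own implicit requirement that $n>N(C)$.
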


\begin{proof}
Let $C=C(F_1, F_2, \phi_1, \phi_2)$ and $F_1=\bigoplus_{i=1}^l M_{r(i)}.$
By Theorem 3.15 of \cite{GLN-I}, 
there are only finitely many mutually inequivalent minimal projections in $C\otimes {\cal K}$. We can choose $N(C)>0$ such that  this set of mutually inequivalent projections is sitting in $M_{N(C)}(C)$, orthogonally. Then every projection in $C\otimes {\cal K}$ is equivalent to  a finite direct sum of   projections from this set of finitely many mutually inequivalent minimal projections (some of them may repeat in the direct sum).
We also assume that, as in Definition 3.1 of \cite{GLN-I}, 
$C$ is minimal.
Let
$$
M_C=N(C)+2(r(1)\cdot r(2)\cdots r(l))
$$
Suppose that $n\ge M_{C}/\ep.$
With the canonical embedding of $K_0(C)$ into $K_0(F_1)\cong\mathbb Z^l$, write
\beq\label{odderK0-2}
x=
\begin{pmatrix}
x_1\\
x_2\\
           \vdots \\
           x_l
           \end{pmatrix}\in\mathbb Z^l.
\eneq
By (\ref{orderK0-0}),
for any irreducible representation $\pi$ of $C$ and any tracial state $t$ on $M_n(\pi(C)),$
\beq\label{orderK0-3}
|t\circ \pi(x)|<\ep.
\eneq
It follows that
\beq\label{orderK0-4}
|x_s|/r(s)n<\ep,\,\,\, s=1,2,...,l.
\eneq
Let
\beq\label{orderK0-5}
T=\max\{|x_s|/r(s): 1\le s\le l\}.
\eneq
Define
\beq\label{orderK0-6}
y=x+T\begin{pmatrix}
                   r(1)\\
                   r(2)\\
           \vdots \\
           r(l)
         \end{pmatrix} \andeqn z=T\begin{pmatrix}
                   r(1)\\
                   r(2)\\
           \vdots \\
           r(l)
         \end{pmatrix}.
         \eneq
It is clear that $z\in K_0(C)_+$ (see Proposition 3.5 of \cite{GLN-I}). 
It follows that $y\in K_0(C).$  One also computes that
$y\in K_0(C)_+.$ It follows that there are projections $p, \, q\in M_L(C)$ for some integer $L\ge 1$ such that $[p]=y$ and $[q]=z.$
Moreover, $x=[p]-[q].$
One also computes that
\beq\label{orderK0-9}
\tau(q)<T/n<\ep\tforal \tau\in T(C\otimes M_n).
\eneq
One also has
\beq\label{orderK0-10}
\tau(p)<2\ep \tforal \tau\in T(C\otimes M_n).
\eneq

There are {{two sets of}} mutually inequivalent and mutually orthogonal minimal projections ${{\{}}p_1, p_2,...,p_{k_1}{{\}}}$ and $
{{\{}}q_1,q_2,...,q_{k_2}{{\}}}$ in $C\otimes M_n$ (since $n>N({{C}})$) such that
\beq\label{orderK0-11}
[p]=\sum_{i=1}^{k_1}l_i[p_i]\andeqn [q]=\sum_{j=1}^{k_2}m_j[q_j].
\eneq
Therefore
\beq\label{orderK0-12}
x=\sum_{i=1}^{k_1}l_i[p_i]-\sum_{j=1}^{k_2}m_j[q_j].
\eneq

\end{proof}

\begin{lem}\label{Vpair1}
Let $C\in \cal C.$  There is an integer $M_C>0$ satisfying the following condition: For any $\ep>0$
and for any
$x\in K_0(C)$ with
$$
|\tau(\rho_C(x))|<\ep/24\pi
$$
for all  $\tau\in T(C\otimes M_n),$ where $n\ge 2M_C\pi/\ep,$
there exists a pair of unitaries $u$ and $v\in C\otimes M_n$ such that
\beq\label{Vpair1-1}
\|uv-vu\|<\ep \tand \tau({\rm bott}_1(u,\, v))=\tau(x).
\eneq
\end{lem}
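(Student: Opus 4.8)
The plan is to reduce this to a concrete model computation of the bott element for a pair of unitaries built out of the projections supplied by Lemma \ref{orderK0}. First I would set $M_C$ to be the constant $M_C$ coming from Lemma \ref{orderK0} (possibly enlarged by an absolute factor to absorb the $2\pi$'s). Given $x\in K_0(C)$ with $|\tau(\rho_C(x))|<\ep/24\pi$ for all $\tau\in T(C\otimes M_n)$ and $n\ge 2M_C\pi/\ep$, apply Lemma \ref{orderK0} with $\ep/12\pi$ in place of $\ep$: this produces mutually orthogonal minimal projections $p_1,\dots,p_{k_1},q_1,\dots,q_{k_2}$ in $C\otimes M_n$ and positive integers $l_i,m_j$ with $x=[\sum l_i p_i]-[\sum m_j q_j]$ and $\tau(\sum l_ip_i)<\ep/3\pi$, $\tau(\sum m_jq_j)<\ep/3\pi$ for all $\tau$. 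Write $P=\sum_i l_ip_i$ and $Q=\sum_j m_jq_j$, so $[P]-[Q]=x$ and $\tau(P),\tau(Q)$ are uniformly small.

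Next I would construct the unitaries on each projection summand. For a single projection $e$ of small trace (say $\tau(e)<\dt$) I would use a standard ``rotation'' construction: inside the hereditary algebra $\overline{e(C\otimes M_n)e}$, or rather on the complementary part, choose a path-type pair, but the cleanest route is to take $u = 1 + (\exp(2\pi i t)-1)e$ realized along a continuous family $t\mapsto e_t$ of projections with $e_0 = 0$, $e_1 = e$ — i.e.\ use a ``Bott pair'' of the familiar form $u = \exp(2\pi i h)$, $v = $ a cyclic-shift unitary on a copy of $M_m$ containing $e$, whose commutator norm is $O(1/m)$ and whose $\mathrm{bott}_1$ equals $[e]$ on traces. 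Since $C\otimes M_n$ contains matrix subalgebras of arbitrarily large size once $n$ is large (indeed $n\ge M_C/\ep$ guarantees enough room), and since $\tau(P),\tau(Q)$ are small, I can fit such a pair $(u_P,v_P)$ with $\|u_Pv_P - v_Pu_P\|<\ep/2$ and $\tau(\mathrm{bott}_1(u_P,v_P)) = \tau(P)$ for all $\tau$, and similarly an ``inverse'' pair $(u_Q,v_Q)$ with $\tau(\mathrm{bott}_1(u_Q,v_Q)) = -\tau(Q)$. Forming $u = u_Pu_Q$ (supported on orthogonal corners so they commute) and $v = v_Pv_Q$, one gets $\|uv-vu\|<\ep$ and, by additivity of $\mathrm{bott}_1$ under such orthogonal sums, $\tau(\mathrm{bott}_1(u,v)) = \tau(P) - \tau(Q) = \tau(\rho_C(x)) = \tau(x)$.

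The main obstacle I expect is the bookkeeping of constants: making precise how large a matrix block one needs to realize a projection $e$ of trace $<\dt$ as (a conjugate of) a subprojection of a block $M_m\subset C\otimes M_n$ on which a cyclic-shift/exponential pair has commutator norm below the prescribed $\ep$, and checking that the hypothesis $n\ge 2M_C\pi/\ep$ is exactly what is needed for this to go through uniformly over all tracial states (the factor $24\pi$ vs.\ $12\pi$ vs.\ the $2\pi$ in $\mathrm{bott}_1 = \frac{1}{2\pi i}\int \tau(\dot u u^*)\,dt$ is where those constants come from). The other point requiring a little care is verifying the trace formula $\tau(\mathrm{bott}_1(u,v)) = \tau([e])$ for the model pair; this is a direct computation with the de la Harpe–Skandalis determinant / the standard Bott-element formula, done for instance as in \ref{Dcu} and \ref{DRphipsi}, and I would simply cite the analogous computation rather than redo it. No genuinely new idea is needed beyond Lemma \ref{orderK0}; the lemma is essentially a ``local'' refinement of the classical almost-commuting-unitaries picture adapted to $C^*$-algebras in $\mathcal C$.
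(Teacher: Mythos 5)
Your plan is essentially the paper's proof: apply Lemma \ref{orderK0} to write $x=[\sum_i l_ip_i]-[\sum_j m_jq_j]$ with uniformly small traces, build an almost commuting ``clock and shift'' pair realizing each positive part, take adjoints on the $q$-part to get the minus sign, and add everything up over orthogonal corners; the trace computation via the $\log$/de la Harpe--Skandalis formula is exactly the one carried out in the paper. The one step where your realization is off is the phrase ``a cyclic-shift unitary on a copy of $M_m$ containing $e$'': the projections produced by Lemma \ref{orderK0} are (sums of multiples of) minimal projections of $C\otimes M_n$, and there is no reason they are equivalent to projections lying in a matrix subalgebra such as $1_C\otimes M_m$, so you cannot justify the model pair by pointing to large matrix blocks. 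What the paper does instead, and what you need, is to choose $N=[2\pi/\ep]+1$ mutually orthogonal projections $d_{i,1},\dots,d_{i,N}$ in $C\otimes M_n$ each with class $l_i[p_i]$ (possible because $N$ times the total trace of all the pieces is $<1$, which is exactly where the $\ep/24\pi$ hypothesis and the value of $N$ interact), and then let $v_i$ be the cyclic shift among the $d_{i,k}$ and $u_i=\sum_k e^{2\pi ik/N}d_{i,k}$; this gives $\|[u_i,v_i]\|\le |e^{2\pi i/N}-1|<\ep$ and ${\rm bott}_1$-trace $l_i\tau(p_i)$ without ever invoking a matrix subalgebra. Relatedly, your application of Lemma \ref{orderK0} with $\ep/12\pi$ yields trace bounds $\ep/3\pi$ for each of the two parts, and multiplying by $N\approx 2\pi/\ep$ copies then exceeds total trace $1$, so there is no room for the orthogonal copies; you should use the hypothesis $\ep/24\pi$ as given (bounds $\ep/6\pi$), as the paper does. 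With these two repairs the argument goes through as you outline.
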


\begin{proof}
We may assume that $C$ is minimal.
Applying Lemma \ref{orderK0},  we obtain
mutually orthogonal and mutually inequivalent  minimal projections $p_1, p_2,...,p_{k_1}, q_1, q_2,...,q_{k_2}\in C\otimes M_n$ such that
$$
\sum_{i=1}^{k_1}l_i[p_i]-\sum_{j=1}^{k_2}m_j[q_j]=x,
$$
where $l_1,l_2,...,l_{k_1},$ $m_1, m_2,...,m_{k_2}$ are positive integers.
Moreover,
\beq\label{Vpair1-2}
\sum_{i=1}^{k_1}l_i\tau(p_i)<\ep/6\pi\andeqn \sum_{j=1}^{k_2}m_j\tau(q_j)<\ep/6\pi
\eneq
for all $\tau\in T(C\otimes M_n).$
Choose $N\le n$ such that $N=[2\pi/\ep]+1.$
By (\ref{Vpair1-2}),
\beq\label{Vpair1-3}
\sum_{i=1}^{k_1}Nl_i\tau(p_i)+\sum_{j=1}^{k_2}Nm_j\tau(q_j)<1/2\tforal \tau\in T(C\otimes M_n).
\eneq
It follows  that there are mutually orthogonal  projections
$d_{i,k}, d_{j,k}'{{\in C\otimes M_n}}, $ $k=1,2,...,N,$ $i=1,2,...,k_1,$ and $j=1,2,...,k_2$ such that
\beq\label{Vpair1-4}
[d_{i,k}]=l_i[p_i]\andeqn [d_{j,k}']=m_j[q_j], \,\,\,i=1,2,...,k_1, \, j=1,2,...,k_2
\eneq
and $k=1,2,...,N.$
Let $D_i=\sum_{k=1}^Nd_{i,k}\andeqn D_j'=\sum_{k=1}^N d_{j,k}',$ $i=1,2,...,k_1$ and $j=1,2,...,k_2.$
There are  partial isometries $s_{i,k},\, s_{j,k}'\in C\otimes M_n$ such that
\beq\label{July28-2}
s_{i,k}^*d_{i,k}s_{i,k}=d_{i,k+1},\,\,\, (s_{j,k}')^*d_{j,k}'s_{j,k}'=d_{j,k{{+1}}}',\,\,\, k=1,2,...,N-1,\\
s_{i, N}^*d_{i, N}s_{i,N}=d_{i,1},\andeqn (s_{j,N}')^* d_{j,N}'s_{j,N}'=d_{j,1}',
\eneq
 $i=1,2,...,k_1$ and  $j=1,2,...,k_2.$
Thus, we obtain unitaries $u_i\in D_i(C\otimes M_n)D_i$ and $u_j'=D_j'(C\otimes M_n)D_j'$ such that
\beq\label{Vpair1-5}
u_i^*d_{i,k}u_i=d_{i,k+1},\,\,\,u_i^*d_{i,N}u_i=d_{i,1},\,\,\,
(u_j')^*d_{j,k}'u_j'=d_{j,k+1}',\andeqn (u_j')^*d_{j,N}'u_j'=d_{j,1}',
\eneq
$i=1,2,...,k_1,$ $j=1,2,...,k_2.$
Define
$$
v_i=\sum_{k=1}^N e^{\sqrt{-1}(2k\pi/N)}d_{i,k}\andeqn v_j'=\sum_{k=1}^N e^{\sqrt{-1}(2k\pi/N)}d_{j,k}'.
$$
We compute that
\beq\label{Vpair1-6}
\|u_iv_i-v_iu_i\|<\ep\andeqn \|u_j'v_j'-v_j'u_j'\|<\ep,\\
{1\over{2\pi \sqrt{-1}}}\tau(\log v_iu_iv_i^*u_i^*)=l_i\tau(p_i),\andeqn\\
{1\over{2\pi \sqrt{-1}}}\tau(\log v_j'u_j'(v_j')^*(u_j')^*)=m_j\tau(q_j),
\eneq
for $\tau\in T(C\otimes M_n),$ $i=1,2,...,k_1$ and $j=1,2,...,k_2.$
Now define
\beq\label{Vpair1-7}
u=\sum_{i=1}^{k_1}u_i+\sum_{j=1}^{k_2}u_j'+(1_{C\otimes M_n}-\sum_{i=1}^{k_1}D_i-\sum_{j=1}^{k_2}D_j')\andeqn\\
v=\sum_{i=1}^{k_1}v_i+\sum_{j=1}^{k_2}(v_j')^*+(1_{C\otimes M_n}-\sum_{i=1}^{k_1}D_i-\sum_{j=1}^{k_2}D_j').
\eneq
We then compute that
\beq\label{Vpair1-8}
\hspace{-0.4in}\tau({\rm bott}_1(u, v))=\sum_{i=1}^{k_1}{1\over{2\pi \sqrt{-1}}}\tau(\log (v_iu_iv_i^*u_i^*))-\sum_{j=1}^{k_2}{1\over{2\pi \sqrt{-1}}}\tau(\log v_j'u_j'(v_j')^*(u_j')^*)\\
=\sum_{i=1}^{k_1} l_i\tau(p_i) -\sum_{j=1}^{k_2}m_j\tau(q_j)=\tau(x)
\eneq
for all $\tau\in T(C\otimes M_n).$
\end{proof}

\begin{lem}\label{Vpair2}
Let $\ep>0.$ There exists $\sigma>0$ satisfying the following condition:
Let $A=A_1\otimes U,$ where $U$ is a UHF-algebra of infinite type  and $A_1\in {\cal B}_0$, let $u\in U(A)$ be a unitary with $sp(u)=\T,$ and let
$x\in K_0(A)$ with
 $|\tau(\rho_A(x))|<\sigma$ for all $\tau\in T(A)$ and $y\in K_1(A) .$ Then there exists a unitary
$v\in U(A)$ such that
\beq\label{Vpair2-1}
\|uv-vu\|<\ep,\,\,\,{\rm bott}_1(u,v)=x, \tand [v]=y.
\eneq
\end{lem}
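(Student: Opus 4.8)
The plan is to reduce, via the approximation and classification machinery of Part~I, to realizing the prescribed $K$-theoretic data by an explicit block construction in $A$ and then transporting that pair onto the given unitary $u$ by a uniqueness statement. First note that $A=A_1\otimes U\in{\cal B}_0$ (it is UHF-stable and, by Part~I, lies in ${\cal B}_1$, so Theorem~\ref{B0=B1tU} applies): in particular $A$ is a unital simple separable algebra with weakly unperforated $K_0(A)$, strict comparison, real rank zero, enough projections, and $A\cong A\otimes M_m$ for all $m$. Fix $N=\lceil 2\pi/\ep\rceil+2$ and set $\sigma:=\ep/(64\pi)$, a small explicit multiple of $\ep$ chosen so that the rotation estimates below hold and $8N\sigma<1/2$. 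Given $u$ with $\mathrm{sp}(u)=\T$, $x$ with $|\tau(\rho_A(x))|<\sigma$ for all $\tau\in T(A)$, and $y\in K_1(A)$, use weak unperforation and strict comparison (the $A$-analogue of Lemma~\ref{orderK0}) to write $x=[p]-[q]$ with $p,q$ projections in $A$ and $\tau(p),\tau(q)<4\sigma$ for all $\tau$.

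Next I would assemble an auxiliary pair $(u_1,v_1)$. Choose pairwise orthogonal families of projections $\{p_{i,k}\}$ and $\{q_{j,k}\}$ ($1\le k\le N$) with $[p_{i,k}]=[p]$, $[q_{j,k}]=[q]$, all mutually orthogonal and summing to a projection $P_0$ with $\tau(P_0)<8N\sigma<1/2$; put $E=1_A-P_0$. Exactly as in the proof of Lemma~\ref{Vpair1}, on $P_0AP_0$ build cyclic-shift unitaries and the corresponding diagonal ``clock'' multiplier unitaries to obtain $a,b\in U(P_0AP_0)$ with $\|ab-ba\|<\ep/2$, $\mathrm{bott}_1(a,b)=[p]-[q]=x$, and $[a]=[b]=0$ in $K_1(P_0AP_0)$. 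For the complementary corner, note that $EAE$ is again a unital simple separable $U$-stable algebra in ${\cal B}_0$ (two corners of the simple algebra $A$ by Murray--von~Neumann equivalent projections are isomorphic, and $[E\otimes 1_U]=[E]$), so Corollary~\ref{istBotC} applies to it with $C=C(\T^2)=M_1(C(\T^2))$, a matrix algebra over a finite CW complex. Feed it the element $\af\in KK_e(C(\T^2),EAE)^{++}$ determined by $[1]\mapsto[E]$, $[z_1]\mapsto y$, $[z_2]\mapsto y$ on $K_1$, and the (reduced) Bott generator $\beta^{(1)}([z_2])\mapsto 0$ in $K_0(EAE)$; this $\af$ is genuinely positive precisely because the Bott generator is killed, and it is compatible with a continuous affine $\gamma\colon T(EAE)\to T_f(C(\T^2))$ (the Bott generator pairs to zero with every trace), where I would choose $\gamma$ so that, after adding the point-mass contributions of $a$, the trace data of the eventual $u_1$ reproduces (or closely approximates) the spectral trace data $\tau\mapsto(f\mapsto\tau(f(u)))$ of $u$. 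For $n$ large Corollary~\ref{istBotC} then produces, after a tiny polar correction, honest unitaries $u_E,w_E\in U(EAE)$ with $\|u_Ew_E-w_Eu_E\|<\ep/2$, $[u_E]=[w_E]=y$, $\mathrm{bott}_1(u_E,w_E)=0$, and $\mathrm{sp}(u_E)=\T$ (faithfulness of $\gamma$ forces dense, hence full, spectrum). Set $u_1:=a\oplus u_E$ and $v_1:=b\oplus w_E$. Then $\|[u_1,v_1]\|<\ep/2$, $\mathrm{bott}_1(u_1,v_1)=x+0=x$ (Bott of a block-diagonal pair is the sum of the blocks), $[u_1]=[u_E]=y$, $[v_1]=[w_E]=y$, and $\mathrm{sp}(u_1)=\{\text{finitely many roots of unity}\}\cup\T=\T$.

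Finally, $z\mapsto u$ and $z\mapsto u_1$ are unital monomorphisms $C(\T)\to A$ inducing the same class in $KK(C(\T),A)$ (they agree on $K_0$, send $[z]$ to $y$ on $K_1$, and the torsion-coefficient data is determined by these) and, by the choice of $\gamma$, the same (or arbitrarily close) induced trace maps $T(A)\to T_f(C(\T))$; hence, by the uniqueness theorem for such maps (Theorem~12.7 of \cite{GLN-I}), applied with $\Delta$ built from the faithful trace data of $u$, there is a unitary $W\in A$ with $\|W^*u_1W-u\|<\ep/8$. Put $v:=W^*v_1W$. Conjugation preserves the Bott invariant and the $K_1$-class, and both are locally constant under norm-small perturbations of either unitary, so $\mathrm{bott}_1(u,v)=\mathrm{bott}_1(W^*u_1W,v)=\mathrm{bott}_1(u_1,v_1)=x$ and $[v]=[v_1]=y$, while $\|uv-vu\|\le\|[W^*u_1W,v]\|+2\|u-W^*u_1W\|=\|[u_1,v_1]\|+2\|u-W^*u_1W\|<\ep/2+\ep/4<\ep$. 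The main obstacle is the trace-bookkeeping in the middle step: the uniqueness theorem needs the trace data of $u_1$ to match that of $u$ within its prescribed tolerances, so one must choose $\gamma$ (and, if necessary, perturb $u$ slightly among unitaries of full spectrum) so that the small point masses from the rotation blocks are absorbed, and this must be done uniformly over $T(A)$; checking that the block-wise Bott computation of Lemma~\ref{Vpair1} and the vanishing $\mathrm{bott}_1(u_E,w_E)=0$ persist after passing to the corrected honest unitaries is the other point requiring care.
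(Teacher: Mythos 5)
Your overall strategy --- build a block-diagonal model pair realizing the prescribed $K$-theoretic data and then conjugate it onto $u$ by the uniqueness theorem (Theorem 12.7 of \cite{GLN-I}) --- is the same as the paper's, but two steps fail as written. First, the $K_1$ bookkeeping is wrong: you arrange $[u_1]=[u_E]=y$, but $y$ is the prescribed class of $v$, not of $u$; the map $z\mapsto u$ sends $[z]$ to $[u]$, which need not equal $y$, so your claim that $z\mapsto u$ and $z\mapsto u_1$ define the same class in $KK(C(\T),A)$ is false in general, and the uniqueness theorem cannot be invoked. (This part is repairable: prescribe $[u_E]=[u]$ and $[w_E]=y$.) Second, and more seriously, Theorem 12.7 of \cite{GLN-I} is not a ``$KL$ plus traces'' uniqueness statement: when $K_1$ of the domain is nonzero it also requires the two maps to be close on a finite set of unitaries measured in $U(A)/CU(A)$, i.e.\ the de la Harpe--Skandalis determinant data must (approximately) agree. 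This datum is \emph{not} determined by $[u]\in K_1(A)$ together with the spectral measures $f\mapsto \tau(f(u))$: for $A_1\in {\cal B}_0$ the group $U_0(A)/CU(A)\cong \Aff(T(A))/\overline{\rho_A(K_0(A))}$ is typically large, and two unitaries with the same class and the same spectral data can differ there. The paper devotes an entire corner to exactly this point: it inserts a unitary $u_5\in e_2Ae_2$ with $\overline{u_5}=\bar u\,\bar w^{*}$ (Theorem 3.10 of \cite{GLX}) so that the model unitary $u_6$ satisfies $\overline{u_6}=\bar u$ in $U(A)/CU(A)$ before applying Corollary 12.7 of \cite{GLN-I}. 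Your proposal has no mechanism for adjusting the $U(A)/CU(A)$ class of $u_1$, so the conjugation step is a genuine gap, and it is the central missing idea.

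There is also a gap at the very first step: writing $x=[p]-[q]$ with $\tau(p),\tau(q)<4\sigma$ directly in $A$ is not justified. The algebra $A=A_1\otimes U$ with $A_1\in{\cal B}_0$ need not have real rank zero, and Lemma \ref{orderK0} is proved only for $C\otimes M_n$ with $C\in {\cal C}$, using the explicit structure of its finitely many minimal projections; weak unperforation and strict comparison alone do not let you cancel the common large part of an arbitrary representative $[P]-[Q]$ of $x$. The paper avoids this by passing to a model algebra $C'\otimes U$ (Corollary 18.10 of \cite{GLN-I}) whose ordered $K_0$ is $\rho_A(K_0(A))$, pulling $\rho_A(x)$ back to a finite stage $C_{n_0}\otimes M_{r(m_1)}$ where Lemmas \ref{orderK0} and \ref{Vpair1} apply, mapping the resulting almost-commuting pair into a small corner of $A$, and then realizing the remaining discrepancy $x-{\rm bott}_1(u_2,v_2)$, which lies in $\ker\rho_A$, by an exactly commuting pair obtained from a homomorphism of $C(\T^2)$ via Lemma \ref{preBot1}. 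If you wish to keep your more direct decomposition you must either prove the small-trace representation of $x$ in $A$ itself or follow a detour of this kind; as written, that step is unsupported.
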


\begin{proof}
Let $\phi_0: C(\T)\to A$ be the unital monomorphism
defined by $\phi_0(f)=f(u)$ for all $f\in C(\T).$
Let $\Delta_0: C(\T)_+^{q,{\bf 1}}\setminus \{0\}\to (0,1)$ be defined by
$\Delta_0(\hat{f})=\inf \{\tau(\phi_0(f)): \tau\in T(A)\}.$
Let $\ep>0$ be given.
Choose $0<\ep_1<\ep$ such that
$$
{\rm bott}_1(z_1,z_2)={\rm bott}_1(z_1', z_2')
$$
for any two pairs of unitaries
$z_1, z_2$ and $z_1', z_2'$  satisfying the conditions $\|z_1-z_1'\|<\ep_1$, $\|z_2-z_2'\|<\ep_1$,
$\|z_1z_2-z_2z_1\|<\ep_1$ and
$\|z_1'z_2'-z_2'z_1'\|<\ep_1.$

Let ${\cal H}_1\subset C(\T)_+^{\bf 1}\setminus \{0\}$
be a finite subset, $\gamma_1>0,$ $\gamma_{{2}}>0,$  and ${\cal H}_2\subset C(\T)_{s.a.}$
be a finite subset  as provided by Corollary 12.9 of \cite{GLN-I} 
(for $\ep_1/4$ and $\Delta_0/2$). We may assume that ${\cal H}_2\subset C(\T)^{\bf 1}.$

Let
$$\dt_1=\min\{\gamma_1/16, \gamma_2/16, \min\{\Delta_0(\hat{f}): f\in {\cal H}_1\}/4\}.$$
 Let
$\sigma =\min\{\dt_1/16, (\dt_1/16)(\ep_1/32\pi)\}.$

Let $e\in 1\otimes U \subset A $ be a non-zero projection such that
$\tau(e)<\sigma$ for all $\tau\in T(A).$
Let $B=eAe$ (then $B\cong A\otimes U'$ for some UHF-algebra $U'$).
It follows from Corollary 18.10 of \cite{GLN-I} 
that there is a unital simple \CA\, $C'=\lim_{n\to\infty}(C_n, \psi_n),$ where $C_n\in {\cal C}_0$ and $C=C'\otimes U$ such that
$$
(K_0(C), K_0(C)_+, [1_C], T(C), r_C)=(\rho_A(K_0(A)), (\rho_A(K_0(A)))_+,
\rho_A([e]), T(eAe), r_A).
$$
Moreover, we may assume that all $\psi_n$ are unital.

Now suppose that $x\in K_0(A)$ with $|\tau(\rho_A(x))|<\sigma$
for all $\tau\in T(A)$ and suppose that $y\in K_1(A).$   Let $z=\rho_A(x){{\in K_0(C)}}.$ We identify $z$ with the element in $K_0(C)$ in the  identification above.
We claim that,
there is $n_0\ge 1$ such that there is $x'\in K_0(C_{n_0}\otimes U)$ such that ${{z}}= (\psi_{n_0, \infty})_{*0}(x'){{\in K_0(C)}}$
and $|t(\rho_{C_{n_0}\otimes U})(x')|<\sigma$ for all $t\in T(C_{n_0}\otimes U).$

Otherwise, there is an increasing sequence $n_k,$ $x_k\in K_0(C_{n_k}\otimes U)$ such that
\beq\label{Vpair2-2}
(\psi_{n_k,\infty})_{*0}(x_k)={{z\in K_0(C)}}\andeqn |t_k(\rho_{C_{n_k}\otimes U})(x_k)|\ge \sigma
\eneq
for some $t_k\in T(C_{n_k}\otimes U),$ $k=1,2,....$
Let $L_k: C\to C_{n_k}\otimes U$
be such that
$$
\lim_{n\to\infty}\|\psi_{n,\infty}\circ L_n(c)-c\|=0
$$
for all $c\in \psi_{k,\infty}( C_{n_k}\otimes U),$ $k=1,2,....$ It follows that any
limit point of $t_k\circ L_k$ is a tracial state of $C.$  Let $t_0$ be one  such limit.
Then, by (\ref{Vpair2-2}),
$$
t_0(\rho_C(z))\ge \sigma.
$$
This proves the claim.

Write $U=\lim_{n\to\infty}(M_{r(m)}, \imath_m),$ where $\imath_m: M_{r(m)}\to M_{r(m+1)}$ is a unital
embedding. Repeating the argument above, we obtain $m_0\ge 1$  and $y'\in K_0(C_{n_0}\otimes M_{r(m_0)})=
K_0(C_{n_0})$  such that
$(\imath_{m_0, \infty})_{*0}(y')=x'$ and $|t(\rho_{C_{n_0}}(y'))|< \sigma$ for all $t\in T(C_{n_0}\otimes M_{r(m_0)}).$
Let $M_{C_{N_0}}$ be the constant given by Lemma \ref{Vpair1}. Choose $r(m_1)\ge \max\{{{48}}M_{C_{n_0}}/\sigma, r(m_0)\}$
and let $y''=(\imath_{m_0, m_1})_{*0}(y').$ Then, we compute that
$$
|t(\rho_{C_{n_0}}(y''))|<\sigma \rforal t\in T(C_{n_0}\otimes M_{r(m_1)}).
$$

It follows from \ref{Vpair1} that there exists a pair of unitaries
$u_1', v_1'\in C_{n_0}\otimes M_{r(m_1)}$ such that
\beq\label{Vpair2-3-}
\|u_1'v_1'-v_1'u_1'\|<\ep_1/4\andeqn {\rm bott}_1(u_1', v_1')=y''.
\eneq
Put $u_1=\imath_{m_1, \infty}(u_1')$ and $v_1=\imath_{m_1, \infty}(v_1').$
Then (\ref{Vpair2-3-}) implies that
\beq\label{Vpair2-3}
\|u_1v_1-v_1u_1\|<\ep_1/4\andeqn {\rm bott}_1(u_1, v_1)=x'.
\eneq

Let $h_0: C_{n_0}\otimes U\to M_2(eAe)$ be a  \hm\, as given by Corollary 18.10 of \cite{GLN-I} 
such that
\beq\label{Vpair2-4}
\rho_A\circ (h_0)_{*0}=(\psi_{n_0, \infty})_{*0}.
\eneq
Then  the projection $e'=h_0(1_{C_{n_0}\otimes U})$ satisfies $\rho_A(e')=\rho_A(e)$. Replacing  $e$ by $e'$, we can assume $h_0$ is a unital homomorphism from $C_{n_0}\otimes U$ to $eAe$.
It follows that
\beq\label{Vpair2-5}
\rho_A((h_0)_{*0}(x')-x)=0.
\eneq
Let $u_2=h_0(u_1)$ and $v_2=h_0(v_1).$
We have
\beq\label{Vpair2-6}
\rho_A({\rm bott}_1(u_2, v_2)-x)=0.
\eneq
Choose another non-zero projection $e_1\in A$ such that $e_1e=ee_1=0$ and $\tau(e_1)<\dt_1/16$
for all $\tau\in T(A).$
It follows from \ref{preBot1} that there is a
unital \hm\,  $H:
C(\T^2)\to e_1Ae_1$ such that
\beq\label{Vpair2-7}
H_{*0}(b)=x-{\rm bott}_1(u_2, v_2){{,}}
\eneq
where $b$ is the Bott element in $K_0(C(\T^2))$. (In fact, we can also apply \ref{Next2014/09} here.)
Thus we obtain a pair of unitaries $u_3, v_3\in e_1Ae_1$ such that
\beq\label{Vpair2-8}
u_3v_3=v_3u_3
\andeqn {\rm bott}_1(u_3, v_3)=x-{\rm bott}_1(u_2, v_2).
\eneq
Let $e_2, e_3\in (1-e-e_1)A(1-e-e_1)$ be a pair of non-zero mutually orthogonal projections such that $\tau(e_2)<\dt_1/32$
and $\tau(e_3)<\dt_1/32$
for all $\tau\in T(A).$
Thus $\tau(e+e_1+e_2+e_3)<3\dt_1/16$ for all $\tau\in T(A).$
Then, together with Theorem 17.3 of \cite{GLN-I}, 
(applied to $X=\T$),
we obtain
 a unitary $u_4\in (1-e-e_1-e_2-e_3)A(1-e-e_1-e_2-e_3)$ such that
\beq\label{Vpair2-9}
|\tau (f(u_4))-\tau( f(u))\|<\dt_1/4\tforal f\in {\cal H}_2\cup {\cal H}_1\andeqn \tforal   \tau\in T(A).
\eneq
Let $w=u_2+u_3+u_4+(1-e-e_1-e_2-e_3).$
It follows from Theorem 3.10 of \cite{GLX}
that there exists $u_5\in U(e_2Ae_2)$ such that
\beq\label{Vpair2-10}
\overline{u_5}=\bar{u}{\bar w^*}\in U(A)/CU(A).
\eneq
Since $A$ is simple and has stable rank one, there exists
a unitary
$v_4\in e_3Ae_3$ such that $[v_4]=y-[v_2+v_3+({{e_2+}}e_3)]\in K_1(A).$
Now define
$$
u_6=u_2+u_3+u_4+u_5+e_3\andeqn v_6=v_2+v_3+(1-e-e_1-{{e_2-}}e_3)+{{e_2+}}v_4.
$$
Then
\beq\label{Vpair2-11}
\|u_6v_6-v_6u_6\|<\ep_1/2,\,\,\,{\rm bott}_1(u_6, v_6)=x,\andeqn
[v_6]=y.
\eneq
Moreover,
\beq\label{Vpair2-12}
\tau(f(u_6))\ge \Delta(\hat{f})/2\tforal f\in {\cal H}_1,\\ \label{Vpair2-12-1}
|\tau(f(u))-\tau(f(u_6))|<\gamma_1\andeqn
\bar{u_6}=\bar{u}.
\eneq
By  Corollary 12.7 of \cite{GLN-I} 
and by (\ref{Vpair2-10}) (\ref{Vpair2-12}) and (\ref{Vpair2-12-1}) that there exists a unitary $W\in A$ such that
\beq\label{Vpair2-13}
\|W^*u_6W-u\|<\ep_1/2.
\eneq
Now let $v=W^* v_6W.$
We compute that
\beq\label{Vpair2-14}
\|uv-vu\|<\ep,\,\,\,{\rm bott}_1(u, v)={\rm bott}_1(u_6, v_6)=x,
\andeqn [v]=y.
\eneq

\end{proof}

\begin{cor}\label{Vpair2+1} Let $\ep>0$, $C=\bigoplus_{i=1}^k C^i=\bigoplus_{i=1}^k M_{m(i)}(C(\T))$. Let ${\cal P}_0\subset K_0(C)$ and ${\cal P}_1\subset K_1(C)$ be finite sets generating $K_0(C)$ and $K_1(C)$. There exists $\sigma>0$ satisfying the following condition: Let $A=A_1\otimes U$ be as in Lemma \ref{Vpair2}, let $\iota: C\to A$ be an embedding, and let $\alpha\in KL(A\otimes C(\T), A)$ be such that
$$|\tau(\rho_A(\af(\boldsymbol{\bt}(w))))|<\sigma\min\{\tau'(\iota(1_{C^i}))/m(i),~~1\leq i\leq k, ~\tau'\in T(A)\},$$
for all $w\in {\cal P}_1$ and $\tau\in T(A)$. Then there exists a unitary $v\in \iota(1_C)A\iota(1_C)$ such that
$${\rm Bott}(\iota, v)|_{{\cal P}_0\cup {\cal P}_1}= \af\circ \boldsymbol{\bt}|_{{\cal P}_0\cup {\cal P}_1}.$$

\end{cor}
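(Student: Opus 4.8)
The plan is to reduce the statement to a block-by-block application of Lemma \ref{Vpair2}, treating each summand $C^i = M_{m(i)}(C(\T))$ separately inside the corner $\iota(1_{C^i})A\iota(1_{C^i})$. First I would pass to matrix units: write $e_i := \iota(1_{C^i})$, so that the $e_i$ are mutually orthogonal projections summing to $\iota(1_C)$, and $e_iAe_i \cong M_{m(i)}(A_i)$ for $A_i \cong e_i^{(1,1)}Ae_i^{(1,1)}$, where $e_i^{(1,1)}$ is a minimal subprojection of $e_i$ corresponding to the $(1,1)$-matrix unit of $C^i$. The restriction of $\iota$ to $C^i$ is then an amplification of an embedding of $C(\T)$ into $A_i$; let $u_i \in A_i$ be the image of the standard unitary generator $z \in C(\T)$, so $\iota|_{C^i}$ is determined up to the usual identification by the single unitary $u_i$. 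Note that $A_i \cong A_1 \otimes U'$ for an appropriate UHF algebra $U'$ of infinite type, so $A_i$ is again of the form required by Lemma \ref{Vpair2}. One subtlety: Lemma \ref{Vpair2} wants $\mathrm{sp}(u_i)=\T$; since $C(\T)$ is semiprojective and embeds unitally, one may perturb $\iota$ slightly (or observe that $\iota$ being an embedding already forces full spectrum on each block when $A$ is simple infinite dimensional) — alternatively absorb a point-evaluation correction as in the proof of Lemma \ref{Vpair2}.

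Next I would record what data must be matched on each block. We have $K_0(C^i) = \Z[1_{C^i}]$ and $K_1(C^i) = \Z[z]$ (as a summand of $K_1(C)$), and the element $\af \circ \boldsymbol{\bt}$ applied to the generators of ${\cal P}_0 \cap K_*(C^i)$ and ${\cal P}_1 \cap K_*(C^i)$ produces, via the identification $\underline{K}(C^i \otimes C(\T)) = \underline{K}(C^i) \oplus \boldsymbol{\bt}(\underline{K}(C^i))$ and the Bott map description in Definition \ref{Dbeta}, a pair consisting of an element $x_i \in K_0(e_iAe_i) = K_0(A)$ (the target of $\mathrm{bott}_1$ applied to the generator coming from $[1_{C^i}]$, up to the matrix amplification factor $m(i)$) and an element $y_i \in K_1(e_iAe_i) = K_1(A)$ (the target of the $K_1$-to-$K_0$ component, i.e. $[v]$). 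The hypothesis $|\tau(\rho_A(\af(\boldsymbol{\bt}(w))))| < \sigma \min\{\tau'(\iota(1_{C^i}))/m(i)\}$ is exactly engineered so that, after normalizing traces on the corner $e_iAe_i$ (dividing by $\tau'(e_i)$) and dividing by the amplification factor $m(i)$, the relevant $K_0$-class $x_i$ satisfies $|\tau''(\rho_{A_i}(x_i))| < \sigma$ for all $\tau'' \in T(A_i)$, which is the smallness hypothesis of Lemma \ref{Vpair2}. So the choice of $\sigma$ in the corollary is just the $\sigma$ furnished by Lemma \ref{Vpair2} for the given $\ep$ (taken uniformly over the finitely many blocks, and over the finitely many generators in ${\cal P}_0 \cup {\cal P}_1$, which is harmless since there are finitely many).

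Then for each $i$ I would apply Lemma \ref{Vpair2} inside $A_i$ with the unitary $u_i$, the $K_0$-class $x_i$, and the $K_1$-class $y_i$, obtaining a unitary $v_i \in A_i$ with $\|u_iv_i - v_iu_i\| < \ep$, $\mathrm{bott}_1(u_i, v_i) = x_i$, and $[v_i] = y_i$. Amplify $v_i$ to a unitary $\tilde v_i := \mathrm{diag}(v_i,\dots,v_i) \in e_iAe_i$ commuting approximately with $\iota(C^i)$, and set $v := \sum_{i=1}^k \tilde v_i + (1_A - \iota(1_C))$. By construction $v \in \iota(1_C)A\iota(1_C) + \C(1-\iota(1_C))$; if the statement genuinely wants $v \in \iota(1_C)A\iota(1_C)$ one drops the complementary summand and works in the unital corner $\iota(1_C)A\iota(1_C)$ throughout (which is again a $\cal B_0$-tensor-UHF algebra). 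Finally I would verify that $\mathrm{Bott}(\iota, v)|_{{\cal P}_0 \cup {\cal P}_1} = \af \circ \boldsymbol{\bt}|_{{\cal P}_0 \cup {\cal P}_1}$: since $v$ is block-diagonal with respect to the $e_i$, the Bott map splits as a direct sum over $i$, and on each block the identity $\mathrm{bott}_1(\iota|_{C^i}, \tilde v_i) = x_i$ together with $[\tilde v_i]$ (which controls the $K_1$-component of $\mathrm{Bott}$, by the description of $\mathrm{Bott}(\phi,v)$ in Definition \ref{Dbeta}, recalling that for $C^i = M_{m(i)}(C(\T))$ the group $\underline{K}$ is finitely generated and torsion free so $\mathrm{Bott}$ is determined on $K_0$ and $K_1$) matches the prescribed values; chasing the identifications $\af(\boldsymbol{\bt}(\cdot))$ versus $\mathrm{bott}_1$ and $[v]$ on generators completes the check.

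The main obstacle I anticipate is bookkeeping the trace normalization and the amplification factor $m(i)$ correctly — i.e. being careful that the quantity appearing in Lemma \ref{Vpair2}'s smallness hypothesis, computed in the corner $e_i^{(1,1)}Ae_i^{(1,1)}$, is precisely $\rho_{A_i}$ of $x_i$, which forces the $\min\{\tau'(\iota(1_{C^i}))/m(i)\}$ factor in the corollary's hypothesis — plus the minor point about ensuring $\mathrm{sp}(u_i) = \T$ (or circumventing it via semiprojectivity of $C(\T)$ and a small perturbation of $\iota$, at the cost of an $\ep$-adjustment to the almost-commutation estimate). The $K$-theoretic matching itself is routine once the identifications of Definition \ref{Dbeta} are unwound.
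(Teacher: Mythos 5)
Your proposal is correct and follows essentially the same route as the paper: the paper likewise reduces (WLOG) to the generators $[e_{11}^i]$ and $u^i=ze_{11}^i+(1-e_{11}^i)$, applies Lemma \ref{Vpair2} in the corner $\iota(e_{11}^i)A\iota(e_{11}^i)$ with $x=\af(\boldsymbol{\bt}(u^i))$ and $y=\af(\boldsymbol{\bt}([e_{11}^i]))$ (the trace hypothesis being exactly the normalization by $\tau(\iota(1_{C^i}))/m(i)$ you describe), and then takes $v=\bigoplus_i v_{11}^i\otimes 1_{m(i)}$. Apart from minor labeling slips (which component of $\mathrm{Bott}$ is $K_1\to K_0$ versus $K_0\to K_1$), your block-by-block argument is the paper's argument.
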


\begin{proof} Let $e_{11}^i\in C^i=M_{m(i)}(C(\T))$ be the rank one projection of
the upper left corner of $C^i$ and $u^i\in K_1(C^i)$ be the standard generator given by $ze_{11}^i+ (1_{C_i}-e_{11}^i)$, where $z\in C(\T)$ is the identity function from $\T$ to $\T\subset \C$. Without loss of generality, we may assume that
${\cal P}_0=\{[e_{11}^i], 1\leq i\leq k\}$ and ${\cal P}_1=\{u^i, 1\leq i\leq k\}$.
Let $\sigma$ be as in Lemma \ref{Vpair2}. For each $i\in \{1,2,\cdots, k\}$, applying Lemma \ref{Vpair2} to $\iota(e_{11}^i)A\iota(e_{11}^i)$ (in place of $A$), $\iota(ze_{11}^i)\in \iota(e_{11}^i)A\iota(e_{11}^i)$ (in place of $u$) with $x=\af(\boldsymbol{\bt}(u^i))$, $y=\af(\boldsymbol{\bt}([e_{11}^i]))$, one obtains a unitary $v_{11}^i\in \iota(e_{11}^i)A\iota(e_{11}^i)$ in place of $v$. Identifying
$\iota(1_{C^i})A\iota(1_{C^i})\cong \big(\iota(e_{11}^i)A\iota(e_{11}^i)\big)\otimes M_{m(i)}(\C)$,
we define $v^i=v_{11}^i\otimes 1_{m(i)}$. Finally, choose $v=v^1\oplus v^2 \oplus \cdots \oplus v^k\in \iota(1_C)A\iota(1_C)$ to finish the proof.

\end{proof}

\section{More existence theorems for Bott elements}

Using Lemma \ref{Vpair2},
\ref{preBot1}, Corollary 21.11 of \cite{GLN-I}, 
Lemma 18.11 of \cite{GLN-I}, 
and Theorem 12.11 of \cite{GLN-I}, 
we can show the following  result:

\begin{lem}\label{Extbot1}
Let  $A=A_1\otimes U_1,$ where $A_1$ is as in Theorem 14.10 of \cite{GLN-I}
and $B=B_1\otimes U_2,$ where $B_1\in {\cal B}_0$ 
and $U_1, U_2$ are two UHF-algebras of infinite type.
For any $\ep>0,$ any finite subset ${\cal F}\subset A,$ and any finite subset ${\cal P}\subset \underline{K}(A),$ there exist $\dt>0$ and a finite subset ${\cal Q}\subset K_1(A)$ satisfying the following condition:
Let a unital \hm\, $\phi: A\to B$
and $\af\in KL(A\otimes C(\T), B)$  be such that
\beq\label{Extbot1-1}
|\tau\circ \rho_B(\af(\boldsymbol{\bt}(x)))|<\dt \tforal x\in {\cal Q}\tand \tforal \tau\in T(B).
\eneq
Then there exists a unitary $u\in B$ such that
\beq\label{Extbot1-2}
\|[\phi(x), \, u]\|&<&\ep \tforal x\in {\cal F}\andeqn\\
{\rm Bott}(\phi, u)|_{\cal P}&=&\af(\boldsymbol{\bt})|_{\cal P}{{.}}
\eneq
\end{lem}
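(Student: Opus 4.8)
The plan is to reduce the statement to the already-established existence results by exploiting the inductive limit structure of $A$ and the semiprojectivity of the building blocks. Since $A = A_1 \otimes U_1$ with $A_1$ as in Theorem 14.10 of \cite{GLN-I}, write $A = \varinjlim(A^{(n)}, \iota_{n,n+1})$ where each $A^{(n)}$ is a finite direct sum of algebras in $\mathcal C_0$ and in $\mathbf H$. Given $\ep$, ${\cal F}$, and ${\cal P}\subset \underline K(A)$, first choose $n_0$ large enough that ${\cal F}$ is approximated to within $\ep/4$ by $\iota_{n_0,\infty}({\cal F}')$ for some finite ${\cal F}'\subset A^{(n_0)}$, and that ${\cal P}$ is in the image of $[\iota_{n_0,\infty}]$ (using that $\underline K(A^{(n_0)})$ is finitely generated modulo Bockstein operations and $K_*(A)$ is the inductive limit); replace ${\cal P}$ by a corresponding finite subset ${\cal P}'\subset\underline K(A^{(n_0)})$. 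Because each summand of $A^{(n_0)}$ is semiprojective (see \cite{ELP1} for the $\mathcal C_0$ part, and the corresponding homogeneous fact for $\mathbf H$), it suffices to produce, for each summand, a unitary in $B$ that almost commutes with the restricted homomorphism and realizes the prescribed Bott data on ${\cal P}'$.

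The core case is a summand of the form $M_{m}(C(\T))$ (or more generally $PM_n(C(X))P$): here the Bott element ${\rm Bott}(\phi,u)|_{{\cal P}'}$ is governed by the pair of $K$-theory invariants $\mathrm{bott}_0$ and $\mathrm{bott}_1$, and Corollary \ref{Vpair2+1} (together with Lemma \ref{Vpair2}) produces exactly such a unitary once the $K_0$-image of the relevant generators has sufficiently small trace. For a summand in $\mathcal C_0$, one uses Lemma \ref{Vpair1}/\ref{orderK0} to produce almost-commuting pairs in matrix amplifications realizing a prescribed $K_0$-element with small trace, transported into $B$ via $\phi$ composed with the finite-dimensional-to-simple absorption; since $K_1$ of $\mathcal C_0$-algebras vanishes, the $\mathrm{bott}_1$ part is the only obstruction and it is again controlled by a trace condition. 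In both cases the smallness hypothesis we need on the traces of $\af(\boldsymbol\bt(x))$ for $x$ ranging over a finite subset ${\cal Q}\subset K_1(A)$ (which we take to be a finite generating set pulled back from $K_1(A^{(n_0)})$, enlarged to absorb the $K_0$-generators under $\boldsymbol\bt$) is precisely the content of \eqref{Extbot1-1}, and $\dt$ is chosen as the minimum of the $\sigma$'s demanded by \ref{Vpair2+1} across the finitely many summands, divided by suitable trace-ratio constants $\tau'(\iota(1_{C^i}))/m(i)$.

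The final assembly: on $A^{(n_0)}$ the unitaries from the individual summands combine to a unitary $u_0\in B$ with $\|[\psi(x),u_0]\|<\ep/2$ for $x\in{\cal F}'$ (where $\psi = \phi\circ\iota_{n_0,\infty}$, approximately) and $\mathrm{Bott}(\psi,u_0)|_{{\cal P}'} = \af\circ\boldsymbol\bt|_{{\cal P}'}$; pulling back along $\iota_{n_0,\infty}$ and using the compatibility $[\psi]\circ[\iota_{n_0,\infty}]$-type identities together with the naturality of the Bott map gives ${\rm Bott}(\phi,u_0)|_{{\cal P}} = \af(\boldsymbol\bt)|_{{\cal P}}$, while $\|[\phi(x),u_0]\|<\ep$ for $x\in{\cal F}$ follows from the approximation. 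The main obstacle I expect is bookkeeping the trace constants: one must verify that the single universal $\dt$ extracted before knowing $\phi$ genuinely suffices, i.e.\ that the passage $\af(\boldsymbol\bt(x))\mapsto \rho_B(\af(\boldsymbol\bt(x)))$ and the relative traces $\tau'(\iota(1_{C^i}))$ appearing in \ref{Vpair2+1} can be uniformly bounded below using only that $B$ is simple with $\phi$ unital — this is where one invokes that the model algebra $A_1$ has the structural properties (via Corollary 21.11, Lemma 18.11, Theorem 12.11 of \cite{GLN-I}) ensuring the various intermediate homomorphisms with finite-dimensional or point-evaluation range exist with controlled traces.
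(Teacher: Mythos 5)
Your overall skeleton is the paper's: pass to a finite stage $C_{n_1}$ of the inductive limit of Theorem 14.10 of \cite{GLN-I}, take ${\cal Q}$ to be (the image of) a generating set of $K_1(C_{n_1})$, produce a unitary in each corner $\phi(\imath_{n_1,\infty}(1_{\rm summand}))B\phi(\imath_{n_1,\infty}(1_{\rm summand}))$, and add them up; and your treatment of the circle summands via Corollary \ref{Vpair2+1} is exactly what the paper does (the semiprojectivity remark is unnecessary, since $\phi\circ\imath_{n_1,\infty}$ is already a homomorphism). The gap is in the other two kinds of summands. Corollary \ref{Vpair2+1} is proved only for $C=\bigoplus_i M_{m(i)}(C(\T))$; it does not cover the summands $PM_n(C(X))P$ with $X=S^2,\ T_{2,k},\ T_{3,k}$, whose $\underline{K}$ contains torsion and $\Z/k\Z$-coefficient groups. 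For those summands the prescribed data $\af\circ\boldsymbol{\bt}|_{\cal P}$ includes torsion and coefficient classes, which cannot be reached by trace estimates and elementary almost-commuting pairs in the style of \ref{orderK0}/\ref{Vpair1}; the relevant fact there is not smallness of traces but that $K_1$ of these summands is torsion, so $\af(\boldsymbol{\bt}(K_1))\subset\ker\rho_B$ and the combined class $\kappa_1\in KK_e(B_{2,n_1}\otimes C(\T),e_2Be_2)^{++}$ is positive; the paper then realizes $\kappa_1$ by an honest homomorphism of $B_{2,n_1}\otimes C(\T)$ (i.e., of $C(X_{n_1}\times\T)$-type) with prescribed, $\phi$-compatible trace data via Theorem \ref{Next2014/09}, obtaining an exactly commuting pair.

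For the ${\cal C}_0$ summands your obstruction analysis is backwards: since $K_1(D)=0$, the ${\rm bott}_1$ part is vacuous, and what has to be realized is ${\rm bott}_0:K_0(D)\to K_1(B)$ (plus coefficient maps), which is not controlled by any trace condition; Lemmas \ref{orderK0}/\ref{Vpair1} produce $K_0$-classes of small trace inside $C\otimes M_n$ and are the wrong tool for hitting a prescribed map into $K_1(B)$. The paper instead invokes Lemma \ref{CtimesText0} to produce an almost multiplicative pair realizing the prescribed $KK$-class of $D\otimes C(\T)$. Moreover, in both the ${\bf H}$ (non-circle) and ${\cal C}_0$ cases the constructed map is a priori unrelated to $\phi$: the essential step missing from your argument is the uniqueness theorem (Theorem 12.7 of \cite{GLN-I}), which is applicable precisely because the existence theorems let one match the tracial data of the constructed map with that of $\phi\circ\imath_{n_1,\infty}$, and which supplies the conjugating unitary that turns the exactly (or almost) commuting pair into a unitary almost commuting with $\phi$ itself. (Your worry about choosing $\dt$ before seeing $\phi$ is resolved as in the paper: one takes $\dt=\sigma\cdot\inf\{\tau(e_1^i)/m(i,n_1):\tau\in T(A)\}$, which suffices for every unital $\phi$ because $\tau\circ\phi\in T(A)$ for every $\tau\in T(B)$.)
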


\begin{proof}
Let $\ep_1>0$ and let ${\cal F}_1\subset A$ be a finite subset satisfying the following condition:
If $$L, L': A\otimes C(\T)\to B$$ are
two {{unital}} ${\cal F}_1'$-$\ep_1$-multiplicative \cp s
such that
\beq\label{Extbot1-4}
\|L(f)-L'(f)\|<\ep_1\tforal f\in {\cal F}_1',
\eneq
where
$$
{\cal F}'_1=\{a\otimes g: a\in {\cal F}_1\andeqn g\in \{z, z^*,1_{C(\T)}\}\},
$$
then
\begin{equation}\label{Extbot1-3}
[L]|_{\boldsymbol{\bt}({\cal P})}=[L']|_{\boldsymbol{\bt}({\cal P})}.
\end{equation}

Let $B_{1,n}=M_{m(1,n)}(C(\T))\oplus M_{m(2,n)}(C(\T))\oplus\cdots \oplus M_{m(k_1(1),n)}(C(\T)),$
$B_{2,n}=PM_{r_1(n)}(C(X_n))P,$ where
$X_n$ is a finite disjoint union of copies of
$S^2, T_{2,k},$ and $T_{3,k}$ (for various $k\ge 1$). Let $B_{3,n}$
be a finite direct sum of \CA s in ${\cal C}_0$ (with trivial $K_1$ and
${\rm ker}\rho_{B_{3,n}}=\{0\}$---see  Proposition 3.5 of \cite{GLN-I}),
$n=1,2,....$
Put $C_n=B_{1,n}\oplus B_{2,n}\oplus B_{3,n},$ $n=1,2,....$
We may write that $A=\lim_{n\to\infty}(C_n,\imath_n)$ as in Theorem 14.10 of \cite{GLN-I}.
with the maps
$\imath_n$ injective (applying Theorem 14.10 of \cite{GLN-I}
to $A_1$),
\beq\label{Extbot1-5}
&&{\rm ker}\rho_A\subset (\imath_{n, \infty})_{*0}({\rm ker}\rho_{C_n}),\andeqn\\
&&\lim_{n\to\infty}\sup\{\tau(1_{B_{1,n}}\oplus 1_{B_{2,n}}): \tau\in T(B)\}=0.
\eneq

Let $\ep_2=\min\{\ep_1/4, \ep/4\}$ and
let ${\cal F}_2={\cal F}_1\cup {\cal F}.$

Let ${\cal P}_{1,1}\subset \underline{K}(B_{1,n_1}),$
${\cal P}_{2,1}\subset \underline{K}(B_{2,n_1})$ and ${\cal P}_{3,1}\subset \underline{K}(B_{3,n_1})$ be finite subsets
such that
$$
{\cal P}\subset [\imath_{n_1, \infty}]({\cal P}_{1,1}){{\oplus}} [\imath_{n_1, \infty}]({\cal P}_{2,1}){{\oplus}}
[\imath_{n_1, \infty}])({\cal P}_{3,1})
$$
for some $n_1\ge 1.$ To simplify the notation, without lose of generality, we may  assume $
{\cal P}\subset [\imath_{n_1, \infty}]({\cal P}_{1,1})\cup [\imath_{n_1, \infty}]({\cal P}_{2,1})\cup
[\imath_{n_1, \infty}])({\cal P}_{3,1})
$.
Let  ${\cal Q}'$ be a finite set of generators of $K_1(C_{n_1})$ and
let ${\cal Q}=[\imath_{n_1,\infty}]({\cal Q}').$ Since $K_i(B_{1,n_1}),~i=0,1,$ are  finitely generated free abelian groups, without loss
of generality, we may assume that ${\cal P}_{1,1}\subset K_0(B_{1,n_1})\cup K_1(B_{1,n_1})$ and generates $K_0(B_{1,n_1})\oplus K_1(B_{1,n_1})$.

Without loss of generality, we may assume that ${\cal F}_1\cup {\cal F}\subset
\imath_{n_1,\infty}(C_{n_1}).$ Let
${\cal F}_{1,1}\subset B_{1, n_1},$ ${\cal F}_{2,1}\subset B_{2, n_2},$ and
${\cal F}_{3,1}\subset B_{3, n_1}$ be finite subsets such that
\beq\label{Extbot1-6}
{\cal F}_1\cup {\cal F}\subset \imath_{n_1, \infty}({\cal F}_{1,1}\cup{\cal F}_{2,1}\cup {\cal F}_{3,1}).
\eneq
Let $e_1=\imath_{n_1, \infty}(1_{B_{1, n_1}}),$ $e_2=\imath_{n_1, \infty}(1_{B_{2, n_1}}),$ and $e_3=1-e_1-e_2.$
 Note that $B_{1, n_1}=\oplus_{i=1}^{s(n_1)}B_{1, n_1}^i$, where $s(n_1)$ is an integer depending on $n_1$ and $B_{1, n_1}^i=M_{m(i, n_1)}(C(\T))$.  We may write
$e_1=\oplus_{i=1}^{s(n_1)}e_1^i$ with $e_1^i=\imath_{n_1, \infty}(1_{B_{1, n_1}}^i)$.
Let $\Delta_1: (B_{2,n_1})_+^{q, {\bf 1}}\setminus \{0\}\to (0,1)$ be defined by
$$
\Delta_1(\hat{h})=(1/2)\inf\{\tau(\phi(\imath_{n_1, \infty}(h)): \tau\in T(B)\}\tforal h\in (B_{2, n_1})_+^{{\bf 1}}\setminus \{0\}.
$$
Let $\Delta_2: B_{{{3}}, n_1}^{q, {\bf 1}}\setminus \{0\}\to (0,1)$ be defined by
$$
\Delta_2(\hat{h})=(1/2)\inf\{\tau(\phi(\imath_{n_1, \infty}(h)): \tau\in T(B)\}\tforal h\in (B_{3, n_1})_+^{{\bf 1}}\setminus \{0\}.
$$
Note that $B_{2, n_1}$ has the form $C$ of Theorem 12.7 of \cite{GLN-I}. 
So we will apply Theorem 12.7 of \cite{GLN-I}. 
Let ${\cal H}_{2,1}\subset (B_{2, n_1}^{\bf 1})_+\setminus \{0\}$ (in place of ${\cal H}_1$),
$\gamma_{2,1}>0$
(in place of $\gamma_1$),
$\dt_{2,1}>0$ (in place of $\dt$),
${\cal G}_{2,1}\subset B_{2, n_1}$ (in place of ${\cal G}$),
${\cal P}_{2,2}\subset \underline{K}(B_{2, n_1})$
(in place of ${\cal P}$),  and ${\cal H}_{2,2}\subset (B_{2, n_1})_{s.a.}$ (in place of ${\cal H}_2$) be the
constants and  finite subsets
 provided
 by Theorem 12.7 of \cite{GLN-I} 
 for  $\ep_2/16,$ ${\cal F}_{2,1},$ and
$\Delta_1$ (we do not need the set ${\cal U}$ in Theorem 12.7 of \cite{GLN-I} 
since $K_1(B_{2,n_1})$ is torsion or zero; see Corollary 12.8 of \cite{GLN-I}). 

Recall that $B_{1, n_1}=\bigoplus_{i=1}^{s(n_1)}B_{1, n_1}^i$ with $B_{1, n_1}^i=M_{m(i, n_1)}(C(\T))$, and $e_1=\bigoplus_{i=1}^{s(n_1)}e_1^i$ with $e_1^i=\imath_{n_1, \infty}(1_{B_{1, n_1}}^i)$. Now let $\sigma>0$ be as provided  by Corollary  \ref{Vpair2+1} (see \ref{Vpair2} also) for ${\cal P}_{1,1}$ and $\ep_2/4$ (in place of $\ep$).
Let  $\dt=\sigma\cdot \inf\{\tau(e_1^i)/m(i,n_1): 1\leq i\leq s(n_1),\, \tau\in T(A)\}.$
It follows from \ref{Vpair2+1} that if $|\tau\circ\rho_B(\af(\bt(x)))|<\dt$ for all $x\in [\imath_{n_1, \infty}])({\cal P}_{1,1})$ then there is a unitary $v_1\in e_1Be_1$ such that
\beq\label{Extbot1-7}
{\rm Bott}(\phi\circ \imath_{n_1, \infty}, v_1)|_{{\cal P}_{1,1}}=\af\circ{\bt}\circ[\imath_{n_1, \infty}]|_{({\cal P}_{1,1})}.
\eneq
 Note that $K_1(B_{2, n_1})$
 is a finite group.
Therefore,
\beq\label{Extbot1-8}
\af(\boldsymbol{\bt}([\imath_{n_1, \infty}])(K_1(B_{2, n_1}))\subset {\rm ker}\rho_B.
\eneq
Define $\kappa_1\in KK(B_{2, n_1}\otimes C(\T){{, A}})$ by
$\kappa_1|_{\underline{K}(B_{2, n_1})}=[\phi\circ \imath_{n_1, \infty}|_{B_{2, n_1}}]$ and
$\kappa_1|_{\boldsymbol{\bt}(\underline{K}(B_{2, n_1})})=
\af|_{ \boldsymbol{\bt}(\underline{K}(B_{2, n_1}))}.$
Since $\imath_{n_1, \infty}$ is injective, by (\ref{Extbot1-8}),
$\kappa_1\in KK_e(B_{2, n_1}\otimes C(\T), e_2Be_2)^{++}.$

Let
$$
\sigma_0=\min\{\gamma_{ 2,1}/2,  \min\{\Delta_1(\hat{h}): h\in {\cal H}_{2,1}\}\cdot \inf\{\tau(e_2): \tau\in T(A)\}.
$$
Define $\gamma_0: T(e_2Ae_2)\to T_f(B_{2,n_1}{{\otimes C(\T)}})$ by
$\gamma_0(\tau)(f\otimes 1_{C(\T)})=\tau\circ \phi\circ \imath_{n_1, \infty}(f)$ for all $f\in B_{2,n_1}$ and
$\gamma_0(1\otimes g)=\int_{\T}g(t)dt$ for all $g\in C(\T).$
It follows from \ref{Next2014/09}, {{applied to the space $X_{n_1}\times \T$,}} that there
is a unital monomorphism $\Phi: B_{2,n_1}\otimes C(\T)\to e_2Ae_2$
such that $[\Phi]=\kappa_1$ and $\Phi_T=\gamma_0.$  Put $L_2=\Phi|_{B_{2,n_1}}${{(identifying $B_{2,n_1}$ with ${ B_{2,n_1}\otimes 1_{C(\T)}}$)}}  and $v_2'=\Phi(1\otimes z),$
where $z\in C(\T)$ is the identity function on the unit circle.
Then $L_2$ is a unital monomorphism  from $ B_{2, n_1}$ to $ e_2Ae_2.$ We also have the following facts:
\beq\label{Extbot-9}
&&[L_2]=[\phi\circ \imath_{n_1, \infty}],\,\,\,\|[L_2(f),\, v_2']\|=0,\\
&&{\rm Bott}(L_2,\, v_2')|_{{\cal P}_{2,2}}=\af(\boldsymbol{\bt}( [\imath_{n_1, \infty}]))|_{{\cal P}_{2,2}},\andeqn\\\label{Extbot-9++}
&&|\tau\circ L_2(f)-\tau\circ \phi\circ \imath_{n_1, \infty}(f)|=0 \tforal f\in {\cal H}_{2,1}\cup {\cal H}_{2,2}
\eneq
and for all $\tau\in T(e_2Ae_2).$
It follows from (\ref{Extbot-9++}) that
\beq\label{Extbot-10}
\tau(L_2(f))\ge \Delta_1(\hat{f})\cdot \tau(e_2)
\tforal f\in {\cal H}_{2,1}\andeqn \tau\in  T(A).
\eneq
By Theorem 12.7 of \cite{GLN-I} 
(see also Corollary 12.8 of \cite{GLN-I}), 
there exists a unitary $w\in e_2Ae_2$ such that
\beq\label{Extbot-11}
\|{\rm Ad}\, w\circ L_2(f)-\phi\circ \imath_{n_1, \infty} (f)\|<\ep_2/16\tforal  f\in {\cal F}_{2,1}.
\eneq
Define $v_2=w^*v_2'w.$
Then, for all $f\in {\cal F}_{2,1}$,
\beq\label{Extbot-12}
\|[\phi\circ \imath_{n_1,\infty}(f),\,v_2]\|<\ep_2/8\andeqn
{\rm Bott}(\phi\circ \imath_{n_1, \infty},\, v_2)|_{{\cal P}_{2,1}}=
\af(\boldsymbol{\bt}([\imath_{n_2, \infty}]))|_{{\cal P}_{2,1}}.
\eneq

Note that $B_{3, n_1}$ has the form $C$ of Theorem 12.7 of \cite{GLN-I}. 
  Let ${\cal H}_{3,1}\subset (B_{3, n_1})_+^{\bf 1}\setminus \{0\}$
(in place of ${\cal H}_1$),
$\gamma_{3,1}>0$ (in place of $\gamma_1$),
$\dt_{3,1}>0$ (in place of $\dt$), ${\cal G}_{3,1}\subset B_{3, N_1}$ (in place of ${\cal G}$),
${\cal P}_{3,2}\subset \underline{K}(B_{3,n_1})$ (in place of ${\cal P}$),
and ${\cal H}_{3,2}\subset
(B_{3, n_1})_{s.a.}$ (note that $K_1(B_{3, n_1})=\{0\}$) be  constants and
finite subsets as provided by Theorem  12.7 of \cite{GLN-I} 
for $\ep_2/16,$ ${\cal F}_{3,1},$ and $\Delta_2$ (see also Corollary 12.8 of \cite{GLN-I}). 

Let
$$
\sigma_1=(\gamma_{3,1}/2)\min\{\tau(e_3): \tau\in T(A)\}\cdot \min\{ \Delta_1(\hat{f}): f\in {\cal H}_{3,1}\}.
$$

Note that ${\rm ker}\rho_{B_{3, n_1}}=\{0\}$ and $K_1(B_{3, n_1})=\{0\}$ (see Proposition 3.5 of \cite{GLN-I})
Therefore, ${\rm ker}\rho_{B_{3, n_1}\otimes C(\T)}={\rm ker}\rho_{B_{3, n_1}}=\{0\}.$
Define
$\kappa_2\in KK(B_{3, n_1}\otimes C(\T), A)$ as follows:
$$
\kappa_2|_{\underline{K}(B_{3, n_1})}=[\phi\circ \imath_{n_1, \infty}]|_{B_{3, n_1}}\andeqn
\kappa_2|_{\boldsymbol{\bt}(\underline{K}(B_{3, n_1})}=\af(\boldsymbol{\bt}(\imath_{n_1, \infty})|_{\underline{K}(B_{3, n_1})}.
$$
Thus $\kappa_2\in KK_e(B_{3,n_1}\otimes C(\T), e_3Ae_3)^{++}.$
It follows from \ref{CtimesText0} that there is  a unital ${\cal G}_{3,1}$-$\min\{\ep_2/16,\dt_{3,1}/2\}$-multiplicative
\cp\, $L_3: B_{3, n_1}\to e_3Ae_3$ and a unitary $v_3'\in e_3Ae_3$ such that
\beq\label{Extbot-13}
&&\hspace{-1.4in}[L_3]=[\phi],\,\,\, \|[L_3(f),\, v_3']\|<\ep_2/16\tforal f\in {\cal G}_{3,1},\\
&&\hspace{-1.4in}{\rm Bott}(L_3,\, v_3')|_{{\cal P}_{3,1}}=\kappa_2|_{\boldsymbol{\bt}({{\cal P}_{3,2}})},\andeqn\\\label{Extbot-13++}
&&\hspace{-1.4in}|\tau\circ L_3(f)-\tau\circ \phi\circ \imath_{n_1, \infty}(f)|<\sigma_1\tforal f\in {\cal H}_{3,1}\cup {\cal H}_{3,2}
\eneq
and
 for all $ \tau\in T(e_3Ae_3).$
It follows that  (\ref{Extbot-13++}) that
\beq\label{Extbot-14}
\tau(L_3(f))\ge \Delta_1(\hat{f})\tau(e_3)\tforal f\in {\cal H}_{3,1}\andeqn \tforal \tau\in T(A).
\eneq
It follows from Theorem 12.7 of \cite{GLN-I}, 
and its corollary (see part (2) of Corollary 12.8 of \cite{GLN-I}), that there exists a unitary $w_1\in e_3Ae_3$ such that
\beq\label{Extbot-15}
\|{\rm Ad}\, w_1\circ L_2(f)-\phi\circ \imath_{n_1, \infty}(f)\|<\ep_2/16\tforal f\in {\cal F}_{3,1}.
\eneq
Define $v_3=w_1^*v_3'w_1.$ Then
\beq\label{Extbot-16}
\|[\phi\circ \imath_{n_1, \infty}(f), \, v_3]\|<\ep_2/8\andeqn {\rm Bott}(\phi\circ \imath_{n_1, \infty},\, v_3)|_{{\cal P}_{3,1}}
={\rm Bott}(L_3,\, v_3')|_{{\cal P}_{3,1}}.
\eneq
Let $v=v_1+v_2+v_3.$ Then
\beq\label{Extbot-17}
\|[\phi(f), \, v]\|<\ep\tforal f\in {\cal F}.
\eneq
Moreover, we compute that
\beq\label{Extbot-18}
{\rm Bott}(\phi, \, v)|_{\cal P}=\af|_{\boldsymbol{\bt}({\cal P})}.
\eneq
\end{proof}

We have actually  proved the following result:
\begin{lem}\label{Extbot2}
Let  $A=A_1\otimes U_1,$ where $A_1$ {{is}}  as in Theorem 14.10 of \cite{GLN-I}
and $B=B_1\otimes U_2,$ where $B_1\in {\cal B}_0$ is a  unital simple \CA\, and where $U_1, U_2$ are two UHF-algebras of infinite type. Write
$A=\lim_{n\to\infty} (C_n, \imath_n)$ as described in Theorem 14.10 of \cite{GLN-I}.
For any $\ep>0,$ any finite subset ${\cal F}\subset A,$ and any finite subset ${\cal P}\subset \underline{K}(A),$ there exists an integer $n\ge 1$
such that ${\cal P}\subset [\imath_{n,\infty}](\underline{K}(C_n)) $
and  there is a finite subset
${\cal Q}\subset K_1(C_n)$ which generates $K_1(C_n)$
and there exists $\dt>0$ satisfying the following condition:
Let $\phi: A\to B$ be a unital \hm\, and let $\af\in KK(C_n\otimes C(\T), B)$ such that
\begin{equation*}
|\tau\circ \rho_B(\af(\boldsymbol{\bt}(x)))|<\dt \tforal x\in {\cal Q}\tand \tforal \tau\in T(B).
\end{equation*}
Then there exists a unitary $u\in B$ such that
\begin{equation*}
\|[\phi(x), \, u]\|<\ep \tforal x\in {\cal F}
\andeqn
{\rm Bott}(\phi\circ [\imath_{n,\infty}], u)=\af(\boldsymbol{\bt}).
\end{equation*}
\end{lem}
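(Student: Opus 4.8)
The plan is to observe that this statement has, in essence, already been proved in the course of proving Lemma \ref{Extbot1}; only the packaging differs. In that proof the constants $\dt$ and the finite set ${\cal Q}$ are extracted \emph{before} the homomorphism $\phi$ and the class $\af$ enter, and the whole construction takes place at a single finite stage $C_{n_1}$ of the system $A=\lim_{n\to\infty}(C_n,\imath_n)$. So, given $\ep>0$, ${\cal F}\subset A$ and ${\cal P}\subset\underline{K}(A)$, I would first run the opening of that argument to produce the integer $n:=n_1$ with ${\cal F}\subset\imath_{n,\infty}(C_n)$ and ${\cal P}\subset[\imath_{n,\infty}](\underline{K}(C_n))$, take ${\cal Q}\subset K_1(C_n)$ to be the finite generating set ${\cal Q}'$ appearing there, and let $\dt>0$ be the number built there (a minimum of the $\dt$ coming from Corollary \ref{Vpair2+1} applied to the circle-algebra block $B_{1,n}$, suitably scaled by $\inf\{\tau(e_1^i)/m(i,n)\}$, together with the $\gamma$'s produced by Theorem 12.7 of \cite{GLN-I} applied to the blocks $B_{2,n}$ and $B_{3,n}$). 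Crucially none of $n$, ${\cal Q}$, $\dt$ depends on $\phi$ or on $\af$.

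Next, given a unital $\phi:A\to B$ and $\af\in KK(C_n\otimes C(\T),B)$ meeting the displayed smallness requirement on ${\cal Q}$, I would decompose $C_n=B_{1,n}\oplus B_{2,n}\oplus B_{3,n}$ and repeat the three block-constructions of the proof of Lemma \ref{Extbot1} essentially verbatim, now feeding in $\af$ itself in place of the restriction to the $n_1$-th stage of the $KL$-element there --- this is legitimate because $\underline{K}(C_n\otimes C(\T))$ is finitely generated modulo Bockstein operations, so $KK(C_n\otimes C(\T),B)=KL(C_n\otimes C(\T),B)$ is determined on a finitely generated subgroup. This produces unitaries $v_1\in e_1Be_1$ (via Corollary \ref{Vpair2+1}), $v_2\in e_2Be_2$ (via Lemma \ref{Next2014/09} applied to $X_n\times\T$) and $v_3\in e_3Be_3$ (via Lemma \ref{CtimesText0}), each almost commuting with $\phi\circ\imath_{n,\infty}$ on the relevant finite set and realizing the prescribed Bott data on finite subsets ${\cal P}_{1,1}$, ${\cal P}_{2,2}$, ${\cal P}_{3,2}$ that \emph{generate} $\underline{K}$ of the respective blocks (modulo Bockstein). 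Putting $u=v_1+v_2+v_3$ and comparing exactly as in Lemma \ref{Extbot1}, one gets $\|[\phi(x),u]\|<\ep$ for $x\in{\cal F}$ and ${\rm Bott}(\phi\circ[\imath_{n,\infty}],u)=\af(\boldsymbol{\bt})$ as maps on $\underline{K}(C_n\otimes C(\T))$, since the two sides agree on a generating set and $K_*(C_n)$ is finitely generated.

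The only point that is even formally new relative to Lemma \ref{Extbot1} is the upgrade from the statement ${\rm Bott}(\phi,u)|_{\cal P}=\af(\boldsymbol{\bt})|_{\cal P}$ to the equality of the full Bott maps, and this is exactly why the lemma is worth isolating; it costs nothing because each block was already handled with a generating finite subset rather than just with the part of ${\cal P}$ landing inside it. I expect no substantive obstacle: the whole content is the bookkeeping of which finite sets and constants in the proof of Lemma \ref{Extbot1} are functions of $(\ep,{\cal F},{\cal P})$ alone (together with the resulting $n$) --- namely $n$, ${\cal Q}$ and $\dt$ --- and which are functions of the downstream data $(\phi,\af)$, and verifying that the three block arguments go through with an arbitrary $KK$-class $\af$ in place of the particular one used there.
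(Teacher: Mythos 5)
Your proposal matches the paper's own treatment: the paper presents Lemma \ref{Extbot2} as having "actually been proved" in the course of Lemma \ref{Extbot1}, adding only the remark that, since $K_i(B_{j,n})$ is finitely generated, the finite sets ${\cal P}_{j,1}$ may be taken to generate $\underline{K}$ of each block up to the Dadarlat--Loring bound $N_0$ on torsion coefficients, so agreement on them yields the full Bott map. Your bookkeeping of which data ($n$, ${\cal Q}$, $\dt$) depend only on $(\ep,{\cal F},{\cal P})$ and your use of $KK=KL$ for the finite stage are exactly the points the paper relies on, so the argument is correct and essentially identical.
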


\begin{proof}
Note that, in the proof of Lemma \ref{Extbot1},
$K_i(B_{j,1})$ is finitely generated, $i=0,1,$ and $j=1,2,3.$
Then $KK(B_{j,1}, A)=KL(B_{j,1},B)$ (for any unital \CA\, $B$), $j=1,2.$
Moreover (see \cite{DL}),  there exists an integer $N_0>1$ such that elements in
${\rm Hom}_{\Lambda}(\underline{K}(B_{j,1}), \underline{K}(B))$
are  determined  by their restrictions to $K_i(B_{j,1})$ and
$K_i(B_{j,1},\Z/m\Z),$ $m=2,3,...,N_0.$ In particular, we may assume, in the proof
of  \ref{Extbot1}, that ${\cal P}_{j,1}$ generates $K_i(B_{j,1})\oplus \bigoplus_{m=2}^{N_0} K_i(B_{j,1}, \Z/m\Z),$
$j=1,2,3.$
\end{proof}
\begin{rem}\label{RemB2} Note that, in the statement above, if an integer $n$ works, any integer $m\ge n$ also
works.
In the terminology of Definition 3.6 of \cite{L-N}, the statement above also implies that
$B$ has properties (B1) and (B2) associated with $C.$
\end{rem}

\begin{cor}\label{L86}
Let $B\in {\cal B}_{{0}},$
let $A_1\in {\cal B}_{{0}},$ let $C=B\otimes U_1,$ and let $A=A_1\otimes U_2,$ where $U_1$ and $U_2$ are
UHF-algebras of infinite type. Suppose that $B$ satisfies the UCT and suppose that $\kappa\in KK_e(C,A)^{++},$
$\gamma: T(A)\to T(C)$ is a continuous affine map, and $\af: U(C)/CU(C)\to
U(A)/CU(A)$ is a continuous \hm\, for which $\gamma,\, \af,$ and $\kappa$ are compatible. Then, there exists a unital monomorphism $h: C\to A$ such that
\begin{enumerate}
\item $[h]=\kappa$ in $KK_e(C,A)^{++}$,
\item $h_T=\gamma$ and $h^{\ddag}=\af.$
\end{enumerate}
\end{cor}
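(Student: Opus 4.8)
The plan is to reduce Corollary \ref{L86} to Lemma \ref{L85} by promoting the $KL_e$-statement to a $KK_e$-statement. Recall that Lemma \ref{L85} already gives, under exactly the same compatibility hypotheses but with $\kappa\in KL_e(C,A)^{++}$, a unital monomorphism $\phi_0: C\to A$ with $[\phi_0]=\kappa$ in $KL_e(C,A)^{++}$, $(\phi_0)_T=\gamma$ and $\phi_0^{\ddag}=\af$. So the only thing to upgrade is the $K$-theory datum: we must modify $\phi_0$ (by an approximately inner perturbation, so that $\phi_T$ and $\phi^{\ddag}$ are undisturbed) to a genuine homomorphism $h$ whose class in $KK(C,A)$ equals the prescribed $\kappa$, not merely its image in $KL(C,A)$. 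The difference $\kappa-[\phi_0]$ lies in the kernel of $KK(C,A)\to KL(C,A)$, i.e. in $\mathrm{Pext}(K_*(C),K_*(A))$; since $B\in{\cal B}_1$ satisfies the UCT and $C=B\otimes U_1\in{\cal B}_0$, this pure extension group is controlled by $\varprojlim^1$ of the $K$-theory of the inductive system writing $C$ as a limit of building blocks.

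The key steps, in order, would be: (1) write $C=\varinjlim(C_n,\phi_{n,n+1})$ with each $C_n$ a finite direct sum of algebras in ${\cal C}_0$ and ${\bf H}$, as recalled in Remark \ref{Remark202011-1}; then $KK(C,A)=\varprojlim KK(C_n,A)$ up to a $\varprojlim^1$ term, and $KL(C,A)=\varprojlim KL(C_n,A)$, so $\kappa$ and $[\phi_0]$ are determined on each $C_n$ up to $KL$. (2) Apply Lemma \ref{L85} to obtain $\phi_0$ with $[\phi_0]=\kappa$ in $KL_e(C,A)^{++}$, $(\phi_0)_T=\gamma$, $\phi_0^{\ddag}=\af$. (3) For the building-block summands that are in ${\bf H}$ or of the form $PM_n(C(X))P$, $K_*$ is finitely generated, so $KK$ and $KL$ agree on them and there is nothing to fix; the genuine obstruction is only carried by the ${\cal C}_0$-summands and the torsion-pairing data across the inductive system. (4) Use the Bott-element existence machinery of this section—specifically Lemma \ref{Extbot1} / Lemma \ref{Extbot2}, which produce a unitary $u\in A$ with $\|[\phi_0(x),u]\|<\ep$ on a finite set and $\mathrm{Bott}(\phi_0,u)$ prescribed—to adjust the $KK$-class by precisely the element $\kappa-[\phi_0]$ realized through a $\boldsymbol\bt$ of the appropriate map, while keeping the tracial and determinant data fixed (the hypothesis of Lemma \ref{Extbot1} that $\tau\circ\rho_B\circ\af\circ\boldsymbol\bt$ is small is automatic here because the correcting element lies in $\ker\rho$). (5) Feed this through an Elliott-type intertwining (Theorem 12.7 of \cite{GLN-I}, as used repeatedly in \ref{istBotC++} and \ref{L85}) with $\Delta(h)=\inf\{\gamma(\tau)(\phi_{n,\infty}(h)):\tau\in T(A)\}$ to pass from the sequence of approximate corrections to an actual unital monomorphism $h: C\to A$ with $[h]=\kappa$ in $KK_e(C,A)^{++}$, $h_T=\gamma$, $h^{\ddag}=\af$.

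Alternatively, and probably cleaner, one observes that the whole argument of Lemma \ref{L85} already works at the $KK$-level provided the building-block existence theorems it invokes—Corollary \ref{istBotC++} (for the ${\bf H}$-summands) and Corollary \ref{CorCinj} (for the ${\cal C}_0$-summands)—produce homomorphisms realizing a prescribed $KK_e^{++}$-element rather than only a $KL_e^{++}$-element. For ${\cal C}_0$-building blocks $K_*$ is finitely generated and torsion free (see Proposition 3.5 of \cite{GLN-I}), so $KK=KL$ there and \ref{CorCinj} already delivers the $KK$-statement; for ${\bf H}$-building blocks $K_*$ is again finitely generated, so the maps of \ref{istBotC++} realize a genuine $KK$-class. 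Hence each $\psi_n: C_n\to A$ in the proof of \ref{L85} can be taken with $[\psi_n]=\kappa\circ[\phi_{n,\infty}]$ in $KK$, and the compatibility of the maps $[\psi_{n+1}\circ\phi_{n,n+1}]=[\psi_n]$ together with $KK(C,A)=\varprojlim KK(C_n,A)$ (the $\varprojlim^1$ term vanishing is not needed—we only need that a compatible family of $KK$-classes that are each the restriction of the fixed $\kappa$ determines the $KK$-class of the limit map, which follows from continuity of $KK$ on the specific inductive limit at hand) forces $[h]=\kappa$ in $KK_e(C,A)^{++}$ for the limit homomorphism $h=\lim\mathrm{Ad}\,u_n\circ\psi_n$.

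The main obstacle I anticipate is precisely the passage from the compatible family $\{[\psi_n]\}$ to the assertion $[h]=\kappa$ \emph{in $KK$}, rather than only in $KL$: one must be careful that the intertwining construction does not secretly introduce a nonzero $\mathrm{Pext}$-correction, i.e. that the unitaries $u_n$ from Theorem 12.7 of \cite{GLN-I} used to glue the $\psi_n$ together act trivially on $KK$—this is fine because conjugation by a unitary is $KK$-trivial, but it must be stated. Equally, one must check that the two existence corollaries \ref{istBotC++} and \ref{CorCinj} genuinely output $KK$-classes (which they do, since their inputs are $KL_e^{++}$-elements over building blocks with finitely generated $K$-theory, where $KK=KL$); once that is granted, the rest is the verbatim intertwining argument of \ref{L85} with ``$KL$'' replaced by ``$KK$'' throughout, and the continuity of $KK$ on the inductive limit $C=\varinjlim C_n$ supplies the final identification $[h]=\kappa$.
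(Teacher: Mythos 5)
Your first route starts exactly as the paper does: apply Lemma \ref{L85} to get a unital monomorphism $\phi$ with $[\phi]=\overline{\kappa}$ in $KL$, $\phi_T=\gamma$, $\phi^{\ddag}=\af$, and note that $\kappa-[\phi]\in\mathrm{Pext}(K_*(C),K_{*+1}(A))$ by the UCT. But the crux — correcting the $KK$-class by precisely this Pext element while leaving $\phi_T$ and $\phi^{\ddag}$ untouched — is exactly what you do not carry out. Your steps (4)--(5) gesture at Lemma \ref{Extbot1}/\ref{Extbot2} and an intertwining via Theorem 12.7 of \cite{GLN-I}, but Theorem 12.7 is a uniqueness theorem controlling only $KL$-, trace- and $U/CU$-data; gluing stage-wise Bott corrections with unitaries from it gives no control of the $KK$-class of the limit map beyond its image in $KL$. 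The nontrivial content is the mapping-telescope computation showing that a suitable sequence of nearly commuting unitaries with prescribed $\mathrm{Bott}$ data assembles into an \emph{approximately inner} monomorphism $\psi_0:A\to A$ with $[\psi_0\circ\phi]-[\phi]$ equal to the prescribed element of $\mathrm{Pext}$; this is Theorem 3.17 of \cite{L-N}, which the paper invokes after verifying its hypotheses — properties (B1) and (B2) of $A$ associated with $C$ (via Lemmas \ref{Extbot2} and \ref{Vpair2}), approximate divisibility, and the existence of $b\in A$ with $\mathrm{sp}(b)=[0,1]$ (via \cite{AS}). Without either reproducing that argument or citing it, your proof has a genuine gap at its central step. (Your parenthetical that the smallness hypothesis in \ref{Extbot1} is ``automatic because the correcting element lies in $\ker\rho$'' also needs to be argued, not asserted.)

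Your ``cleaner'' alternative is not a repair but an error: it rests on ``continuity of $KK$ on the specific inductive limit at hand,'' which fails — the failure of $KK$ to commute with inductive limits is precisely why $KL$ exists and why $\mathrm{Pext}$ appears. By the Milnor sequence, a compatible family $[\psi_n]=\kappa\circ[\phi_{n,\infty}]$ in $KK(C_n,A)$ (each $C_n$ having finitely generated $K$-theory, so $KK=KL$ there, as you say) determines the class of the limit homomorphism only modulo the $\varprojlim^1$ term, which is exactly $\mathrm{Pext}(K_*(C),K_{*+1}(A))$; so running the proof of Lemma \ref{L85} ``with $KL$ replaced by $KK$'' proves nothing more than Lemma \ref{L85} itself. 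The statement you need is that $[h]=\kappa$ in $KK$, and that ambiguity is the whole point of the corollary.
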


\begin{proof}
The proof follows the same lines as that of Theorem 8.6 of \cite{Lnclasn}, following
the proof of Theorem 3.17 of \cite{L-N}. Denote by $\overline{\kappa}\in KL(C, A)$
 the image of $\kappa$. It follows from Lemma \ref{L85} that there is a unital monomorphism $\phi: C\to A$ such that
$$[\phi]=\overline{\kappa}, \quad \phi^\ddag=\alpha,\quad\textrm{and}\quad (\phi)_T=\gamma.$$
Note that it follows from the UCT that (as an element  of $KK(C,A)$)
$$\kappa-[\phi]\in\textrm{Pext}(K_*(C), K_{*+1}(A)).$$
By Lemmas \ref{Extbot2} (see also Remark \ref{Remark202011-1}) and \ref{Vpair2}, the \CA\,  $A$ has Property (B1) and Property (B2) associated with C
in the sense of \cite{L-N}.
Since $A$ contains a unital copy of $U_2,$  it is infinite dimensional, simple and antiliminal. It follows
from a result in \cite{AS} that $A$ contains an element $b$ with $sp(b)=[0,1].$ Moreover,
$A$ is approximately divisible.
It follows from Theorem 3.17 of \cite{L-N}
that  there is a unital monomorphism $\psi_0: A\to A$ which is approximately inner and such that
$$[\psi_0\circ\phi]-[\phi]=\kappa-[\phi]\quad\textrm{in $KK(C, A)$}.$$ Then the map
$$h:=\psi_0\circ\phi$$
satisfies the requirements of the corollary.
\end{proof}

\begin{lem}\label{Extbot3}
Let  $A=A_1\otimes U_1,$ where $A_1$ is as in Theorem 14.10 of \cite{GLN-I}
and $B=B_1\otimes U_2,$ where $B_1\in {\cal B}_{{0}}$ {{is}} a unital simple \CA\, and where $U_1, U_2$ are two UHF-algebras of infinite type. Let
$A=\lim_{n\to\infty} (C_n, \imath_n)$ be as described in Theorem 14.10 of \cite{GLN-I},
For any $\ep>0,$ any $\sigma>0,$ any finite subset ${\cal F}\subset A,$ any finite subset ${\cal P}\subset \underline{K}(A),$
and any projections $p_1, p_2,...,p_k,q_1,q_2,...,q_k\in A$
such that $\{x_1, x_2,...,x_k\}$
generates a free abelian subgroup  $G$ of $K_0(A),$  where $x_i=[p_i]-[q_i],$ $i=1,2,...,k,$
there exists an integer $n\ge 1$
such that ${{x_j\in}} {\cal P}\subset [\imath_{n,\infty}](\underline{K}(C_n)) $
and  there is a finite subset
${\cal Q}\subset K_1(C_n)$ which generates $K_1(C_n)$
and there exists $\dt>0$ satisfying the following condition:
Let $\phi: A\to B$ be a unital \hm,  let
$\Gamma: G\to {{U}}(B)/CU(B)$ be  a \hm\, and  let $\af\in KK(C_n\otimes C(\T), B)$
 such that
\beq
&&{{\af(\boldsymbol{\bt}(g)))=\kappa_1^B(\Gamma(g))\rforal g\in {\imath_{n, \infty}}_{*0}^{-1}(G)\tand}}\\
&&|\tau\circ \rho_B(
\af(\boldsymbol{\bt}(x)))|<\dt \tforal x\in {\cal Q}\tand \tforal \tau\in T(B).
\eneq
Then there exists a unitary $u\in B$ such that
\begin{equation*}
\|[\phi(x), \, u]\| < \ep \tforal x\in {\cal F},\,\,\,
{\rm Bott}(\phi\circ [\imath_{n,\infty}], u) = \af(\boldsymbol{\bt}),
\end{equation*}
and
\begin{equation*}
{\rm dist}(\overline{\langle ((1-\phi(p_i))+\phi(p_i)u)((1-\phi(q_i))+\phi(q_i)u^*)\rangle}, \Gamma(x_i))<\sigma,\,i=1,2,...,k.
\end{equation*}
\end{lem}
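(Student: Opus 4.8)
The plan is to combine the two existence tools already established: Lemma~\ref{Extbot2}, which produces a unitary $u$ realizing a prescribed $\af(\boldsymbol{\bt})$ on $K$-theory up to a small trace condition, and Theorem~\ref{BB-exi+} (or rather its proof), which handles the $U(B)/CU(B)$-valued data $\Gamma$ via the de la Harpe--Skandalis determinant. The point of the present lemma is that the $KK$-datum $\af$ and the determinant datum $\Gamma$ have already been made \emph{compatible} on the free group $G$ by the hypothesis $\af(\boldsymbol{\bt}(g))=\kappa_1^B(\Gamma(g))$; so the two constructions can be carried out separately on complementary ``pieces'' and then glued, with the compatibility ensuring no conflict on the overlap $G$.

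Concretely, I would first invoke Lemma~\ref{Extbot2} to fix the integer $n$, the finite subset ${\cal Q}\subset K_1(C_n)$, and $\dt_0>0$ so that, given the trace smallness of $\af(\boldsymbol{\bt}(x))$ for $x\in{\cal Q}$, there is a unitary $u_0\in B$ with $\|[\phi(x),u_0]\|$ small on ${\cal F}$ and ${\rm Bott}(\phi\circ[\imath_{n,\infty}],u_0)=\af(\boldsymbol{\bt})$. This already gives conclusions (1) and (2). For conclusion (3) I would then modify $u_0$ by a unitary $w$ that is $\ep/2$-close to commuting with $\phi({\cal F})$ and has ${\rm Bott}(\phi,w)|_{\cal P}=0$ but moves the determinants $\overline{\langle((1-\phi(p_i))+\phi(p_i)u)((1-\phi(q_i))+\phi(q_i)u^*)\rangle}$ to the prescribed $\Gamma(x_i)$; such a $w$ exists by Theorem~\ref{BB-exi+} applied with target homomorphism $\Z^k\to U_0(M_n(B))/CU(M_n(B))$ equal to the ``defect'' $\Gamma(x_i)$ times the inverse of the determinant already produced by $u_0$. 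The crucial observation is that this defect lands in $U_0/CU$: the difference of two lifts of the same $KK$-class $\af(\boldsymbol{\bt}(x_i))=\kappa_1^B(\Gamma(x_i))$ has vanishing class in $K_1(B)$, hence is in the connected component, so Theorem~\ref{BB-exi+} (which requires the target in $U_0/CU$) applies. Setting $u=w^*u_0w$ (or $uw$, absorbing the commutator estimate), one checks (1), (2), (3) by the usual perturbation bookkeeping, shrinking $\ep$ and enlarging ${\cal F},{\cal P}$ at the outset so that ${\rm Bott}$ is additive under these small perturbations.

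The main obstacle I anticipate is the careful tracking of the $U(B)/CU(B)$-valued bookkeeping: one must verify that multiplying $u_0$ by $w$ changes the determinant $\overline{\langle\cdots\rangle}$ \emph{additively} (in $U_0(M_n(B))/CU(M_n(B))$) by exactly ${\rm Bott}$-type data controlled by $w$, so that the defect can indeed be cancelled. This is the content of the computation in \eqref{july19-1} together with the relation between $\overline{\langle((1-\phi(p))+\phi(p)\tilde u)((1-\phi(q))+\phi(q)\tilde u^*)\rangle}$ and ${\rm bott}_1(\phi(p),u)$ vs.\ ${\rm bott}_1(\phi(q),u)$; one needs the analogue, in the $CU$-quotient, of the statement that these winding-number/determinant contributions add when $u$ is replaced by a product. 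The secondary technical point is the integer-$n$ juggling: Lemma~\ref{Extbot2} fixes $n$ first, and one must make sure that $x_j\in{\cal P}\subset[\imath_{n,\infty}](\underline{K}(C_n))$ can be arranged simultaneously with the generating set ${\cal Q}\subset K_1(C_n)$; by Remark~\ref{RemB2} any larger $n$ also works, so one simply takes $n$ large enough to contain both ${\cal P}$ (enlarged to include the $x_j$) and a generating set of $K_1(C_n)$, and then defines $\dt$ as a suitable minimum of the $\dt$ coming from Lemma~\ref{Extbot2} and the $\sigma$-dependent constant coming from Theorem~\ref{BB-exi+}. With these arranged, the proof is a matter of assembling the two lemmas and doing the routine $\ep/2+\ep/2$ estimates.
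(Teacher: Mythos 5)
Your proposal is correct and follows essentially the same route as the paper: apply Lemma~\ref{Extbot2} first to produce $u_1$ with the prescribed ${\rm Bott}(\phi\circ\imath_{n,\infty},u_1)=\af(\boldsymbol{\bt})$, observe that the compatibility hypothesis $\af(\boldsymbol{\bt}(g))=\kappa_1^B(\Gamma(g))$ forces the determinant ``defect'' $\Gamma_1(x_i)^{*}\Gamma(x_i)$ into $U_0(B)/CU(B)$, correct it by a unitary $v$ with ${\rm Bott}(\phi\circ\imath_{n,\infty},v)=0$ obtained from Theorem~\ref{BB-exi+}, and take $n$ large enough (Remark~\ref{RemB2}) to accommodate ${\cal P}$, the $x_j$, and a generating set of $K_1(C_n)$ simultaneously, exactly as the paper does. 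The only caveat is that the final unitary must be the product $u_1v$ (your parenthetical option), not the conjugate $w^{*}u_0w$, since conjugation by an almost-commuting unitary leaves the determinant classes essentially unchanged and would not cancel the defect.
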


\begin{proof}
This follows from Lemma \ref{Extbot2} and  Theorem \ref{BB-exi+}.
In fact,  for any $0<\ep_1<\ep/2$ and finite subset ${\cal F}_1\supset {\cal F},$
by \ref{Extbot2}, there exists
an integer $n\ge 1,$   a finite subset ${\cal Q}\subset K_1(C_n),$ and $\dt>0$ as described above, and a unitary $u_1\in U_0(B),$  such that
\begin{equation*}
\|[\phi(x), \, u_1]\|<\ep_1\tforal x\in {\cal F}_1
\end{equation*}
and
\beq\label{Add2021-n1}
{\rm Bott}(\phi\circ \imath_{n,\infty}, u_1)  = \af(\boldsymbol{\bt})|_{\cal P}.
\eneq

Choosing a smaller $\ep_1$ and a larger ${\cal F}_1,$ {{if}} necessary, we may assume that the class
$$\overline{\langle ((1-\phi(p_i))+\phi(p_i)u_1)((1-\phi(q_i))+\phi(q_i)u_1^*)\rangle}\in {{U(B)}}/CU(B)$$
is well defined for all $1\le i\le k.$
Define {{a}}  map $\Gamma_1: G\to {{U(B)}}/CU(B)$ by
\begin{equation}\label{Extbot2-4}
\Gamma_1(x_i)=\overline{\langle ((1-\phi(p_i))+\phi(p_i)u_1)(1-\phi(q_i))+\phi(q_i)u_1^*)\rangle},\quad i=1, 2, ..., k.
\end{equation}

%
Choosing a  large  enough $n,$ \wilog,    we may assume that there are projections $p_1', p_2',...,p_k',$
$q_1',q_2,'...,q_k'\in C_n$ such that $\imath_{n, \infty}(p_i')=p_i$ and
$\imath_{n, \infty}(q_i')=q_i,$ $i=1,2,...,k.$ 
 Moreover, we may assume that ${\cal F}_1\subset \imath_{n, \infty}(C_n).$

Let $\Gamma_2: G\to U_0(B)/CU(B)$ {{be defined}} by
$\Gamma_2(x_i)=\Gamma_1(x_i)^*\Gamma(x_i),$ $i=1,2,...,k$ (see also \eqref{Add2021-n1}).
It follows  by Theorem \ref{BB-exi+} that is a unitary $v\in U_0(B)$ such that
\beq\label{Extbot2-5}
\|[\phi(x), \, v]\|<\ep/2\rforal x\in {\cal F},\\
{\rm Bott}(\phi\circ \imath_{n,\infty}, v)=0,\andeqn\\
{\rm dist}(\overline{\langle ((1-\phi(p_i))+\phi(p_i)v)((1-\phi(q_i))+\phi(q_i)v^*)\rangle}, \Gamma_2(x_i))&<&\sigma,
\eneq
$i=1,2,...,k.$ Define $u=u_1v,$
\beq
X_i&=&\overline{\langle ((1-\phi(p_i))+\phi(p_i)u_1)((1-\phi(q_i))+\phi(q_i)u_1^*)\rangle},\andeqn\\
Y_i&=&\overline{\langle ((1-\phi(p_i))+\phi(p_i)v)((1-\phi(q_i))+\phi(q_i)v^*)\rangle},
\eneq
 $i=1,2,...,k.$
We then compute that
\beq
\label{Extbot2-6}
&&\|[\phi(x), \, u]\|<\ep_1+\ep/2<\ep\tforal x\in {\cal F},\\\nonumber
&&{\rm Bott}(\phi\circ \imath_{n,\infty}, u)={\rm Bott}(\phi\circ \imath_{n,\infty}, u_1)=\af(\boldsymbol{\bt}),\,\,\andeqn\\\nonumber
&&{\rm dist}(\overline{\langle ((1-\phi(p_i))+\phi(p_i)u)((1-\phi(q_i))+\phi(q_i)u^*)\rangle}, \Gamma(x_i))\\\nonumber
&\le & {\rm dist}(X_iY_i, \Gamma_1(x_i)Y_i)+{\rm dist}(\Gamma_1(x_i)Y_i, \Gamma(x_i))\\\nonumber
&=&{\rm dist}(X_i, \Gamma_1(x_i))+{\rm dist}(Y_i, \Gamma_2(x_i))< \sigma,
\eneq
for $i=1,2,...,k$.
\end{proof}

\section{Another Basic Homotopy Lemma}

\begin{lem}\label{densesp}
Let $A$  be a unital C*-algebra and let $U$ be an infinite dimensional  UHF-algebra. Then there is a unitary $w\in U$ such that for
any unitary $u\in A$, one has
\begin{equation}\label{densesp1909}
\tau(f(u\otimes w))=\tau(f(1_A\otimes w))=\int_{\T}f dm,\quad f\in\mathbb C(\T),\ \tau\in T(A\otimes U),
\end{equation}
where $m$ is normalized Lebesgue
 measure on $\T.$
Furthermore, for any $a\in A$  and $\tau\in T(A\otimes U)$, $\tau (a\otimes w^j)=0$ if $j\not=0$.
\end{lem}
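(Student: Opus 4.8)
The plan is to produce the unitary $w$ as an infinite product of ``Rokhlin-type'' unitaries coming from the UHF structure of $U$, each acting as a cyclic permutation with prescribed spectrum. First I would write $U=\varinjlim M_{k_n}$ with $k_{n+1}=m_n k_n$ and $m_n\to\infty$; inside the $n$-th block I would take a unitary $w_n\in M_{m_n}\subset M_{k_{n+1}}$ that is a full cycle on the $m_n$ standard basis vectors with eigenvalues the $m_n$-th roots of unity, i.e. $w_n$ is unitarily equivalent to $\mathrm{diag}(1,\zeta_{m_n},\zeta_{m_n}^2,\dots,\zeta_{m_n}^{m_n-1})$. The point of the cyclic-permutation realization is that $\mathrm{Tr}_{m_n}(w_n^j)=0$ for $0<|j|<m_n$, where $\mathrm{Tr}_{m_n}$ is the normalized trace on $M_{m_n}$; so the normalized trace of $w_n^j$ vanishes for all those $j$. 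Set $w=\prod_n (1\otimes w_n)$, interpreting the product in $U$ via the inductive limit (each new factor lives in a fresh tensor leg $M_{m_n}$), so $w$ is a genuine unitary in $U$.

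Next I would verify the moment condition. Fix $a\in A$, $\tau\in T(A\otimes U)$, and $j\neq 0$. Because $U$ is UHF with unique trace, $\tau=\tau_A\otimes\mathrm{tr}_U$ for the unique normalized trace $\mathrm{tr}_U$ (here I only need that $\tau$ restricted to $1_A\otimes U$ is a trace, hence the unique one, and that it factors). Then $\tau(a\otimes w^j)=\tau_A(a)\,\mathrm{tr}_U(w^j)$, and since $w^j=\prod_n(1\otimes w_n^j)$ with the factors in mutually commuting tensor legs, $\mathrm{tr}_U(w^j)=\prod_n \mathrm{Tr}_{m_n}(w_n^j)$. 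For any fixed $j\neq0$, once $m_n>|j|$ we have $\mathrm{Tr}_{m_n}(w_n^j)=0$, so the (finite, really) product is $0$; hence $\tau(a\otimes w^j)=0$. This is exactly the ``Furthermore'' clause.

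For the displayed identity \eqref{densesp1909}, I would argue that the moment vanishing forces the pushforward measure to be Lebesgue measure. For any unitary $u\in A$ and any $\tau\in T(A\otimes U)$, consider the state $\mu_\tau$ on $C(\mathbb T)$ given by $\mu_\tau(f)=\tau(f(u\otimes w))$; it is a positive normalized linear functional, hence a probability measure on $\mathbb T$. Its Fourier coefficients are $\widehat{\mu_\tau}(j)=\tau((u\otimes w)^j)=\tau(u^j\otimes w^j)$, which for $j\neq0$ equals $0$ by the previous paragraph (applied with $a=u^j$), and equals $1$ for $j=0$. A measure on $\mathbb T$ with all nonzero Fourier coefficients vanishing is Lebesgue measure $m$ (uniqueness of Fourier coefficients / Stone–Weierstrass density of trigonometric polynomials), so $\mu_\tau=m$; the same computation with $u=1_A$ gives the middle term. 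This yields $\tau(f(u\otimes w))=\int_{\mathbb T}f\,dm=\tau(f(1_A\otimes w))$ for all $f\in C(\mathbb T)$.

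\textbf{Main obstacle.} The only genuine subtlety is making the infinite product $w=\prod_n(1\otimes w_n)$ precise inside $U$ and checking that the trace of $w^j$ genuinely factors as the product of block traces; this is routine given the inductive-limit description of $U$ and the fact that a UHF algebra has a unique tracial state which is multiplicative across the tensor decomposition, but it should be spelled out. Everything else—realizing $w_n$ as a cyclic permutation with root-of-unity spectrum, the vanishing of its nonzero moments, and the Fourier-coefficient characterization of Lebesgue measure—is standard. One should also note that $m_n\to\infty$ is exactly what is needed so that for each fixed $j$ eventually $\mathrm{Tr}_{m_n}(w_n^j)=0$; any infinite-dimensional UHF algebra admits such a decomposition.
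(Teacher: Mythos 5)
Your overall strategy is the same as the paper's: produce a Haar unitary $w\in U$ (all nonzero moments of $\tau_U(w^j)$ vanish), use that every trace on $A\otimes U$ is a product trace to get $\tau(u^j\otimes w^j)=\tau(u^j\otimes 1)\tau_U(w^j)$, and then identify the resulting measure on $\T$ with Lebesgue measure via its Fourier coefficients. Those last two steps are fine and match the paper. The genuine gap is in the construction of $w$ itself: the infinite product $w=\prod_n(1\otimes w_n)$, with each $w_n\in M_{m_n}$ a cycle whose spectrum is the full set of $m_n$-th roots of unity, does not exist in $U$. The partial products $W_N=w_1\otimes\cdots\otimes w_N\otimes 1\otimes\cdots$ are honest unitaries in the finite stages, but $\|W_{N+1}-W_N\|=\|w_{N+1}-1_{m_{N+1}}\|\to 2$ (the spectrum of $w_{N+1}$ reaches points near $-1$), so the sequence is not Cauchy; since the C*-inductive limit $U$ contains only norm limits of elements of the finite stages, there is no element of $U$ that the symbol $\prod_n(1\otimes w_n)$ denotes. (It does not help to pass to a weak limit: in the trace-GNS representation one has $\tau_U(W_N^*W_M)=0$ for $N<M$, so $W_N\to 0$ weakly, not to a unitary.) Thus the central object of your proof is undefined, and the moment computation $\mathrm{tr}_U(w^j)=\prod_n\mathrm{Tr}_{m_n}(w_n^j)$ has nothing to apply to.

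The repair is not hard but is a different construction. Writing $U=\varinjlim (M_{k_n},\imath_n)$ with $k_n\mid k_{n+1}$ and $k_n\to\infty$, take $v_n\in M_{k_n}$ diagonal with spectrum the $k_n$-th roots of unity, each simple; under $\imath_n$ each eigenvalue of $v_n$ is repeated $k_{n+1}/k_n$ times, and one may choose $v_{n+1}$ diagonal with the $k_{n+1}$-th roots of unity distributed so that $\|v_{n+1}-\imath_n(v_n)\|\le 2\pi/k_n$. After passing to a subsequence so that $\sum_n 1/k_n<\infty$, the $v_n$ converge in norm to a unitary $w\in U$, and since $\mathrm{tr}_{k_n}(v_n^j)=0$ for $0<|j|<k_n$ while $|\tau_U(w^j)-\mathrm{tr}_{k_n}(v_n^j)|\le |j|\,\|w-v_n\|$, one gets $\tau_U(w^j)=0$ for all $j\ne 0$. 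This is exactly the ``can be constructed directly'' route the paper alludes to (its alternative is to embed a strictly ergodic Cantor system crossed product into $U$ and use its canonical unitary). With $w$ so obtained, the rest of your argument — factorization of traces on $A\otimes U$ through the unique trace of $U$, vanishing of $\tau(a\otimes w^j)$ for $j\ne 0$, and the Fourier-coefficient identification of the spectral measure with $m$ — goes through verbatim and coincides with the paper's proof.
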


\begin{proof}
Denote by $\tau_U$ the unique trace of $U$. Then any trace $\tau\in T(A\otimes U)$ is a product trace, i.e.,
$$\tau(a\otimes b)=\tau(a\otimes 1)\otimes\tau_U(b),\quad a\in A, b\in U.$$

Pick a unitary $w\in U$ such that the spectral measure of $w$ is
Lebesgue measure (a Haar unitary). Such a unitary always exists. (It can be constructed directly; or, one can consider a strictly ergodic Cantor system $(\Omega, \sigma)$ such that $K_0(\mathrm{C}(\Omega)\rtimes_\sigma\Z)\cong K_0(U).$   One then notes that the canonical unitary in $\mathrm{C}(\Omega)\rtimes_\sigma\Z$ is a Haar unitary. Embedding $\mathrm{C}(\Omega)\rtimes_\sigma\Z$ into $U$, one obtains a Haar unitary in $U.$)
Then one has, for each $n\in \Z,$
$$
\tau_U(w^n)=
\left\{
\begin{array}{ll}
1, & \textrm{if $n=0$},\\
0, & \textrm{otherwise}.
\end{array}
\right.
$$
Hence, for any $\tau\in T(A\otimes U)$, one has, for each $n\in \Z,$
$$\tau((u\otimes w)^n)=\tau(u^n\otimes w^n)=\tau(u^n\otimes 1)\tau_U(w^n)=
\left\{
\begin{array}{ll}
1, & \textrm{if $n=0$},\\
0, & \textrm{otherwise};
\end{array}
\right.
$$
and therefore,
$$\tau(P(u\otimes w))=\tau(P(1\otimes w))=\int_{\T}P(z)dm$$
for any polynomial $P$. Similarly, $\tau(P(u\otimes w)^*)=\int_TP({\bar z})dm$
for any polynomial $P.$
Since polynomials in $z$ and $z^{-1}$ are dense in $\mathrm{C}(\T)$, one has
$$\tau(f(u\otimes w))=\tau(f(1\otimes w))=\int_{\T}fdm,\quad f\in \mathrm{C}(\T),$$
as desired.
\end{proof}

\begin{lem}\label{Defuv}
Let $A$ be a unital separable amenable \CA\, and let
$L: A\otimes C(\T)\to B$ be a unital completely positive linear map, where $B$ is another
unital amenable \CA. Suppose that $C$ is a unital \CA\, and $u\in C$ is a
unitary. Then, there is  a unique  pair of
unital completely positive linear maps  $\Phi_1, \Phi_2: A\otimes C(\T)\to B\otimes C$
such that
\beq
&&\Phi_i|_{A\otimes 1_{C(\T)}}=\imath\circ L|_{A\otimes 1_{C(\T)}}\, (i=0,1)\,\,  \textrm{and}\quad \Phi_1(a\otimes z^j)=L(a\otimes z^j)\otimes u^j\tand\\
&&\Phi_2(a\otimes z^j)=L(a\otimes 1_{C(\T)})\otimes u^j
\eneq
for any $a\in A$ and any  integer $j,$ where $\imath: B\to B\otimes C$ is the standard inclusion.

Furthermore,
if $\dt>0$ and ${\cal G}\subset A\otimes C(\T)$ is a finite subset, there are  $\dt_1>0$ and finite set  ${\cal G}_1\subset A\otimes C(\T)$  (which do not  depend on $L$) such that  if
$L$ is {{${\cal G}_1$-$\dt_1$-}}multiplicative, then 
$\Phi_i$  
is ${\cal G}$-$\dt$-multiplicative.


\end{lem}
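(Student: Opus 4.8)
\textbf{Proof proposal for Lemma \ref{Defuv}.}

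The plan is to build $\Phi_1$ and $\Phi_2$ by composing $L$ (amplified) with a fixed $\ast$-homomorphism out of $B\otimes C(\T)$. First I would note that $A\otimes C(\T)$ is the universal unital C*-algebra generated by a copy of $A$ and a commuting unitary, so a unital completely positive map out of it is not automatically determined by its values on $A\otimes 1$ and on $1\otimes z$; this is precisely why the construction has to be done by hand. The key observation is that $L$ itself gives such data: restricting $L$ to $B\otimes C(\T)$ when we regard $A\otimes C(\T)=(A)\otimes C(\T)$, we want to ``twist'' the $C(\T)$-variable by $u$. Concretely, consider the unital $\ast$-homomorphism $\sigma_u\colon C(\T)\to C(\T)\otimes C$ determined by $z\mapsto z\otimes u$ (this exists because $z\otimes u$ is a unitary, and $C(\T)$ is the universal unital C*-algebra on one unitary), and the evaluation-type unital $\ast$-homomorphism $\varepsilon_u\colon C(\T)\to C$ determined by $z\mapsto u$. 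Then set
\[
\Phi_1=(L\otimes \mathrm{id}_{C})\circ (\mathrm{id}_{A}\otimes \sigma_u),\qquad
\Phi_2=(L\otimes \mathrm{id}_C)\circ\big(\text{(restriction to }A\otimes 1)\otimes \varepsilon_u\big),
\]
where the second composition means: on $A\otimes C(\T)$, first apply $\mathrm{id}_A\otimes(\text{quotient }C(\T)\to\C\text{ at the point }1)\otimes\text{then tensor }u^j$; more cleanly, $\Phi_2(a\otimes f)=L(a\otimes 1)\otimes \varepsilon_u(f)$ extended linearly and continuously. Both $\mathrm{id}_A\otimes\sigma_u$ and $\mathrm{id}_A\otimes(\cdot)\otimes\varepsilon_u$ are unital completely positive (indeed the first is a $\ast$-homomorphism, the second is a $\ast$-homomorphism composed with a conditional-expectation-like c.p.\ map), so $\Phi_1,\Phi_2$ are unital completely positive as composites of such maps with the c.p.\ map $L\otimes\mathrm{id}_C$. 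Checking the stated formulas on the generators $a\otimes z^j$ is then immediate from the definitions of $\sigma_u$ and $\varepsilon_u$, and the restriction to $A\otimes 1_{C(\T)}$ agrees with $\imath\circ L|_{A\otimes 1}$ by construction. Uniqueness holds because a completely positive (in particular bounded linear) map on $A\otimes C(\T)$ is determined on the dense $\ast$-subalgebra spanned by $\{a\otimes z^j: a\in A,\ j\in\Z\}$, and the two conditions pin down its values there.

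For the ``furthermore'' part, I would argue that multiplicativity on a finite set is preserved under composition with the fixed $\ast$-homomorphisms $\sigma_u,\varepsilon_u$, which are exactly multiplicative, so only the c.p.\ map $L\otimes\mathrm{id}_C$ can spoil it. Given $\delta>0$ and a finite set $\mathcal G\subset A\otimes C(\T)$, approximate each element of $\mathcal G$ within $\delta/4$ by a finite sum $\sum_j a_j\otimes z^j$; let $\mathcal G_0\subset A$ be the finite set of all the $a_j$ (together with their products $a_ja_k$ appearing when one multiplies two elements of $\mathcal G$) and let $\mathcal G_1=\{a\otimes z^j: a\in\mathcal G_0,\ |j|\le N\}$ for $N$ large enough to cover the exponents that occur. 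Then $(\mathrm{id}_A\otimes\sigma_u)(\mathcal G)$ and $(\mathrm{id}_A\otimes\sigma_u)(\mathcal G\cdot\mathcal G)$ land (up to $\delta/4$) in the span of elementary tensors whose $A$-component lies in $\mathcal G_0$ and $C(\T)\otimes C$-component is $z^j\otimes u^j$; applying $L\otimes\mathrm{id}_C$ and using that $L$ is $\mathcal G_1$-$\delta_1$-multiplicative, one controls $\|\Phi_1(gh)-\Phi_1(g)\Phi_1(h)\|$ for $g,h\in\mathcal G$ by a bound of the form $C\cdot\delta_1+(\text{approximation error})$, which is $<\delta$ once $\delta_1$ is small and the approximations are $<\delta/(8C)$; the same bookkeeping handles $\Phi_2$. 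Crucially, $\mathcal G_1$ and $\delta_1$ depend only on $\mathcal G$, $\delta$, and the structure of $A\otimes C(\T)$, not on $L$ (nor on $u$, since $\sigma_u,\varepsilon_u$ are contractive homomorphisms regardless of $u$).

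The main obstacle I anticipate is purely bookkeeping: turning the norm estimates into an explicit $\mathcal G_1$ and $\delta_1$ that genuinely do not depend on $L$ requires being careful that the finite set of ``words'' in $A$ produced by multiplying two approximants, and the range of Fourier exponents, are fixed in advance by $\mathcal G$ and $\delta$ alone. Since $z^j z^k=z^{j+k}$ and $u^j u^k=u^{j+k}$ exactly (no error), the exponent range is controlled a priori, and the $A$-side is exactly the multiplicativity defect of $L$ on a fixed finite set; there is no circularity, just a routine $\varepsilon$-$\delta$ estimate. I would present the generator-level formulas and the composition description as the core of the proof and relegate the multiplicativity estimate to a short paragraph, since it follows the standard pattern for such approximate-multiplicativity transfer lemmas.
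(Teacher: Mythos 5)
Your proposal is correct and follows essentially the same route as the paper: you build $\Phi_1=(L\otimes\mathrm{id})\circ(\mathrm{id}_A\otimes\sigma_u)$ with $\sigma_u(z)=z\otimes u$ (the paper works inside the subalgebra $C^*(u)\subset C$, citing Theorem 3.5.3 of \cite{BO-Book} for the tensor-product c.p.\ maps), obtain $\Phi_2$ by the same device applied to $a\otimes f\mapsto L(a\otimes f(1))$, get uniqueness from density of the span of $\{a\otimes z^j\}$, and prove the approximate multiplicativity by the same Laurent-polynomial bookkeeping with $\delta_1$ and $\mathcal G_1$ determined only by $\mathcal G$ and $\delta$. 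No gaps; the differences (using all of $C$ instead of $C^*(u)$, constructing $\Phi_2$ directly rather than reducing it to the $\Phi_1$-construction) are purely cosmetic.
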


\begin{proof}
Considering the map $L': A\otimes C(\T)\to B$ by
$L'(a\otimes f)=L(a\otimes f(1))$ for all $a\in A$ and $f\in C(\T),$ where $f(1)$ is the evaluation of $f$ at
1 (a point on the unit circle), we see that it suffices to prove the statement for $\Phi_1$ only.
%

Denote by $C_0$ the unital C*-subalgebra of $C$ generated by $u.$  Then the tensor product map
$$L \otimes\id_{C_0}: A\otimes C(\T)\otimes C_0 \to B\otimes C_0$$
is unital and completely positive (see, for example, Theorem 3.5.3 of \cite{BO-Book}). Define the homomorphism $\psi: C(\T) \to C(\T)\otimes C_0$ by
$$\psi(z)=z\otimes u.$$ By Theorem 3.5.3 of \cite{BO-Book} again, the tensor product map
$$\id_A\otimes\psi: A\otimes C(\T) \to A\otimes C(\T)\otimes C_0$$
is unital and completely positive. Then, the map
$$\Phi:=(L\otimes\id_{C_0})\circ(\id_A\otimes \psi)$$
satisfies the requirement of the first part of the lemma.

Let us consider the second part of the lemma. Let $\dt>0$ and ${\cal G}\subset A\otimes C(\T)$ be a finite subset.
Without loss of generality, we may assume that
elements in ${\cal G}$ have the form $\sum_{-n\le i\le n} a_i\otimes z^i.$
Let $N=\max\{  n: \sum_{-n\le i\le n} a_i\otimes z^i\in {\cal G}\},$
let $\dt_1=\dt/2N^2,$ and let
${\cal G}_1\supset \{a_i\otimes z^i: -n\le i\le n: \sum_{-n\le i\le n} a_i\otimes z^i\in {\cal G}\}.$

Then
\beq\label{Defuv-6}\nonumber
\Phi((\sum_{-n\le i\le n} a_i\otimes z^i)(\sum_{-n\le i\le n} b_i\otimes z^i))
&=&\sum_{i,j} \Phi(a_ib_j\otimes z^{i+j})\\\nonumber
&=& \sum_{i,j}L(a_ib_j\otimes z^{i+j})\otimes u^{i+j}\\\nonumber
&\approx_{\dt}& (\sum_{-n\le i\le n}L(a_i\otimes z^i)\otimes u^i)(\sum_{-n\le i\le n}L(b_i\otimes z^i)\otimes u^i)\\\nonumber
&=&\Phi(\sum_{-n\le i\le n} a_i\otimes z^i)\Phi(\sum_{-n\le i\le n} b_i\otimes z^i),
\eneq
if $\sum_{-n\le i\le n}a_i\otimes z^i, \sum_{-n\le i\le n}b_i\otimes z^i\in {\cal G}.$
It follows that $\Phi$ is {{${\cal G}$-$\dt$}}-multiplicative.

Let $P(\T)=\{\sum_{i=-n}^n c_iz^i, c_i\in \C\}$ denote the algebra of  Laurent  polynomials.  Uniqueness of $\Phi$ follows from
the fact that $A\otimes C(\T)$ is the closure of the algebraic tensor product $A\otimes_{alg} P(\T)$.
\end{proof}


The following corollary follows immediately from \ref{Defuv} and \ref{densesp}.

\begin{cor}\label{densesp3}
Let $C$ be a unital \CA\, and let $U$ be an infinite dimensional UHF-algebra.
{{For any $\dt>0$ and any finite subset ${\cal G}\subset
C\otimes C(\T),$ there exist $\dt_1>0$ and a finite subset
 ${\cal G}_1\subset C\otimes C(\T)$ satisfying the following condition:}}
For any $1>\sigma_1,\,\sigma_2>0,$ any finite subset ${\cal H}_1\subset C(\T)_+\setminus \{0\},$ any finite subset ${\cal H}_2\subset  (C\otimes C(\T))_{s.a.},$   
 and any unital
 ${\cal G}_1$-$\dt_1$-multiplicative \cp\,  $L: C\otimes C(\T)\to A,$
 where $A$ is another unital \CA,
 there exists  a unitary $w\in U$ satisfying the following conditions:
 \beq\label{densesp3-1}
 |\tau(L_1(f))-\tau(L_2(f))|<\sigma_1\tforal f\in {\cal H}_2,\ \tau\in T(B),\ \textrm{and}\ \\ \label{densesp3-2}
 \tau(g(1_A\otimes w))\ge \sigma_2(\int g dm)\tforal g\in {\cal H}_1,\ \tau\in T(B),
 \eneq
where $B=A\otimes U$ and $m$ is  normalized Lebesgue
 measure
 on $\T,$ and $L_1, L_2: C\otimes C(\T)\to A\otimes U$ are $\mathcal G$-$\dt$-multiplicative \morp s as
 given by Lemma {\rm \ref{Defuv}}  (as $\Phi_1, \Phi_2$) such that
 $L_i(c\otimes 1_{C(\T)})=L(c\otimes 1_{C(\T)})\otimes 1_U$ ($i=1,2$),  $L_1(c\otimes z^j)=L(c\otimes z^j)\otimes w^j,$
 and $L_2(c\otimes z^j)=L(c \otimes 1_{C(\T)})\otimes w^j$ for all $c\in C$ and all
 integers {{$j.$}}

 \begin{proof}
 Fix a $ \dt>0$ and a finite subset ${\cal G}.$ Let $\dt_1>0$
 and ${\cal G}_1
\subset  C\otimes C(\T)$
 be as given by Lemma \ref{Defuv} for $A$ (in place of $B$).

 Let $0<\sigma_1, \sigma_2<1,$ ${\cal H}_1,$ and ${\cal H}_2$ be as given in the statement.
 There are a finite subset ${\cal F}_C\subset C$ and an integer $N>0$ such that,
 for any $h\in {\cal H}_2,$
 \beq\label{190903}
\|h-\sum_{j=-N}^N a_{h,i}\otimes z^i\|<\sigma_1/2,
 \eneq
 where $a_{h,i}\in {\cal F}_C\cup\{0\}.$
 Set  ${\cal H}_2'=\{\sum_{i=-N}^N a_{h,i}\otimes z^j: h\in {\cal H}_2\}.$

 Now assume that $L$ is as stated for ${\cal G}_1$ and $\dt_1$ mentioned above.

 Choose $w\in U$ as in Lemma \ref{densesp}. Let $L_1, L_2:  C\otimes C(\T)\to A\otimes U$ be as described in the corollary.  In other words, let
$L_1: C\otimes C(\T)\to A\otimes U$ be the map as $\Phi_1$ given by  Lemma \ref{Defuv}
 (with $C$ in place of $A,$ $A$ in place of $B,$  $U$ in place of $C,$ and $w\in U$ in place  $u\in U$),
 and let $L_2: C\otimes C(\T)\to A\otimes U$ be defined by $L_2(c\otimes f)=L(c\otimes 1_{C(\T)})\otimes f(w)$
 for all $c\in C$ and $f\in C(\T)$ (as $\Phi_2$ in Lemma \ref{Defuv}).
By the choice of ${\cal G}_1$ and $\dt_1,$
 $L_1$ and $L_2$ are ${\cal G}$-$\dt$-multiplicative (as in \ref{Defuv}).

 By Lemma \ref{densesp} (and by Lemma \ref{Defuv}),  for $h\in {\cal H}_2',$
 \beq
 \tau(L_1(h))&=&\tau(\sum_{i=-N}^N L_1(a_{h,i}\otimes z^i))=\sum_{i=-N}^N \tau(L(a_{h,i}\otimes z^i)\otimes w^i)\\
 &=&\tau(L(a_{h,0}\otimes 1_{C(\T)}))=
 \tau(L_2(a_{h,0}\otimes 1_{C(\T)}))\\
 &=&\tau(\sum_{i=-N}^N L(a_{h,i}\otimes 1_{C(\T)})\otimes w^i)=\tau(L_2(h)).
 \eneq
 Thus, combining \eqref{190903}, inequality \eqref{densesp3-1} holds.
 By \eqref{densesp1909},  
 \eqref{densesp3-2} also holds.

 \end{proof}

%
\end{cor}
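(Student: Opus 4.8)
The statement to prove is Corollary \ref{densesp3}, and the plan is essentially to assemble it from Lemma \ref{Defuv} and Lemma \ref{densesp}, which do almost all of the work. First I would fix the data $\dt>0$ and the finite subset ${\cal G}\subset C\otimes C(\T)$, apply Lemma \ref{Defuv} with $C$ in the role of $A$ (and $A$ in the role of $B$, $U$ in the role of $C$) to extract the promised $\dt_1>0$ and finite subset ${\cal G}_1\subset C\otimes C(\T)$; these depend only on $\dt$ and ${\cal G}$, not on $L$, which is exactly what the corollary demands. Then, given the remaining data $\sigma_1,\sigma_2$, ${\cal H}_1\subset C(\T)_+\setminus\{0\}$, ${\cal H}_2\subset (C\otimes C(\T))_{s.a.}$, and a ${\cal G}_1$-$\dt_1$-multiplicative $L\colon C\otimes C(\T)\to A$, I would choose the Haar unitary $w\in U$ supplied by Lemma \ref{densesp}, and define $L_1,L_2\colon C\otimes C(\T)\to A\otimes U$ as the maps $\Phi_1,\Phi_2$ of Lemma \ref{Defuv} applied to this $L$ and $u=w$. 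By the choice of $\dt_1,{\cal G}_1$, both $L_1$ and $L_2$ are automatically ${\cal G}$-$\dt$-multiplicative.

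The only genuine estimates left are \eqref{densesp3-1} and \eqref{densesp3-2}. For \eqref{densesp3-1}, I would approximate each $h\in{\cal H}_2$ within $\sigma_1/2$ (in norm) by a Laurent polynomial $\sum_{i=-N}^N a_{h,i}\otimes z^i$ with coefficients in a fixed finite subset ${\cal F}_C\subset C$ (possible since $A\otimes C(\T)$ is the closure of $A\otimes_{\mathrm{alg}}P(\T)$), forming ${\cal H}_2'$; then for $h'\in{\cal H}_2'$ the trace computation is exact: $\tau(L_1(h'))=\sum_i\tau(L(a_{h,i}\otimes z^i)\otimes w^i)$, and by Lemma \ref{densesp} the product trace kills every term with $i\neq 0$, leaving $\tau(L(a_{h,0}\otimes 1))=\tau(L_2(h'))$. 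Transferring back through the $\sigma_1/2$-approximation gives \eqref{densesp3-1}. For \eqref{densesp3-2}, since $g\in C(\T)_+$ and $\tau(g(1_A\otimes w))=\int g\,dm$ exactly by Lemma \ref{densesp}, one even gets equality, so any $\sigma_2<1$ works; strictly speaking one should record that the left side is $\int g\,dm\ge\sigma_2\int g\,dm$.

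There is no serious obstacle here—the corollary is a packaging statement. The one point requiring a little care is the logical ordering of the quantifiers: $\dt_1$ and ${\cal G}_1$ must be produced before $\sigma_1,\sigma_2,{\cal H}_1,{\cal H}_2,L$ are given, and indeed they are, because Lemma \ref{Defuv}'s output depends only on $(\dt,{\cal G})$ and the domain algebra $C\otimes C(\T)$. The second mild point is that $L_1,L_2$ land in $A\otimes U$ and one must check the normalization $L_i(c\otimes 1_{C(\T)})=L(c\otimes 1_{C(\T)})\otimes 1_U$, which is immediate from the definition of $\Phi_1,\Phi_2$ (the homomorphism $\psi\colon C(\T)\to C(\T)\otimes C_0$ sends $1$ to $1\otimes 1$). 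So the proof is just: invoke \ref{Defuv} for the constants, invoke \ref{densesp} for $w$, spell out the polynomial approximation, and do the one-line trace identity.
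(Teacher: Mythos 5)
Your proposal is correct and follows essentially the same route as the paper's own proof: invoke Lemma \ref{Defuv} to fix $\dt_1,{\cal G}_1$ in advance, take the Haar unitary $w$ from Lemma \ref{densesp}, define $L_1,L_2$ as $\Phi_1,\Phi_2$, approximate elements of ${\cal H}_2$ by Laurent polynomials within $\sigma_1/2$, and use the vanishing of traces of $a\otimes w^j$ for $j\neq 0$ to get \eqref{densesp3-1}, with \eqref{densesp3-2} holding by \eqref{densesp1909}. No gaps; nothing further is needed.
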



\begin{lem}\label{BHfull}
Let $A=A_1\otimes U_1,$
where $A_1\in {\cal B}_0$ and satisfies the UCT and $U_1$ is a UHF-algebra
of infinite type.
For any $1>\ep>0$ and any finite subset ${\cal F}\subset A,$ there exist  $\dt>0,$ $\sigma>0$, a finite subset
${\cal G}\subset A,$ a finite subset $\{p_1,p_2,...,p_k, q_1,q_2,...,q_k\}$ of projections of $A$ such that
$\{[p_1]-[q_1],[p_2]-[q_2],...,[p_k]-[q_k]\}$ generates a free abelian subgroup $G_u$ of $K_0(A),$
and a finite subset ${\cal P}\subset \underline{K}(A),$
satisfying the following condition:

Let $B=B_1\otimes U_2,$
where $B_1$ is in ${\cal B}_0$  and satisfies the UCT and  $U_2$ is a  UHF-algebra of infinite type.
Suppose that $\phi: A\to B$ is a unital \hm.

If $u\in U(B)$  is a unitary such that
\beq\label{BHfull-1}
&&\|[\phi(x),\, u]\|<\dt\tforal x\in {\cal G},\\\label{BHfull-1n}
&&{\rm Bott}(\phi,\, u)|_{\cal P}=0{{,}}\\\label{BHfull-1n1}
&&{\rm dist}(\overline{\langle ((1-\phi(p_i))+\phi(p_i)u)(1-\phi(q_i))+\phi(q_i)u^*)\rangle}, {\bar 1})<\sigma,\tand\\\label{BHfull-1n2}
&&\mathrm{dist}(\bar{u}, \bar{1})<\sigma,
\eneq
then there exists a continuous path of unitaries $\{u(t): t\in [0,1]\}\subset U(B)$ such
that
\beq\label{BHTL-3}
&&u(0)=u,\,\,\, u(1)=1_B,\\
&&{\rm dist}(u(t), CU(B))<\ep {\tforal} t\in [0,1],\\
&&\|[\phi(a),\, u(t)]\|<\ep\tforal a\in {\cal F}\tand for\,\, all\,\, t\in [0,1],\tand\\
&&{\rm length}(\{u(t)\})\le 2\pi+\ep.
\eneq
\end{lem}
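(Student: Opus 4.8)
The plan is to realize this as a "Basic Homotopy Lemma" of the type studied in \cite{Lnclasn} and \cite{L-N}, adapting the argument so that the hypotheses \eqref{BHfull-1}--\eqref{BHfull-1n2} are exactly the finite data one needs to run the controlled homotopy. First I would fix $\ep$ and ${\cal F}$, and produce the output data $\dt, \sigma, {\cal G}, \{p_i,q_i\}, {\cal P}$ by a reverse-engineering argument: choose ${\cal G}\supset {\cal F}$ large enough and $\dt$ small enough that ${\rm Bott}(\phi,u)|_{\cal P}$ is well defined whenever \eqref{BHfull-1} holds (this is Definition \ref{Dbeta}), and choose ${\cal P}$ large enough to generate the relevant finitely generated part of $\underline{K}(A)$ modulo Bockstein operations, using that $A=A_1\otimes U_1$ with $A_1\in {\cal B}_0$ so that $\underline{K}(A)$ is a nice direct limit. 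The projections $\{p_i,q_i\}$ are chosen so that $\{[p_i]-[q_i]\}$ generate a free abelian subgroup $G_u$ containing enough of $K_0(A)$ to control, together with ${\cal P}$ and the de la Harpe--Skandalis determinant, the class $\bar u\in U(B)/CU(B)$ via the splitting \eqref{Dcu-6}.

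Next, the heart of the argument: from the almost-commutation \eqref{BHfull-1} I would invoke Lemma \ref{Defuv} to build an ${\cal F}$-$\ep$-multiplicative map $L\colon A\otimes C(\T)\to B$ with $L|_{A\otimes 1}\approx\phi$ and $L(1\otimes z)\approx u$, as in \eqref{Dbeta-10}; the hypotheses \eqref{BHfull-1n}, \eqref{BHfull-1n1}, \eqref{BHfull-1n2} say precisely that $[L]$ restricted to ${\boldsymbol{\bt}}({\cal P})$ vanishes, that $L^{\ddag}$ agrees with the trivial map on the generators $\langle((1-\phi(p_i))+\phi(p_i)\tilde u)\cdots\rangle$, and that $\overline{D}_B(\bar u)$ is small. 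I would then use Corollary \ref{densesp3} (tensoring with a Haar unitary $w\in U_2$, absorbed into $B=B_1\otimes U_2$) to arrange that the trace data of $L$ is, up to $\sigma$, that of the "constant" map $L_2$, i.e.\ that $u$ is close in the trace-norm sense to a unitary whose determinant vanishes. Combining all of this, I would conclude that $u$ is, modulo $CU(B)$, trivial and moreover $KL$-trivially and $K_1$-trivially connected to $1$; concretely, one produces an $\ep$-close unitary $u'$ with $\bar u'=\bar 1$ exactly and ${\rm Bott}(\phi,u')=0$ on all of $\underline{K}(A)$ (not just ${\cal P}$), again using that $K_*(A)$ is a direct limit and absorbing the remaining $K$-theory on a small corner via Lemma \ref{alg-cut}.

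Finally, with $u$ reduced to the case $\bar u=\bar 1$, ${\rm Bott}(\phi,u)=0$, and $u\approx_\sigma$ a unitary with small determinant, I would apply the existing Basic Homotopy Lemma machinery (the version in \cite{L-N}, Theorem 3.17, whose hypotheses—property (B1), (B2), approximate divisibility, existence of an element with spectrum $[0,1]$—hold for $B$ by Lemmas \ref{Extbot2} and \ref{Vpair2} and the fact that $B$ contains a unital UHF subalgebra) to produce the path $\{u(t)\}$ with $u(0)=u$, $u(1)=1_B$, $\|[\phi(a),u(t)]\|<\ep$ on ${\cal F}$, and length $\le 2\pi+\ep$. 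The condition ${\rm dist}(u(t),CU(B))<\ep$ is then extracted from the length estimate together with the determinant control, since a short path emanating from $CU(B)$ stays near $CU(B)$ by \eqref{july19-1} and the definition \eqref{July16-1} of the metric. The main obstacle I expect is the bookkeeping in the second paragraph: getting all three obstruction groups (the $\underline{K}$-part controlled by ${\cal P}$, the determinant/$\Aff$ part controlled by $\sigma$ and the Haar-unitary trick, and the $K_1$-part controlled by the $\{p_i,q_i\}$ and \eqref{BHfull-1n1}) to simultaneously vanish after a single $\ep$-perturbation of $u$, uniformly over all $\phi$ and all admissible $B$—this is where the choice of ${\cal P}$, $G_u$, $\dt$ and $\sigma$ must be made with care, and it is essentially the reason the statement packages so many hypotheses.
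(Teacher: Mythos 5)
Your opening moves match the paper's: build an ${\cal F}$-$\ep$-multiplicative map $L\colon A\otimes C(\T)\to B$ from the almost-commuting pair $(\phi,u)$, and use the Haar-unitary construction of Corollary \ref{densesp3} to control trace data. But the concluding step is a genuine gap. You propose to perturb $u$ by $\ep$ to a unitary $u'$ with $\bar u'=\bar 1$ exactly and ${\rm Bott}(\phi,u')=0$ on \emph{all} of $\underline{K}(A)$, and then to invoke ``the existing Basic Homotopy Lemma machinery (the version in \cite{L-N}, Theorem 3.17).'' Theorem 3.17 of \cite{L-N} is not a homotopy lemma: it is the $KK$-lifting result producing an approximately inner monomorphism with prescribed $KK$-difference (it is used in this paper only for that purpose, in Corollary \ref{L86}); it yields no path of unitaries and no length control. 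More importantly, no previously available Basic Homotopy Lemma applies to targets of the form $B=B_1\otimes U_2$ with $B_1\in{\cal B}_0$ --- establishing such a lemma is precisely the content of the present statement, so quoting ``existing machinery'' here is circular. The paper's actual route is different at exactly this point: after cutting $L$ down by a small corner and a subalgebra $C\in{\cal C}_0$, it twists by a Haar unitary $w\in U_2$ to form $L_3$, introduces the model \emph{homomorphism} $\Phi$ with $\Phi(a\otimes 1)=\phi(a)$ and $\Phi(1\otimes z)=1\otimes w$ (exact commutation, full spectral distribution), verifies that $L_3$ and $\Phi$ agree on the finite $KL$-, trace-, and $\ddag$-data demanded by the uniqueness theorem (Theorem 12.11 of \cite{GLN-I}), and conjugates one to the other; the path and the bound $2\pi+\ep$ then come from explicit short paths inside $CU(U_2)=U(U_2)$ connecting $w$ to $1$, not from any abstract homotopy lemma. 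In particular one never needs (and cannot in general arrange) vanishing of ${\rm Bott}(\phi,u')$ on all of $\underline{K}(A)$.

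A secondary error: your derivation of ${\rm dist}(u(t),CU(B))<\ep$ from the length estimate via \eqref{july19-1} does not work. A path of length up to $2\pi$ starting in $CU(B)$ can drift far from $CU(B)$ in the determinant metric (consider $t\mapsto e^{2\pi i t}1_B$). In the paper this condition is obtained by construction: after reducing to $u\in CU(B)$, every segment of the path consists of unitaries built from elements of $CU$ of corners and of $U_2$ (e.g.\ $(v_{00}\oplus v_0)\otimes v(t)$ and $w_1^*(p\otimes w(t)+(1-p))w_1$ with $v(t),w(t)\in CU(U_2)$), so each point stays within $\ep$ of $CU(B)$.
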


\begin{proof}
It is enough
 to prove the statement under the  assumption that $u\in CU(B)$.

Recall that every C*-algebra in ${\cal B}_0$ has stable rank one (see Theorem 9.7 of \cite{GLN-I}).
Define
$$
\Delta(f)=(1/2)\int f dm \tforal f\in C(\T)_+^{\bf 1}\setminus \{0\},
$$
where $m$ is normalized Lebesgue measure on the unit circle $\T.$
Let $A_2=A\otimes C(\T).$ Let ${\cal F}_1=\{x\otimes f: x\in {\cal F}, f=1, z, z^*\}.$
We may assume that ${\cal F}$ is a subset of the unit ball of $A.$
Let $1>\dt_1>0$ (in place of $\dt$), ${\cal G}_1\subset A_2$
(in place of ${\cal G}$), $1/4>\sigma_1>0, \,1/4>\sigma_2>0,$ ${\cal \widetilde{P}}\subset \underline{K}(A_2),$
${\cal H}_1\subset C(\T)_+^{\bf 1}\setminus \{0\},$
${\cal H}_2\subset (A_2)_{s.a.},$
and ${\cal U}\subset U(M_2(A_2))/CU(M_2(A))$ be the  constants and
finite subsets provided  by Theorem 12.11 part (b) of \cite{GLN-I} 
for $\ep/4$ (in place of $\ep$), ${\cal F}_1$ (in place of ${\cal F}$),  $\Delta,$ and $A_2$ (in place of $A$).

We may assume ${\cal H}_2\subset A_{s.a.}^{\bf 1}.$
that
$$
{\cal G}_1=\{a\otimes f: a\in {\cal G}_2\andeqn  f=1,z,z^*\},
$$
where ${\cal G}_2\subset A$ is a finite subset, and
$
{\cal \widetilde{P}}={\cal P}_1\cup \boldsymbol{\bt}({\cal P}_2),
$
where ${\cal P}_1, {\cal P}_2\subset \underline{K}(A)$ are finite subsets. Define ${\cal P}={\cal P}_1\cup {\cal P}_2$.

We may assume that $(2\dt_1, {\cal \tilde{P}}, {\cal G}_1)$ is a $KL$-triple for $A_2$, $(2\dt_1, {\cal P}_1, {\cal G}_2)$ is a $KL$-triple for $A$, and $1_{A_1} \otimes \mathcal H_1\subset \mathcal H_2$.


We may
choose $\sigma_1$ and $\sigma_2$ such that
\beq\label{Bufull-9}
\max\{\sigma_1, \sigma_2\}<(1/4)\inf\{\Delta(f): f\in {\cal H}_1\}.
\eneq

Let $\dt_2$ (in place of $\dt_1$) and a finite subset  ${\cal G}_3$ (in place of ${\cal G}_1$)
be as provided by  \ref{densesp3} for $A$ (in place of $C$), $\dt_1/4$ (in place of $\dt$), and
${\cal G}_1$ (in place of ${\cal G}$).
Choosing
smaller $\dt_2,$
\wilog, we may assume that
${\cal G}_3=\{a\otimes f: g\in {\cal G}_2'\andeqn f=1,z,z^*\}$ for a large finite subset
${\cal G}_2'\supset {\cal G}_2.$
We may assume that $\dt_2<\dt_1.$

We may further assume
that
\beq\label{BUfull-8}
{\cal U}={\cal U}_1\cup \{\overline{1\otimes z}\}\cup  {\cal U}_2,
\eneq
where ${\cal U}_1=\{\overline{a\otimes 1}: a\in {\cal U}_1'\subset U(A)\},$
${\cal U}_1'$ is a finite subset, and
${\cal U}_2\subset U(A_2)/CU(A_2)$ is a finite subset
whose elements represent a finite subset of $\boldsymbol{\bt}(K_0(A)).$
So we may assume that ${\cal U}_2\in J_c(\boldsymbol{\bt}(K_0(A))).$
As in Remark 12.12 of \cite{GLN-I}, 
we may assume that the subgroup of $J_c(\boldsymbol{\bt}(K_0(A)))$
generated by ${\cal U}_2$ is free abelian.  Let ${\cal U}_2'$ be a finite subset of unitaries such
that $\{{\bar x}: x\in {\cal U}_2'\}={\cal U}_2.$ We may also
assume that the unitaries in ${\cal U}_2'$ have the form
\beq\label{Bufull-10-1}
((1-p_i)+p_i\otimes z)((1-q_i)+q_i\otimes z^*),\,\,\, i=1,2,...,k.
\eneq

We may further assume that $p_i\otimes z, q_i\otimes z \in {\cal G}_1,$ $i=1,2,...,k.$
Choose $\dt_3>0$ and a finite subset ${\cal G}_4'\subset A$ (and write  ${\cal G}_4:=\{g\otimes f: g\in {\cal G}_4', f=1,z,z^*\}$) such that, for any two
unital ${\cal G}_4$-$\dt_3$-multiplicative
\cp s $\Psi_1, \Psi_2: A\otimes C(\T)\to C$ (any unital \CA\, $C$),
any ${\cal G}_4'$-$\dt_3$-multiplicative \morp\, $\Psi_0: A \to C$ and unitary $V\in C$ ($1\le i\le k$), { {if
\beq\label{Bf-10+1}
&&\|\Psi_0(g)-\Psi_1(g\otimes 1)\|<\dt_3\rforal g\in {\cal G}_4',\\\label{Bf-10+2}
 &&\|\Psi_1(z)-V\|<\dt_3,\andeqn
\|\Psi_1(g)-\Psi_2(g)\|<\dt_3\rforal g\in {\cal G}_4,
\eneq then}}
\beq\label{Bufull-10-1+}
&&\langle (1-\Psi_0(p_i)+\Psi_0(p_i)V)(1-\Psi_0(q_i)+\Psi_0(q_i) V^*)\rangle\\
&&\hspace{0.3in}\approx_{\sigma_2/2^{10}}\langle \Psi_1(((1-p_i)+p_i\otimes z)((1-q_i)+q_i\otimes z^*))\rangle,\\
&&\|\langle \Psi_1(x)\rangle -\langle \Psi_2(x)\rangle\|<{{\sigma_2/2^{10}}}\rforal x\in {\cal U}_2',\andeqn\\
&&\Psi_1(((1-p_i)+p_i\otimes z)((1-q_i)+q_i\otimes z^*))\\\label{Bf-1510}
&&\hspace{0.3in}\approx_{\sigma_2/2^{10}}
\Psi_1((1-p_i)+p_i\otimes z)\Psi_1((1-q_i)+q_i\otimes z^*),
\eneq
and,  furthermore, for $d_i^{(1)}=p_i,$ $d_i^{(2)}=q_i,$ there are projections ${\bar d}_i^{(j)}\in C$ and unitaries  ${\bar z}^{(j)}_i\in {\bar d}_i^{(j)}C {\bar d}_i^{(j)}$  such that
\beq\label{Bufull-10-1++}
&&\Psi_1(((1-d_i^{(j)})+d_i^{(j)}\otimes z))\approx_{{\sigma_2\over{2^{12}}}}(1-{\bar d}_i^{(j)})+{\bar z}^{(j)}_i,\\\label{Bf-15107+1-1}
&&{\bar d}_i^{(j)}\approx_{\sigma_2\over{2^{12}}} \Psi_1(d_i^{(j)}),\,\,{\bar z}_i^{(1)}\approx_{\sigma_2\over{2^{12}}} \Psi_1(p_i\otimes z),\andeqn
{\bar z}_i^{(2)}\approx_{\sigma_2\over{2^{12}}}  \Psi_1(q_i\otimes z^*),
\eneq
where
$1\le i\le k,$\, $j=1,2.$
Choose $\sigma>0$ so it is smaller than $\min\{\sigma_1/16, \ep/16, \sigma_2/16, \dt_2/16, \dt_3/16\}.$

Choose $\dt_5>0$ and a finite subset ${\cal G}_5\subset A$ satisfying the following condition:
there is a unital  ${{{\cal G}_4}}$-$\sigma/8$-multiplicative \cp\,
$L: A\otimes C(\T)\to B'$ such that
\beq\label{Bufull-10}
&&\|L(a\otimes 1)-\phi'(a)\|<\sigma/8
\tforal a\in {{{\cal G}_4'}}\andeqn
\|L(1\otimes z)-u'\|<\sigma/8
\eneq
for any unital \hm\, $\phi': A\to B'$ and any unitary $u'\in B'$ such that
$$
\|\phi'(g)u'-u'\phi'(g)\|<\dt_5\tforal  g\in {\cal G}_5.
$$
Let $\dt=\min\{\dt_5/4, \sigma\}$ and ${\cal G}={\cal G}_5\cup {\cal G}_4'\cup {\cal G}_2'.$

Now suppose that $\phi: A\to B$ is a unital \hm\, and $u\in CU(B)$
satisfies the assumptions (\ref{BHfull-1}) to (\ref{BHfull-1n1}) for the above mentioned
$\dt,$ $\sigma,$ ${\cal G},$  ${\cal P},$ $p_i,$ and $q_i.$
Choose an isomorphism $s: U_2\otimes U_2\to U_2.$
Note that $s\circ\imath$ (since it is unital) is approximately unitarily equivalent to the identity map on $U_2,$
where   ${\imath}: U_2\to U_2\otimes U_2$ is defined by $\imath(a)=a\otimes 1$ (for all $a\in U_2$).
To simplify notation,
let us assume that
$\phi(A)\subset B\otimes 1\subset B\otimes U_2.$
Suppose that $u\in U(B)\otimes 1_{U_2}$ is a unitary which satisfies the assumption of the lemma. {{As mentioned at the beginning,  we may assume that $u\in CU(B)\otimes 1_{U_2}.$
\Wlog, we may further assume that $u=\prod_{j=1}^{m_1}c_jd_jc_j^*d_j^*,$
where $c_j, d_j\in U(B)\otimes 1_{U_2},$ $1\le j\le m_1.$
Let ${\cal  F}_2= \{c_j, d_j: 1\le j\le m_1\}.$ }}

Let $L: A\otimes C(\T)\to B$ be a unital ${\cal G}_4$-$\dt_2/8$-multiplicative \cp\,
such that
\beq\label{BUfull11}
\|L(a\otimes 1)-\phi(a)\|<\sigma/8
\tforal a\in {{{\cal G}_4'}}\andeqn
\|L(1\otimes z)-u\|<\sigma/8.
\eneq
Since ${\rm Bott}(\phi, u)|_{\cal P}=0$, we may also assume
that
\beq\label{Bufull11+}
[L]|_{{\cal P}_1}=[\phi]|_{{\cal P}_1}\andeqn
[L]|_{\boldsymbol{\bt}({\cal P}_2)}=0.
\eneq

Since $B$ is in ${\cal B}_0,$ there is a projection $p\in B$ and a unital \SCA\,
$C\in {\cal C}_0$ with $1_C=p$ satisfying the following condition:
\beq\label{BUfull-12}
&&\|L(g)-[(1-p)L(g)(1-p)+L_1(g)]\|<{{\sigma^2/32(m_1+1)}}
\tforal g\in {{ {\cal G}_4}}\\\label{BUfull-12+}
&&{{\andeqn \|(1-p)x-x(1-p)\|<\sigma^2/32(m_1+1)
\rforal x\in {\cal F}_2,}}
\eneq
where $L_1: A{{\otimes C(\T)}}\to C$ is a unital ${{{\cal G}_4}}$-$\min\{\dt_2/8, \ep/8\}$-multiplicative
\cp,
\beq\label{BUfull-13}
\tau(1-p)<\min\{\sigma_1/16, \sigma_2/16\}\tforal \tau\in T(B),
\eneq
and,  using (\ref{BHfull-1n1}), (\ref{BHfull-1n2}), (\ref{BUfull11}), and (\ref{Bufull-10-1+}) to
(\ref{Bf-1510})
we have that
\beq\label{Bufull-14}
&&{\rm dist}(L_2^{\ddag}(x), {\bar 1})<\sigma_2/4\tforal x\in \{1\otimes {\bar z}\}\cup {\cal U}_2\andeqn \\
&&{\rm dist}(L_2^{\ddag}(x), \overline{\phi(x')\otimes 1_{C(\T)}})<\sigma_2/4
\tforal  x\in {\cal U}_1,
\eneq
where $\overline{x'\otimes 1_{C(\T)}}=x$ and
$L_2(a)=(1-p)L(a)(1-p)+L_1(a)$ for all $a\in A\otimes C(\T).$
Note that  we also have
\beq\label{Bufull-15}
\|\phi(g)-L_2(g\otimes 1)\|<\sigma/2
\rforal g\in {{{\cal G}_4'}}\andeqn
[L_2|_A]|_{{\cal P}_1}=[\phi]|_{{\cal P}_1}.
\eneq
By  (\ref{BUfull-12+}) and the choice of ${\cal F}_2,$ there are a unitary $v_0\in CU(C)$ and a unitary\\ $v_{00}\in CU((1-p)B(1-p))$
 such that
\beq\label{Bufull-17}
&&\|L_1(1\otimes z)-v_0\|<\min\{\dt_2/2, \ep/8\}\andeqn \\\label{Bufull-17+}
&&\|(1-p)L(1\otimes z)(1-p)-v_{00}\|<\min\{\dt_2/2, \ep/8\}.
\eneq

By the choice of $\dt_2$ and ${\cal G}_3,$ applying Corollary \ref{densesp3}, we obtain  a unitary $w\in U(U_2)$
for example) such that
\beq\label{nBufull-18}
&&|t(L_3'(g)))-t(L_1'(g))|<\sigma_1/128,\quad g\in\mathcal H_2, 
\andeqn\\ \label{nBufull-18-+}
&&t(g(p\otimes w))\ge 
\frac{1}{2(1-\sigma_1/2)}\int_{\T} g dm\tforal g\in {\cal H}_1
\eneq
for all $t\in T(pBp\otimes U_2),$
where
$L_1', L_3': A\otimes C(\T)\to pBp\otimes U_2$ is the unital ${\cal G}_1$-$\dt_1/4$-multiplicative completely positive linear map defined by
\beq\label{Bufull-18-}
&&L_1'(a\otimes 1)=L_1(a\otimes 1)\otimes 1_{U_2},\,\, L_1'(a\otimes z^j)=L_1(a\otimes 1)\otimes w^j,\\
&&L_3'(a\otimes 1)=L_1(a\otimes 1)\otimes 1_{U_2} \andeqn
L_3'(a\otimes z^j)=L_1(a\otimes z^j)\otimes (w)^j
\eneq
for all $a\in A$ and all integers $j$ as given by Lemma \ref{Defuv}.

Let $L_3'': A\otimes C(\T)\to B\otimes U_2$ be defined by
$L_3''(a\otimes z^j)=(1-p)L(a\otimes z^j)(1-p)\otimes w^j$
for all $a\in A$ and for all $j\in \Z$ as described in  Lemma \ref{Defuv} which is ${\cal G}_1$-$\dt_1/4$-multiplicative completely positive linear map.

Define  $L_3: A\otimes C(\T)\to B\otimes U_2$ by $L_3(c)=L_3'(c)+L_3''(c)$
for all $c\in A\otimes C(\T).$  Thus 
$L_3(a\otimes z^j)=L_2(a\otimes z^j)\otimes w^j$ for all $a\in A$ and integer $j\in \Z.$
Define 
$\Phi: A\otimes C(\T)\to B\otimes U_2$   by
$\Phi(a\otimes 1)=\phi(a)\subset B\otimes 1_{U_2}$ for all $a\in A$ and
$\Phi(1\otimes f)=1_A\otimes f(w)$
for all $f\in C(\T)$  and for some $\lambda\in \T.$

One then checks that, for all $t\in T(B\otimes U_2),$
\beq\label{Bufull-18}
&&|t(L_3(g)))-t(\Phi(g))|<\sigma_1,\quad g\in\mathcal H_2, 
\andeqn\\ \label{Bufull-18-+}
&&t(g(1\otimes w))\ge 
\frac{1}{2}\int_{\T} g dm\tforal g\in {\cal H}_1.
\eneq


Note that $CU(U_2)=U(U_2)$ (see Theorem 4.1 of \cite{GLX}).
It is also known (by working in matrices, for example)  that there is a continuous
path of unitaries in $CU(U_2)$ connecting $1_{U_2}$ to $w$ with length
no more than $\pi+\ep/256.$ Therefore
one obtains a continuous path of unitaries
$\{v(t): t\in [1/4, 1/2]\}\subset CU(U_2)$ such that
\beq\label{Bufull-1409}
v(1/4)=1_{U_2},\,\,\, v(1/2)=w,\andeqn
{\rm length}(\{v(t): t\in [1/4, 1/2]\})\le \pi+\ep/256.
\eneq
Note that $\phi(a)\Phi(1\otimes z)=\Phi(1\otimes z)\phi(a)$
for all $a\in A.$ So, in particular, $\Phi$ is a unital \hm\, and
\beq\label{Bf-moved}
[\Phi]|_{\boldsymbol{\bt}(\underline{K}(A))}=0.
\eneq
Define a unital \cp\,
$L_t: A_2\to C([2,3], B\otimes U_2)$ by
$$
L_t(f\otimes 1)=L_2(f\otimes 1)\andeqn L_t(a\otimes z^j)=L_2(a\otimes z^j)\otimes (v((t-2)/4+1/4))^j
$$
for all $a\in A$ and integers $j$ and $t\in [2,3]$.
Note that $L_t(1\otimes z)\approx_{\min(\dt_2/2,\ep/8)}(v_0\oplus v_{00})\otimes v((t-2)/4+1/4)),$
and,  since $v(s)\in CU(U_2),$
$L_t(1\otimes z) \in_{\min(\dt_2/2,\ep/8)} CU(B\otimes U_2)$ for all $t\in [2, 3].$
Note also that,
$L_t$ is ${\cal G}_1$-$\dt_1/4$-multiplicative.
Note that at $t=2,$ $L_t=L_2$ and at $t=3,$ $L_t=L_3.$
It follows that
\beq\label{Bufull-18+}
&&[L_3]|_{{\cal P}_1}=[L_2]|_{{\cal P}_1}=[\phi]|_{{\cal P}_1},\,\,\,
[L_3]|_{{\boldsymbol{\bt}}({\cal P}_2)}=0,\andeqn\\\label{Bufull-18++}
&&L_3^{\ddag}(x)=L_2^{\ddag}(x)\tforal x\in {\cal U}_1.
\eneq
If $v=(e\otimes z)+(1-e)$ for some projection $e\in A,$ then
\beq\label{Bufull-19-1}
L_3(v)=L_2(e\otimes z)\otimes w+L_2((1-e)).
\eneq
Since $w\in CU(U_2),$
one computes  from (\ref{Bf-1510}) 
 that
that, with $x=((1-p_i)+p_i\otimes z)((1-q_i)+q_i\otimes z^*),$
\beq\label{Bf-15107+1}
&&\hspace{-0.4in}{{\overline{\langle L_3(x)\rangle}\approx_{\sigma_2/2^{10}} \overline{({\bar z}_i^{(1)}\otimes w+(1-{\bar p}_i))({\bar z}_i^{(2)}\otimes w+(1-{\bar q}_i))}}}\\
&&\hspace{-0.2in}{{=\overline{({\bar z}_i^{(1)}+(1-{\bar p}_i))({\bar p}_i\otimes w+(1-{\bar p}_i)\otimes 1_{U_2})({\bar z}_i^{(2)}+(1-{\bar q}_i))
({\bar q}_i\otimes w+(1-{\bar q}_i))}}}\\
&&{{=\overline{({\bar z}_i^{(1)}+(1-{\bar p}_i))({\bar z}_i^{(2)}+(1-{\bar q}_i))}={\overline{\langle L_2(x)\rangle}}}},
\eneq
where ${\bar p}_i, {\bar q}_i, {\bar z}_i^{(1)}, {\bar z}_i^{(2)}$ are as above (see the lines
following (\ref{Bf-1510})),
with $\Psi_1$ replaced by $L_2.$  It follows that
\beq\label{Bufull-19-2}
{\rm dist}(L_3^{\ddag}(x), {\bar 1})<\sigma_2/2 \tforal x\in \{\overline{1\otimes z}\}\cup {\cal U}_2.
\eneq
Note that, since $w\in CU(U_2)$ and $\phi(q)\in B\otimes 1_{U_2},$
\beq\label{Bufull-19+3}
\Phi(q\otimes z+(1-q)\otimes 1)=
\phi(q)\otimes w+\phi(1-q)\in CU(B\otimes U_2)
\eneq
for any projection $q\in A.$
It follows  that
\beq\label{Bufull-19}
&&\Phi^{\ddag}(x)\in CU(B\otimes U_2)\tforal x\in \{\overline{1\otimes z}\}\cup {\cal U}_2.
\eneq
Therefore (see also (\ref{Bf-moved}))
\beq\label{Bufull-20}
[L_3]|_{{\cal P}}=[\Phi]|_{\cal P}\andeqn
{\rm dist}(\Phi^{\ddag}(x), L_3^{\ddag}(x))<\sigma_2\tforal x\in {\cal U}.
\eneq
It follows from \eqref{Bufull-18-+} that
\begin{equation}\label{Bufull-21}
\tau(\Phi(f))\ge \Delta(f),\quad f\in {\cal H}_1,\ \tau\in T(B\otimes U_2),
\end{equation}
and it follows from \eqref{Bufull-18} that
\begin{equation}
|\tau(\Phi(f))-\tau(L_3(f))|<\sigma_1,\quad f\in\mathcal H_2,\ \tau\in T(B\otimes U_2).
\end{equation}
Applying Theorem 12.11 of \cite{GLN-I}, 
we obtain a unitary $w_1\in B\otimes U_2$ such that
\beq\label{Bufull-22}
\|w_1^*\Phi(f)w_1-L_3(f)\|<\ep/4\tforal f\in {\cal F}_1.
\eneq
Since $w\in U_2,$ there is a continuous path of unitaries $\{w(t):t\in [3/4, 1]\}\subset CU(U_2)$
(recall that $CU(U_2)=U_0(U_2)$) such that
\beq\label{Bufull-23}
\hspace{-0.2in}w(3/4)=w,
\,\,\, w(1)=1_{U_2}\andeqn {\rm length}(\{w(t):t\in [3/4,1]\})\le \pi+\ep/256.
\eneq
Note that
\begin{equation}\label{Bufull-24}
\Phi(a\otimes 1)w(t)=w(t)\Phi(a\otimes 1)\tforal a\in A\andeqn t\in [3/4,1].
\end{equation}
It follows from (\ref{Bufull-22}) that there exists
a continuous path of unitaries $\{u(t): t\in [1/2, 3/4]\}\subset B\otimes U_2$ such that  (see \eqref{Bufull-17},
\eqref{Bufull-17+},
and \eqref{Bufull-18-})
\beq\label{Bufull-25}
u(1/2)=(v_{00}+(v_0))\otimes w,\,\,\,
u(3/4)=w_1^*\Phi(1\otimes z)w_1,\andeqn\\\label{Bufull-25+}
\|u(t)-u(1/2)\|<\ep/4\tforal t\in [1/2, 3/4].
\eneq
It follows from \eqref{BUfull11}, \eqref{Bufull-17},
and (\ref{Bufull-17+}) that
there exists a continuous path of unitaries
$\{u(t): t\in [0, 1/4]\}\subset B$ such that
\beq\label{Bufull-26}
u(0)=u,\,\,\,u(1/4)=v_{00}+v_0,\andeqn\\\label{Bufull-26+}
\|u(t)-u\|<\ep/4\tforal t\in [0, 1/4].
\eneq
Also, define $u(t)=(v_{00}\oplus v_0)\otimes v(t)$ for all $t\in [1/4, 1/2].$
It follows that
\beq\label{Bufull-26-1604}
\|\phi(g)u(t)-u(t)\phi(g)\|<\ep/4+\dt<5\ep/16 \rforal g\in {\cal G}.
\eneq
Then define
\beq\label{Bufull-27}
u(t)=w_1^*(p\otimes w(t)+(1-p)\otimes 1_{U_2})w_1\tforal t\in [3/4,1].
\eneq
Then $\{u(t): t\in [0,1]\}\subset B\otimes U_2$ is a continuous path of unitaries such that $u(0)=u$ and $u(1)=1.$ Moreover, by (\ref{Bufull-26+}), \eqref{Bufull-26-1604}, \eqref{Bufull-25}, \eqref{Bufull-25+},
 (\ref{Bufull-22}),
\eqref{Bufull-27}, \eqref{BUfull11},
(\ref{Bufull-1409}), and  (\ref{Bufull-23}),
\beq\label{Bufull-28}
\|\phi(f)u(t)-u(t)\phi(f)\|<\ep\tforal f\in {\cal F}\andeqn {\rm length}(\{u(t)\})\le 2\pi+\ep.
\eneq

\end{proof}

\begin{rem}\label{RBHfull}
Note, in the statement of Theorem \ref{BHfull}, if $[1_A]\in {\cal P}$ (as an element of $K_0(A)$),
by 2.14 of \cite{GLN-I}, 
condition \eqref{BHfull-1n} implies $[u]=0$ in $K_0(B).$  In other words, by making $[1_A]\in {\cal P},$
\eqref{BHfull-1n} implies $[u]=0.$

One also notices that if, for some $i,$ $p_i=1_A$ and $q_i=0,$ then \eqref{BHfull-1n1}
implies \eqref{BHfull-1n2}.  In fact, \eqref{BHfull-1n2} is redundant.
To see this, let $A$ be a unital simple separable amenable \CA\, with stable rank one.
Let $G_0\subset K_0(A)$ be a finitely generated subgroup containing $[1_A].$
Let $G_r=\rho_A(G_0).$ Then $\rho_A([1_A])\not=0$ {and} $G_r$ is a finitely generated free abelian
group. Then we may write $G_0=G_0\cap {\rm ker}\rho_A\oplus G_r',$
where $\rho_A(G_r')=G_r$ and $G_r'\cong G_r.$
Note that $G_0\cap {\rm ker}\rho_A$ is {{a}} finitely generated group.
We may  therefore  write $G_0\cap {\rm ker}\rho_A=G_{00}\oplus G_{01},$ where
$G_{00}$ is a torsion group and $G_{01}$ is free abelian.
Note that $G_{01}\oplus G_r'$ is  free abelian.
Therefore  $G_0={\rm Tor}(G_0)\oplus F,$ where $F$ is a finitely generated free abelian subgroup.
Note that there is an integer $m\ge 1$ such that $m[1_A]\in F.$
Let $z\in C(\T)$ be the standard unitary generator. Consider $A\otimes C(\T).$
Then $\boldsymbol{\bt}(G_0)\subset \boldsymbol{\bt}(K_0(A))$ is a subgroup
of $K_1(A\otimes C(\T)).$  Moreover, $\boldsymbol{\bt}([1_A])$ may be identified with
$[1\otimes z].$

If we choose ${\cal U}_2$ in the proof of \ref{BHfull} to generate $\boldsymbol{\bt}(F),$ then
$m[1_A]$ is in the subgroup generated by
$\{[p_i]-[q_i]: 1\le i\le k\}$ (see the last paragraph).
Thus,
 for any $\sigma_1>0,$ we may assume that
\beq\label{RBH-1}
{\rm dist}({\overline{ u^m}}, {\bar 1})<\sigma_1
\eneq
provided that (\ref{BHfull-1n1}) holds for a sufficiently small $\sigma.$
Recall  that  $B$ has stable rank one (see  Theorem 9.7 of \cite{GLN-I})
and $u\in U_0(B)$ (see the beginning of this remark).
We may write $u=\exp(ih)v$ for some $h\in B_{s.a.}$ {{and $v\in CU(B)$.}}
Recall, in this case,  $U_0(B)/CU(B)=\Aff(T(B))/\overline{\rho_B(K_0(B))},$
where $\overline{\rho_B(K_0(B))}$ is a closed vector subspace of $\Aff(T(B))$ (see the proof
of Lemma 11.5 of \cite{GLN-I}).
The image of $\overline{u^m}$ in $\Aff(T(B))/\overline{\rho_B(K_0(B))}$
is the same as $m$ times the image of $\overline{u}$
in $\Aff(T(B))/\overline{\rho_B(K_0(B))}.$
It follows
from (\ref{RBH-1}) that
\beq\label{RBH-2}
{\rm dist}({\bar u}, {\bar 1})<\sigma_1.
\eneq
This implies that (with sufficiently small $\sigma$) the condition (\ref{BHfull-1n2}) is redundant
and therefore can be  {{omitted.}}

\end{rem}


\section{Stable results}

\begin{lem}\label{stablehomtp}
Let $C$ be a unital amenable separable \CA\,  which
is residually finite dimensional and satisfies the UCT.  For any $\ep>0,$ any finite subset ${\cal F}\subset C,$ any finite subset ${\cal P}\subset \underline{K}(C),$ any unital \hm\, $h: C\to A,$ where $A$ is any unital \CA, and any ${\kappa}\in {\rm Hom}_{\Lambda}(\underline{K}({SC}), \underline{K}(A)),$ there exists an integer $N\ge 1,$ a unital \hm\,
$h_0: C\to M_N(\C)\subset M_N(A)$,  and  a unitary $u\in U(M_{N+1}(A))$ such that
\beq\label{sthomp-1}
\|H(c),\, u]\|<\ep\tforal c\in {\cal F}\quad\textrm{and}\quad {\rm Bott}(H,\, u)|_{\cal P}=\kappa\circ{\bf \beta}|_{\cal P},
\eneq
where $H(c)={\rm diag}(h(c), h_0(c))$ for all $c\in C.$

\end{lem}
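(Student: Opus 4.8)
The statement is a ``stable existence'' result: given an abstract $KK$-type datum $\kappa\in\mathrm{Hom}_\Lambda(\underline{K}(SC),\underline{K}(A))$, we must realize $\kappa\circ\boldsymbol{\bt}$ as an honest $\mathrm{Bott}$ map of a pair $(H,u)$, at the cost of stabilizing $h$ by a finite-dimensional representation $h_0$ and passing to $M_{N+1}(A)$. The natural mechanism is that $SC = C_0(\T\setminus\{1\})\otimes C$ sits inside $C\otimes C(\T)$, and a pair $(\phi,v)$ with $\phi$ a homomorphism and $v$ a commuting unitary produces a homomorphism $C\otimes C(\T)\to A$, hence an element of $\underline{K}(C\otimes C(\T))=\underline{K}(C)\oplus\boldsymbol{\bt}(\underline{K}(C))$; the $\boldsymbol{\bt}$-component is exactly $\mathrm{Bott}(\phi,v)$. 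So the task reduces to producing, after stabilization, a unital homomorphism $G: C\otimes C(\T)\to M_{N+1}(A)$ with $[G]|_{\underline{K}(C)}$ equal to (a stabilization of) $[h]$ and $[G]|_{\boldsymbol{\bt}(\underline{K}(C))}=\kappa\circ\boldsymbol{\bt}$, and then taking $u=G(1\otimes z)$, $H=G|_{C\otimes 1}$.

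First I would reduce to the purely $K$-theoretic/$KK$-theoretic problem. Since $C$ is RFD and satisfies the UCT and only finitely many $K$-theory classes $\mathcal P$ are at stake, $\kappa$ is determined on a finitely generated piece of $\underline{K}(SC)$; I want to build a $KK$-class $\alpha\in KK(C\otimes C(\T), M_{N+1}(A))$ whose restriction to $\underline{K}(C)$ is $[\iota_N]\circ[h]$ (the $N$-fold amplification, composed with the standard embedding $A\hookrightarrow M_{N+1}(A)$, plus the finite-dimensional summand $h_0$) and whose restriction to $\boldsymbol{\bt}(\underline{K}(C))$ agrees with $\kappa\circ\boldsymbol{\bt}$ on $\mathcal P$. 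The key point is that $[h]\oplus[h_0]$ together with $\kappa$ assemble into such an $\alpha$ precisely because the splitting $\underline{K}(C\otimes C(\T))=\underline{K}(C)\oplus\boldsymbol{\bt}(\underline{K}(C))$ is natural and $\boldsymbol{\bt}$ is a split injection; the RFD hypothesis enters because it guarantees (for $N$ large, via standard RFD/almost-multiplicative representation theory as in the construction of $h_0$) that the $K_0^+$-type positivity needed to realize the $C$-part is automatic after adding a large finite-dimensional representation. Then I would invoke the appropriate earlier realization/existence machinery --- this is exactly the kind of statement handled by the Bott-element existence lemmas above (e.g.\ Lemma~\ref{Extbot2}, and behind it Lemma~\ref{Defuv} plus the $KK$-realization results of Part~I) applied to $C\otimes C(\T)$ --- to produce an $\mathcal F$-$\ep$-multiplicative map (or honest homomorphism, using semiprojectivity/RFD) $G: C\otimes C(\T)\to M_{N+1}(A)$ with $[G]$ equal to this $\alpha$ on $\mathcal P\cup\boldsymbol{\bt}(\mathcal P)$. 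Setting $u=\langle G(1\otimes z)\rangle$, $h_0$ the finite-dimensional summand of $G|_{C\otimes 1}$, and $H=\mathrm{diag}(h,h_0)$ (after a small perturbation so that $H$ literally equals $G|_{C\otimes 1}$ up to $\ep$), conditions \eqref{sthomp-1} follow: $\|[H(c),u]\|<\ep$ because $G$ is a homomorphism and $1\otimes z$ is central in $C\otimes C(\T)$, and $\mathrm{Bott}(H,u)|_{\mathcal P}=[G]|_{\boldsymbol{\bt}(\mathcal P)}=\kappa\circ\boldsymbol{\bt}|_{\mathcal P}$ by definition of the Bott map.

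The main obstacle I anticipate is \emph{bookkeeping the stabilization}: ensuring that $[H]|_{\underline{K}(C)}$ is genuinely realizable after adding only a finite-dimensional $h_0$, i.e.\ arranging the positivity ($KK_e(\cdot,\cdot)^{++}$-type) condition so that the existence theorems apply to $C\otimes C(\T)$ rather than just to $C$. Concretely, one must choose $N$ and the representation $h_0: C\to M_N(\C)$ so that the class $[h]\oplus[h_0]$, amplified into $M_{N+1}(A)$, has the right order-unit and positivity behavior and so that the target class $\alpha$ restricted to $K_0(C\otimes C(\T))_+$ lands in $K_0(M_{N+1}(A))_+$; this is where the RFD hypothesis on $C$ (hence $C\otimes C(\T)$) is essential, since it supplies enough finite-dimensional representations to absorb any failure of positivity. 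A secondary technical point is replacing the almost-multiplicative map coming out of the Bott-existence lemma by one that restricts \emph{exactly} to $\mathrm{diag}(h,h_0)$ on $C\otimes 1$; this is routine using that $C$ is RFD/semiprojective and a small perturbation argument, absorbing the perturbation into the $\ep$ and the choice of $u$.
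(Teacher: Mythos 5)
Your overall reduction --- package $[h]$ on $\underline{K}(C)$ and $\kappa$ on $\boldsymbol{\bt}(\underline{K}(C))$ into a single element of $\mathrm{Hom}_{\Lambda}(\underline{K}(C\otimes C(\T)),\underline{K}(A))$, realize it on $C\otimes C(\T)$ after a finite-dimensional stabilization, and read off $u$ from $1\otimes z$ --- is the same first half as the paper's argument. But there is a genuine gap in the second half, which you dismiss as a ``routine perturbation.'' After you realize the class by an $\mathcal F$-$\ep$-multiplicative map $G$ on $C\otimes C(\T)$, you only know that $[G|_{C\otimes 1}]$ agrees with $[h\oplus h_0]$ on the finite set $\mathcal P$ of $K$-theory classes; this does \emph{not} give you a map whose restriction to $C\otimes 1$ is norm-close (on $\mathcal F$) to the \emph{given} homomorphism $h$ stabilized by matrices, and semiprojectivity/RFD of $C$ provides no such perturbation. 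The statement requires $H(c)=\mathrm{diag}(h(c),h_0(c))$ with the original $h$ as a literal summand, so the essential missing ingredient is a \emph{stable uniqueness theorem}: in the paper one adds a further large finite-dimensional representation $h_2$ and invokes Lemma 4.17 of \cite{GLN-I} to produce a unitary $W$ with $W^*(G|_{C\otimes 1}(c)\oplus h_2(c))W\approx_{\ep/4}h(c)\oplus h_1(c\otimes 1)\oplus h_2(c)$ on $\mathcal F$; then $u=W^*(v_0\oplus 1)W$ (with $v_0\approx G(1\otimes z)$), and the Bott identity is computed through this conjugation. Without that step your $u=\langle G(1\otimes z)\rangle$ almost commutes with $G|_{C\otimes 1}$ but has no reason to almost commute with $\mathrm{diag}(h,h_0)$, and $\mathrm{Bott}(H,u)$ is not identified with $[G]|_{\boldsymbol{\bt}(\mathcal P)}$.

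Two further points of calibration. First, the realization step cannot be done by Lemma~\ref{Extbot2} (or its relatives), since those require the target to be of the special form $B_1\otimes U$ with $B_1\in\mathcal B_0$, whereas here $A$ is an arbitrary unital C*-algebra; the correct tool is Theorem 18.2 of \cite{GLN-I}, which for a separable RFD C*-algebra satisfying the UCT realizes a prescribed class only up to adding a homomorphism factoring through finite-dimensional representations --- exactly the correction that becomes part of $h_0$. Second, because of this ``realize up to a finite-dimensionally factored summand'' mechanism, no $KK_e^{++}$-positivity issue arises at all; the difficulty you flag as the main obstacle is not where the work is, while the stable uniqueness step you treat as secondary is the heart of the proof.
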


\begin{proof}
Define $S=\{z, 1_{C(\T)}\},$ where $z$ is  the identity function on the unit circle. Define $x\in {\rm Hom}_{\Lambda}(\underline{K}(C\otimes C(\T)), \underline{K}(A))$ as follows:
\beq\label{sthomp-2}
x|_{\underline{K}(C)}=[h]\andeqn x|_{\boldsymbol{\bt}(\underline{K}(C))}=\kappa.
\eneq
Fix a finite subset ${\cal P}_1\subset \boldsymbol{\bt}(\underline{K}(C)).$
Choose $\ep_1>0$ and a finite subset ${\cal F}_1\subset C$ satisfying the following condition:
\beq\label{sthomp-3}
[L']|_{{\cal P}_1}=[L'']|_{{\cal P}_1}
\eneq
for any pair of (${\cal F}_1\otimes S)$-$\ep_1$-multiplicative \morp s $L',L'':C\otimes C(\T)\to B$ (for any unital \CA\, $B$), whenever
\beq\label{sthomtp-4}
L'\approx_{\ep_1} L''\,\,\,{\rm on} \,\, {\cal F}_1\otimes S.
\eneq

Let a positive number $\ep>0,$  a finite subset ${\cal F}$  and a finite subset ${\cal P}\subset \underline{K}(C)$ be given.
We may assume, without loss of generality,  that
\beq\label{sthomtp-4+1}
{\rm Bott}(H',\, u')|_{\cal P}={\rm Bott}(H',\, u'')|_{\cal P}
\eneq
whenever $\|u'-u''\|<\ep$ for any unital \hm\, {{$H'$}} from $C.$
Put
$\ep_2=\min\{\ep/2, \ep_1/2\}$ and ${\cal F}_2={\cal F}\cup {\cal F}_1$
(choosing ${\cal P}_1=\boldsymbol{\bt}({\cal P})$ above).

Let $\dt>0,$ a finite subset ${\cal G}\subset C$, and a finite subset ${\cal P}_0\subset
\underline{K}(C)$ (in place of ${\cal P}$) be as provided by Lemma 4.17 of \cite{GLN-I} 
for $\ep_2/2$ (in place of $\ep$) and ${\cal F}_2$ (in place of ${\cal F}$).
We may assume that ${\cal F}_2$ and ${\cal G}$ are in the unit ball of $C$ and $\dt<\min\{1/2, \ep_2/16\}.$
Fix another finite subset ${\cal P}_2\subset \underline{K}(C)$ and define
${\cal P}_3={\cal P}_0\cup {\boldsymbol{\bt}}({\cal P}_2)$ (as a subset of
$\underline{K}(C\otimes C(\T))$). We may assume that ${\cal P}_1\subset
{\boldsymbol{\bt}}({\cal P}_2).$

It follows from Theorem 18.2 of \cite{GLN-I} 
that there are integers
$k_1, k_2,...,k_m$ and $K_1,$ a \hm\,
$h_1': C\otimes C(\T)\to \bigoplus_{j=1}^m M_{k_j}(\C)\to M_K(A),$  and a
unital $({\cal G}\otimes S)$-$\dt/2$-multiplicative \cp\,
$L': C\otimes C(\T)\to M_{K+1}(A)$ such that
\beq\label{sthomtp-5-}
[L']|_{{\cal P}_3}=(x+[h_1'])|_{{\cal P}_3}.
\eneq
Write $h_1'=\oplus_{j=1}^m H_j',$
where $H_j'=\psi_j\circ \pi_j,$ $\pi_j: A\otimes C(\T)\to M_{k_j}(\C)$ is a finite dimensional
representation and $\psi_j: M_{k_j}(\C)\to M_K(A)$ is a \hm.
Let $e_j$ be a minimal projection of $M_{k_j}(\C)$ and $q_j=\psi_j(e_j)\in M_{K}(A),$
and $Q_j=\psi_j(1_{M_{k_j}(\C)})\in M_{K}(A).$
Set $p_j=1_{M_K(A)}-q_j,$ $j=1,2,...,m.$ Then $M_{k_jK}(\C)$ can be identified with $M_{k_j}(M_K(A))=M_{k_j}\big((q_j\oplus p_j)M_K(A)(q_j\oplus p_j)\big)$ (since $q_j+p_j=1_{M_K(A)}$) in such a way that $Q_j(M_K(A))Q_j$ is identified with $M_{k_j}(q_j M_K(A)q_j)$. Define $\psi_j': M_{k_j}(\C)\to M_{k_j}(\C\cdot 1_{M_{K}(A)})\subset  M_{k_jK}(A)$
by sending $e_j$ to $p_j.$ Define $H_j'': C\otimes C(\T)\to M_{k_jK}(A)$
by $H_j''(c)=\psi_j\circ \pi_j(c)\oplus \psi_j'\circ \pi_j$ (conjugating a  unitary).
Note we require $H_j''$ maps into the scalar matrices of $M_{k_jK}(A).$
Let $H'=\oplus_{j=1}^m \psi_j': C\otimes C(\T)\to M_{k_j}(p_jM_K(A)p_j) \subset M_{(\sum_j k_j)K}(A)$ (conjugating a suitable unitary).
Let $N_1=(\sum_{j=1}^m k_j)K.$
Define $h_1=h_1'\oplus H'$ and $L=L'\oplus H'.$ Then $h_1$ maps $C\otimes C(\T)$ into $M_{N_1}(\C\cdot 1_A)\subset M_{N_1}(A).$

In other words,
 there are an integer $N_1\ge 1,$ a unital \hm\, $h_1':C\otimes C(\T)\to M_{N_1}(\C)\subset M_{N_1}(A)$, and a
unital $({\cal G}\otimes S)$-$\dt/2$-multiplicative \cp\,
$L: C\otimes C(\T)\to M_{N_1+1}(A)$ such that
\beq\label{sthomtp-5}
[L]|_{{\cal P}_3}=(x+[h_1])|_{{\cal P}_3}.
\eneq

We may assume that there is a unitary $v_0\in M_{N_1+1}(A)$ such that
\beq\label{sthomtp-5+}
\|L(1\otimes z)-v_0\|<\ep_2/2.
\eneq

Define $H_1: C\to M_{N_1+1}(A)$ by
\beq\label{sthomtp-6}
H_1(c)=h(c)\oplus h_1(c\otimes 1)\tforal c\in C.
\eneq
Define $L_1: C\to M_{N_1+1}(A)$ by $L_1(c)=L(c\otimes 1)$ for all $c\in C.$ Note that
\beq\label{sthomtp-7}
[L_1]|_{{\cal P}_0}=[H_1]|_{{\cal P}_0}.
\eneq
It follows from Lemma 4.17 of \cite{GLN-I} 
that there exists an integer $N_2\ge 1,$ a unital \hm\, $h_2: C\to M_{N_2(N_1+1)}(\C){{\subset M_{N_2(N_1+1)}(A)}},$ and a unitary $W\in M_{(N_2+1)(1+N_1)}(A)$ such that
\beq\label{sthomtp-8}
W^*(L_1(c)\oplus h_2(c))W\approx_{\ep/4} H_1(c)\oplus h_2(c)\rforal c\in {\cal F}_2.
\eneq
Put $N=N_2(N_1+1)+N_1.$ Now define $h_0: C\to M_N(\C)$ and $H: C\to M_{N+1}(A)$ by
\beq\label{sthomtp-9}
h_0(c)=h_1(c\otimes 1)\oplus h_2(c)\andeqn H(c)=h(c)\oplus h_0(c)
\eneq
for all $c\in C.$
Define
$u=W^*(v_0\oplus 1_{M_{N_2(N_1+1)}})W.$
Then, by (\ref{sthomtp-8}), and as $L_1$ is $({\cal G}\otimes S)$-$\dt/2$- multiplicative, we have
\beq\label{sthomtp-11}
&&\hspace{-0.2in}\|[H(c),\, u]\| \le \|(H(c)-{\rm Ad}\, W\circ (L_1(c)\oplus h_2(c))) u]\|\\
&&+\|{\rm Ad}\, W\circ (L_1(c)\oplus h_2(c)),\, u]\|+
\|u(H(c)-{\rm Ad}\, W\circ (L_1(c)\oplus h_2(c)))\|\\
&&<\ep/4+\dt/2+\ep/4<\ep\rforal c\in {\cal F}_2.
\eneq
Define $L_2: C\to M_{N+1}(A)$ by $L_2(c)=L_1(c)\oplus h_2(c)$ for all $c\in C.$ Then, we compute that
\beq\label{sthomtp-12}
{\rm Bott}(H,\, u)|_{\cal P}&=& {\rm Bott}({\rm Ad}\, W\circ L_2,\,u)|_{\cal P}=
 {\rm Bott}(L_2,\, v_0\oplus 1_{M_{N_2(N_1+1)}})|_{\cal P}\\
&=& {\rm Bott}(L_1,\, v_0)|_{\cal P}+{\rm Bott}(h_2,\, 1_{M_{N_2(N_1+1)}})|_{\cal P}\\
&=& [L]|_{{\boldsymbol{\bt}({\cal P})}}+0=
(x+[h])|_{{\boldsymbol{\bt}}({\cal P})}=\kappa|_{\cal P}.
\eneq

\end{proof}

\begin{thm}\label{STHOM}
Let $C$ be a unital amenable separable \CA\,  which  is residually finite dimensional and satisfies the UCT.  For any $\ep>0$ and any finite subset ${\cal
F}\subset C,$ there are $\dt>0,$ a finite subset ${\cal G}\subset
C,$ and a finite subset ${\cal P}\subset \underline{K}(C)$
 satisfying the following condition:

Suppose that $A$ is a unital \CA, suppose that $h: C\to A$ is a unital
\hm\, and suppose that $u\in U(A)$ is a unitary such that
\beq\label{Shomp1}
\|[h(a), u]\|<\dt\tforal a\in {\cal G}\tand
{\rm{Bott}}(h,u)|_{\cal P}=0.
\eneq
Then there exist an integer $N\ge 1$, a unital \hm\, $H_0:C\otimes C(\T)\to M_N(\C)$ ($\subset M_N(A)$)  (with finite dimensional range), and a continuous path of
unitaries $\{U(t): t\in [0,1]\}$ in $M_{N+1}(A)$ such that
\beq\label{Shomp2}
U(0)=u',\,\,\, U(1)=1_{M_{N+1}(A)},\tand \|[h'(a),
U(t)]\|<\ep\tforal a\in {\cal F},
\eneq
where
$$
u'={\rm diag}(u, H_0(1\otimes z))
$$
and $h'(f)=h(f)\oplus H_0(f\otimes 1)$ for $f\in C,$
and $z\in C(\T)$ is the identity function on the unit circle.

Moreover,
\beq\label{Shomp2+}
{\rm Length}(\{U(t)\})\le \pi+\ep.
\eneq

\end{thm}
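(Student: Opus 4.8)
The plan is to convert the almost-commuting pair $(h,u)$ into an approximately multiplicative map of $C\otimes C(\T)$ into $A$, observe that the hypothesis $\mathrm{Bott}(h,u)|_{\mathcal P}=0$ forces this map to agree, on the relevant $\underline K$-data, with the genuine homomorphism obtained by evaluating the $C(\T)$-factor at the base point $1\in\T$, apply the stable uniqueness theorem for residually finite dimensional C*-algebras (Lemma 4.17 of \cite{GLN-I}) to $C\otimes C(\T)$ to absorb a finite-dimensional unital homomorphism $H_0$ and make the two maps unitarily equivalent, and then write the homotopy down explicitly, using that $H_0(1\otimes z)$ lies in a finite-dimensional commutant and is therefore joined to $1$ by a path of length at most $\pi$.

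For the constants, given $\ep$ and $\mathcal F$ I would run the requirements backwards. By Definition \ref{Dbeta} (i.e.\ 2.8 of \cite{LnHomtp}), one can pick $\mathcal G\subset C$ and $\dt>0$ so that $\|[h(a),u]\|<\dt$ on $\mathcal G$ yields a $\mathcal G'$-$\dt'$-multiplicative c.p.\ map $L\colon C\otimes C(\T)\to A$ with $L(c\otimes 1)\approx h(c)$, $L(1\otimes z)\approx u$, and with $[L]$ defined on any prescribed finite subset of $\underline K(C\otimes C(\T))$. Next, $C\otimes C(\T)$ is unital, separable, amenable (as $C(\T)$ is nuclear), satisfies the UCT, and is residually finite dimensional (a minimal tensor product of residually finite dimensional C*-algebras is residually finite dimensional, and both $C$ and $C(\T)$ are), so Lemma 4.17 of \cite{GLN-I} applies to it; I enlarge $\mathcal P$ (and shrink $\dt$) so that the $\underline K$-set it demands, pulled back through the splitting $\underline K(C\otimes C(\T))=\underline K(C)\oplus\boldsymbol{\bt}(\underline K(C))$, sits inside $\mathcal P\cup\boldsymbol{\bt}(\mathcal P)$, with parameters $\ep/2$ and $(\mathcal F\cup\{1_C\})\otimes\{1,z,z^*\}$. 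Let $\pi_1\colon C\otimes C(\T)\to C$, $c\otimes f\mapsto f(1)c$, be evaluation at $1\in\T$; it is the identity on $\underline K(C)$ and vanishes on $\boldsymbol{\bt}(\underline K(C))$, and $\pi_1(1\otimes z)=1_C$.

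With $(h,u)$ satisfying \eqref{Shomp1}, take $L$ as above. Then $[L]$ and $[h\circ\pi_1]$ agree on $\mathcal P$ (both equal $[h]$ there, using $L(c\otimes1)\approx h(c)$ and that $\pi_1$ is the identity on $\underline K(C)$) and on $\boldsymbol{\bt}(\mathcal P)$ (the first is $\mathrm{Bott}(h,u)|_{\mathcal P}=0$ by \eqref{Shomp1}, the second is $[h]\circ[\pi_1]|_{\boldsymbol{\bt}(\mathcal P)}=0$ since $[\pi_1]\circ\boldsymbol{\bt}=0$). Hence the hypothesis of Lemma 4.17 of \cite{GLN-I} is met, and it produces an integer $N$, a unital homomorphism $H_0\colon C\otimes C(\T)\to M_N(\C)$, and a unitary $W\in U(M_{N+1}(A))$ with $W^*\big(L(y)\oplus H_0(y)\big)W\approx_{\ep/2}(h\circ\pi_1)(y)\oplus H_0(y)$ for all $y\in(\mathcal F\cup\{1_C\})\otimes\{1,z,z^*\}$. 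Taking $y=f\otimes1$ shows $W$ almost commutes with $h'(f)=h(f)\oplus H_0(f\otimes1)$ on $\mathcal F$; taking $y=1\otimes z$ gives $W^*u'W\approx 1_A\oplus H_0(1\otimes z)$, where $u'=\mathrm{diag}\big(u,H_0(1\otimes z)\big)$, using $\pi_1(1\otimes z)=1_C$.

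Finally the path: since $H_0(1\otimes z)$ commutes with $H_0(C\otimes1)$, it lies in the finite-dimensional C*-algebra $D=\{H_0(c\otimes1):c\in C\}'\cap M_N(\C)$, so $H_0(1\otimes z)=\exp(iB)$ with $B=B^*\in D$ and $\|B\|\le\pi$. I first run a short path from $u'$ to $W\big(1_A\oplus H_0(1\otimes z)\big)W^*$ (the two are within $\ep/2$ plus a small error and both almost commute with $h'(\mathcal F)$, so the exponential interpolant has length $<\ep/2$ and the same commutation control), then $t\mapsto W\big(1_A\oplus\exp(itB)\big)W^*$ run from $t=1$ to $t=0$; the latter almost commutes with $h'$ within $2\|[W,h'(\cdot)]\|$ because $1_A\oplus\exp(itB)$ commutes with $h'(f)$ exactly ($B\in D$), and it has length $\|B\|\le\pi$. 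Concatenating gives $U(t)$ with $U(0)=u'$, $U(1)=1_{M_{N+1}(A)}$, $\|[h'(a),U(t)]\|<\ep$ on $\mathcal F$, and $\mathrm{Length}(\{U(t)\})\le\pi+\ep$. I expect the main obstacle to be the reverse bookkeeping of the constants — arranging one uniform $(\dt,\mathcal G,\mathcal P)$ that simultaneously makes $[L]$ well defined on the needed part of $\underline K(C\otimes C(\T))$ and feeds Lemma 4.17 of \cite{GLN-I} — together with confirming that that lemma does apply to $C\otimes C(\T)$; once the stable unitary equivalence is in hand, the homotopy is elementary.
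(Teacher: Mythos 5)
Your proposal is correct and follows essentially the same route as the paper: convert $(h,u)$ into an approximately multiplicative map $L$ on $C\otimes C(\T)$ via 2.8 of \cite{LnHomtp}, match its $\underline{K}$-data on ${\cal P}\cup\boldsymbol{\bt}({\cal P})$ with the point-evaluation homomorphism $h\circ\pi_1$ using ${\rm Bott}(h,u)|_{\cal P}=0$, apply the stable uniqueness Lemma 4.17 of \cite{GLN-I} to $C\otimes C(\T)$ to produce $H_0$ and $W$, and then homotope $H_0(1\otimes z)$ to $1$ inside the relative commutant of $H_0(C\otimes 1)$ in $M_N(\C)$, adding a short correction path. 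The only difference is bookkeeping (the paper uses $\ep/4$ tolerances where you use $\ep/2$), which you already flag and which does not affect the argument.
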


\begin{proof}
Let $\ep>0$ and ${\cal F}\subset C$  be given. We may assume that ${\cal F}$ is in the unit ball of
$C.$

Let $\dt_1>0,$  ${\cal G}_1\subset C\otimes C(\T),$ and ${\cal
P}_1\subset \underline{K}(C\otimes C(\T))$  be as provided by Lemma 4.17 of \cite{GLN-I}
for $\ep/4$ and ${\cal F}\otimes S.$  We may assume that ${\cal G}_1={\cal G}_1'\otimes S,$
where ${\cal G}_1'$ is in the unit ball of $C$ and
$S=\{1_{C(\T)}, z\}\subset C(\T).$ Moreover, we may assume that ${\cal P}_1={\cal P}_2\cup{\cal
P}_3,$ where ${\cal P}_2\subset \underline{K}(C)$ and ${\cal
P}_3\subset {\boldsymbol{\bt}}(\underline{K}(C)).$ Let ${\cal P}= {\cal P}_2 \cup \beta^{-1}({\cal P}_3) \subset \underline{K}(C)$. Furthermore, we
may assume that any $\dt_1$-${\cal G}_1$-multiplicative \morp\,
$L'$ from $C\otimes C(\T)$ to a unital \CA\, gives rise  to a well defined map $[L']|_{{\cal P}_1}.$

Let $\dt_2>0$ and a finite subset ${\cal G}_2\subset C$  be as provided by 2.8 of \cite{LnHomtp} for $\dt_1/2$ and ${\cal G}_1'$
above.

Let $\dt=\min\{\dt_2/2, \dt_1/2, \ep/2\}$ and ${\cal G}={\cal
F}\cup {\cal G}_2.$

Suppose that $h$ and $u$ satisfy the assumption with $\dt,$
${\cal G}$ and ${\cal P}$ as above.  Thus, by 2.8 of \cite{LnHomtp}, there is a
$\dt_1/2$-${\cal G}_1$-multiplicative \morp\, $L: C\otimes
C(\T)\to A$ such that
\beq\label{Shomp4}
&&\|L(f\otimes 1)-h(f)\|<\dt_1/2\rforal f\in {\cal G}_1'\andeqn\\
&& \|L(1\otimes z)-u\|<\dt_1/2.
\eneq

Define $y\in {\rm Hom}_{\Lambda}(\underline{K}(C\otimes C(\T)),
\underline{K}(A))$ as follows:
$$
y|_{\underline{K}(C)}=[h]|_{\underline{K}(C)}\andeqn
y|_{\boldsymbol{\bt}(\underline{K}(C))}=0.
$$
{It follows from ${\rm{Bott}}(h,u)|_{\cal P}=0$ that $[L]|_{\bt({\cal P})}=0$.}

Then
\beq\label{Shomp4+1}
[L]|_{{\cal P}_{ 1}}=y|_{{\cal P}_{ 1}}.
\eneq

Define $H: C\otimes C(\T)\to A$ by
$$
H(c\otimes g)=h(c)\cdot g(1)\cdot 1_A
$$
for all $c\in C$ and $g\in C(\T),$ where $\T$ refers to
the unit circle (and $1\in \T$).

It follows that
\beq\label{Shomp3}
[H]|_{{\cal P}_{ 1}}= y|_{{\cal P}_{ 1}}=[L]|_{{\cal P}_{ 1}}.
\eneq

It follows from Lemma 4.17 of \cite{GLN-I} 
that there are an integer $N\ge 1,$ a
unital \hm\, $H_0: C\otimes C(\T)\to M_N(\C)$ {{($\subset M_N(A)$)}} with finite
dimensional range, and a unitary $W\in U(M_{1+N}(A))$ such that
\beq\label{Shomp5}
W^*(H(c)\oplus H_{0}(c))W\approx_{\ep/4} L(c)\oplus H_{0}(c)\rforal c\in {\cal F}\otimes S.
\eneq

Since $H_0$ has finite dimensional range {{and since $H_0(1\otimes z)$ is in the center of range($H_0$) $\subset M_N(\C)$, }} it is easy to construct
a continuous path $\{V'{ (t)}: t\in [0,1]\}$ in a finite dimensional
\SCA\, of $M_N(\C)$  such that
\beq\label{Shomp6}
&&V'(0)=H_0(1\otimes z),\,\,\,
V'(1)=1_{M_{N}(A)}\andeqn\\
&&H_0(c\otimes 1)V'(t)=V'(t)H_0(c\otimes 1)
\eneq
for all $c\in C$ and $t\in [0,1].$ Moreover, we may ensure that
\beq\label{Shomp6+}
\text{Length}(\{V'(t)\})\le \pi.
\eneq

Now define $U(1/4+3t/4)=W^*{\rm diag}(1, V'(t))W$ for $t\in [0,1]$
and
$$
u'=u\oplus  H_0(1_A\otimes z)\andeqn h'(c)=h(c)\oplus H_0(c\otimes
1)
$$
for $c\in C$ for $t\in [0,1].$  Then, by (\ref{Shomp5}),
\beq\label{Shomp7}
\|u'-U(1/4)\|<\ep/4\andeqn \|[U(t),\, h'(a)]\|<\ep/4
\eneq
for all $a\in {\cal F}$ and $t\in [1/4,1].$ The desired conclusion follows by
connecting $U(1/4)$ with $u'$ with a short path as follows: There
is a self-adjoint element $a\in M_{1+N}(A)$ with $\|a\|\le {\ep
\pi/8}$ such that
\beq\label{Shomp-8}
\exp(i a)=u'U(1/4)^*
\eneq
Then the path of unitaries $U(t)=\exp(i (1-4t) a)U(1/4)$ for $t\in [0,1/4)$ satisfies the requirements.
\end{proof}

\begin{lem}\label{VuV}
Let $C$ be a unital separable \CA\,  whose irreducible representations have bounded dimension and let $B$ be a unital \CA\, with $T(B)\not=\emptyset.$
Suppose that $\phi_1, \, \phi_2:C\to B$ are two unital monomorphisms
such that
$$
[\phi_1]=[\phi_2]\,\,\,{\textrm in} \,\,\, KK(C,B),\\
$$
Let $\theta: \underline{K}(C)\to \underline{K}(M_{\phi_1, \phi_2})$ be the splitting map defined in Equation (e 2.46)
in Definition 2.21 of \cite{GLN-I}.

For any $1/2>\ep>0,$ any finite subset ${\cal F}\subset C$ and any
finite subset ${\cal P}\subset \underline{K}(C),$ there are
integers $N_1\ge 1,$  a unital $\ep/2$-${\cal F}$-multiplicative  \cp\,
$L: C\to M_{1+N_1}(M_{\phi_1,\phi_2}),$  a unital \hm\,
$h_0: C\to M_{N_1}(\C)$ (later, $M_{N_1}(\C)$ can be regarded as unital subalgebra of $M_{N_1}(B)$ and also of $M_{N_1}(M_{\phi_1,\phi_2})$),
and a continuous path of unitaries $\{V(t): t\in [0,1-d]\}$ in
$M_{1+N_1}(B)$ for some $1/2>d>0,$ such that $[L]|_{\cal P}$ is
well defined, $V(0)=1_{M_{1+N_1}(B)},$
\beq\label{VuV1}
[L]|_{\cal P}&=&(\theta+[h_0])|_{\cal P},\\
\label{VuV2}
\pi_t\circ L&\approx_{\ep}&{\rm Ad}\, V(t)\circ
(\phi_1\oplus h_0)\,\,\,\text{on}\,\,\,{\cal F} \tforal t\in (0,1-d],\\
\label{VuV3}
\pi_t\circ L&\approx_{\ep}&{\rm Ad}\, V(1-d)\circ
(\phi_1\oplus h_0)\,\,\,\text{on}\,\,\,{\cal F} \tforal t\in (1-d, 1],\tand\\
\label{VuV33}
\pi_1\circ L&\approx_{\ep}&\phi_2\oplus h_0\,\,\,\text{on}\,\,\,{\cal F},
\eneq
where $\pi_t: M_{\phi_1,\phi_2}\to B$ is the point evaluation at
$t\in (0,1).$

\end{lem}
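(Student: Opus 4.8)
The statement is a ``mapping torus'' version of the stabilized Basic Homotopy Lemma of Theorem \ref{STHOM}, so the plan is to feed the data $(\phi_1,\phi_2)$ into that theorem after first replacing $\phi_1$ by something concretely homotopic inside $M_{\phi_1,\phi_2}$. First I would record the structural consequence of $[\phi_1]=[\phi_2]$ in $KK(C,B)$: by Definition \ref{Mappingtorus} and the splitting in Equation (e\,2.46) of \cite{GLN-I}, the exact sequence $0\to \underline K(SB)\to \underline K(M_{\phi_1,\phi_2})\to \underline K(C)\to 0$ splits via $\theta$, with $[\pi_0]\circ\theta=[\phi_1]$ and $[\pi_1]\circ\theta=[\phi_2]$. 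Thus $\theta-[\text{const}]$ (or rather the $SB$-part it picks up) is exactly a class in $\mathrm{Hom}_\Lambda(\underline K(SC),\underline K(B))$; call it $\kappa$. The point of the lemma is to build a completely positive lift $L$ of a path from (a stabilization of) $\phi_1$ to $\phi_2$ whose induced $KK$-class is $\theta+[h_0]$.

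Next I would apply Lemma \ref{stablehomtp} to the unital homomorphism $\phi_1:C\to B$, the class $\kappa$ above, and the finite subsets $\mathcal F,\mathcal P$: this produces an integer $N_1$, a unital homomorphism $h_0:C\to M_{N_1}(\mathbb C)\subset M_{N_1}(B)$ with finite-dimensional range, and a unitary $u\in U(M_{N_1+1}(B))$ with $\|[\,(\phi_1\oplus h_0)(c),u]\|$ small on $\mathcal F$ and $\mathrm{Bott}(\phi_1\oplus h_0,u)|_{\mathcal P}=\kappa\circ\boldsymbol\beta|_{\mathcal P}$. Then I would feed $(\phi_1\oplus h_0, u)$ into Theorem \ref{STHOM} (applied with $A$ there equal to $M_{N_1+1}(B)$), after possibly enlarging $N_1$ and the finite data so the $\delta$-$\mathcal G$-$\mathcal P$ thresholds of \ref{STHOM} are met; this yields a further integer, a finite-dimensional-range homomorphism, and a short continuous path of unitaries $\{V(t)\}$ in a matrix algebra over $B$ connecting $1$ to (a stabilization of) $u$, with $\|[\,(\phi_1\oplus h_0)(c),V(t)]\|<\ep$ along the path and length $\le\pi+\ep$. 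Here $h_0$ absorbs all the auxiliary finite-dimensional pieces, and $d$ is chosen so that on $(1-d,1]$ the path $V(t)$ is constant while we interpolate the final small correction making $\pi_1\circ L\approx_\ep \phi_2\oplus h_0$.

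The actual construction of $L:C\to M_{1+N_1}(M_{\phi_1,\phi_2})$ is then assembled by hand: for $t$ in the bulk of $(0,1)$ set $\pi_t\circ L=\mathrm{Ad}\,V(t)\circ(\phi_1\oplus h_0)$; near $t=0$ use that $V(0)=1$ and that $L$ must restrict to $\phi_1\oplus h_0$ at $0$ (this forces \eqref{VuV2} and uses the definition of $M_{\phi_1,\phi_2}$ as a subalgebra of $C([0,1],M_{1+N_1}(B))\oplus M_{1+N_1}(C)$); near $t=1$ use the last $d$-interval to glue in a homotopy (coming from $[\phi_1]=[\phi_2]$ together with the $\mathrm{Bott}$/rotation bookkeeping) from $\mathrm{Ad}\,V(1-d)\circ(\phi_1\oplus h_0)$ to $\phi_2\oplus h_0$, giving \eqref{VuV3} and \eqref{VuV33}. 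One checks $[L]|_{\mathcal P}=(\theta+[h_0])|_{\mathcal P}$ by computing the $\underline K$-class of the resulting extension: the $\underline K(C)$-part is $[\phi_1\oplus h_0]$ since at $t=0$ we see $\phi_1\oplus h_0$, and the $\boldsymbol\beta(\underline K(C))$-part is exactly $\mathrm{Bott}(\phi_1\oplus h_0,u)=\kappa\circ\boldsymbol\beta$, which matches the $SB$-component of $\theta$ by construction of $\kappa$.

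\textbf{Main obstacle.} The routine part is invoking \ref{stablehomtp} and \ref{STHOM}; the delicate point is making the endpoint $t=1$ come out \emph{exactly} $\phi_2\oplus h_0$ (up to $\ep$ on $\mathcal F$) rather than merely $KK$-equivalent to it, i.e.\ controlling the $d$-collar so that the path in $M_{1+N_1}(B)$ closes up into an honest element of the mapping torus with the prescribed $\underline K$-class $\theta+[h_0]$ on $\mathcal P$. This is where the splitting map $\theta$ of Definition 2.21 of \cite{GLN-I} and the compatibility $[\pi_1]\circ\theta=[\phi_2]$ must be used precisely, together with the fact that $h_0$ has finite-dimensional range so that the extra matrix blocks contribute nothing on $\boldsymbol\beta(\underline K(C))$ and contribute the expected summand on $\underline K(C)$.
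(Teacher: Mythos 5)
Your outline has a genuine gap at its two load-bearing steps. First, you construct $u$ with $\mathrm{Bott}(\phi_1\oplus h_0,u)|_{\mathcal P}=\kappa\circ\boldsymbol{\beta}|_{\mathcal P}$ via Lemma \ref{stablehomtp} and then propose to connect (a stabilization of) $u$ to $1$ through unitaries almost commuting with $\phi_1\oplus h_0$ by ``feeding $(\phi_1\oplus h_0,u)$ into Theorem \ref{STHOM}.'' But \ref{STHOM} has the hypothesis ${\rm Bott}(h,u)|_{\mathcal P}=0$, and your $\kappa$ is precisely the $\underline{K}(SB)$-component of $\theta$, which is nonzero in general; no enlargement of $N_1$ or of the finite data helps, since a nonvanishing Bott element is exactly the homotopy obstruction to the existence of such a path (adding a finite-dimensional-range summand does not change this). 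Second, your collar at $t=1$ assumes what must be proved: conclusions \eqref{VuV3} and \eqref{VuV33} together force ${\rm Ad}\,V(1-d)\circ(\phi_1\oplus h_0)\approx_{2\ep}\phi_2\oplus h_0$ on ${\cal F}$, i.e.\ a unitary approximately conjugating $\phi_1$ to $\phi_2$ after adding a common finite-dimensional summand. The equality $[\phi_1]=[\phi_2]$ in $KK(C,B)$ does not yield this by ``Bott/rotation bookkeeping''---here $B$ is an arbitrary unital \CA\ with a trace, so one needs a stable uniqueness theorem of D\u{a}d\u{a}rlat--Eilers type (Lemma 4.17 of \cite{GLN-I}, which is also where the hypothesis on bounded dimensions of irreducible representations of $C$ enters), and you never invoke it.

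The paper's proof is organized the other way around and thereby avoids both problems. It first applies the existence theorem (Theorem 18.2 of \cite{GLN-I}) to produce an almost multiplicative \cp\ $L_1:C\to M_{K_1+1}(M_{\phi_1,\phi_2})$ with $[L_1]|_{{\cal P}_1}=(\theta+[h_0'])|_{{\cal P}_1}$ already built in, so the $K$-class identity \eqref{VuV1} never has to be computed from a hand-made path. It then uses the stable uniqueness Lemma 4.17 of \cite{GLN-I}: once at $t=0,1$ (together with $[\phi_1]=[\phi_2]$, used to match the $K_1$-classes of the two correcting unitaries) to normalize the endpoints to $\phi_1\oplus h_0'$ and $\phi_2\oplus h_0'$, and again at each point $t_i$ of a fine partition to get unitaries $V_{t_i}$ with ${\rm Ad}\,V_{t_i}\circ(\phi_1\oplus h_0'\oplus h_{00})\approx \pi_{t_i}\circ L_1\oplus h_{00}$ on ${\cal F}_1$. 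Only then do \ref{stablehomtp} and \ref{STHOM} appear, and only for the consecutive differences $V_{t_i}V_{t_{i+1}}^*$: \ref{stablehomtp} supplies extra finite-dimensional summands and unitaries $W_i$ cancelling the accumulated Bott elements $\eta_i$, so that the corrected differences have vanishing Bott element and \ref{STHOM} is applied within its hypotheses to join them to $1$, producing the continuous path $V(t)$ on $[0,1-d]$ with $d=1-t_{n-1}$. To repair your argument you would have to (i) replace the by-hand construction of $L$ by an appeal to such an existence theorem into the mapping torus (or otherwise justify $[L]|_{\cal P}=(\theta+[h_0])|_{\cal P}$ for a glued map), and (ii) insert the stable uniqueness step; as written, the two steps you treat as routine are either applied outside their hypotheses or replaced by an unproved assertion.
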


\begin{proof}
Let $\ep>0$ and let ${\cal F}\subset C$ be a finite subset.
Let $\dt_1>0,$ a finite subset ${\cal G}_1\subset C$, and a finite subset ${\cal
P}\subset \underline{K}(C)$ be as provided by
\ref{STHOM} for $\ep/4$ and ${\cal F}$ above.
In particular, we assume that $\dt_1<\dt_{\cal P}$ (see Definition 2.14 of \cite{GLN-I}).
By
Lemma 2.15  of \cite{Lnclasn},
we may further assume that $\dt_1$ is sufficiently small  that
\beq\label{1510-n1}
{\rm Bott}(\Phi, U_1U_2U_3)|_{\cal P}=\sum_{i=1}^3{\rm Bott}(\Phi, U_i)|_{\cal P}
\eneq
whenever
$\|[\Phi(a), U_i]\|<\dt_1\rforal a\in {\cal G}_1,\,\,\,i=1,2,3.$

Let $\ep_1=\min\{\dt_1/2, \ep/16\}$ and ${\cal F}_1={\cal F}\cup
{\cal G}_1.$ We may assume that ${\cal F}_1$ is in the unit ball
of $C.$ We may also assume that $[L']|_{\cal P}$ is well defined
for any $\ep_1$-${\cal F}_1$-multiplicative \morp\, $L'$ from $C$ to
any unital C*-algebra.

Let $\dt_2>0$, ${\cal G}\subset C$, and ${\cal P}_1\subset \underline{K}(C)$ be a constant and finite subsets as provided by
Lemma 4.17 of \cite{GLN-I}
for $\ep_1/2$ and ${\cal F}_1.$ We may
assume that
$\dt_2<\ep_1/2,$ ${\cal G}\supset {\cal F}_1$, and ${\cal
P}_1\supset {\cal P}.$ We also assume that ${\cal G}$ is in the
unit ball of $C.$

It follows from Theorem 18.2 of \cite{GLN-I} 
that there exist an integer $K_1\ge
1,$ a unital \hm\, $h_0': C\to M_{K_1}(\C)$ (see also lines around \eqref{sthomtp-5}),
and a
$\dt_2/2$-${\cal G}$-multiplicative \morp\, $L_1:  C\to
M_{K_1+1}(M_{\phi_1, \phi_2})$ such that
\beq\label{VuV-1}
[L_1]|_{{\cal P}_1}=(\theta+[h_0'])|_{{\cal P}_1}.
\eneq

%
Note that  $[\pi_0]\circ \theta=[\phi_1]$ and $[\pi_1]\circ \theta =[\phi_2]$ and, for each $t\in (0,1),$
\beq\label{VuV-4}
[\pi_t]\circ \theta=[\phi_1]=[\phi_2].
\eneq
By Lemma 4.17 of \cite{GLN-I}, 
we obtain an integer $K_0,$ a unitary $V\in
U(M_{1+K_1+K_0}((C)))$, and a unital \hm\, $h: C\to
M_{K_0}(\C)$ such that
\beq\label{VuV-5--1}
{\rm Ad}\, V\circ (\pi_e\circ L_1\oplus h)\approx_{\ep_1/2}
(\id\oplus h_0'\oplus h)\,\,\,\text{on}\,\,\,{\cal F}_1,
\eneq
where $\pi_e: M_{\phi_1,\phi_2} \to C$ is the canonical projection.

(Here and below, we will identify a homomorphism mapping to $M_k(\C)$ with a homomorphism to $M_k(A)$ for any unital $C^*$ algebra $A$, without introducing new notation.)

Write $V_{00}=\phi_1(V)$ and
$V_{00}'=\phi_2(V).$ The assumption that
$[\phi_1]=[\phi_2]$ implies that $[V_{00}]=[V_{00}']$ in $K_1(B).$
By adding another homomorphism to $h$ in (\ref{VuV-5--1}), replacing $K_0$ by $2K_0$,  and replacing $V$
by $V\oplus 1_{M_{K_0}},$ if necessary, we may assume that
$V_{00}$ and $V_{00}'$ are in the same connected component of
$U(M_{1+K_1+K_0}(B)).$ (Note that $[V_{00}]=[V'_{00}]$.)

One obtains a continuous path of unitaries $\{Z(t): t\in [0,1]\}$
in $M_{1+K_1+K_0}(B)$ such that
\beq\label{VuV-5--2}
Z(0)=V_{00}\andeqn Z(1)=V_{00}'.
\eneq
It follows that $Z\in M_{1+K_1+K_0}(M_{\phi_1,\phi_2}).$ By
replacing $L_1$ by ${\rm ad}\, Z\circ (L_1\oplus h)$ and using a
new $h_0',$ we may assume that
\beq\label{VuV-5--3}
\pi_0\circ L_1\approx_{\ep_1/2}\phi_1\oplus
h_0'\,\,\,\text{on}\,\,\,{\cal F}_1\andeqn
\pi_1\circ L_1\approx_{\ep_1/2} \phi_2\oplus
h_0'\,\,\,\text{on}\,\,\, {\cal F}_1.
\eneq

Define $\lambda: C\to M_{1+K_1+K_0}(C)$ by $\lambda(c)={\rm diag}(c, h_0'(c)),$ where
we also identify $M_{K_0+K_1}(\C)$ with the scalar matrices in $M_{K_0+K_1}(C).$
In particular, since $\phi_i$ is  unital, $\phi_i\otimes {\rm id}_{M_{K_1+K_0}}$ is the identity on  $M_{K_0+K_1}(\C),$
$i=1,2.$ {{Consequently, $(\phi_i\otimes {\rm id}_{M_{K_0+K_1}})\circ h_0'=h_0'$.}}
Therefore, one may write
\begin{equation*}
\phi_i(c)\oplus h_0'(c)=(\phi_i\otimes {\rm id}_{M_{K_0+K_1{{+1}}}})\circ \lambda(c)\rforal c\in C.
\end{equation*}

There is a partition
$0=t_0<t_1<\cdots <t_n=1$
such that
\beq\label{VuV-6}
\pi_{t_{i}}\circ L_1\approx_{\dt_2/8}\pi_t\circ
L_1\,\,\,\text{on}\,\,\,{\cal G} \rforal t_i\le t\le t_{i+1},\,\,\,i=1,2,...,n-1.
\eneq
Applying Lemma 4.17 of \cite{GLN-I} 
again, we obtain an integer $K_2\ge 1, $
a unital \hm\, $h_{00}: C\to M_{K_2}(\C),$
 and  a unitary $V_{t_i}\in M_{1+K_0+K_1+K_2}(B)$ such that
\beq\label{VuV-7}
{\rm Ad}\, V_{t_i}\circ (\phi_1\oplus h_0'\oplus
h_{00})\approx_{\ep_1/2} (\pi_{t_i}\circ L_1 \oplus
h_{00})\,\,\,\text{on}\,\,\, {\cal F}_1.
\eneq

Note that, by (\ref{VuV-6}), (\ref{VuV-7}), and (\ref{VuV-5--3}),
$$
\|[\phi_1\oplus h_0'\oplus h_{00}(a), V_{t_i}V_{t_{i+1}}^*]\|<\dt_2/4+\ep_1\rforal
a\in {\cal F}_1.
$$

Define  $\eta_{-1}=0$ and
$$
\eta_k=\sum_{i=0}^k {\rm Bott}(\phi_1\oplus h_0'\oplus h_{00}, V_{t_i}V_{t_{i+1}}^*)|_{\cal P},\,\,k=0,1,...,n-1.
$$

Now we will construct, for each $i,$ a unital \hm\, $F_i: C\to M_{J_i}(\C)\subset M_{J_i}(B)$ and
a unitary $W_i\in M_{1+K_0+K_1+K_2+\sum_{k=1}^iJ_i}(B)$ such that
\beq\label{150110-L2}
\|[H_i(a),\, W_i]\|<\dt_2/4\tforal a\in {\cal F}_1\andeqn {\rm Bott}(H_i,\, W_i)=\eta_{i-1},
\eneq
where $H_i=\phi_1\oplus h_0'\oplus h_{00}\oplus\bigoplus_{k=1}^i F_i,$ $i=1,2,...,n-1.$

Let $W_0=1_{M_{1+K_0+K_1+K_2}}.$
It follows from Lemma \ref{stablehomtp} that there are an integer $J_1\ge 1,$
a unital \hm\, $F_1: C\to M_{J_1}(\C)$, and a unitary $W_0\in
U(M_{1+K_0+K_1+K_2+J_1}(B))$ such that
\beq\label{VuV-8}
\|[H_1(a), \, W_1]\|<\dt_2/4\rforal a\in {\cal F}_1\andeqn
\text{Bott}(H_1, W_1)=\eta_0,
\eneq
where $H_1=\phi_1\oplus h_0'\oplus h_{00}\oplus F_1.$

Assume that we have constructed the required $F_i$ and $W_i$ for $i=0,1,...,k<n-1.$
It follows from Lemma \ref{stablehomtp} that there are an integer $J_{k+1}\ge 1,$
a unital \hm\, $F_{k+1}: C\to M_{J_{k+1}}(\C),$ and a unitary $W_{k+1}\in
U(M_{1+K_0+K_1+K_2+\sum_{i=1}^{k+1}J_i}(B))$ such that
\beq\label{VuV-9}
\|[H_{k+1}(a), \, W_{k+1}]\|<\dt_2/{ 4}\rforal a\in {\cal F}_1\andeqn
\text{Bott}(H_{k+1}, W_{k+1})=\eta_{k},
\eneq
where $H_{k+1}=\phi_1\oplus h_0'\oplus h_{00}\oplus \bigoplus_{i=1}^{k+1}F_i.$ This finished the construction of $F_i$, $W_i$ and $H_i$ for $i=0,1,..., n-1.$

Now define $F_{00}=h_{00}\oplus \bigoplus_{i=0}^{n-1}F_i$ and define
$K_3=1+K_0+K_1+K_2+\sum_{i=1}^{n-1}J_i.$
Define
$$
v_{t_k}= {\rm diag}(W_k{\rm diag}(V_{t_k}, {\rm id}_{1_{M_{\sum_{i=1}^kJ_i}}}), 1_{M_{\sum_{i=k+1}^{n-1} J_i}}),
$$
$k=1,2,...,n-1$ and
$v_{t_0}=1_{M_{1+K_0+K_1+K_2+\sum_{i=1}^{n-1}J_i}}.$
Then
\beq\label{VuV-14}
&&{\rm Ad}\, v_{t_i}\circ (\phi_1\oplus h_0'\oplus
F_{00})\approx_{\dt_2+\ep_1} \pi_{t_i}\circ (L_1\oplus
F_{00})\,\,\,\text{on}\,\,\,{\cal F}_1,\\
&&\|[\phi_1\oplus  h_0'\oplus F_{00}(a),\,v_{t_i}v_{t_{i+1}}^*]\|<\dt_2/2+2\ep_1\rforal a\in {\cal F}_1, \andeqn\\
\label{VuV-15}
&&\text{Bott}(\phi_1\oplus h_0'\oplus F_{00}, v_{t_i}v_{t_{i+1}}^*)\\
&=&\text{Bott}(\phi_1', W_i')+
\text{Bott}(\phi_1', V_{t_i}'(V_{t_{i+1}}')^*) +\text{Bott}(\phi_1', (W_{i+1}')^*)\\
&=& \eta_{i-1}+\text{Bott}(\phi_1', V_{t_i}V_{t_{i+1}}^*)-\eta_{i}=0,
\eneq
where $\phi'_{{1}}=\phi_1\oplus h_0'\oplus F_{00},$   $W_i'={\rm diag}(W_i, 1_{M_{\sum_{j=i+1}^{n-1}J_i}})$
and $V_{t_i}'={\rm diag}(V_{t_i}, 1_{M_{\sum_{i=1}^{n-1}J_i}}),$
$i=0,1,2,...,n-2.$

It follows by Lemma \ref{STHOM} that there are an integer $N_1\ge 1,$
a unital \hm\, $F_0': C\to M_{N_1}(\C)$, and a continuous path of
unitaries $\{w_i(t): t\in [t_{i-1}, t_i]\}$ in $M_{K_3}(B)$ such that
\beq\label{VuV-17}
&&
w_i(t_{i-1})=v_{i-1}'(v_{i}')^*, w_i(t_i)=1,
\andeqn\\
&&\|[\phi_1\oplus h_0'\oplus F_{00}\oplus F_{0}'(a), \,w_i(t)]\|<\ep/4\rforal a\in {\cal F},
\eneq
where $v_i'={\rm diag}(v_i, 1_{M_{N_1}}(B)),$
$i=1,2,...,n-1.$
Define $V(t)=w_i(t)v_i'$ for $t\in [t_{i-1}, t_i],$ $i=1,2,...,n-1.$
Then $V(t)\in C([0,t_{n-1}], M_{K_3+N_1}(B)).$ Moreover,
\beq\label{VuV-18}
{\rm Ad}\, V(t)\circ (\phi_1\oplus h_0'\oplus F_{00}\oplus
F_0')\approx_{\ep} \pi_t\circ L_1\oplus F_{00}\oplus F_0'
\,\,\,\text{on}\,\,\,{\cal F}.
\eneq

Define  $h_0=h_0'\oplus F_{00}\oplus F_0',$ $L=L_1\oplus
F_{00}+F_0'$, and $d=1-t_{n-1}.$ Then, by (\ref{VuV-18}),
(\ref{VuV2}) and (\ref{VuV3}) hold. From (\ref{VuV-5--3}), it follows that
(\ref{VuV33}) also holds.

\end{proof}

\section{Asymptotic unitary equivalence}

\begin{lem}\label{MUN2}
Let $C_1$ and {$A_1$} be two unital separable simple \CA s in {${\cal B}_1,$} let
$U_1$ and $U_2$ be two UHF-algebras of infinite type and
consider the \CA s $C=C_1\otimes U_1$ and $A=A_1\otimes U_2.$  Suppose that
$\phi_1, \phi_2: C\to A$ are two unital monomorphisms.
Suppose also that
\beq\label{MUN2-1}
[\phi_1]=[\phi_2]\,\,\,{\textrm in}\,\,\, KL(C,A),\\
(\phi_1)_T=(\phi_2)_T\tand \phi_1^{\ddag}=\phi_2^{\ddag}.
\eneq
Then $\phi_1$ and $\phi_2$ are approximately unitarily equivalent.
\end{lem}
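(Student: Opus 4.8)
The plan is to establish approximate unitary equivalence of $\phi_1$ and $\phi_2$ by combining the Winter-type approximate intertwining with the uniqueness-type results available in the excerpt, after first reducing to the situation where the domain $C$ is an inductive limit of the tractable building blocks in $\mathcal C_0$ and $\mathbf H$. Concretely, since $C_1\in\mathcal B_1$ is separable simple with weakly unperforated $K_0$ (Theorem 9.11 of \cite{GLN-I}) and $C=C_1\otimes U_1$, Corollary 19.3 of \cite{GLN-I} gives $C\in\mathcal B_0$, and by the classification/structure result (Theorem 21.9 and Theorem 14.10 of \cite{GLN-I}, see Remark \ref{Remark202011-1}) one may write $C=\varinjlim(C_n,\psi_{n,n+1})$ with each $C_n$ a finite direct sum of algebras in $\mathcal C_0$ and in $\mathbf H$. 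It suffices to produce, for any finite subset $\mathcal F\subset C$ and $\varepsilon>0$, a unitary $u\in A$ with $\|u^*\phi_1(x)u-\phi_2(x)\|<\varepsilon$ for $x\in\mathcal F$; by a standard $\varepsilon/3$ argument it is enough to do this for $\mathcal F$ coming from some $\psi_{n,\infty}(C_n)$, i.e.\ for the restrictions $\phi_i\circ\psi_{n,\infty}|_{C_n}$.

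First I would fix $n$ large and set $D=C_n$, $\varphi_i=\phi_i\circ\psi_{n,\infty}:D\to A$; these are unital completely positive maps (homomorphisms) which, by \eqref{MUN2-1}, satisfy $[\varphi_1]=[\varphi_2]$ in $KL$, $(\varphi_1)_T=(\varphi_2)_T$ and $\varphi_1^{\ddag}=\varphi_2^{\ddag}$ on $U(D)/CU(D)$ (the last because $\psi_{n,\infty}^{\ddag}$ intertwines and $\phi_1^{\ddag}=\phi_2^{\ddag}$). Because $A=A_1\otimes U_2$ with $A_1\in\mathcal B_1$, $A$ has all the regularity needed: stable rank one, strict comparison, the relevant $\Delta$-function coming from the common trace map $(\varphi_1)_T=(\varphi_2)_T$ being faithful. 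The uniqueness theorem Theorem 12.7 of \cite{GLN-I} (for the $\mathbf H$-type summands of $D$) together with its corollaries, applied with $\Delta(h)=\inf\{\tau\circ\varphi_1(h):\tau\in T(A)\}$, then directly yields a unitary $W\in A$ with $\|W^*\varphi_1(x)W-\varphi_2(x)\|<\varepsilon$ for $x$ in the prescribed finite subset of $D$; for the $\mathcal C_0$-type summands, which have trivial $K_1$, the same conclusion follows from the $K_1$-free version (Corollary 12.8 of \cite{GLN-I}). Assembling these over the finitely many direct summands of $D$ (and absorbing the small discrepancies with a further small unitary conjugation) produces the desired $W$ for $\mathcal F$.

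The hard part will be bookkeeping the compatibility data so that the hypotheses of Theorem 12.7 of \cite{GLN-I} are genuinely met at each finite stage: one must check that $[\varphi_1]|_{\mathcal P}=[\varphi_2]|_{\mathcal P}$ on the finite subset $\mathcal P\subset\underline K(D)$ demanded by the uniqueness theorem (this is where $[\phi_1]=[\phi_2]$ in $KL(C,A)$, rather than merely $KK$, is exactly what is needed, since $\underline K(D)$ is finitely generated modulo Bockstein), that the $\ddag$-maps agree on the relevant finite subset of $U_\infty(D)/CU_\infty(D)$, and that the tracial estimates hold with the uniform lower bound coming from $\Delta/2$. One also has to arrange the decomposition $D=D^{(0)}\oplus D^{(1)}$ with $D^{(1)}\in\mathbf H$, $D^{(0)}\in\mathcal C_0$ and handle the two pieces by the appropriate versions, then glue. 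Finally, with such $W=W(\mathcal F,\varepsilon)$ available for every $(\mathcal F,\varepsilon)$, a routine diagonal/compactness argument (no approximate intertwining of two sequences is even needed, since we are only asserting approximate unitary equivalence, not an isomorphism) finishes the proof: given an increasing exhausting sequence $\mathcal F_k$ and $\varepsilon_k\to0$, choose $W_k$ accordingly, and these witness $\phi_1\sim_{\mathrm{a.u.}}\phi_2$.
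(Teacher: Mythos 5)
The paper's own proof is a one-line citation: since $\mathcal B_1$ is closed under tensoring with UHF algebras of infinite type, both $C$ and $A$ lie in $\mathcal B_1$, and the statement is exactly the uniqueness theorem, Theorem 12.11 part (a), of \cite{GLN-I}, which (as used here) needs no further hypotheses on the domain. Your argument instead re-derives that uniqueness statement by decomposing the domain, and this is where it breaks down for the lemma as stated: to write $C=\varinjlim(C_n,\psi_{n,n+1})$ with $C_n$ finite direct sums of algebras in $\mathcal C_0$ and $\mathbf H$ you invoke Theorem 21.9 and Theorem 14.10 of \cite{GLN-I}, i.e.\ the classification of UHF-stable $\mathcal B_0$-algebras by the model algebras. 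That classification requires $C_1$ to be amenable and, crucially, to satisfy the UCT. Lemma \ref{MUN2} assumes neither; $C_1$ is only a unital separable simple C*-algebra in $\mathcal B_1$. Compare with Lemma \ref{L85} of this paper, whose hypotheses explicitly include amenability and the UCT precisely because its proof uses this inductive-limit decomposition (see also Remark \ref{Remark202011-1}). So your very first reduction is unavailable in the stated generality, and the rest of the argument has nothing to stand on; the intended proof deliberately avoids any structure theorem for the domain.

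If one adds the UCT and nuclearity of $C_1$ to the hypotheses (which do hold in the later applications of this lemma, where $C_1$ is a model algebra as in Theorem 14.10 of \cite{GLN-I}), your outline becomes essentially the argument the paper itself runs in the proof of Lemma \ref{L85}: restrict to $C_n$, check that the $KL$-, trace- and $\ddag$-data agree on the finite sets demanded by Theorem 12.7 of \cite{GLN-I}, and conclude stage by stage, followed by the trivial exhaustion argument. Even in that setting two points you gloss over need care: the connecting maps must be chosen injective (as Theorem 14.10 of \cite{GLN-I} allows) so that $\Delta(\hat h)=\inf\{\tau\circ\phi_1\circ\psi_{n,\infty}(h):\tau\in T(A)\}$ is strictly positive on $(C_n)_+^{\bf 1}\setminus\{0\}$; and when treating the summands of $C_n=D^{(0)}\oplus D^{(1)}$ separately you must first conjugate by a unitary so that $\phi_1\circ\psi_{n,\infty}$ and $\phi_2\circ\psi_{n,\infty}$ send the summand units to the \emph{same} projections (possible since the images have the same $K_0$-classes and traces and $A$ has stable rank one), as otherwise the two corner algebras on which you want to apply Theorem 12.7 or Corollary 12.8 of \cite{GLN-I} do not coincide. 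But these are repairs to a proof of a strictly weaker statement; as a proof of Lemma \ref{MUN2} itself, the appeal to the UCT-based classification is a genuine gap.
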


\begin{proof}
This follows  immediately from Theorem 12.11 part (a) of \cite{GLN-I}. 
Note that both $A$ and $C$ are
in $\mathcal B_1.$
\end{proof}

\begin{lem}\label{botgroup}
Let $B$ be a unital \CA\, and let $u_1, u_2,...,u_n\subset U(B)$ be unitaries. Suppose that $v_1, v_2,...,v_m\subset U(B)$ are also unitaries
such that $[v_j]\subset G,$ $j=1, ..., m$, where $G$ is the subgroup of $K_1(B)$ generated by $[u_1], [u_2],...,[u_n].$ There exist  $\dt>0$  and a finite subset ${\cal F}\subset B$ satisfying the following condition:
For any unital \CA\, $A$ and any unital monomorphisms $\phi_1, \phi_2: B\to A${{,}}
if $\tau\circ \phi_1=\tau\circ \phi_2$ for all $\tau\in T(A)$ and
if there is a unitary $w\in U(B)$ such that
\beq\label{botgroup-1}
\| w^*\phi_1(b)w-\phi_2(b)\|<\dt \tforal b\in {\cal F},
\eneq
then there exists a group \hm\, $\af: G\to \Aff(T(A))$ such that
\beq\label{botgroup-2}
{1\over{2\pi i}}\tau(\log(\phi_2(u_k)w^*\phi_1(u_k^{ *})w)&=&\af([u_k]){{(\tau)}}\tand\\
{1\over{2\pi i}}\tau(\log(\phi_2(v_j)w^*\phi_1(v_j^{ *})w)&=&\af([v_j]){{(\tau)}},
\eneq
for any $\tau\in T(A),$ $k=1,2,...,n$ and $j=1,2,...,m.$
\end{lem}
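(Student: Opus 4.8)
The plan is to show that the prescribed "logarithm" numbers really do assemble into a well-defined homomorphism on $G$. First I would recall the standard setup for the de la Harpe–Skandalis type invariant: given a unitary $w$ with $\|w^*\phi_1(b)w-\phi_2(b)\|<\delta$ on a suitable finite set, and given $\tau\circ\phi_1=\tau\circ\phi_2$, the element $\phi_2(u)w^*\phi_1(u^*)w$ is close to $1$ (for $u$ in the relevant finite set, when $\delta$ is small and $\mathcal F$ large enough) and has trace under each $\tau$ close to $1$, so its logarithm is well defined by the usual branch; moreover $\frac{1}{2\pi i}\tau(\log(\cdot))$ does not depend on the choice of path/representative up to $\overline{\rho_A(K_0(A))}$, but here we want an honest real number, which is fine since we're fixing $w$. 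The key functional identity I would establish is the cocycle/multiplicativity relation: for unitaries $u,u'$ one has, modulo controllable error,
\begin{equation*}
\tfrac{1}{2\pi i}\tau\big(\log(\phi_2(uu')w^*\phi_1((uu')^*)w)\big)=\tfrac{1}{2\pi i}\tau\big(\log(\phi_2(u)w^*\phi_1(u^*)w)\big)+\tfrac{1}{2\pi i}\tau\big(\log(\phi_2(u')w^*\phi_1((u')^*)w)\big),
\end{equation*}
which follows by writing $\phi_2(uu')w^*\phi_1((uu')^*)w$ as a product of two near-unitaries-near-$1$ and using additivity of the determinant-type functional $\tau\circ\log$ for commuting-up-to-small-error pairs (Exel's formula / the arguments around Lemma 2.15 of \cite{Lnclasn} cited in the excerpt). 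Similarly $\tfrac{1}{2\pi i}\tau(\log(\cdot))$ vanishes (is well-defined and equals $0$) on a unitary that is a commutator or lies in $U_0$ with small determinant.

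Next I would define a candidate map on the free group $F_n$ on symbols $g_1,\dots,g_n$ by sending $g_k$ to the function $\tau\mapsto \tfrac{1}{2\pi i}\tau(\log(\phi_2(u_k)w^*\phi_1(u_k^*)w))$ and extending multiplicatively; by the cocycle identity above this is a well-defined homomorphism $\tilde\alpha:F_n\to \mathrm{Aff}(T(A))$ up to the small errors, and after passing to the limit (choosing $\delta\to 0$, $\mathcal F$ increasing, or simply choosing $\delta$ small enough relative to the finitely many relations needed) it is an honest homomorphism. To get a map on $G=\langle[u_1],\dots,[u_n]\rangle\subset K_1(B)$ I must check that $\tilde\alpha$ kills the kernel of $F_n\twoheadrightarrow G$, i.e.\ that any word $W(u_1,\dots,u_n)$ which is trivial in $K_1(B)$ gives value $0$. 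Such a $W$, evaluated in $U(M_N(B))$ for suitable $N$, lies in $U_0(M_N(B))$; applying $\phi_1,\phi_2$ and using $\tau\circ\phi_1=\tau\circ\phi_2$ together with the vanishing of the logarithm functional on $U_0$ with matching traces (this is where the hypothesis $\tau\circ\phi_1=\tau\circ\phi_2$ is essential, and one uses that the functional descends through $U_0/CU$ to $\mathrm{Aff}/\overline{\rho}$ but the \emph{difference} of the two traces is literally zero), one concludes the value is $0$ modulo small error, hence exactly $0$. This produces $\alpha:G\to\mathrm{Aff}(T(A))$.

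Finally, for the $v_j$: since $[v_j]\in G$, write $[v_j]$ as a word in the $[u_k]$; then $v_j$ differs from the corresponding word $W_j(u_1,\dots,u_n)$ by an element of $U_0(M_{N}(B))$ (after stabilizing), and again the logarithm functional applied to $\phi_2(v_j)w^*\phi_1(v_j^*)w$ versus $\phi_2(W_j)w^*\phi_1(W_j^*)w$ agree because their "ratio" is in $U_0$ with equal traces under $\phi_1,\phi_2$; hence $\tfrac{1}{2\pi i}\tau(\log(\phi_2(v_j)w^*\phi_1(v_j^*)w))=\tilde\alpha(W_j)(\tau)=\alpha([v_j])(\tau)$, as required. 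The main obstacle I anticipate is the bookkeeping to make everything simultaneously honest rather than approximate: one must choose $\mathcal F$ and $\delta$ so that all the finitely many logarithms in play (for $u_k$, for $v_j$, for the finitely many relation-words needed to see $G$ as a quotient of $F_n$, and for the products appearing in the cocycle identity) are well-defined with the correct branch and so that the approximate additivity is exact in the limit; this is a standard but careful application of the stability of $\tau\circ\log$ under small perturbations (as in \cite{HS} and Lemma 2.15 of \cite{Lnclasn}), and choosing $N$ large enough that the relevant words land in $U_0(M_N(B))$.
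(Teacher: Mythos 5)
Your overall strategy (the trace-of-logarithm calculus: exact additivity of $\tfrac{1}{2\pi i}\tau\circ\log$ for near-identity unitaries, transport by $w$, reduction of $G$ to finitely many relations) is the right one and is essentially what the paper's one-line proof delegates to Lemmas 6.1--6.3 of \cite{Lnind}. However, there is a genuine gap at the decisive step, the one the whole lemma turns on: your justification that a relation word $W$ with $[W]=0$ in $K_1(B)$ contributes value $0$. You argue that the value is ``$0$ modulo small error, hence exactly $0$,'' using that the functional descends to $\Aff(T(A))/\overline{\rho_A(K_0(A))}$ and that $\tau\circ\phi_1-\tau\circ\phi_2=0$. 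What this line of reasoning actually gives is that $\tfrac{1}{2\pi i}\tau\bigl(\log(\phi_2(W)\tilde w^*\phi_1(W^*)\tilde w)\bigr)$ equals $\rho_A(x)(\tau)$ for some $x\in K_0(A)$ (the class of a loop in $U(M_N(A))$) and that it is small in norm; being small and lying in $\overline{\rho_A(K_0(A))}$ does \emph{not} force it to vanish for a general unital $A$ --- indeed in the intended application ($A=A_1\otimes U$) the group $\rho_A(K_0(A))$ is dense in $\Aff(T(A))$, so the inference fails exactly where the lemma is used. The same unjustified step recurs when you compare $v_j$ with the word $W_j$, whose ``ratio'' you only know lies in $U_0(M_N(B))$.

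The missing idea is that $\delta$ and ${\cal F}$ must encode not only the relation words but also chosen \emph{witnesses of their null-homotopy}: for each of the finitely many generators of $\ker(\Z^n\to G)$ (and for each $v_jW_j^{-1}$), fix in advance a factorization $\mathrm{diag}(W,1)=\prod_l\exp(ih_l)$ in $M_N(B)$ and put (the entries of) the $h_l$ into ${\cal F}$. Then, with $\delta$ small, the transported path $Y_t=\phi_2(z_t)\tilde w^*\phi_1(z_t^*)\tilde w$, where $z_t$ runs along that factorization from $1$ to $\mathrm{diag}(W,1)$, stays \emph{uniformly} close to $1$; hence its de la Harpe--Skandalis winding (\cite{HS}) equals the endpoint quantity $\tfrac{1}{2\pi i}\tau(\log Y_1)$ on the nose, while a direct computation of that winding gives $D_{\tau\circ\phi_2}(z)-D_{\tau\circ\phi_1}(z)=0$ exactly, by the hypothesis $\tau\circ\phi_1=\tau\circ\phi_2$. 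This is what upgrades ``$0$ modulo $\overline{\rho_A(K_0(A))}$ and small'' to ``exactly $0$,'' and it is the content of the cited Lemmas 6.2 and 6.3 of \cite{Lnind}. With this repair (and your exact-additivity bookkeeping, which is fine), the rest of your argument --- extension over the free group, killing commutators because $\Aff(T(A))$ is abelian, finite generation of the kernel in $\Z^n$, and the treatment of the $v_j$ --- goes through.
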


\begin{proof}
The proof is essentially contained in the proofs of 6.1, 6.2, and 6.3 of \cite{Lnind}. Note that there is a typo in Lemma 6.2 and Lemma 6.3 in \cite{Lnind}: ``$\tau(\af(a))=a$" should be ``$\tau(\af(a))=\tau(a)$". Here the condition $\tau\circ \phi_1=\tau\circ \phi_2$ plays the role of condition $\tau(\af(a))=\tau(a)$ there.

\end{proof}

\begin{lem}\label{inv71}
Let $C_1$ be a unital simple \CA\, as in Theorem 14 .10 of \cite{GLN-I},
let $A_1$ be a unital separable simple \CA\, in {${\cal B}_0,$} and let $U_1$ and $U_2$ be UHF-algebras of infinite type. Let $C=C_1\otimes U_1$ and $A=A_1\otimes U_2.$ Suppose that $\phi_1, \phi_2: C\to A$ are unital monomorphisms. Suppose also that
\beq\label{71-1}
&&[\phi_1]=[\phi_2]\,\,\,{\textrm in}\,\,\, KL(C, A),\\\label{71-2}
&&\phi_1^{\ddag}=\phi_2^{\ddag}, \,\,\, (\phi_1)_T=(\phi_2)_T,\tand\\\label{71-3}
&&R_{\phi, \psi}(K_1(M_{\phi_1,\phi_2})){ \subset} \rho_A(K_0(A)).
\eneq
Then, for any increasing sequence of finite subsets $\{{\cal F}_n\}$ of $C$ whose union is dense in $C,$ any increasing sequence of finite
subsets ${\cal P}_n$ of $K_1(C)$ with
$\bigcup_{n=1}^{\infty} {\cal P}_n=K_1(C),$ and any decreasing sequence of positive numbers $\{\dt_n\}$ with $\sum_{n=1}^{\infty} \dt_n<\infty,$ there exists a sequence of unitaries $\{u_n\}$ in $U(A)$ such that
\beq\label{71-4}
{\rm Ad}u_n\circ \phi_1\approx_{\dt_n} \phi_2\,\,\,{\textrm on}\,\,\,{\cal F}_n\tand\\
\rho_A({\rm bott}_1(\phi_2, u_n^*u_{n+1})(x))=0\ \textrm{for all}\  x\in {\cal P}_n (\subset K_1(C))
\eneq
and for all sufficiently large $n.$

\end{lem}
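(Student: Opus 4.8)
\textbf{Proof proposal for Lemma \ref{inv71}.} The plan is to run a one-sided approximate intertwining in which, at each stage, we have already arranged $\mathrm{Ad}\,u_n\circ\phi_1\approx_{\delta_n}\phi_2$ on ${\cal F}_n$ and we wish to pass to $u_{n+1}$ keeping this with $\delta_{n+1}$ on ${\cal F}_{n+1}$ while simultaneously killing the rotation-type obstruction $\rho_A(\mathrm{bott}_1(\phi_2,u_n^*u_{n+1}))$ on ${\cal P}_n$. First I would reduce to the situation where $\phi_1$ and $\phi_2$ are genuinely close on a large finite set: by Lemma \ref{MUN2} (using \eqref{71-1} and \eqref{71-2}) the two monomorphisms are approximately unitarily equivalent, so after conjugating $\phi_1$ by some unitary we may start the induction with $u_1$. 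The real content is the inductive step.

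For the inductive step I would fix a large finite subset ${\cal G}\supset {\cal F}_{n+1}$ and a small tolerance, and let $\psi_1=\mathrm{Ad}\,u_n\circ\phi_1$, so $\psi_1\approx_{\delta_n}\phi_2$ on ${\cal F}_n$. I first want to produce one unitary $w\in U(A)$ with $\mathrm{Ad}\,w\circ\psi_1\approx\phi_2$ on ${\cal G}$ and with a \emph{controlled} $\mathrm{bott}_1$ value: precisely, I would invoke Theorem 12.11 of \cite{GLN-I} (the uniqueness theorem for maps from ${\cal B}_1$ algebras into ${\cal B}_1$ algebras, valid here since $C,A\in{\cal B}_1$) to get \emph{some} unitary $w_0$ implementing the approximate equivalence on ${\cal G}$. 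The defect is measured by the class in $K_1(M_{\psi_1,\phi_2})$ that the pair $(\psi_1,\phi_2,w_0)$ determines, and by the hypothesis $[\phi_1]=[\phi_2]$ in $KL(C,A)$ together with \eqref{71-3} this class pushes forward under $R_{\phi,\psi}$ into $\rho_A(K_0(A))$. Now I would use Lemma \ref{botgroup} to see that the assignment $x\mapsto \frac{1}{2\pi i}\tau\big(\log(\phi_2(u_x)w_0^*\psi_1(u_x^*)w_0)\big)$ is a well-defined homomorphism $\alpha\colon G\to \mathrm{Aff}(T(A))$ on the subgroup $G\subset K_1(C)$ generated by (representatives of) ${\cal P}_n$; combined with \eqref{71-3} we get that $\alpha$ takes values in $\overline{\rho_A(K_0(A))}$, in fact (since the $R$-map lands in $\rho_A(K_0(A))$ itself) we can assume $\alpha=\rho_A\circ h$ for some $h\colon G\to K_0(A)$.

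Having identified the obstruction as an element of $\rho_A(K_0(A))$, I would correct it: apply the existence theorem for Bott elements, Lemma \ref{Extbot3} (or the cleaner special case needed here, Lemma \ref{Extbot2} together with Theorem \ref{BB-exi+}), to $\phi_2\colon C\to A$ to produce a unitary $v\in U_0(A)$ almost commuting with $\phi_2({\cal G})$, with $\mathrm{Bott}(\phi_2,v)$ vanishing on the relevant finite subset of $\underline K(C)$ \emph{except} that the prescribed partial-determinant data is matched so that $\mathrm{bott}_1(\phi_2,v)=-h$ on ${\cal P}_n$ (this is exactly the kind of ``prescribed rotation-related'' existence statement those lemmas provide, using that $A=A_1\otimes U_2$ has the required divisibility and $U_0(A)/CU(A)$ is divisible). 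Then setting $w=w_0 v$ and $u_{n+1}=w^*u_n$ (suitably: $u_{n+1}$ chosen so that $\mathrm{Ad}\,u_{n+1}\circ\phi_1=\mathrm{Ad}\,w\circ\psi_1$), a computation with the cocycle identity for $\mathrm{bott}_1$ (Lemma 2.15 of \cite{Lnclasn}, as used in \eqref{1510-n1}) gives $\mathrm{bott}_1(\phi_2,u_n^*u_{n+1})=\mathrm{bott}_1(\phi_2,v)+(\text{piece from }w_0)$, and the $w_0$ piece is $\alpha=\rho_A\circ h$ while the $v$ piece is $\rho_A\circ(-h)$, so $\rho_A(\mathrm{bott}_1(\phi_2,u_n^*u_{n+1}))=0$ on ${\cal P}_n$ as required. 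The main obstacle I anticipate is the bookkeeping in this last step: one must choose the finite subsets ${\cal G}$, the $KL$-triples, and the order of quantifiers carefully so that (i) the uniqueness theorem applies with the right tolerances, (ii) the homomorphism $\alpha$ on $G$ is genuinely independent of the choices modulo $\rho_A(K_0(A))$ (this is where \eqref{71-3}, i.e. $R_{\phi,\psi}(K_1(M_{\phi_1,\phi_2}))\subset\rho_A(K_0(A))$, is essential), and (iii) the correcting unitary $v$ can be taken in $U_0(A)$ so that attaching it does not disturb the already-achieved $KL$-data on ${\cal P}_m$ for $m<n$. None of these is conceptually hard given the machinery in Sections 23--26, but assembling them into a convergent intertwining is the technical heart, and it follows the template of Lemma 7.1 of \cite{Lnclasn}.
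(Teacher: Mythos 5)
Your architecture is the same as the paper's (Lemma \ref{MUN2} to produce approximate intertwiners, Lemma \ref{botgroup} plus \eqref{71-3} and the Exel trace formula to see the rotation defect is of the form $\rho_A\circ h$, and Lemma \ref{Extbot2}/\ref{Extbot3} to realize a correcting unitary with prescribed ${\rm bott}_1$), but your key cancellation step has a genuine gap. You decompose ${\rm bott}_1(\phi_2,u_n^*u_{n+1})={\rm bott}_1(\phi_2,w_0)+{\rm bott}_1(\phi_2,v)$ and assert that the $w_0$ piece equals $\alpha$, the intertwining-defect homomorphism $\alpha(x)=\frac{1}{2\pi i}\tau\big(\log(\phi_2(u_x)w_0^*\psi_1(u_x^*)w_0)\big)$. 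These are different objects: by the Exel formula and the additivity of $\tau(\frac{1}{2\pi i}\log(\cdot))$ on products of elements close to $1$ (6.1 of \cite{Lnind}), one has $\rho_A({\rm bott}_1(\phi_2,w_0))([u_x])(\tau)=\alpha([u_x])(\tau)-D_n([u_x])(\tau)$, where $D_n([u_x])(\tau)=\tau\big(\frac{1}{2\pi i}\log(\phi_2(u_x)u_n^*\phi_1(u_x^*)u_n)\big)$ is the \emph{absolute} rotation defect of the previously constructed $u_n$. So your cancellation yields $\rho_A({\rm bott}_1(\phi_2,u_n^*u_{n+1}))=-D_n$, not $0$, and $D_n$ is only small (of order $\delta_n$), not zero, under your stated induction hypotheses; smallness is useless here since the conclusion demands exact vanishing.

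The missing idea is exactly what the paper's proof is organized around: make the absolute defect vanish identically, not the relative one. The paper takes the sequence $v_n$ from Lemma \ref{MUN2}, shows $\tau(h_{j,n})\in\rho_A(K_0(A))$ using \eqref{71-3}, Exel's formula and Lemma 3.5 of \cite{Linajm}, lifts to $\alpha_n^{(1)}:K_1(C_n)\to K_0(A)$ (splitting off torsion, which dies in $\Aff(T(A))$), and then corrects each $v_n$ by a unitary $U_n$ from Lemma \ref{Extbot2} so that the absolute defect of $u_n=v_nU_n$ is exactly zero; the vanishing of $\rho_A({\rm bott}_1(\phi_2,u_n^*u_{n+1}))$ then follows from the identity $u_ne^{2\pi i b'_{j,n}}u_n^*=e^{2\pi i b_{j,n}}e^{-2\pi i b''_{j,n+1}}$ together with 6.1 of \cite{Lnind} and the Exel formula, with no induction on the corrections at all. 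Your scheme can be repaired along the same lines: note that your correction in fact forces the absolute defect of the new $u_{n+1}$ to vanish (by the same additivity computation), add this to the induction hypothesis so that the term $D_n$ above is actually $0$ from the second step on (the base case must also be corrected, or one uses that the conclusion is only required for large $n$). But as written, the step "the $w_0$ piece is $\alpha$" is unjustified and is precisely where the work lies; once you supply it, your proof collapses to the paper's argument phrased as an induction.
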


\begin{proof}
Note that $A\cong A\otimes U_2.$ Therefore, as $U_2$ is of infinite type, there is a unital \hm\, $s: A\otimes U_2\to A$ such that
$s\circ \imath$ is approximately unitarily equivalent to the identity map on $A,$ where
 ${\imath}: A\to A\otimes U_2$ is defined by $a\to a\otimes 1_{U_2}$ {for all $a\in A$}.
 Therefore, we
may assume that $\phi_1(C),\, \phi_2(C)\subset A\otimes 1_{U_2}.$
By Lemma \ref{MUN2}, there exists a sequence of unitaries
$\{v_n\}\subset A$ such that
\beq\label{2019-sept-10}
\lim_{n\to\infty} {\rm Ad}\, v_n\circ \phi_1(c)=\phi_2(c)\tforal c\in C.
\eneq

We may assume that the set ${\cal F}_n$ are in the unit ball of $C,$ with dense union.
For the next four paragraphs of the proof, fix $n=1,2,....$

Put $\ep_n'=\min\{1/2^{n+1}, \dt_n/2\}.$ Let
$C_n\subset C$ be a unital \SCA\, (in place of ${ C_n}$) such that $K_i(C_n)$ is finitely generated ($i=0,1$),
and let ${\cal Q}_n$ be a finite set of generators of $K_1(C_n),$
 let $\dt_n'>0$ (in place of $\dt$) be as in Lemma \ref{Extbot2} for $C$ (in place of $A$), $\ep_n'$ (in
place of $\ep$), ${\cal F}_n$ (in place of ${\cal F}$), and $[\imath_n]({\cal Q}_{n-1})$
(in place of ${\cal P}$), where $\imath_n: C_n\to C$ is the embedding.
Note that we assume that
\beq\label{71-9}
[\imath_{n+1}]({\cal Q}_{n+1})\supset {\cal P}_{n+1}\cup [\imath_n]({\cal Q}_n).
\eneq

Write $K_1(C_n)=G_{n,f}\oplus {\rm Tor}(K_1(C_n)),$ where $G_{n,f}$ is a finitely generated free abelian group.
Let $z_{1,n}, z_{2,n},...,z_{f(n),n}$ be independent generators of $G_{n,f}$ and $z'_{1,n},z'_{2,n},...,z_{t(n),n}'$ be generators of ${\rm Tor}(K_1(C_n)).$
We may assume that
$$
{\cal Q}_n=\{z_{1,n}, z_{2,n},...,z_{f(n),n},z'_{1,n},z'_{2,n},...,z_{t(n),n}'\}.
$$

Choose $1/2>\ep_n''>0$ so that ${\text{bott}}_1(h', u')|_{K_1(C_n)}$ is a well defined group \hm, ${\text{bott}}_1(h', u')|_{{\cal Q}_n}$ is well defined, and
$(\text{bott}_1(h',u')|_{K_1(C_n)})|_{{\cal Q}_n}={\text{bott}}_1(h', u')|_{{\cal Q}_n}$ for any unital \hm\, $h': C\to A$ and any unitary $u'\in A$ for which
\beq\label{71-10}
\|[h'(c), u']\|<\ep_n''\tforal c\in {\cal G}_n'
\eneq
for some finite subset ${\cal G}_n'\subset C$ which contains ${\cal F}_n.$

Let $w_{1,n},w_{2,n},...,w_{f(n),n}, w_{1,n}',w_{2,n}',...,w_{t(n),n}'\in C$ be unitaries (note that, by
Theorem 9.7 of \cite{GLN-I},
$C$ has stable rank one)
such that $[w_{i,n}]=(\imath_n)_{*1}(z_{i,n})$ and
$[w_{j,n}']=(\imath_n)_{*1}(z_{j,n}'),$ $i=1,2,...,f(n),$
$j=1,2,...,t(n)$, and  $n=1,2,....$
Since we may choose larger ${\cal G}_n',$
without loss of generality, we may assume that $w_{i,n}\in {\cal G}_n'.$

Let $\dt_1''=1/2$ and, for $n\ge 2,$ let $\dt_n''>0$ (in place of $\dt$)
and ${\cal G}_n''$ (in place of ${\cal F}$) be as in Lemma \ref{botgroup}  associated
with $w_{1,n},w_{2,n},...,w_{f(n),n}, w_{1,n}',w_{2,n}',...,w_{t(n),n}'$
(in place of $u_1, u_2, ... ,u_n$) and
$$
\{w_{1,{n-1}}, w_{2,n-1},...,w_{f(n-1), n-1}, w_{1,n-1}',w_{2,n-1}',...,w_{t(n-1), n-1}'\}
$$
(in place of $v_1,v_2,...,v_m$).

Now consider all $n=1,2,....$
Put $\ep_n=\min\{\ep_n''/2, \ep_n'/2,\dt'_n, \dt_n''/2\}$ and
${\cal G}_n={\cal G}_n'\cup {\cal G}_n''.$
By (\ref{2019-sept-10}), we may assume that
\beq\label{71-11}
{\rm Ad}\, v_n\circ \phi_1\approx_{\ep_n} \phi_2\,\,\,{\rm on\,\,\,} {\cal G}_n,\,\,\, n=1,2,....
\eneq
Thus, ${\rm bott}_1(\phi_2\circ \imath_n, v_n^*v_{n+1})$ is well defined.
Since $\Aff(T(A))$ is torsion free,
\beq\label{71-12-1}
\tau\big({\rm bott}_1(\phi_2\circ \imath_n, v_n^*v_{n+1})|_{{\rm Tor}(K_1(C_n))}\big)=0.
\eneq

From (\ref{71-11}), we have
\beq\label{71-12}
\|\phi_2(w_{j,n}){\rm Ad}\, v_n(\phi_1(w_{j,n})^*)-1\|<(1/4)\sin(2\pi \ep_n)<\ep_n,\,\,\, n=1,2,....
\eneq
Define
\beq\label{71-13}
h_{j,n}={1\over{2\pi i}}\log(\phi_2(w_{j,n}){\rm Ad}\, v_n(\phi_1(w_{j,n})^*)),\,\,\,j=1,2,...,f(n),\  n=1,2,....
\eneq
Then, for any $\tau\in T(A),$
$|\tau(h_{j,n})|<\ep_n<\dt_n',\,\,\, j=1,2,...,f(n)$, $n=1,2,....$
Since $\Aff(T(A))$ is torsion free, and the classes $[w'_{j, n}]$ are torsion, it follows from Lemma
\ref{botgroup} that
\beq\label{71-13+}
\tau({1\over{2\pi i}}\log(\phi_2(w_{j,n}'){\rm Ad}\, v_n(\phi_1(w_{j,n}'^{*}))))=0,
\eneq
$j=1,2,...,t(n)$ and $n=1,2,....$
By the assumption that
$R_{\phi_1, \phi_2}({ K_1}(M_{\phi_1,\phi_2})){ \subset} \rho_A(K_0(A)),$ by
Exel's formula (see \cite{HL}), and by Lemma 3.5 of \cite{Linajm}, we conclude that
\begin{equation*}
\widehat{h_{j,n}}(\tau)=\tau(h_{j,n})\in R_{\phi_1, \phi_2}({ K_1}(M_{\phi_1,\phi_2})){ \subset} \rho_A(K_0(A)).
\end{equation*}
Now define $\af_n': K_1(C_n)\to \rho_A(K_0(A))$ by
\beq\label{71-15}
\af_n'(z_{j,n})(\tau)=\widehat{h_{j,n}}(\tau)=\tau(h_{j,n}),\,\,\, j=1,2,...,f(n)\andeqn
\af_n'(z_{j,n}')=0,\,\,\,j=1,2,...,t(n),
\eneq
$n=1,2,....$
 Since $\af_n'(K_1(C_n))$ is free abelian,
it follows that there is a \hm\, $\af_n^{(1)}: K_1(C_n)\to K_0(A)$ such that
\beq\label{71-16}
{{(}}\rho_A\circ \af_n^{(1)}(z_{j,n}))(\tau)=\tau(h_{j,n}),\,\,\, j=1,2,...,f(n),\,\,{{\tau\in T(A)}}, \,\,\andeqn\\
\af_{n}^{(1)}(z_{j,n}')=0,\,\,\, j=1,2,...,t(n).
\eneq
Define
$\af_n^{(0)}: K_0(C_n)\to K_1(A)$ by $\af_n^{(0)}=0.$ By the UCT,
there is $\kappa_n\in KL({ S}C_n, A)$ such that $\kappa_n|_{K_i(C_n)}=\af_n^{(i)},$ $i=0,1,$
where $SC_n$ is the suspension of $C_n$ (here, we identify $K_i(C_n)$ with $K_{i+1}(SC_n)$).

By the UCT again, there is $\af_n\in KL(C_n\otimes C(\T), A)$ such that
$\af_n\circ \boldsymbol{\bt}|_{\underline{K}(C_n)}=\kappa_n.$ In particular,
$\af_n\circ \boldsymbol{\bt}|_{K_1(C_n)}=\af_n^{(1)}.$ It follows from Lemma \ref{Extbot2} that there exists a unitary $U_n\in U_0(A)$ such that
\beq\label{71-17}
\|[\phi_{ 2}(c),\, U_n]\|<\ep_n''\rforal c\in {\cal F}_n\andeqn\\\label{71-17+}
\rho_A({\text{bott}_1}(\phi_2,\, U_n)(z_{j,n}))=-\rho_A\circ \af_n^{(1)}(z_{j,n}),
\eneq
$j=1,2,...,f(n).$ We also have
\beq\label{71-18}
\rho_A({\text{bott}_1}(\phi_2,\, U_n)(z_{j,n}'))=0,\,\,\, j=1,2,...,t(n),
\eneq
as the elements $z_{j, n}$ are torsion.
By the Exel trace formula (see \cite{HL}), (\ref{71-16}), and (\ref{71-17+}),
 we have
\beq\label{71-19}
\tau(h_{j,n})&=&-\rho_A({\rm bott}_1(\phi_2, U_n)(z_{j,n})(\tau)
    =-\tau({1\over{2\pi i}}\log(U_n\phi_2(w_{j,n})U_n^*\phi_2(w_{j,n}^*)))
\eneq
for all $\tau\in T(A),$ $j=1,2,...,f(n).$
Define $u_n=v_nU_n,$ $n=1,2,....$ By 6.1 of \cite{Lnind}, (\ref{71-19}), and
(\ref{71-17+}), we compute that
\beq\label{71-21}
&&\hspace{-0.2in}\tau({1\over{2\pi i}}\log(\phi_2(w_{j,n}){\rm Ad} u_n(\phi_{ 1}(w_{j,n}^*)))))\\
&=&\tau({1\over{2\pi i}}\log(U_n\phi_2(w_{j,n})U_n^*v_n^*\phi_1(w_{j,n}^*)v_n)))\\
&=&\tau({1\over{2\pi i}}\log(U_n\phi_2(w_{j,n})U_n^*\phi_2(w_{j,n}^*)\phi_2(w_{j,n})v_n^*
\phi_1(w_{j{{,n}}}^*)v_n)))\\
&=&\tau({1\over{2\pi i}}\log(U_n\phi_2(w_{j,n})U_n^*\phi_2(w_{j,n}^*))))
+\tau({1\over{2\pi i}}\log(\phi_2(w_{j,n})v_n^*\phi_1(w_{j,n}^*)v_n)))\\
&=&\rho_A({\rm bott}_1(\phi_2, U_n)(z_{j,n}))(\tau)+\tau(h_{j,n})=0
\eneq
for all $\tau\in T(A),$ $j=1,2,...,f(n)$ and $n=1,2,....$
By (\ref{71-13+}) and (\ref{71-18}),
\begin{equation}\label{71-22}
\tau({1\over{2\pi i}}\log(\phi_2(w_{j,n}'){\rm Ad} u_n(\phi_{1}((w_{j,n}')^*))))=0,
\end{equation}
$j=1,2,...,t(n)$ and $n=1,2,....$
Let
\beq\label{71-23}
b_{j,n}&=&{1\over{2\pi i}}\log(u_n\phi_2(w_{j,n})u_n^*\phi_1(w_{j,n}^*)),\\
b_{j,n}'&=&{1\over{2\pi i}}\log(\phi_2(w_{j,n})u_n^*u_{n+1}\phi_2(w_{j,n}^*)u_{n+1}^*u_n),\andeqn\\
b_{j,n+1}''&=&{1\over{2\pi i}}\log(u_{n+1}\phi_2(w_{j,n})u_{n+1}^*\phi_1(w_{j,n}^*)),
\eneq
$j=1,2,...,f(n)$ and $n=1,2,....$ We have, by (\ref{71-21}),
\beq\label{71-24}
\tau(b_{j,n})&=&\tau({1\over{2\pi i}}\log(u_n\phi_2(w_{j,n})u_n^*\phi_1(w_{j,n}^*)))\\
&=&\tau({1\over{2\pi i}}\log(\phi_2(w_{j,n})u_n^*\phi_1(w_{j,n}^*)u_n))=0
\eneq
for all $\tau\in T(A),$ $j=1,2,..., f(n)$, and $n=1,2,....$ Note that
$\tau(b_{j, n+1})=0$ for all $\tau\in T(A),$ $j=1,2,...,f(n+1).$
It follows from Lemma \ref{botgroup} {and (\ref{71-9})} that
\begin{equation*}
\tau(b_{j,n+1}'')=0\tforal \tau\in T(A),\,\,\,j=1,2,...,f(n),\,\,\,n=1,2,....
\end{equation*}
Note
that
\begin{equation*}
u_ne^{2\pi i b_{j,n}'}u_n^*=e^{2\pi i b_{j,n}}\cdot e^{-2\pi i b_{j,n+1}''},\,\,\,j=1,2,...,f(n).
\end{equation*}
Hence, using 6.1 of \cite{Lnind}, we compute that
\beq\label{71-26}
\tau(b_{j, n}')=\tau(b_{j,n})-\tau(b_{j,n+1}'')=0\tforal \tau\in T(A).
\eneq
By the Exel formula {{(see \cite{HL})}} and (\ref{71-26}),
\beq\label{71-27}
&&\hspace{-0.3in}\rho_A({\rm bott}_1(\phi_2, u_n^*u_{n+1}))(w_{j,n}^*)(\tau)
=\tau({1\over{2\pi i}}\log(u_n^*u_{n+1}\phi_2(w_{j,n})u_{n+1}^*u_n\phi_2(w_{j,n}^*)))\\
&=&\tau({1\over{2\pi i}}\log(\phi_2(w_{j,n})u_{n}^*u_{n+1}\phi_2(w_{j,n}^*)u_{n+1}^*u_n))=0
\eneq
for all $\tau\in T(A)$ and $j=1,2,...,f(n).$
Thus,
\beq\label{71-28}
\rho_A({\rm bott}_1(\phi_2,u_n^*u_{n+1})(w_{j,n}))(\tau)=0\tforal \tau\in T(A),
\eneq
$j=1,2,...,f(n)$, and $n=1,2,....$ We also have
\beq\label{71-29}
\rho_A({\rm bott}_1(\phi_2, u_n^*u_{n+1})(w_{j,n}'))(\tau)=0\tforal \tau\in T(A),
\eneq
$j=1,2,...,f(n)$, and $n=1,2,....$ By \ref{botgroup}, we have that
\beq\label{71-30}
\rho_A({\rm bott}_1(\phi_2, u_n^*u_{n+1})(z))=0\tforal z\in {\cal P}_n,
\eneq
$n=1,2,....$
\end{proof}

\begin{rem}\label{gamma}
Let $C$ be a unital separable amenable C*-algebra satisfying the UCT with finitely generated $K_i(C)$ ($i=0, 1$), let $A$ be a unital separable C*-algebra and let $\phi_1, \phi_2: C \to A$ be two unital homomorphisms. In what follows, we will continue to use $\phi_1$ and $\phi_2$ for the induced homomorphisms from $M_k(C)$ to $M_k(A)$. Suppose that $v\in U(A)$ and $$\|v^*\phi_1(a)v - \phi_2(a)\| < \ep < 1/8,\quad a\in\{z_1, z_2, ..., z_n\}\cup\mathcal F$$ for a finite subset $\mathcal F\subseteq M_k(C)$ and some $z_1, z_2, ..., z_n \in U(M_k(C))$ such that $[z_1], [z_2], ..., [z_n]$ generate $K_1(C)$. Define $W_j(t) \in U(M_2(C([0, 1], M_k(A))))$ as follows
$$W_j(t) = (T_tVT_t^{-1})^*\mathrm{diag}(\phi_1(z_j), 1_{M_k})T_tVT_t^{-1},
$$
where
$$V=\mathrm{diag}(v, 1_{M_k})\quad\mathrm{and}\quad T_t = \left( \begin{array}{cc} \cos(\frac{\pi}{2} t) & -\sin(\frac{\pi}{2} t) \\ \sin(\frac{\pi}{2} t) & \cos(\frac{\pi}{2} t)  \end{array} \right).$$

Note that $W_j(0) = \mathrm{diag}(v^*\phi_1(z_j)v, 1)$ and $W_j(1) = \mathrm{diag}(\phi_1(z_j), 1)$. Connecting $W_j(0)$ with $\mathrm{diag}(\phi_2(z), 1)$ by a {{continuous}} path, we obtain a continuous path of {{unitaries}} $Z_j(t)$ such that $Z_j(0) = \mathrm{diag}(\phi_2(z_j), 1)$, $Z(1/4) = W(0)$ and $Z_j(1) = \mathrm{diag}(\phi_1(z_j), 1)$ and $\|Z_j(t) - Z_j(1/4)\| < 1/8$ for $t\in [0, 1/4)$. Thus $Z_j \in M_{2k}(M_{\phi_1, \phi_2})$. 
With sufficiently small $\ep>0$, since $K_1(C)$ is finitely generated, the map $$K_1(C) \ni [z_j] \mapsto [Z_j] \in K_1(M_{\phi_1, \phi_2}),\quad j=1, 2, ..., n,$$ induces a homomorphism.

Set
\begin{equation}\label{define-h}
h_j = \frac{1}{i}\mathrm{diag}(\log(\phi_2(z_j)^*V^*\phi_1(z_j)V), 1),\quad j=1, 2, ..., n.
\end{equation}
We may specifically use
$$Z_j(t) = \mathrm{diag}(\phi_2(z_j), 1)\exp(i4th_j),\quad t\in [0, 1/4].$$

Still use $\phi_1$ and $\phi_2$ for the induced homomorphisms from $M_k(C \otimes C')$ to $M_k(A\otimes C')$, where $C'$ is a commutative C*-algebra $C'$ with finitely generated $K_i(C')$ ($i=0, 1$). Fix a finite set of unitaries $z_1, ..., z_n \in M_\infty(C\otimes C')$ which generates $K_1(C\otimes C')$. We also obtain a homomorphism $K_1(C\otimes C') \to K_1(M_{\phi_1, \phi_2}\otimes C')$ provided that $\ep$ is small. 

Let $\mathcal F \subset C$ be a finite subset and $\ep>0$. Suppose that there is a unitary $v\in U(A)$ such that $$\mathrm{ad}v\circ\phi_1 \approx_\ep \phi_2\quad{\mathrm{on}}\ \mathcal F.$$
Let $U'(t) = T_tVT_t^{-1}$. Define
\beq\label{913-1}
L(c)(t) = (U'(\frac{4t-1}{3}))^*\mathrm{diag}(\phi_1(c), 1)U'(\frac{4t-1}{3}),\quad t\in[1/4, 1]
\eneq
and
$$L(c)(t) = 4tL(c)(1/4) + (1 - 4t)\mathrm{diag}(\phi_2(c), 1),\quad t\in [0, 1/4].$$
Note that $L$ maps $C$ into $M_2(M_{\phi_1, \phi_2})$. Thus, since $K_i(C)$ ($i=0, 1$) is finitely generated, by Corollary 2.11 of \cite{DL}, there is $N_1>0$ such that any element of $\mathrm{Hom}_\Lambda(\underline{K}(C), \underline{K}(A))$ is determined by its restriction to $K_i(A, \mathbb Z/n\mathbb Z)$, $i=0, 1$, $n=0, 1, ..., N_1$. Hence, if $\ep$ is sufficiently small and $\mathcal F$ is sufficiently large, there is
\beq\label{LN-0926-1}
\gamma_{\phi_1,\phi_2, v} \in \mathrm{Hom}_\Lambda(\underline{K}(C), \underline{K}(M_{\phi_1, \phi_2}))
\eneq
such that
\beq\label{LN-0926-2}
[L]|_{\mathcal P} = \gamma_{\phi_1, \phi_2, v} |_{\mathcal P}
\eneq
for any given finite subset $\mathcal P \subset \underline{K}(C)$.

One computes that $$\int_0^1\tau(\frac{dZ_j(t)}{dt}Z_j(t))dt = \tau(h_j),\quad \tau \in T(A).$$ Therefore, if $R_{\phi_1, \phi_2}\circ\gamma_{\phi_1,\phi_2, v}(K_1(C)) = 0$, then
\beq\label{913-30}
\tau(h_j) = 0,\quad \tau\in T(A).
\eneq	

On the other hand, for any given $\eta>0$ and a finite set $\{z_1, z_2, ..., z_n\}$ of generators of $K_1(C)$, by \eqref{define-h},
\beq\label{913-11}
|\tau(h_i)| < \eta,\quad \tau\in T(A),
\eneq
provided that $\ep$ is sufficiently small and $\mathcal F$ is sufficiently large.

Now, assume that $\phi_1=\phi_2$. Then, with sufficiently large $\mathcal F$ and sufficiently small $\ep$, the element $\mathrm{Bott}(\phi_1, v): \underline{K}(C) \to \underline{K} (SA)$ is well defined.

We have the following splitting short exact sequence:
\begin{displaymath}
\xymatrix{
0\ar[r]&\mathrm{S}A\ar[r]^{\imath}&M_{\phi_1,\phi_1}\ar[r]^{\pi_e}&C
\ar[r]&0}.
\end{displaymath}
Define $\theta: C \to M_{\phi_1, \phi_1}$ by $\theta(b)={{\phi_1(b)}}$ as {a}
constant element in $M_{\phi_1, \phi_1}.$   Then $\theta$ may be identified with a
splitting map and $\underline{K}(M_{\phi_1, \phi_1})$ may be written
as $\underline{K}(SA){\oplus} \underline{K}(C).$

Let $P: \underline{K}(M_{\phi_1, \phi_1})\cong \underline{K}(C)\oplus \underline{K}(SA) \to\underline{K}(SA)$ be the standard projection map. One can verify that for any two elements $x, y \in \underline{K}(C)$ if $\mathrm{Bott}(\phi_1, v)(x)=\mathrm{Bott}(\phi_1, v)(y)$, then $P\circ\gamma_{\phi_1, \phi_1,v} (x)=P \circ\gamma_{\phi_1,\phi_1, v}(y)$.
So we will also use $\Gamma(\mathrm{Bott}(\phi_1, v))$ to denote the map $P\circ\gamma_{\phi_1, \phi_1, v} \in\mathrm{Hom}_\Lambda(\underline{K}(C), \underline{K}(SA))$. 

By shifting the index, we {see} $\Gamma({\rm Bott}(\phi_1,\, v))|_{\cal P}$
maps ${\cal P}$ to $\underline{K}(A).$
One may identify $P$ with ${\rm id}_{M_{\phi_1, \phi_1}}-[\theta]\circ [\pi_e].$
Note that
$$
\pi_e\circ \theta={\rm id}_{C} \andeqn \pi_e\circ L={\rm id}_{C}.
$$
So
\beq\label{913-33}
\Gamma({\rm Bott}(\phi_1, v))|_{\cal P}=
\gamma_{\phi_1, \phi_1,v}|_{\cal P}-\theta|_{\cal P}.
\eneq
Furthermore, it is shown in 10.6 of \cite{Linajm} that $\Gamma(\mathrm{Bott}(\phi_1, v))=0$ if and only if $\mathrm{Bott}(\phi_1, v)=0$.

Note that since the K-theory of $C$ is finitely generated, by Corollary 2.11 of \cite{DL}, one has that any element of $\mathrm{Hom}_\Lambda(\underline{K}(C), \underline{K}(A))$ is determined by its restriction to $K_i(A, \mathbb Z/n\mathbb Z)$, $i=0, 1$, $n=0, 1, ..., N_1$. Fix separable commutative C*-algebras $C_0=\mathbb C$, $C_1$, ..., $C_{N_1},$ $C_{N_1+1},$ ..., $C_{2N_1+1}$ with
$$K_0(C_n) = \mathbb Z/n\mathbb Z \quad\mathrm{and}\quad K_1(C_n) = \{0\},\quad n=0, 1, ..., N_1,$$
and $C_{N_1+i}=SC_{i-1},$ $i=1,2,...,N_1+1.$
For each $C\otimes C'$, where $C'$ is one of the $C_0, C_1, ..., C_{2N_1+1}$, fix a finite set of unitaries $z_1^{(n)}, z^{(n)}_2, ..., z^{(n)}_{k(n)}$ of  $M_{N_2}({\widetilde{C\otimes C'}})\subset M_{N_2}(C\otimes {\widetilde C'})$
(for some $N_2\ge 1$) which generates $K_1(C\otimes C_n),$ $n=0,1,...,2N_1+1.$
Let $C'_i=M_{N_2}(C_i),$ $i=0,1,...,2N_1+1.$

Let $1/4>\ep>0$ and $1/4>\eta>0.$
Choose $0<\dt<\ep/2$ sufficiently small and a finite set $\mathcal F\subseteq A$ sufficiently large such that if
$$u^*\phi_1u \approx_{\dt} \phi_2\quad\mathrm{on}\ \mathcal F$$
for some unitary $u\in A$,
then, for each $C_n,$
\beq\label{LN-0926}
u^*{\tilde \phi}_1u\approx_{\ep/2} {\tilde \phi}_2\,\, {\rm on}\,\, {\cal F}_{0,n},
\eneq
where ${\cal F}_{0,n}$ is a finite subset which contains $\{z_1^{(n)}, z_2^{(n)}, ..., z_{k(n)}^{(n)}\},$
$n=0,1,...,2N_1+1.$  We also assume that $\dt$ is sufficiently small and ${\cal F}$ is sufficiently large so
that \eqref{LN-0926-1}, \eqref{LN-0926-2}, \eqref{913-11}, \eqref{913-33} hold.

Suppose that there are  unitaries  $u_1, u_2\in A$ such  that
$$
u_i^*\phi_1u_i\approx_{\dt/2} \phi_2\,\,\,{\rm on}\,\,\, {\cal F},
$$
$i=1,2.$   Then, as in \eqref{LN-0926}, for each $C_n,$
\beq
u_i^*{\tilde \phi}_1u_i\approx_{\ep/2} {\tilde \phi}_2\,\, {\rm on}\,\, {\cal F}_{0,n},
\eneq
where ${\cal F}_{0,n}$ is a finite subset which contains $\{z_1^{(n)}, z_2^{(n)}, ..., z_{k(n)}^{(n)}\},$
$n=0,1,...,2N_1+1.$
Let $L_i: C\to M_{\phi_1, \phi_2}$ and $\gamma_{\phi_1, \phi_2, u_i} \in {\rm Hom}_{\Lambda}(\underline{K}(C),  \underline{K}(M_{\phi_1, \phi_2}){)}$
be the element defined by the pair $(\phi_1, u_i)$ ($i=1,2$) as above. 

On the other hand, one also has that
$$
u_2u_1^*\phi_1u_1u_2^*\approx_{\dt} \phi_1 \,\,\,{\rm on}\,\,\, {\cal F}.
$$
Note that  $\pi_e\circ (L_1-L_2)=0.$

Fix $n\in \{0,1,...,2N_1+1\}.$
Consider $z\in \{z_1^{(n)}, z_2^{(n)}, ..., z_{k(n)}^{(n)}\}$
and ${\tilde u}_i=u_i\otimes 1_{\widetilde{C_n'}},$ $i=1,2.$
We also write ${\tilde \phi}_i$ for $\phi_i\otimes {\rm id}_{{\widetilde{C_n'}}}.$

Define ${\tilde T}(t)=T_{2(t-1/4)}$ for $t\in [1/4, 3/4]$ and ${\tilde T}_t=T_1$ for $t\in [3/4,1].$
Let $W_i(t)=
{\tilde T}_t\left(\begin{array}{cc} \tilde{u}^*_i & 0 \\ 0 & 1 \end{array}\right){\tilde{T}}_t^*,$
$t\in [1/4, 1],$ and $W_i(t)=\diag(1,1)$ for $t\in [0,1/4],$
$i=0,1.$
Note
that
\beq
\|\diag({\tilde u}_1^*{\tilde \phi}(z){\tilde u}_1\tilde{u}_2^*{\tilde \phi}(z)^*{\tilde u}_2, 1)-\diag(1,1)\|<\ep<1/8.
\eneq
There is a continuous path $d(z)(t)$ (for $t\in [0,1/4]$)
such that
$d(z)(0)=\diag(1,1)$ and $d(z)(1/4)=\diag({\tilde u}_1^*{\tilde \phi(z)}{\tilde u}_1\tilde{u}_2^*{\tilde \phi}(z)^*{\tilde u}, 1)$ and
\beq
\|d(z)(t)-\diag(1,1)\|<\ep\rforal t\in [0,1/4].
\eneq
Define,
for $t\in [1/4, 1],$
\begin{eqnarray*}
d(z)(t)&=& \big(W_1(t)\diag({\tilde \phi}(z),1) W_1(t)^*\big)\big(W_2(t)\diag({\tilde \phi}(z)^*,1)W_2(t)^*\big)\\
& = &  {\tilde T}_t\left(\begin{array}{cc} \tilde{u}^*_1 & 0 \\ 0 & 1 \end{array}\right){\tilde T}_t^* \left(\begin{array}{cc} {\tilde \phi}_1(z) & 0 \\ 0 & 1 \end{array}\right)   {\tilde T}_t\left(\begin{array}{cc} \tilde{u}_1  \tilde{u}^*_2 & 0 \\ 0 & 1 \end{array}\right) {\tilde T}_t^* \left(\begin{array}{cc} {\tilde \phi}_1(z)^* & 0 \\ 0 & 1 \end{array}\right)  {\tilde T}_t\left(\begin{array}{cc} \tilde{u}_2 & 0 \\ 0 & 1 \end{array}\right){\tilde T}_t^*.
\end{eqnarray*}
On $[3/4,1],$ define $d(z)(t)=\diag(1,1).$
Define $U(t)=\diag(W_2(t)^*, W_2(t))$ for $t\in (1/4, 3/4),$
On $[0,1/4],$ there is a continuous path $U(t)$  of unitaries  in $M_4(A\otimes C_n')$ with $U(0)=\diag(1,1,1,1)$
and $U(1/4)=\diag(W_2(0)^*, W_2(0)).$
On $[3/4, 1],$ there is a continuous path $U(t)$ of unitaries in $M_4(A\otimes C_n')$ with $U(3/4)=\diag(W_2(3/4)^*, W_2(3/4))$
and $U(1)=\diag(1,1,1,1).$

For $n=0$ (so $z$ is represented by unitaries in $M_{N_2}(C)$), we may also assume that (see \eqref{913-11}),
\beq\label{913-10}
|\tau(h_z)|<\eta\rforal \tau\in T(A),
\eneq
where
$$
h_z= \mathrm{diag}(\log({\tilde \phi}_2(z)^*{\tilde  u}_1^*{\tilde \phi}_1(z){\tilde u}_1, 1).
$$

Note that $d(z)(0) = d(z)(1) = 1$ and
\beq\label{0927-1}
[d(z)] = \gamma_{\phi_1, \phi_2, u_1}(z) - \gamma_{\phi_1, \phi_2, u_2}(z),
\eneq
where $\gamma_{\phi_1, \phi_2, u_i}$, $i=1, 2$, are the maps defined above
(see \eqref{913-1}). 

One has, on $[1/4, 3/4],$
\beq\nonumber
&&U(t)^*\diag(d(z)(t), 1,1)U(t) \\\nonumber
 &=&\diag( \left({\tilde T}_t\left(\begin{array}{cc} \tilde{u}_2 & 0 \\ 0 & 1 \end{array}\right){\tilde T}_t^*\right) \left({\tilde T}_t\left(\begin{array}{cc} \tilde{u}^*_1 & 0 \\ 0 & 1 \end{array}\right){\tilde T}_t^* \left(\begin{array}{cc} {\tilde \phi}_1(z) & 0 \\ 0 & 1 \end{array}\right)  {\tilde T}_t\left(\begin{array}{cc} \tilde{u}_1  \tilde{u}^*_2 & 0 \\ 0 & 1 \end{array}\right) {\tilde T}_t^* \left(\begin{array}{cc} {\tilde \phi}_1(z)^* & 0 \\ 0 & 1 \end{array}\right) \right), \\
 && \hskip 7mm \left(\begin{array}{cc} 1 &0\\ 0&1\end{array}\right )) \\\label{913-4}
 &= & \diag({\tilde T}_t\left(\begin{array}{cc} \tilde{u}_2\tilde{u}_1^* & 0 \\ 0 & 1 \end{array}\right){\tilde T}_t^* \left(\begin{array}{cc} {\tilde \phi}_1(z) & 0 \\ 0 & 1 \end{array}\right)  {\tilde T}_t\left(\begin{array}{cc} \tilde{u}_1\tilde{u}_2^* & 0 \\ 0 & 1 \end{array}\right){\tilde T}_t^* \left(\begin{array}{cc} {\tilde \phi}_1(z)^* & 0 \\ 0 & 1 \end{array}\right) , \left(\begin{array}{cc} 1 &0\\ 0&1\end{array}\right )),
\eneq
on $ [0,1/4],$ and on $[3/4,1],$
\beq
\|U(t)^*\diag(d(z)(t), 1, 1)U(t)-\diag(1,1,1,1)\|<\ep.
\eneq
Moreover, $U(0)^*\diag(d(z)(0), 1,1)U(0)=U^*(1)\diag(d(z)(1),1,1)U(1)=\diag(1,1,1,1).$
Therefore
\beq\label{0927-2-}
[d(z)]=[U^*d(z)U]\,\,{\rm in}\,\, K_1(SA\otimes {\widetilde{C_n}}).
\eneq
Since the short exact sequence $0\to SA\otimes C_n\to SA\otimes {\widetilde{C_n}}\to SA\to 0$
splits, we conclude that
\beq\label{0927-2}
[d(z)]=[U^*d(z)U]\,\,{\rm in}\,\, K_1(SA\otimes C_n).
\eneq

On the other hand, the class $\Gamma(\mathrm{Bott}(\phi_1, u_1u_2^*))(z)$ is represented by the path
$$
r(t)= {\tilde T}_t\left(\begin{array}{cc} \tilde{u}_2\tilde{u}_1^* & 0 \\ 0 & 1 \end{array}\right){\tilde T}_t^* \left(\begin{array}{cc} {\tilde \phi}_1(z) & 0 \\ 0 & 1 \end{array}\right)  {\tilde T}_t\left(\begin{array}{cc} \tilde{u}_1\tilde{u}_2^* & 0 \\ 0 & 1 \end{array}\right){\tilde T}_t^* \left(\begin{array}{cc} {\tilde \phi}_1(z)^* & 0 \\ 0 & 1 \end{array}\right)  \rforal t\in [1/4,3/4],
$$
and
\beq
\|r(t)-\diag(1,1)\|<\ep\rforal t\in [0,1/4]\cup [3/4,1].
\eneq
Hence (see \eqref{913-4}), by \eqref{0927-1} and \eqref{0927-2},
\beq\label{913-20}
\Gamma({\rm Bott}(\phi_1, u_1u_2^*))= \gamma_{\phi_1, \phi_2, u_1} - \gamma_{\phi_1, \phi_2, u_2}.
\eneq

\end{rem}

\begin{thm}\label{Tm72}
Let $C_1$ be a unital simple \CA\, as in
Theorem 14.10 of \cite{GLN-I},
let $A_1$ be a unital separable simple \CA\, in ${{\cal B}_0},$ let $C=C_1\otimes U_1$ and let $A=A_1\otimes U_2,$ where $U_1$ and $U_2$ are UHF-algebras of infinite type. Suppose that $\phi_1,\, \phi_2: C\to A$ are two unital monomorphisms.
Then $\phi_1$ and $\phi_2$ are asymptotically unitarily equivalent if and only if
\beq\label{Tm72-1}
[\phi_1]=[\phi_2]\,\,\,{\text in}\,\,\, KK(C,A),\\
\phi^{\ddag}=\psi^{\ddag},\,\,\,(\phi_1)_T=(\phi_2)_T, \tand {\overline{R_{\phi_1,\phi_2}}}=0.
\eneq
\end{thm}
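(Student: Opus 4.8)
The plan is to establish the nontrivial (``if'') direction; the converse is standard. Indeed, if $\{u(t):t\in[0,\infty)\}\subset U(A)$ is a continuous path with $\lim_{t\to\infty}\|u(t)^*\phi_1(c)u(t)-\phi_2(c)\|=0$ for all $c\in C$, then the path realizes a splitting of the extension $0\to SA\to M_{\phi_1,\phi_2}\to C\to 0$ of Definition \ref{Mappingtorus} compatible with $\underline{K}$, so $[\phi_1]=[\phi_2]$ in $KK(C,A)$; it is asymptotically central for $\phi_1$ and conjugates $\phi_1$ into $\phi_2$, giving $(\phi_1)_T=(\phi_2)_T$ and $\phi_1^\ddag=\phi_2^\ddag$; and the de la Harpe--Skandalis determinant along the path produces a splitting $\theta$ of the mapping torus with $R_{\phi_1,\phi_2}\circ\theta|_{K_1(C)}=0$, i.e.\ $\overline{R_{\phi_1,\phi_2}}=0$ (cf.\ \cite{Lnind}, \cite{Lnclasn}). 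So from now on assume \eqref{Tm72-1}.

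Since $[\phi_1]=[\phi_2]$ in $KK(C,A)$, the exact sequence \eqref{Aug2-2} splits; combining this with $\overline{R_{\phi_1,\phi_2}}=0$, fix $\theta\in\mathrm{Hom}_\Lambda(\underline{K}(C),\underline{K}(M_{\phi_1,\phi_2}))$ with $[\pi_e]\circ\theta=[\mathrm{id}_C]$, $[\pi_0]\circ\theta=[\phi_1]$, $[\pi_1]\circ\theta=[\phi_2]$ and $R_{\phi_1,\phi_2}\circ\theta|_{K_1(C)}=0$; in particular $R_{\phi_1,\phi_2}(K_1(M_{\phi_1,\phi_2}))\subset\rho_A(K_0(A))$, which is hypothesis \eqref{71-3} of Lemma \ref{inv71}. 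Since also $[\phi_1]=[\phi_2]$ in $KL(C,A)$, $(\phi_1)_T=(\phi_2)_T$ and $\phi_1^\ddag=\phi_2^\ddag$, Lemma \ref{MUN2} gives that $\phi_1$ and $\phi_2$ are approximately unitarily equivalent, and then Lemma \ref{inv71} produces, relative to a fixed exhaustion $\{\mathcal F_n\}$ of $C$, an exhaustion $\{\mathcal P_n\}$ of $K_1(C)$ and a summable $\{\delta_n\}$, unitaries $u_n\in U(A)$ with $\mathrm{Ad}\,u_n\circ\phi_1\approx_{\delta_n}\phi_2$ on $\mathcal F_n$ and $\rho_A(\mathrm{bott}_1(\phi_2,u_n^*u_{n+1})(x))=0$ for all $x\in\mathcal P_n$ and all large $n$. (The hypotheses on $C_1$ and $A_1$ are precisely what allow Lemmas \ref{inv71}, \ref{Extbot2} and \ref{Extbot3} to be invoked.)

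The heart of the proof is to upgrade $\{u_n\}$ so that Lemma \ref{BHfull} applies to the pairs $(\phi_2,\,u_n^*u_{n+1})$. Using the elements $\gamma_{\phi_1,\phi_2,u_n}\in\mathrm{Hom}_\Lambda(\underline{K}(C),\underline{K}(M_{\phi_1,\phi_2}))$ of Remark \ref{gamma} and a relation of the type \eqref{913-20} (linking $\mathrm{Bott}(\phi_2,u_n^*u_{n+1})$ with $\gamma_{\phi_1,\phi_2,u_{n+1}}-\gamma_{\phi_1,\phi_2,u_n}$), one checks that $\gamma_{\phi_1,\phi_2,u_n}-\theta$ takes values in $\mathrm{Hom}_\Lambda(\underline{K}(C),\underline{K}(SA))$ and, along $K_1(C)$, is as small on traces as we like for large $n$, thanks to the $\mathrm{bott}_1$-estimate above. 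Feeding this discrepancy --- a $\underline{K}$-theoretic datum with only a small trace part --- into the existence theorems for Bott elements, Lemma \ref{Extbot2} refined by Theorem \ref{BB-exi+} (i.e.\ Lemma \ref{Extbot3}), yields unitaries $w_n\in U(A)$ that $\epsilon_n$-commute with $\phi_2(\mathcal F_n)$ and whose prescribed $\mathrm{Bott}(\phi_2,\cdot)|_{\mathcal P_n}$ and $U(A)/CU(A)$-data cancel those of $u_n^*u_{n+1}$; replacing $\{u_n\}$ by a correspondingly corrected sequence (via the $w_n$, chosen iteratively) and invoking Remark \ref{RBHfull} to drop the redundant hypothesis \eqref{BHfull-1n2}, we arrange $\mathrm{Bott}(\phi_2,u_n^*u_{n+1})|_{\mathcal P}=0$ together with $\mathrm{dist}\big(\overline{\langle((1-\phi_2(p_i))+\phi_2(p_i)(u_n^*u_{n+1}))((1-\phi_2(q_i))+\phi_2(q_i)(u_n^*u_{n+1})^*)\rangle},\bar 1\big)<\sigma_n$. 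Lemma \ref{BHfull} then gives continuous paths $\{z_n(t):t\in[0,1]\}\subset U(A)$ with $z_n(0)=u_n^*u_{n+1}$, $z_n(1)=1_A$, $\|[\phi_2(a),z_n(t)]\|<\epsilon_n$ on $\mathcal F_n$ and $\mathrm{length}(\{z_n(t)\})\le 2\pi+\epsilon_n$; concatenating the paths connecting $u_n$ to $u_{n+1}$ obtained from $z_n$, reparametrized onto $[1-2^{-(n-1)},1-2^{-n}]$, produces a continuous path $\{w(s):s\in[0,1)\}\subset U(A)$ with $\lim_{s\to1}\|w(s)^*\phi_1(c)w(s)-\phi_2(c)\|=0$ for all $c\in C$, as required.

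The main obstacle is the simultaneous bookkeeping in the preceding paragraph: the finite subsets $\mathcal F_n\subset C$ and $\mathcal P_n\subset\underline{K}(C)$, the tolerances $\delta_n,\epsilon_n,\sigma_n$, and the integers supplied by Lemma \ref{Extbot3} must be chosen nested and mutually compatible so that (i) the correction $w_n$ is realizable --- which uses exactly the trace estimates that descend from $\overline{R_{\phi_1,\phi_2}}=0$ through Lemma \ref{inv71}; (ii) the iterative correction introduces no new $\underline{K}$-theory or $U/CU$ data; and (iii) every hypothesis of Lemma \ref{BHfull} holds with the prescribed tolerances on the prescribed sets. Keeping the uniform length bound $2\pi+\epsilon_n$ under control at each stage is what turns the mere approximate unitary equivalence of Lemma \ref{MUN2} into a genuine continuous path on $[0,1)$.
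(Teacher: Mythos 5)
Your overall architecture coincides with the paper's: the converse is quoted from \cite{Lnclasn}; the hypothesis $\overline{R_{\phi_1,\phi_2}}=0$ supplies a splitting $\theta$ with $R_{\phi_1,\phi_2}\circ\theta|_{K_1(C)}=0$ and hence hypothesis \eqref{71-3}; Lemma \ref{inv71} gives intertwiners $v_n$ with $\rho_A\circ\mathrm{bott}_1(\phi_2,v_n^*v_{n+1})=0$; each $v_n$ is then corrected (via Lemmas \ref{Extbot2}/\ref{Extbot3} and a second round for the $U/CU$-data) so that Lemma \ref{BHfull} applies to the consecutive differences, and the resulting paths are concatenated. So the route is the intended one. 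However, there is a real soft spot at exactly the hardest step. You assert that $\gamma_{\phi_1,\phi_2,u_n}-\theta$ is small on traces along $K_1(C)$ ``thanks to the $\mathrm{bott}_1$-estimate above.'' That estimate cannot be the source: by \eqref{913-20} it only controls the \emph{increments} $\gamma_{\phi_1,\phi_2,u_{n+1}}-\gamma_{\phi_1,\phi_2,u_n}$, so it shows at best that $\rho_A\circ(\gamma_{\phi_1,\phi_2,u_n}-\theta)|_{K_1}$ is independent of $n$, not that it is small. In the paper the smallness is extracted from the rotation hypothesis itself: one realizes $\theta$ concretely by the stable almost-multiplicative map and path of Lemma \ref{VuV} (with the finite-dimensional summand $h_0$), so that $R_{\phi_1,\phi_2}\circ\theta=0$ becomes the exact trace identity \eqref{72-24}, and combines this with the estimate \eqref{913-11} coming from the quality of the intertwiner $v_{n+1}$ and the Exel trace formula to obtain $|\rho_A({\tilde\kappa}_n(z_j))(\tau)|<\delta_{n+1}$ in \eqref{72-30}. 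Without some device of this kind (you never mention Lemma \ref{VuV} or any stable realization of $\theta$) you have neither the trace estimate that Lemma \ref{Extbot3} requires nor a concrete Bott element of a pair of unitaries to hand to it.

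Two further points deserve explicit care rather than the phrase ``correspondingly corrected sequence.'' First, Lemma \ref{BHfull} needs $\mathrm{Bott}(\phi_2,\tilde u_n^*\tilde u_{n+1})|_{\mathcal P_n}=0$ \emph{exactly}; in the paper this is not arranged by cancelling the data of each difference $u_n^*u_{n+1}$ separately (which would be inconsistent from one $n$ to the next), but by correcting each $v_n$ by a unitary whose Bott class realizes the \emph{absolute} discrepancy $\kappa_n=\zeta_n-\theta_n$ relative to the single fixed $\theta$, so that the telescoping identity \eqref{72-35}, $-\kappa_n+\xi_n+\kappa_{n+1}\circ[\imath_{n+2}]=0$, forces the exact vanishing \eqref{72-38}; your text gestures at this but the mechanism is the crux and should be stated. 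Second, the $U(A)/CU(A)$-data cannot be prescribed in the same stroke as the Bott data at stage $n$, because the class $\Lambda_n$ in \eqref{2019-sept-10-2} is only defined after the Bott-corrected $\tilde u_n$ and $u_{n+1}$ are fixed (and only then does it land in $U_0(A)/CU(A)$); this is why the paper runs a second, genuinely inductive round producing the unitaries $s_n$ before invoking Lemma \ref{BHfull} together with Remark \ref{RBHfull}. Finally, a cosmetic remark: the length bound $2\pi+\ep$ in Lemma \ref{BHfull} is not what makes the concatenated path work --- only the commutator estimate \eqref{72-40} along the path is used in \eqref{72-41}.
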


\begin{proof}
We will prove the ``if " part only. The ``only if" part follows
from 4.3 of \cite{Lnclasn}.
Note $C=C_1\otimes U_1$ can be also regarded as a \CA\, as in Theorem 14.10 of \cite{GLN-I}.
Let $C=\lim_{n\to\infty}(C_n, \imath_n)$ be as in Theorem 14.10 of \cite{GLN-I},
where each $\imath_n: C_n\to C_{n+1}$ is an injective \hm. Let ${\cal F}_n\subset C$ be
an increasing sequence of finite subsets of $C$ such that
$\bigcup_{n=1}^{\infty}{\cal F}_n$ is dense in $C.$
Put
$$
M_{\phi_1, \phi_2}=\{(f, {  c})\in C([0,1], A){ \oplus C}: f(0)=\phi_1(c)\andeqn f(1)=\phi_2(c)\}.
$$
Since $C$ satisfies the UCT, the assumption that $[\phi_1]=[\phi_2]$ in
$KK(C,A)$ implies that the following exact sequence splits:
\beq\label{72-1}
0\to \underline{K}(SA)\to \underline{K}(M_{\phi_1, \phi_2}) \rightleftharpoons_{\theta}^{\pi_e} \underline{K}(C)\to 0
\eneq
for some
$\theta\in {\rm Hom}(\underline{K}(C), \underline{K}(A)),$ where
$\pi_e: M_{\phi_1,\phi_2}\to C$ is the projection to $C$ defined in Definition 2.20 of \cite{GLN-I}.
Furthermore, since $\tau\circ \phi_1=\tau\circ \phi_2$ for all $\tau\in T(A)$, and ${\overline{R_{\phi_1,\phi_2}}}=0,$ we may also assume that
\beq\label{72-2}
R_{\phi_1,\phi_2}(\theta(x))=0\rforal x\in K_1(C).
\eneq
By \cite{DL}, we have
\beq\label{72-3}
\lim_{n\to\infty}(\underline{K}(C_n),[\imath_n])=\underline{K}(C).
\eneq
Since $K_i(C_n)$ is finitely generated, there exists $K(n)\ge 1$ such that
\beq\label{72-3+}
{\rm Hom}_{\Lambda}(F_{K(n)}\underline{K}(C_n),\, F_{K(n)}\underline{K}(A))={\rm Hom}_{\Lambda}(\underline{K}(C_n), \underline{K}(A))
\eneq
(see also \cite{DL} for the notation $F_m$ there).

Let $\dt_n'>0$ (in place of $\dt$), $\sigma_n'>0$ (in place of $\sigma$), ${\cal G}_n'\subset C$ (in place of ${\cal G}$), \linebreak ${\{p_{1,n}', p_{2,n}',...,p'_{I(n),n)}, q_{1,n}', q_{2,n}',...,q_{I(n),n}'\}}$ (in place of
$\{p_1, p_2,...,p_k, q_1, q_2,...,q_k\}),$ ${\cal P}_n'\subset  {\underline{K}(C)}$
(in place of ${\cal P}$) corresponding to $1/2^{n+2}$ (in place of $\ep$), and ${\cal F}_n$ (in place of ${\cal F}$) be as provided by Lemma \ref{BHfull} (see also Remark \ref{RBHfull}).
Note that, by the choice as in \ref{BHfull}, we may assume that $G_{u,n}',$ the subgroup generated by
${\{[p'_{i,n}]-[q'_{i,n}]: 1\le i\le I(n)\}}$ is free abelian.

Without loss of generality, we may assume that
${\cal G}_n'\subset \imath_{n, \infty}({\cal G}_n)$ and ${\cal P}_n'\subset [\imath_{n, \infty}]({\cal P}_n)$ for some finite subset ${\cal G}_n\subset C_n,$ and for some finite subset ${\cal P}_n\subset \underline{K}(C_n),$ and we may assume that $p_{i,n}'=\imath_{n, \infty}(p_{i,n})$ and $q_{i,n}'=\imath_{n,\infty}(q_{i,n})$
for some projections $p_{i,n}, q_{i,n}\in C_n,$ $i=1,2,...,I(n).$
We may also assume that the subgroup $G_{n,u}$  generated by $\{[p_{i,n}]-[q_{i,n}]: 1\le i\le I(n)\}$
is free abelian and $p_{i,n}, q_{i,n}\in {\cal G}_n,$ $n=1,2,...,I(n).$

 We may assume that ${\cal P}_n$ contains a set of generators
of $F_{K(n)}\underline{K}(C_n),$ ${\cal F}_n\subset {\cal G}_n'$, and
$\dt_n'<1/2^{n+3}.$
We may also assume that
${\rm Bott}(h', u')|_{{\cal P}_n}$ is well defined whenever $\|[h'(a),\, u']\|<\dt_n'$ for all $a\in {\cal G}_n'$ and for any unital \hm\, $h'$ from $C_n$ and unitary $u'$ in the target algebra. Note that ${\rm Bott}(h',\,u')|_{{\cal P}_n}$ defines ${\rm Bott}(h'\, u').$
We may further assume that
\beq\label{72-4}
{\rm Bott}(h,\, u)|_{{\cal P}_n}={\rm Bott}(h',u)|_{{\cal P}_n}
\eneq
provided that $h\approx_{\dt_n'}h'$ on ${\cal G}_n'.$
We may also assume
that $\dt_n'$ is smaller than {$\dt/16$} for the $\dt$ defined in 2.15 of \cite{Lnclasn} for $C_n$ (in place of $A$) and ${\cal P}_n$ (in place of ${\cal P}$).
{Let $k(n)\ge n$ (in place of $n$), $\eta_n'>0$ (in place of $\dt$), and ${\cal Q}_{k(n)}\subset K_1(C_{k(n)})$ be as provided by Lemma \ref{Extbot3}
for
$\dt_{k(n)}'/4$ (in place of $\ep$),
$\imath_{n, \infty}({\cal G}_{k(n)})$ (in place of ${\cal F}$),  ${\cal P}_{k(n)}$ (in place of ${\cal P}$),
$\{p_{i,n}, q_{i,n},: i=1,2,...,k(n)\}$ (in place of $\{p_i, q_i: i=1,2,...,k\}$),
{and $\sigma_{k(n)}'/16$ (in place of $\sigma$)}.
 We may assume that ${\cal Q}_{k(n)}$ generates the group $K_1(C_{k(n)}).$
Since ${\cal P}$ generates $F_{K(n)}\underline{K}(C_{k(n+1)}),$ we may assume that ${\cal Q}_n\subset {\cal P}_{k(n)}.$}

Since $K_i(C_n)$ ($i=0,1$) is finitely generated, by
(\ref{72-3+}), we may further assume that
$[\imath_{k(n), \infty}]$ is injective on $[\imath_{n, k(n)}](\underline{K}(C_n)),$ $n=1,2,....$
Passing to a subsequence, we may also assume that
$k(n)=n+1.$ Let $\dt_n=\min\{\eta_n, \sigma_n',\dt_n'/2\}.$ By Lemma \ref{inv71}, there are unitaries $v_n\in U(A)$ such that
\beq\label{72-5}
&&{\rm Ad}\, v_n\circ \phi_1\approx_{\dt_{n+1}/4} \phi_2\,\,\,{\rm on}\,\,\,
\imath_{n, \infty}({\cal G}_{n+1}),\\
&&\rho_A({\rm bott}_1(\phi_2, v_n^*v_{n+1}))(x)=0
\,\,\,\rforal x\in (\imath_{n, \infty})_{*1}(K_1(C_{n+1})), \andeqn\\
&&\|[\phi_2(c),\, v_n^*v_{n+1}]\|<\dt_{n+1}/2\,\,\rforal a\in \imath_{n, \infty}({\cal G}_{n+1})
\eneq
(Recall that $K_1(C_{n+1})$ is finitely generated).
Note that, by (\ref{72-4}),
we may also assume that
\beq\label{72-7}
{\rm Bott}(\phi_1,\, v_{n+1}v_n^*)|_{[\imath_{n, \infty}]({\cal P}_n)}&=&{\rm Bott}(v_n^*\phi_1v_n, \, v_n^*v_{n+1})|_{[\imath_{n,\infty}]({\cal P}_n)}\\
&=&{\rm Bott}(\phi_2,\, v_n^*v_{n+1}])|_{[\imath_{n, \infty}]({\cal P}_n)}.
\eneq
In particular,
\beq\label{72-8}
{\rm bott}_1(v_n^*\phi_1v_n,\, v_n^*v_{n+1})(x)={\rm bott}_1(\phi_2,\, v_n^*v_{n+1})(x)
\eneq
for all $x\in (\imath_{n, \infty})_{*1}(K_1(C_{n+1})).$

Applying 10.4 and 10.5 of \cite{Linajm} (see also  Remark  \ref{gamma}),
we may assume that the pair $(\phi_1, \phi_2)$ and $v_n$ define 
an element $\gamma_n:=\gamma_{\phi_1|_{C_{n+1}},\phi_1|_{C_{n+1}}, v_n}\in {\rm Hom}_{\Lambda}(\underline{K}(C_{n+1}),\underline{K}(M_{\phi_1, \phi_2}))$ and $[\pi_e]\circ \gamma_n=[{\rm id}_{C_{n+1}}]$ (see Remark  \ref{gamma} for the definition of $\gamma_n$). Moreover, 
we may assume  (see \eqref{913-10}) that
\beq\label{72-9}
|\tau(\log(\phi_2\circ \imath_{n, \infty}(z_j^*){\tilde v_n}\phi_1\circ \imath_{n, \infty}(z_j){\tilde v_n}))|<\dt_{n+1},
\eneq
$j=1,2,...,r(n),$ where $\{z_1,z_2,...,z_{r(n)}\} \subset U(M_k(C_{n+1})),$ and this set generates $K_1(C_{n+1})$, and where
${\tilde v_n}={\rm diag}(\overbrace{v_n, v_n,...,v_n}^k).$ {We may assume that
$z_j\in {\cal Q}_n\subset {\cal P}_n,$ $j=1,2,...,r(n).$}

Let $H_n=[\imath_{n+1}](\underline{K}(C_{n+1})) \subset \underline{K}(C_{n+2}).$ Since $\bigcup_{n=1}[\imath_{n+1, \infty}](\underline{K}(C_n))=\underline{K}(C)$
and $[\pi_e]\circ \gamma_n=[{{\rm id}}_{C_{n+1}}],$ we conclude that
\beq\label{72-10}
\underline{K}(M_{\phi_1, \phi_2})=\underline{K}(SA)+\bigcup_{n=1}^{\infty}\gamma_{n+1}(H_n).
\eneq
Thus, passing to a subsequence, we may further assume that
\beq\label{72-11}
\gamma_{n+1}(H_n)\subset \underline{K}(SA)+\gamma_{n+2}(H_{n+1}),\,\,\,n=1,2,....
\eneq

Identifying $H_n$ with $\gamma_{n+1}(H_n),$ let us write $j_n: \underline{K}(SA)\oplus H_n\to \underline{K}(SA)\oplus H_{n+1}$ {for the inclusion in  (\ref{72-11}).} By
(\ref{72-10}), the inductive limit is $\underline{K}(M_{\phi_1,\phi_2}).$
From the definition of $\gamma_n,$ we note that $\gamma_n-\gamma_{n+1}\circ [\imath_{n+1}]$ maps $\underline{K}(C_{n+1})$ into $\underline{K}(SA).$
By Remark \ref{gamma} (see \eqref{913-20}),
the map
$$
\Gamma({\rm Bott}(\phi_1,\, v_nv_{n+1}^*))|_{H_n}=(\gamma_{n+1}-\gamma_{n+2}\circ [\imath_{n+2}])|_{H_n}
$$
(see \ref{gamma} for the definition of $\Gamma({\rm Bott}(,))$)
is then a \hm\, $\xi_n: H_n\to \underline{K}(SA).$ Put $\zeta_n=\gamma_{n+1}|_{H_n}.$ Then
\beq\label{72-13}
j_n(x,y)=(x+\xi_n(y),[\imath_{n+2}](y))
\eneq
for all $(x,y)\in \underline{K}(SA)\oplus H_n.$ Thus we obtain the following diagram:
\beq\label{72-14}
\begin{array}{ccccccc}
0 \to  & \underline{K}(SA)  &\to & \underline{K}(SA)\oplus H_n &\to
& H_n &\to 0\\\nonumber
 &\| & &\hspace{0.4in}\| \hspace{0.15in}\swarrow_{\xi_n} \hspace{0.05in}\downarrow_{[\imath_{n+2,\infty}]} &&
 \hspace{0.2in}\downarrow_{[\imath_{n+2,\infty}]} &\\\label{72-15}
 0 \to  & \underline{K}(SA)  &\to & \underline{K}(SA)\oplus H_{n+1} &\to & H_{n+1} &\to 0\\\nonumber
 &\| & &\hspace{0.4in}\| \hspace{0.1in}\swarrow_{\xi_{n+1}}\downarrow_{[\imath_{n+3,\infty}]} &&
\hspace{0.2in} \downarrow_{[\imath_{n+3,\infty}]} &\\
 0 \to  & \underline{K}(SA)  &\to & \underline{K}(SA)\oplus H_{n+2} &\to & H_{n+2} &\to 0.\\
 \end{array}
\eneq
By the assumption that ${\bar R}_{\phi_1, \phi_2}=0,$ the map $\theta$ also gives the following decomposition:
\beq\label{72-16}
{\rm ker}R_{\phi_1,\phi_2}={\rm ker}\rho_A\oplus K_1(C).
\eneq
Define $\theta_n=\theta\circ [\imath_{n+2, \infty}]$ and $\kappa_n=\zeta_n-\theta_n.$
Note that
\beq\label{72-17}
\theta_n=\theta_{n+1}\circ [\imath_{n+2}].
\eneq
We also have that
\beq\label{72-18}
\zeta_n-\zeta_{n+1}\circ [\imath_{n+2}]=\xi_n.
\eneq
Since $[\pi_e]\circ (\zeta_n-\theta_n)|_{H_n}=0,$ $\kappa_n$ maps
$H_n$ into $\underline{K}(SA).$ It follows that
\beq\label{72-19}
\kappa_n-\kappa_{n+1}\circ [\imath_{n+2}] &=&
\zeta_n-\theta_n-\zeta_{n+1}\circ [ \imath_{n+2}]+\theta_{n+1}\circ
[\imath_{n+2}]\\{\label{72-19+}}
&=&\zeta_n-\zeta_{n+1}\circ[\imath_{n+2}]=\xi_n{{.}}
\eneq
It follows from Lemma \ref{VuV} that there are an integer
$N_1\ge 1,$  a unital ${\dt_{n+1}\over{4}}$-$\imath_{n+1}({\cal
G}_{n+1})$-multiplicative
\cp\, $L_n: \imath_{n,
\infty}(C_{n+1})\to M_{1+N_1}(M_{\phi_1,\phi_2}),$ a unital \hm\,
$h_0: \imath_{n+1, \infty}(C_{n+1})\to M_{N_1}(\C),$
and a continuous path of unitaries $\{V_n(t): t\in [0,3/4]\}$ in
$M_{1+N_1}(A)$ such that $[L_n]|_{{\cal P}_{n+1}'}$ is well
defined, $V_n(0)=1_{M_{1+N_1}(A)},$
\begin{equation*}
[L_n\circ \imath_{n,\infty}]|_{{\cal P}_n}=(\theta\circ
[\imath_{n+1,\infty}]+[h_0\circ \imath_{n+1,\infty}])|_{{\cal P}_n},
\end{equation*}
\begin{equation*}
\pi_t\circ L_n\circ
\imath_{n+1,\infty}\approx_{\dt_{n+1}/4} {\rm Ad}\, V_n(t)\circ
((\phi_1\circ \imath_{n+1,\infty})\oplus (h_0\circ\imath_{n+1, \infty}))
\end{equation*}
on $\imath_{n+1,\infty}({\cal G}_{n+1})$ for all $t\in (0,3/4],$
\begin{equation*}
\pi_t\circ L_n\circ
\imath_{n+1,\infty}\approx_{\dt_{n+1}/4} {\rm Ad}\, V_n(3/4)\circ
((\phi_1\circ \imath_{n+1,\infty})\oplus (h_0\circ
\imath_{n+1,\infty}))
\end{equation*}
on $\imath_{n+1,\infty}({\cal G}_{n+1})$ for all $t\in (3/4,1),$ and
\begin{equation*}
\pi_1\circ L_n\circ
\imath_{n+1,\infty}\approx_{\dt_{n+1}/4}\phi_2\circ
\imath_{n+1,\infty}\oplus h_0\circ \imath_{n+1,\infty}
\end{equation*}
on $\imath_{n+1,\infty}({\cal G}_{n+1}),$ where $\pi_t:
M_{\phi_1,\phi_2}\to A$ is the point evaluation at $t\in (0,1).$

Note that $R_{\phi_1,\phi_2}(\theta(x))=0$ for all $x\in
\imath_{n+1,\infty}(K_1(C_{n+1})).$  As  in \eqref{913-30} (see also
10.4 of
\cite{Linajm}),
\beq\label{72-24}
\tau(\log((\phi_2(x)\oplus h_0(x)^*V_n(3/4)^*(\phi_1 (x)\oplus
h_0(x))V_n(3/4)))=0
\eneq
for $x=\imath_{n+1, \infty}(y),$ where $y$ is in a set of generators
of $K_1(C_{n+1}),$ and for all $\tau\in T(A).$

Define $W_n'={\rm diag}(v_{n+1},1)\in M_{1+N_1}(A).$ Then
\beq\label{913-n-1}
{\tilde \kappa}_n:=\text{Bott}((\phi_1\oplus h_0)\circ \imath_{n+1, \infty},\,
W_n'(V_n(3/4)^*)
\eneq
 defines a \hm\,
 in
${\rm Hom}_{\Lambda}(\underline{K}(C_{n+1}),\underline{K}(SA)).$ By
(\ref{72-9})
\beq\label{72-25}
|\tau(\log((\phi_2\oplus h_0)\circ \imath_{n+1,\infty}(z_j)^*(W_n')^*(\phi_1\oplus h_0)\circ \imath_{n+1,\infty}(z_j)W_n'))|<\dt_{n+1},
\eneq
$j=1,2,...,r(n).$
  One computes  (see \eqref{913-33}) that
\beq\label{72-25+}
\Gamma(\text{Bott}(\phi_1\circ {\imath}_{n+1, \infty}\oplus h_0,\, W_n'V(3/4)^*)|_{{\cal P}_n}=(\gamma_{n+1}-\theta)[\imath_n]|_{{\cal P}_n}.
\eneq
Put ${\tilde V}_n=V_n(3/4).$
Let
\beq\label{72-26}
&&\hspace{-0.7in}b_{j,n}={1\over{2\pi i}}\log({\tilde V}_n^*(\phi_1\oplus h_0)\imath_{n+1,\infty}(z_j){\tilde V}_n (\phi_2\oplus h_0)\circ \imath_{n+1, \infty}(z_j)^*),\\
&&\hspace{-0.7in}b_{j,n}'={1\over{2\pi i}}\log((\phi_1\oplus h_0)\circ \imath_{n+1,\infty}(z_j){\tilde V}_n(W_n')^*(\phi_1\oplus h_0)\circ \imath_{n+1, \infty}(z_j)^*W_n'{\tilde V}_n^*), \andeqn \label{2019-sept-10+1}\\
&&\hspace{-0.7in}b_{j,n}''={1\over{2\pi i}}\log((\phi_2\oplus h_0)\imath_{n+1,\infty}(z_j)(W_n')^* (\phi_1\oplus h_0)\circ \imath_{n+1, \infty}(z_j)^*W_n'),
\eneq
$j=1,2,...,r(n).$
By (\ref{72-24}) and (\ref{72-25}),
\beq\label{72-27}
\tau(b_{j,n})=0 \andeqn |\tau(b_{j,n}'')|<\dt_{n+1}
\eneq
for all $\tau\in T(A).$
Note that
\beq\label{72-28}
{\tilde V}_n^*e^{2\pi i b_{j,n}'}{\tilde V}_n=
e^{2\pi i b_{j,n}} e^{2\pi i b_{j,n}''}{{.}}
\eneq
Then, by 6.1 of \cite{Lnind} and  by (\ref{72-27}),
\beq\label{72-29}
\tau(b_{j,n}')&=&\tau(b_{j,n})-\tau(b_{j,n}'')
=\tau(b_{j,n}'') \andeqn |\tau(b_{j,n}')|<\dt_{n+1}
\eneq
for all $\tau\in T(A).$
It follows from this, \eqref{913-n-1},  and \eqref{2019-sept-10+1}
that
\beq\label{72-30}
|\rho_A({\tilde \kappa}_n(z_j))(\tau)| <\dt_{n+1},\,\,\,j=1,2,...,
\eneq
for all $\tau\in T(A).$
It follows from {\ref{Extbot3}} that there is a unitary $w_n'\in U(A)$
such that
\beq\label{72-31}
&&\|[\phi_1(a), w_n']\|<\dt_{n+1}'/4\rforal a\in \imath_{n+1,\infty}({\cal G}_{n+1})\andeqn\\
&&\text{Bott}(\phi_1\circ \imath_{n+1,\infty},\, w_n')=-{\tilde
\kappa}_n\circ[\imath_{n+1}].
\eneq
By (\ref{72-4}),
\beq\label{72-32}
\text{Bott}(\phi_2\circ \imath_{n+1,\infty},\,v_n^*w_n'v_n)|_{{\cal
P}_n}=-{\tilde \kappa}_n\circ [\imath_{n+1}]|_{{\cal P}_n}.
\eneq
 It follows from  \eqref{913-33} (see also
 10.6 of \cite{Linajm}) and \eqref{72-25+} that
\beq\label{72-33}
\Gamma(\text{Bott}(\phi_1\circ \imath_{n+1, \infty}, w_n'))&=&-\kappa_n\circ [\imath_{n+1}]
\andeqn\\
 \Gamma(\text{Bott}(\phi_1\circ \imath_{n+2, \infty},
w_{n+1}'))&=&-\kappa_{n+1}\circ [\imath_{n+2}].
\eneq
We also have
\beq\label{72-34}
\Gamma(\text{Bott}(\phi_1\circ \imath_{n+1,\infty},
v_nv_{n+1}^*))|_{H_n}=\zeta_n-\zeta_{n+1}\circ [\imath_{n+2}]=\xi_n.
\eneq
But, by (\ref{72-19}) {and (\ref{72-19+})},
\beq\label{72-35}
(-\kappa_n +\xi_n+\kappa_{n+1}\circ [{\imath_{n+2}}])=0.
\eneq
By 10.6 of \cite{Linajm} (see also Remark \ref{gamma}), $\Gamma({\rm Bott}(.,.))=0$ if and only
if ${\rm Bott}(.,.)=0.$ Thus, by (\ref{72-32}), (\ref{72-33}), and (\ref{72-34}),
\beq\label{72-36}
{\hspace{-0.2in}} -\text{Bott}(\phi_1\circ \imath_{n+1, \infty},\,w_n')
+\text{Bott}(\phi_1\circ \imath_{n+1, \infty}, \,v_nv_{n+1}^*)
+\text{Bott}(\phi_1\circ\imath_{n+1,\infty}, {w_{n+1}'}) =0.
\eneq
Put $w_n=v_n^*(w_n')v_n$ and  $u_n=v_nw_n^*,$ $n=1,2,....$ Then, by (\ref{72-5}) and
(\ref{72-31}),
\beq\label{72-37}
{\rm Ad}\, u_n\circ \phi_1\approx_{\dt_n'/2} \phi_2\rforal a\in
\imath_{n+1, \infty}({\cal G}_{n+1}).
\eneq
From (\ref{72-7}), (\ref{72-4}), and (\ref{72-36}), we compute
that
\beq\label{72-38}
&&\hspace{-0.6in}\text{Bott}(\phi_2\circ \imath_{n+1, \infty},u_n^*u_{n+1})
= \text{Bott}(\phi_2\circ \imath_{n+1, \infty}, w_nv_n^*v_{n+1}w_{n+1}^*)\\
&=&\hspace{-0.1in} \text{Bott}(\phi_2\circ \imath_{n+1, \infty}, w_n)+\text{Bott}(\phi_2\circ \imath_{n+1, \infty}, v_n^*v_{n+1})\\
&&\hspace{0.3in}
+\text{Bott}(\phi_2\circ \imath_{n+1, \infty}, w_{n+1}^*)\\
&=&\text{Bott}(\phi_1\circ \imath_{n+1, \infty},w_n')+\text{Bott}(\phi_1\circ \imath_{n+1, \infty}, v_{n+1}v^*_n
)\\
&&\hspace{0.3in}+\text{Bott}(\phi_1\circ \imath_{n+1, \infty}, (w_{n+1}')^*)\\
&=&-[-\text{Bott}(\phi_1\circ \imath_{n+1, \infty}, w_n')+\text{Bott}(\phi_1\circ \imath_{n+1, \infty}, v_nv_{n+1}^*)\\\label{Add211}
&&\hspace{2.1in}+\text{Bott}(\phi_1\circ \imath_{n+1, \infty}, {w_{n+1}'})]=0.
\eneq
Let $x_{i,n}=[p_{i,n}]-[q_{i,n}], \,\, 1\le i\le I(n).$ Note that we assume that
$G_{u,n}$ is a free abelian group generated by  $\{x_{i,n}: 1\le i\le I(n)\}.$ \Wlog, we may assume that these generators are independent.
{{Put $e_{i,n}=\phi_2\circ \imath_{n+1,\infty}(p_{i,n}),$
$e'_{i,n}=\phi_2\circ \imath_{n+1,\infty}(q_{i,n}),$
$i=1,2,..., I(n).$
Put $s_1=1$  and ${\td u}_1=u_1s_1^*=u_1.$}}
{{Define a \hm\, $\Lambda_1: G_{u,1}\to U_0(A)/CU(A)$ (see \eqref{Add211}) by
\beq
\Lambda_1(x_{i,1})=\overline{\langle ((1-e_{i,1})+e_{i,1}u_1^*u_2)((1-e'_{i,1})+e'_{i,1}u_2^*u_1)\rangle}.
\eneq}}
{{Since $\Lambda_1$ factors through $G_{u,1}',$ applying Theorem \ref{Extbot3} 
(or just  Theorem \ref{BB-exi+}) to $\phi_2\circ\iota_{2, \infty}$, one obtains  a unitary $s_{2}\in {{A}}$ such that}}
\beq
||[\phi_2\circ\iota_{2, \infty}(f), {{s_{2}}}]||<\delta'_{2}/4\rforal f\in\mathcal G_{2},\\
\mathrm{Bott}(\phi_2\circ\iota_{2, \infty}, s_2)|_{\mathcal P_2}=0, \andeqn
\eneq
\beq\label{n-72-39+}
\hspace{-0.2in}{\rm dist}(\overline{\langle ((1-e_{i,1})+e_{i,1}{{s_{2}^*}})((1-e'_{i,1})+
e'_{i,1}s_{2})\rangle},
\Lambda_{1}(-x_{i,1}))<\sigma_2'/16.
\eneq
{{Define $\td u_2=u_2s_2^*.$}}
%
%
In what follows, we will construct unitaries $s_2, ..., {s_n,...}$ in $A$ such that
\beq\label{n-72-37}
||[\phi_2\circ\iota_{j+1, \infty}(f), s_{{j+1}}]||<\delta'_{j+1}/4\rforal f\in\mathcal G_{j+1},\\
\label{n-72-38}
\mathrm{Bott}(\phi_2\circ\iota_{+1, \infty}, s_{j+1})|_{\mathcal P_j}=0, \andeqn\\
\label{n-72-39}
\hspace{-0.2in}{\rm dist}(\overline{\langle ((1-e_{i,j})+e_{i,j}s^*_{n+1})((1-e'_{i,j})+e'_{i,j}s_{j+1})\rangle},
\Lambda_n(-x_{i,j}))<\sigma_n'/16,
\eneq
{{where $\Lambda_j: G_{u,j}\to U_0(A)/CU(A)$  is a \hm\, defined  by
\beq\label{2019-sept-10-2}\Lambda_j(x_{i,j})=\overline{\langle ((1-e_{i,j})+e_{i,}{{\widetilde{u_j}}}^*u_{j+1})((1-e'_{i,j})+e'_{i,j}{{u_{j+1}^*{\widetilde{u_j}})\rangle}}}\eneq
(see \eqref{Add211} and \eqref{n-72-38} for $j$), and ${\tilde u}_j=u_js_j^*,$
$j=1,2,....$}}

{{A}}ssume that $s_2, ..., s_n$ are already constructed. Let us construct $s_{n+1}$. Note that by 
\eqref{72-38} 
the $K_1$ class of the unitary $u_n^*u_{n+1}$ is trivial. In particular, the $K_1$ class of $s_nu_n^*u_{n+1}$ is trivial.  {Since ${{\Lambda_n}}$ factors through $G_{u,n}',$ applying Theorem \ref{Extbot3}} to $\phi_2\circ\iota_{{{n+1}}, \infty}$, one obtains  a unitary $s_{n+1}\in {{A}}$ such that
\beq
&&||[\phi_2\circ\iota_{{{n+1}}, \infty}(f), {{s_{n+1}}}]||<\delta'_{n+1}/4\rforal f\in\mathcal G_{n+1},\\
&&\mathrm{Bott}(\phi_2\circ\iota_{{{n+1}}, \infty}, s_{n+1})|_{\mathcal P_n}=0, \andeqn\\
\label{n-72-39+}
&&\hspace{-1in}{\rm dist}(\overline{\langle ((1-e_{i,n})+e_{i,n}{{s_{n+1}^*}})((1-e'_{i,n})+
e'_{i,n}s_{n+1})\rangle},
\Lambda_{n}(-x_{i,n}))<\sigma_n'/16,
\eneq
$i=1,2,...,I(n+1).$
Then  $s_1, s_2, ..., s_{n+1}$ {{satisfy}} \eqref{n-72-37}, \eqref{n-72-38}, and \eqref{n-72-39}.

Put $\widetilde{u_n+1}=u_{n+1}s_{n+1}^*$. Then by \eqref{72-37} and \eqref{n-72-37}, one has
\beq\label{n-72-40}
{\rm ad}\, \widetilde{u_n}\circ \phi_1\approx_{\dt_n'} \phi_2\rforal a\in
\imath_{n+1, \infty}({\cal G}_{n+1}).
\eneq
By \eqref{72-38} and \eqref{n-72-38}, one has
\begin{equation}\label{n-72-41}
\mathrm{Bott}(\phi_2\circ\iota_{n+1, \infty}, (\widetilde{u_{n}})^* \widetilde{u_{n+1}})|_{\mathcal P_n}=0.
\end{equation}
Note that
{{\beq\label{72-n2014103}
\overline{\langle (1-e_{i,n})+e_{i,n}\widetilde{u_n}^* \widetilde{u_{n+1}}\rangle \langle(1-e_{i,n}')+e_{i,n}'{\widetilde{u_{n+1}}^*}{\widetilde{u_{n}}}\rangle}
=\overline{c_1c_2c_4c_3}=
\overline{c_1c_3c_2c_4},
\eneq}}
where
{{\beq
c_1=\langle(1-e_{i,n})+e_{i,n}{\widetilde{u_n}}^*u_{n+1}\rangle,
\,\, c_2=\langle (1-e_{i,n})+e_{i,n}s_{n+1}^*\rangle.\\
c_3=\langle (1-e'_{i,n})+{e_{i,n}'}u_{n+1}^*\widetilde{u_n}\rangle,\,\, 
c_4=\langle (1-e'_{i,n})+e'_{i,n}s_{n+1}\rangle.
\eneq}}

%
Therefore,  by \eqref{n-72-39+} and (\ref{2019-sept-10-2}),
one has
{{\beq\label{n-72-42}
&&\hspace{-0.2in}\mathrm{dist}(\overline{\langle ((1-e_{i,n})+e_{i,n}{\widetilde{u_n}}^*{\widetilde{u_{n+1}}})((1-e_{i,n}')+e_{i,n}'{\widetilde{u_{n+1}}^*{\widetilde{u_n}})}\rangle}), {\bar 1})\\
&&<\sigma_n'/16+{\rm dist}(\Lambda_n(x_{i,n})\Lambda_n(-x_{i,n}), {\bar 1})
={{\sigma_{n}'/16,}}
\eneq}}
$i=1,2,...,I(n).$
Therefore, by Lemma \ref{BHfull} (and Remark \ref{RBHfull}), there exists a continuous and piecewise smooth
path of unitaries $\{z_n(t): t\in [0,1]\}$ of $A$ such
that
\beq\label{72-39}
&&z_n(0)=1,\,\,\, z_n(1)=(\widetilde{u_n})^*\widetilde{u_{n+1}}\andeqn\\\label{72-40}
&&\|[\phi_2(a),\, z_n(t)]\|<1/2^{n+2}\rforal a\in {\cal F}_n\andeqn
t\in [0,1].
\eneq
Define
$$
u(t+n-1)=\widetilde{u_n}z_{n+1}(t)\,\,\,t\in (0,1].
$$
Note that $u(n)=\widetilde{u_{n+1}}$ for all integers $n$ and $\{u(t):t\in [0,
\infty)\}$ is a continuous path of unitaries in $A.$ One estimates
that, by (\ref{72-37}) and (\ref{72-40}),
\beq\label{72-41}
{\rm Ad}\, u(t+n-1)\circ \phi_1 \approx_{\dt_n'} {\rm
Ad}\,z_{n+1}(t)\circ \phi_2
 \approx_{1/2^{n+2}} \phi_2
\,\,\,\,\,\,\,\text{on}\,\,\,\,{\cal
F}_n
\eneq
 for all $t\in (0,1).$
It then follows that
\beq\label{72-42}
\lim_{t\to\infty}u^*(t)\phi_1(a) u(t)=\phi_2(a)\rforal a\in C.
\eneq

\end{proof}

\section{Rotation maps and strong asymptotic equivalence}

\begin{lem}\label{L91}
{Let {{$A$}} be a unital separable simple \CA\, of stable rank one.}
Suppose that
$u\in CU(A).$ Then, for any continuous and piecewise smooth  path
$\{u(t): t\in [0,1]\}\subset U(A)$ with $u(0)=u$ and $u(1)=1_A,$
\beq\label{L91-1}
D_A(\{u(t)\})\in \overline{\rho_A(K_0(A))}\,\,\,\,\,\,\hspace{0.3in}\textrm{(recall Definition 2.16 of \cite{GLN-I}
 for $D_A$).}
\eneq
\end{lem}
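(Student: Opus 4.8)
The statement asserts that the de la Harpe--Skandalis determinant $D_A(\{u(t)\})$ of a piecewise smooth loop based at a unitary $u\in CU(A)$ lands in the closure $\overline{\rho_A(K_0(A))}$ of the image of $K_0(A)$ in $\Aff(T(A))$. The key point is that $D_A(\{u(t)\})$ only depends on the endpoints $u(0)=u$ and $u(1)=1_A$ modulo the choice of path; more precisely, by the standard properties of $\overline{D}_A$ recalled in Definition 2.16 of \cite{GLN-I} (following \cite{HS}), the element $\overline{D}_A(\bar u)\in \Aff(T(A))/\overline{\rho_A(K_0(A))}$ is independent of the representative $u$ of $\bar u$ and of the choice of path connecting $1_A$ to $u$, and $\overline{D}_A$ is a homomorphism on $U_0(A)/CU(A)$. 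So the first step is simply to observe that, since $u\in CU(A)\subset U_0(A)$ (recall $A$ has stable rank one, so $CU(A)=CU(M_1(A))$ and $\overline{D}_A$ is defined on all of $U_0(A)/CU(A)$), we have $\bar u=\bar 1$ in $U_0(A)/CU(A)$, hence $\overline{D}_A(\bar u)=0$ in $\Aff(T(A))/\overline{\rho_A(K_0(A))}$.

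The second step is to translate this vanishing back to the statement about the specific path $\{u(t)\}$. By definition, $\overline{D}_A(\bar u)=D_A(\{v(t)\})+\overline{\rho_A(K_0(A))}$ for \emph{any} piecewise smooth path $\{v(t)\}$ with $v(0)=1_A$, $v(1)=u$; reversing orientation (replacing $v(t)$ by $v(1-t)$, which negates the integral) and using that our given path runs from $u$ to $1_A$, we get that $D_A(\{u(t)\})+\overline{\rho_A(K_0(A))}=\overline{D}_A(\bar u)=0$ in the quotient, which is exactly \eqref{L91-1}. Thus the proof is essentially a bookkeeping argument invoking the well-definedness of the de la Harpe--Skandalis determinant on $U_0(A)/CU(A)$.

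\textbf{Where the content really sits.} The only substantive input is the well-definedness and homomorphism property of $\overline{D}_A$ on $U_0(M_k(A))/CU(M_k(A))$, together with the identification \eqref{Dcu-3}/\eqref{Dcu-4} that, for $A$ of stable rank one, one may work at the level $k=1$ without loss of generality — all of which is recalled in Definition 2.16 of \cite{GLN-I} and ultimately rests on \cite{HS} and \cite{Thomsen-rims}, \cite{GLX-ER}. The ``main obstacle'' here is not really an obstacle but a verification: one must be careful that the path $\{u(t)\}$ in the hypothesis lies in $U(A)$ (not just $U(M_k(A))$) and that $u\in CU(A)$ indeed forces $u\in U_0(A)$ so that $\overline{D}_A(\bar u)$ makes sense — this is immediate since $CU(A)$ is by definition the closure of the commutator subgroup of $U_0(A)$, hence $CU(A)\subset U_0(A)$. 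Once these points are in place, the conclusion \eqref{L91-1} follows directly, so I would present the argument in a few lines: note $\bar u=\bar 1_A$ in $U_0(A)/CU(A)$, recall the independence of $\overline{D}_A(\bar u)$ from path and representative, and read off that the determinant of the given path is a coset representative of $0$, i.e.\ lies in $\overline{\rho_A(K_0(A))}$.
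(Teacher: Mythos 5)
Your proposal is correct and takes essentially the same route as the paper: the paper also reduces the statement to the known properties of the de la Harpe--Skandalis determinant, passing from $U(A)/CU(A)$ to $U(M_n(A))/CU(M_n(A))$ via Corollary 11.11 of \cite{GLN-I} (where stable rank one and simplicity enter) and then citing Lemmas 3.1 and 3.2 of \cite{Thomsen-rims}, which is precisely the background (well-definedness of $\overline{D}_A$ modulo $\overline{\rho_A(K_0(A))}$ on $U_0/CU$) that you invoke through Definition 2.16. The only cosmetic difference is that you work directly at the level $k=1$ using the packaged statement in the definition, while the paper makes the matrix-stabilization step and the appeal to Thomsen explicit.
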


\begin{proof}
It follows from Corollary 11.11 of \cite{GLN-I} 
that the map
$j: u\mapsto {\rm diag}(u,1,..., 1)$ from $U(A)$ to $U(M_n(A))$ induces
an isomorphism from $U(A)/CU(A)$ to $U(M_n(A))/CU(M_n(A)).$ Then
the conclusion follows from 3.1 and 3.2 of \cite{Thomsen-rims}.
\end{proof}

\begin{lem}\label{L92}
{Let {{$A$}}  be a unital separable simple \CA\, of stable rank one.}
{Suppose that $B$ is a unital separable \CA\,}
and suppose that $\phi,\,\psi: B \to A$ are two unital monomorphisms such that
\beq\label{L92-1}
[\phi]=[\psi]\,\,\,{\textrm in}\,\,\,{{KK(B,A),}} \\\label{L92-2}
\phi_T=\psi_T\tand \phi^{\ddag}=\psi^{\ddag}.
\eneq
Then
\beq\label{L92-3}
R_{\phi, \psi}\in {\rm Hom}(K_1(B), \overline{\rho_A(K_0(A))}).
\eneq

\end{lem}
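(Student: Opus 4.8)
The plan is to reduce the statement to Lemma~\ref{L91}. Since $\phi_T=\psi_T$, the homomorphism $R_{\phi,\psi}\colon K_1(M_{\phi,\psi})\to \Aff(T(A))$ is well defined, and since $[\phi]=[\psi]$ in $KK(B,A)$ there is a splitting $\theta$ giving $K_1(M_{\phi,\psi})=\imath_*(K_0(A))\oplus\theta(K_1(B))$, where $\imath_*\colon K_0(A)\to K_1(M_{\phi,\psi})$ is induced by the inclusion $SA\hookrightarrow M_{\phi,\psi}$. The commuting triangle relating $\rho_A$, $\imath_*$, and $R_{\phi,\psi}$ (see \cite{GLN-I}) gives $R_{\phi,\psi}\circ\imath_*=\rho_A$, so $R_{\phi,\psi}(\imath_*(K_0(A)))=\rho_A(K_0(A))\subseteq\overline{\rho_A(K_0(A))}$; hence it suffices to show $R_{\phi,\psi}(\theta(K_1(B)))\subseteq\overline{\rho_A(K_0(A))}$.

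Fix $x\in K_1(B)$ and, using that $B$ is unital, represent it by a unitary $v\in U(M_N(B))$ for some $N$. Because $\phi$ and $\psi$ induce the same map on $K_1$, the unitaries $\phi(v),\psi(v)\in U(M_N(A))$ lie in the same connected component, so we may pick a piecewise smooth path $\{U(t)\}\subset U(M_N(A))$ with $U(0)=\phi(v)$ and $U(1)=\psi(v)$; then $(U,v)$ is a unitary in $M_N(M_{\phi,\psi})$ with $(\pi_e)_{*1}[(U,v)]=x$, so $\theta(x)-[(U,v)]\in\imath_*(K_0(A))$ and therefore $R_{\phi,\psi}(\theta(x))\equiv D_A(\{U(t)\})\pmod{\rho_A(K_0(A))}$, where $D_A(\{U(t)\})(\tau)=\frac{1}{2\pi i}\int_0^1\tau(\dot U(t)U(t)^*)\,dt$ with $\tau$ extended to $M_N(A)$ in the usual way. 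It thus remains to exhibit one path $U$ with $D_A(\{U\})\in\overline{\rho_A(K_0(A))}$, and here I would use the hypothesis $\phi^{\ddag}=\psi^{\ddag}$: applied to the amplifications it gives $\overline{\phi(v)}=\overline{\psi(v)}$ in $U(M_N(A))/CU(M_N(A))$, that is, $\psi(v)^*\phi(v)\in CU(M_N(A))$. Since $M_N(A)$ is again a unital separable simple C*-algebra of stable rank one, Lemma~\ref{L91} applied to $M_N(A)$ shows that any piecewise smooth path $\{c(t)\}\subset U(M_N(A))$ with $c(0)=\psi(v)^*\phi(v)$ and $c(1)=1$ satisfies $D_{M_N(A)}(\{c(t)\})\in\overline{\rho_{M_N(A)}(K_0(M_N(A)))}$. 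Taking $U(t)=\psi(v)c(t)$ yields a path from $\phi(v)$ to $\psi(v)$, and by traciality $\tau(\dot U U^*)=\tau(\psi(v)\dot c\,c^*\psi(v)^*)=\tau(\dot c\,c^*)$; hence, under the canonical identifications $T(M_N(A))\cong T(A)$ and $K_0(M_N(A))\cong K_0(A)$, $D_A(\{U\})$ agrees with $D_{M_N(A)}(\{c\})$ up to the normalization factor, so $D_A(\{U\})\in\overline{\rho_A(K_0(A))}$. Combining, $R_{\phi,\psi}(\theta(x))\in\overline{\rho_A(K_0(A))}+\rho_A(K_0(A))=\overline{\rho_A(K_0(A))}$, which is the assertion.

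The main obstacle I anticipate is bookkeeping rather than conceptual: keeping straight the various normalization conventions (the extension of traces to matrix algebras used in the definition of $R_{\phi,\psi}$ versus the $D_{M_N(A)}$ of Lemma~\ref{L91}, and the comparison of $\rho_{M_N(A)}(K_0(M_N(A)))$ with $\rho_A(K_0(A))$ inside $\Aff(T(A))$), together with carefully checking that $\theta(x)$ and $[(U,v)]$ differ by an element of $\imath_*(K_0(A))$ so that the choice of the path $U$ is immaterial modulo $\rho_A(K_0(A))$. Once these routine points are settled, the statement follows immediately from Lemma~\ref{L91} and the hypotheses $\phi^{\ddag}=\psi^{\ddag}$ and $\phi_T=\psi_T$.
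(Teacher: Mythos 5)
Your proposal is correct and follows essentially the same route as the paper: represent a class of $K_1(B)$ by a unitary $v$ over $B$, use $\phi^{\ddag}=\psi^{\ddag}$ to see $\psi(v)^*\phi(v)\in CU(M_N(A))$, translate the connecting path by $\psi(v)$ (the paper passes from $u(t)$ to $w(t)=\psi(u)^*u(t)$, you go the other way), and invoke Lemma~\ref{L91} together with trace invariance. Your extra bookkeeping about the splitting $\theta$, the independence modulo $\rho_A(K_0(A))$, and the normalization in $M_N(A)$ is just a more explicit version of what the paper leaves implicit.
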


\begin{proof}
Let $z\in K_1(B)$ be represented by the unitary $u\in {U(M_m(B))}$ {for some integer $m$}.
Then, by (\ref{L92-2}),
$$
{(\phi\otimes {\rm id}_{M_m})(u)(\psi\otimes {\rm id}_{M_m})(u)^*\in CU(M_m(A)).}
$$
Suppose that $\{u(t): t\in [0,1]\}$ is a continuous and piecewise smooth path in ${M_m(U(A))}$ such that
$u(0)={(\phi\otimes {\rm id}_{M_m})}(u)$ and $u(1)=(\psi\otimes{\rm id}_{M_m})(u).$ Put $w(t)={(\psi\otimes {\rm id}_{M_m})(u)^*u(t)}.$  Then $w(0)={(\psi\otimes {\rm id}_{M_m})(u)^*(\phi\otimes {\rm id}_{M_{m}})(u)}\in CU(A)$ and $w(1)=1_A.$ Thus,
\beq\label{L92-4}
R_{\phi, \psi}(z)(\tau)&=&{1\over{2\pi i}}\int_0^1 \tau({du(t)\over{dt}}u^*(t))dt
= {1\over{2\pi i}}\int_0^1 \tau(\psi(u)^*{du(t)\over{dt}}u^*(t)\psi(u))dt\\
&=& {1\over{2\pi i}}\int_0^1 \tau({dw(t)\over{dt}}w^*(t))dt
\eneq
for all $\tau\in T(A).$ By \ref{L91},
\beq\label{L92-5}
R_{\phi, \psi}(z)\in \overline{\rho_A(K_0(A))}.
\eneq
It follows that
\beq\label{L92-6}
R_{\phi, \psi}\in {\rm Hom}(K_1(B), \overline{\rho_A(K_0(A))}).
\eneq

\end{proof}

\begin{thm}\label{T94}
Let $C_1, C_2\in {\cal B}_0$ be unital separable simple \CA s, and $A=C_1\otimes U_1,$  $B=C_2\otimes U_2,$ where $U_1$ and $U_2$ are  UHF-algebras of infinite type, and  $B$ satisfies the UCT. Suppose that $B$ is a  {unital \SCA\,} of $A,$ and denote by $\imath$ the embedding. For any $\lambda\in  {\rm Hom}(K_{{1}}(B), \overline{\rho_A(K_0(A))}),$ there exists $\phi\in {\overline{{\rm Inn}}}(B,A)$ (see Definition 2.8 of \cite{GLN-I})
such that there are \hm s $\theta_i: K_i(B)\to K_i(M_{\imath, \phi})$ with $(\pi_0)_{*i}\circ \theta_i={\rm id}_{K_i({{B}})},$ $i=0,1,$ and
the rotation map $R_{\imath, \phi}: K_1({{M_{\imath, \phi}}})\to {\rm Aff}(T(A))$ is given by
\beq\label{T94-1}
R_{\imath, \phi}(x)=\rho_A(x-\theta_1(\pi_0)_{*1}(x))+\lambda\circ (\pi_0)_{*1}(x))
\eneq
for all $x\in K_1(M_{\imath, \phi}).$ In other words,
\beq\label{T94-2}
[\phi]=[\imath]\,\,\,{\text{in}}\,\,\, KK(B,A)
\eneq
and the rotation map $R_{\imath, \phi}: K_1(M_{\imath, \phi})\to
{\rm Aff}(T(A))$ is given by
\beq\label{T94-3}
R_{\imath, \phi}(a,b)=\rho_A(a)+\lambda(b)
\eneq
for some identification of $K_1(M_{\imath, \phi})$ with $K_0(A)\oplus K_1(B).$
\end{thm}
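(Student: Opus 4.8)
The plan is to realize the prescribed homomorphism $\lambda\in\mathrm{Hom}(K_1(B),\overline{\rho_A(K_0(A))})$ as the rotation map of some approximately inner automorphism of the form $\imath$ followed by a conjugate. First I would reduce the problem to a concrete existence statement: by Lemma \ref{L92}, any pair of unital monomorphisms from $B$ to $A$ which agree on $KK$, on traces, and on $U(-)/CU(-)$ has rotation map landing in $\mathrm{Hom}(K_1(B),\overline{\rho_A(K_0(A))})$, so the target class is the ``right'' one, and the content is that every such $\lambda$ is actually hit. The construction of $\phi$ should proceed through a mapping torus model: I would seek a unital monomorphism $\phi: B\to A$ with $[\phi]=[\imath]$ in $KK(B,A)$, $\phi_T=\imath_T$ and $\phi^\ddag=\imath^\ddag$ (so that $\phi\in\overline{\mathrm{Inn}}(B,A)$ by Lemma \ref{MUN2}, using that both $A$ and $B=C_2\otimes U_2$ are in $\mathcal{B}_1$), but whose rotation map $R_{\imath,\phi}$ differs from the ``trivial'' one exactly by $\lambda\circ(\pi_0)_{*1}$.

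The key step is to build $\phi$ by a deformation/Bott-element argument. Writing $C=C_2\otimes U_1$-type inductive limit decomposition of $B$ (via Theorem 14.10/Remark \ref{Remark202011-1}, since $B\in\mathcal{B}_0$ satisfies the UCT), I would use the existence theorems for maps with prescribed $KK$-data, traces and determinant — namely Lemma \ref{L85} and Corollary \ref{L86} — to produce, along a path, a family of monomorphisms interpolating $\imath$ with itself but accumulating the prescribed ``winding''. Concretely, one expresses the desired $\phi$ as a composition $\psi_0\circ\imath$ where $\psi_0: A\to A$ is approximately inner, chosen so that the induced element in $KK(B,A)$ is unchanged but the de la Harpe–Skandalis determinant data along the homotopy produces $\lambda$. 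The mechanism for injecting $\lambda$ into the rotation map is precisely the control of $D_A$ of paths of unitaries implementing the approximate innerness; this is where Lemma \ref{L91} (any $CU$-to-$1$ path has $D_A$ in $\overline{\rho_A(K_0(A))}$) guarantees that the construction stays inside the allowed subgroup, and conversely that any element of that subgroup can be arranged by choosing appropriate paths.

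I would then verify the stated formula for $R_{\imath,\phi}$. Since $[\phi]=[\imath]$, the exact sequence \eqref{Aug2-2} for $M_{\imath,\phi}$ splits, giving splitting homomorphisms $\theta_i: K_i(B)\to K_i(M_{\imath,\phi})$ with $(\pi_0)_{*i}\circ\theta_i=\mathrm{id}$; any $x\in K_1(M_{\imath,\phi})$ decomposes as $x=\imath_*(a)+\theta_1(b)$ with $a\in K_0(A)$, $b\in K_1(B)$, where $a=x-\theta_1(\pi_0)_{*1}(x)$. On $\imath_*(K_0(A))$ the rotation map is $\rho_A$ by the commutative diagram in Definition \ref{DRphipsi}, and on $\theta_1(K_1(B))$ it is by construction $\lambda\circ(\pi_0)_{*1}$; adding these gives \eqref{T94-1}, and choosing the identification $K_1(M_{\imath,\phi})\cong K_0(A)\oplus K_1(B)$ via $(\imath_*,\theta_1)$ gives \eqref{T94-3}. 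The main obstacle I anticipate is the precise bookkeeping in the deformation argument: ensuring simultaneously that $[\phi]=[\imath]$ exactly in $KK$ (not just $KL$), that $\phi_T$ and $\phi^\ddag$ are unchanged, and that the rotation map picks up exactly $\lambda$ and nothing else — this requires carefully combining the $KK$-existence results (Corollary \ref{L86}), the Basic Homotopy Lemma machinery (Lemma \ref{BHfull}), and the trace/determinant compatibility, and is the technically delicate heart of the proof.
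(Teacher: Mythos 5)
Your overall plan---produce a monomorphism $\phi$ approximately unitarily equivalent to $\imath$ whose accumulated ``winding'' realizes $\lambda$, then read off \eqref{T94-1} and \eqref{T94-3} from the splitting of $\underline{K}(M_{\imath,\phi})$---has the right shape, and your final verification paragraph is fine once such a $\phi$ is in hand. The gap is at the step you yourself flag as the delicate heart: the mechanism that actually injects $\lambda$ into $R_{\imath,\phi}$, and the tools you name cannot supply it. Lemma \ref{L85} and Corollary \ref{L86} only prescribe the triple $([\phi],\phi^{\ddag},\phi_T)$; they give no control whatsoever over the rotation map, which is precisely the finer invariant that distinguishes the elements of a fixed class $\langle\kappa,\af,\gamma\rangle$ (indeed Theorem \ref{MT2} is proved \emph{using} Theorem \ref{T94}, so one cannot expect those existence results alone to produce a prescribed $\overline{R}_{\imath,\phi}$). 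Lemma \ref{BHfull} is a homotopy (uniqueness-type) lemma, not an existence result for Bott elements. And Lemma \ref{L91} only gives the containment $D_A(\{u(t)\})\in\overline{\rho_A(K_0(A))}$; your assertion that ``conversely any element of that subgroup can be arranged by choosing appropriate paths'' is exactly the hard existence statement, and it does not follow from \ref{L91}.

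What is actually needed (and what the paper uses, by quoting Theorem 4.2 of \cite{L-N} verbatim) is the pair of properties (B1) and (B2) of \cite{L-N}: for a finite stage $B_n$ of an inductive limit decomposition of $B$ (available by Remark \ref{Remark202011-1}) and a homomorphism from $K_1(B_n)$ to $K_0(A)$ with small traces, one must produce a unitary in $A$ almost commuting with the image of $B_n$ whose ${\rm bott}_1$ realizes it. In the present setting these are furnished by Lemma \ref{Vpair2} and Lemma \ref{Extbot1} (see also Lemma \ref{Extbot2}); one then constructs $\phi=\lim_{n}{\rm Ad}\,(u_1u_2\cdots u_n)\circ\imath$, choosing the $u_n$ so that the partial sums of $\rho_A({\rm bott}_1(\cdot,u_k))$ converge to $\lambda$ (the hypothesis $\lambda(K_1(B))\subset\overline{\rho_A(K_0(A))}$ replacing the density assumption of \cite{L-N}), while the $K$-theoretic obstructions are kept zero so that the splitting maps $\theta_i$ exist and $[\phi]=[\imath]$ in $KK(B,A)$. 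Without this iterative Bott-element construction (or an equivalent), your outline does not yield a $\phi$ whose rotation map is the prescribed $\lambda$ rather than some uncontrolled element of ${\rm Hom}(K_1(B),\overline{\rho_A(K_0(A))})$.
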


\begin{proof}
The proof is exactly the same as that of Theorem 4.2 of \cite{L-N}. By Lemma \ref{Vpair2} and
Lemma \ref{Extbot1} (see also Lemma \ref{Extbot2} {{and Remark \ref{Remark202011-1}}}), we have the properties (B1) and (B2) associated with $B$ (defined in {3.6} of \cite{L-N}) as in Theorem 4.2 of \cite{L-N}.
In 4.2 of \cite{L-N}, it is also assumed that $\rho_A(K_0(A))$ is dense in
${\rm Aff}(T(A))$, which is only used to get that $\psi(K_1(B))\subset \overline{\rho_A(K_0(A))}$, which corresponds to the assumption $\lambda(K_1(B))\subset \overline{\rho_A(K_0(A))}$ here.
\end{proof}

\begin{df}\label{d95}
Let $A$ be a unital \CA\, and let $C$ be a unital separable
\CA. Denote by ${\rm{Mon}}_{asu}^e(C,A)$  the set of all asymptotic unitary equivalence classes of unital monomorphisms from $C$ into $A.$ Denote by $\small{{\boldsymbol{K}}}:
\textrm{Mon}_{asu}^e(C, A)\to {KK}_e(C,A)^{++}$  the map defined by
$$
\phi\mapsto [\phi]\tforal \phi\in \mathrm{Mon}_{asu}^e(C,A).
$$
Let $\kappa\in {KK}_e(C,A)^{++}.$ Denote by $\langle \kappa
\rangle$ the set of classes of all $\phi\in \mathrm{Mon}_{asu}^e(C,A)$ such
that $\small{{\boldsymbol{ K}}}(\phi)=\kappa.$

Denote by $KKUT_e(A,B)^{++}$ the set of triples $(\kappa,
\af,\gamma)$ for which $\kappa\in KK_e(A,B)^{++},$ $\af:
U(A)/CU(A)\to U(B)/CU(B)$ is a \hm\,, $\gamma: T(B)\to T(A)$ is a continuous
affine map, and both $\af$ and $\gamma$ are compatible
with $\kappa.$ Denote by $\boldsymbol{\mathfrak{K}}$ the map from
$\textrm{Mon}_{asu}^e(C, A)$ into ${KKUT}(C,A)^{++}$ defined by
$$
\phi\mapsto ([\phi],\phi^{\ddag}, \phi_T)\tforal \phi\in
\mathrm{Mon}_{asu}^e(C,A).
$$
Denote by $\langle \kappa, \af,\gamma \rangle $ the subset of
$\phi\in \mathrm{Mon}_{asu}^e(C,A)$ such that
$\boldsymbol{\mathfrak{K}}(\phi)=(\kappa,\,\af,\,\gamma).$

\end{df}

\begin{thm}\label{T96}
Let $C$ and $A$ be two  unital separable  amenable \CA s. Suppose that $\phi_1, \phi_2, \phi_3: {C\to A}$
are three unital monomorphisms for which
\beq\label{Mul-1}
[\phi_1]=[\phi_2]=[\phi_3]\,\,\,{\textrm in}\,\,\, KK({C,A}))\tand
(\phi_1)_T=(\phi_2)_T=(\phi_3)_T.
\eneq
Then
\beq\label{Mul-2}
\overline{R}_{\phi_1, \phi_2}+\overline{R}_{\phi_2,
\phi_3}=\overline{R}_{\phi_1,\phi_3}.
\eneq
\end{thm}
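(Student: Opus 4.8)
The plan is to prove the cocycle identity \eqref{Mul-2} by working directly with the mapping tori and constructing, from the three given maps, a single monomorphism-with-unitary datum whose rotation-related invariants compute all three rotation maps simultaneously. First I would recall that, by Definition \ref{DRphipsi}, the hypothesis $[\phi_i]=[\phi_j]$ in $KK(C,A)$ together with $(\phi_i)_T=(\phi_j)_T$ guarantees that $\overline{R}_{\phi_i,\phi_j}$ is a well-defined element of $\mathrm{Hom}(K_1(C),\mathrm{Aff}(T(A)))/\mathcal R_0$, independent of the choice of splitting $\theta$. So the statement lives in a quotient, and it suffices to prove the identity at the level of representatives $R_{\phi_i,\phi_j}$ after choosing compatible splittings $\theta_{ij}$; the $\mathcal R_0$-ambiguity will absorb any mismatch between the splittings.

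The key step is a concatenation-of-paths argument. Given a class $x\in K_1(C)$ represented by a unitary $u\in M_m(C)$, choose piecewise-smooth paths of unitaries $w_{12}(t)$ from $\phi_1(u)$ to $\phi_2(u)$ and $w_{23}(t)$ from $\phi_2(u)$ to $\phi_3(u)$ in $M_m(A)$ (possible since the relevant differences lie in $CU$, using $[\phi_i]=[\phi_j]$ and $(\phi_i)_T=(\phi_j)_T$ as in the proof that $R_{\phi,\psi}$ is defined). Their concatenation $w_{13}$ is a path from $\phi_1(u)$ to $\phi_3(u)$. The de la Harpe--Skandalis determinant / winding-number integral
$$
D_A(\{w(t)\})(\tau)=\frac{1}{2\pi i}\int_0^1\tau\!\left(\frac{dw(t)}{dt}w^*(t)\right)dt
$$
is additive under concatenation of paths, so
$$
D_A(\{w_{13}(t)\})=D_A(\{w_{12}(t)\})+D_A(\{w_{23}(t)\}),
$$
which gives $R_{\phi_1,\phi_3}(x)=R_{\phi_1,\phi_2}(x)+R_{\phi_2,\phi_3}(x)$ at the level of these particular path choices. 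To make this canonical I would instead phrase it via the mapping tori: there is a natural ``gluing'' map $M_{\phi_1,\phi_2}\oplus_{\phi_2}M_{\phi_2,\phi_3}\to M_{\phi_1,\phi_3}$ (concatenating the interval $[0,1]\sqcup[1,2]$ and reparametrizing), inducing a homomorphism on $K_1$ compatible with the maps $\pi_e$ and with $R$, and the additivity of $R$ under this gluing is exactly the integral identity above.

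The main obstacle will be bookkeeping the splitting maps: $R_{\phi_i,\phi_j}$ as a homomorphism $K_1(C)\to\mathrm{Aff}(T(A))$ is only pinned down after choosing $\theta_{ij}:\underline K(C)\to\underline K(M_{\phi_i,\phi_j})$ with $[\pi_0]\circ\theta_{ij}=[\phi_i]$, $[\pi_1]\circ\theta_{ij}=[\phi_j]$, and different choices differ by a map factoring through $\rho_A\circ(\text{something into }K_0(A))$, i.e. by an element of $\mathcal R_0$. I would fix $\theta_{12}$ and $\theta_{23}$, use the gluing map to transport their ``sum'' to a splitting $\theta_{13}'$ for $M_{\phi_1,\phi_3}$, check that $[\pi_0]\circ\theta_{13}'=[\phi_1]$ and $[\pi_1]\circ\theta_{13}'=[\phi_3]$ so it is a legitimate choice, and then observe $R_{\phi_1,\phi_3}\circ\theta_{13}'|_{K_1(C)}=R_{\phi_1,\phi_2}\circ\theta_{12}|_{K_1(C)}+R_{\phi_2,\phi_3}\circ\theta_{23}|_{K_1(C)}$ by the concatenation computation. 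Passing to the quotient by $\mathcal R_0$ kills the dependence on the particular splittings and yields \eqref{Mul-2}. This is essentially the argument of the corresponding lemma in \cite{Lnclasn} / \cite{Linajm}, adapted to the present setting, and no deep input beyond the additivity of the Skandalis--de la Harpe determinant under concatenation is needed.
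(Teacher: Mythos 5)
Your proposal is correct and is essentially the paper's own route: the paper proves Theorem \ref{T96} simply by invoking Theorem 9.6 of \cite{Lnclasn}, and that argument is precisely the concatenation of mapping-torus paths, the additivity of the de la Harpe--Skandalis integral $D_A$ under concatenation, and the bookkeeping of splitting maps modulo ${\cal R}_0$ that you describe. Two small refinements: the parenthetical claim that $\phi_1(u)\phi_2(u)^*$ lies in $CU(M_m(A))$ does not follow from the hypotheses here (that would require $\phi_1^{\ddag}=\phi_2^{\ddag}$); what you need, and have, is only that $[\phi_1(u)]=[\phi_2(u)]$ in $K_1(A)$, or better, take unitaries over $M_{\phi_1,\phi_2}$ and $M_{\phi_2,\phi_3}$ representing $\theta_{12}(x)$ and $\theta_{23}(x)$. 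Also, the step you rightly flag as the main obstacle is essential and not merely cosmetic: the pointwise concatenation identity alone only shows the difference of the three rotation homomorphisms takes values in $\rho_A(K_0(A))$, which need not produce an element of ${\cal R}_0$ (one must lift to a homomorphism into $K_0(A)$); this is exactly why one must check that $x\mapsto(\theta_{12}(x),\theta_{23}(x))$ defines a homomorphism into $K_1$ of the glued algebra $M_{\phi_1,\phi_2}\oplus_C M_{\phi_2,\phi_3}$ (e.g.\ via its Mayer--Vietoris sequence), so that $\theta_{13}'$ is a genuine splitting of $(\pi_e)_{*1}$ and the identity holds on the nose before passing to the quotient by ${\cal R}_0$.
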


\begin{proof}
The proof is exactly the same as that of Theorem 9.6 of \cite{Lnclasn}.
\end{proof}

\begin{lem}\label{L97}
Let $A$ {{and}} $B$ be two unital separable amenable \CA s.
Suppose that $\phi_1, \phi_2: A\to B$ are  two
unital monomorphisms such that
$$
[\phi_1]=[\phi_2]\,\,\,{\textrm in}\,\,\, KK(A,B)\tand
(\phi_1)_T=(\phi_2)_T.
$$
Suppose that $(\phi_2)_T: T(B)\to T(A)$ is an affine
homeomorphism. Suppose also that there is $\af\in Aut(B)$ such that
$$[\af]=[{\rm id}_B]\,\,\,{\textrm in}\,\,\, KK(B,B)\tand \af_T={\rm id}_T.
$$
Then
\beq\label{Group1-0}
\overline{R}_{\phi_1,\af\circ \phi_2} =\overline{R}_{{\rm
id}_B,\af}\circ (\phi_2)_{*1}+\overline{R}_{\phi_1,\phi_2}
\eneq
in ${\rm Hom}(K_1(A),
\Aff(T(B)))/{\cal R}_0.$

\end{lem}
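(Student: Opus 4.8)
\textbf{Proof plan for Lemma \ref{L97}.}
The plan is to reduce the additivity of rotation maps to the composition formula \eqref{Mul-2} of Theorem \ref{T96}, applied to a well-chosen triple of monomorphisms, together with a computation identifying $\overline{R}_{\phi_2,\,\af\circ\phi_2}$ with $\overline{R}_{\mathrm{id}_B,\af}\circ(\phi_2)_{*1}$. First I would record that the hypotheses $[\af]=[\mathrm{id}_B]$ in $KK(B,B)$, $\af_T=\mathrm{id}_T$, $[\phi_1]=[\phi_2]$ in $KK(A,B)$, and $(\phi_1)_T=(\phi_2)_T$ guarantee that all of $\phi_1$, $\phi_2$, and $\af\circ\phi_2$ induce the same class in $KK(A,B)$ (by functoriality of the Kasparov product) and the same map $T(B)\to T(A)$ (since $(\af\circ\phi_2)_T=(\phi_2)_T\circ\af_T=(\phi_2)_T$). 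Hence the three monomorphisms $\phi_1,\phi_2,\af\circ\phi_2: A\to B$ satisfy the hypothesis \eqref{Mul-1} of Theorem \ref{T96}, so
\[
\overline{R}_{\phi_1,\,\af\circ\phi_2}=\overline{R}_{\phi_1,\phi_2}+\overline{R}_{\phi_2,\,\af\circ\phi_2}
\]
in $\mathrm{Hom}(K_1(A),\Aff(T(B)))/{\cal R}_0$. It remains to show $\overline{R}_{\phi_2,\,\af\circ\phi_2}=\overline{R}_{\mathrm{id}_B,\af}\circ(\phi_2)_{*1}$.

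The second step is the naturality statement: if $\psi_1,\psi_2: B\to B$ are unital monomorphisms with $[\psi_1]=[\psi_2]$ in $KK(B,B)$ and $(\psi_1)_T=(\psi_2)_T$, and $\phi: A\to B$ is any unital monomorphism, then $\overline{R}_{\psi_1\circ\phi,\,\psi_2\circ\phi}=\overline{R}_{\psi_1,\psi_2}\circ\phi_{*1}$ in $\mathrm{Hom}(K_1(A),\Aff(T(B)))/{\cal R}_0$. I would prove this by working at the level of mapping tori: there is a natural $*$-homomorphism $M_{\psi_1,\psi_2}\to M_{\psi_1\circ\phi,\psi_2\circ\phi}$ obtained by precomposing path data with $\phi$ (more precisely, sending $(f,b)\in M_{\psi_1,\psi_2}$ with $b\in B$ to the restriction along $\phi$), which is compatible with the two evaluation maps $\pi_e$ and hence carries a splitting $\theta$ for $M_{\psi_1,\psi_2}$ to a splitting for $M_{\psi_1\circ\phi,\psi_2\circ\phi}$. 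Applying $K_1$ and chasing the definition of $R_{\psi_1,\psi_2}$ via the de la Harpe--Skandalis determinant $D_B$ (Definition \ref{DRphipsi}): for $z\in K_1(A)$ represented by a unitary $u\in M_m(A)$, a piecewise-smooth path in $M_m(B)$ from $\psi_1(\phi(u))$ to $\psi_2(\phi(u))$ is the $\phi$-image of such a path from $\psi_1(v)$ to $\psi_2(v)$ where $v=\phi(u)$ represents $\phi_{*1}(z)$; the integral formula for $D_B$ then gives $R_{\psi_1\circ\phi,\psi_2\circ\phi}(\theta(z))=R_{\psi_1,\psi_2}(\theta'(\phi_{*1}(z)))$, and passing to the quotient by ${\cal R}_0$ (which absorbs the ambiguity in the choice of splitting) yields the claimed identity. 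Specializing to $\psi_1=\mathrm{id}_B$, $\psi_2=\af$, $\phi=\phi_2$ gives exactly $\overline{R}_{\phi_2,\,\af\circ\phi_2}=\overline{R}_{\mathrm{id}_B,\af}\circ(\phi_2)_{*1}$, and combining with the first step proves \eqref{Group1-0}.

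The main obstacle I anticipate is the naturality/functoriality of the rotation map under precomposition, i.e. making the second step fully rigorous at the level of the mapping torus and its $\underline{K}$-theory splitting. One has to check that the induced map on mapping tori is well defined (using that $\phi$ is a unital monomorphism so endpoint conditions are respected), that it intertwines the evaluation exact sequences, and — crucially — that the choice of splitting $\theta$ on $M_{\mathrm{id}_B,\af}$ can be transported so that the two sides of the formula are compared using compatible splittings; the quotient by ${\cal R}_0$ is what makes the statement independent of these choices, so care is needed to confirm that the discrepancy between transported and arbitrary splittings always lies in ${\cal R}_0$ (this uses that such a discrepancy factors through $\rho_B$, exactly as in Definition \ref{DRphipsi}). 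The hypothesis that $(\phi_2)_T$ is an affine homeomorphism is not needed for the identity itself but is presumably retained for the intended application; I would not use it in the proof. Everything else is bookkeeping with the integral formula for $D_B$, parallel to the proof of Theorem 9.6 of \cite{Lnclasn} and Theorem \ref{T96}.
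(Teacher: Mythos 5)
Your proposal is correct and follows essentially the same route as the paper: the paper's proof is exactly the one-line computation $\overline{R}_{\phi_1,\af\circ\phi_2}=\overline{R}_{\phi_1,\phi_2}+\overline{R}_{\phi_2,\af\circ\phi_2}=\overline{R}_{\phi_1,\phi_2}+\overline{R}_{{\rm id}_B,\af}\circ(\phi_2)_{*1}$ via Theorem \ref{T96}, with the second equality (the naturality of the rotation map under precomposition, which you spell out via the mapping-torus map and the determinant formula) left implicit. Your observation that the affine-homeomorphism hypothesis on $(\phi_2)_T$ is not needed for the identity is also consistent with the paper's argument.
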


\begin{proof}
Using \ref{T96}, we compute that
\beq\nonumber
\overline{R}_{\phi_1, \af\circ \phi_2}&=&\overline{R}_{\phi_1, \phi_2}+\overline{R}_{\phi_2,\af\circ \phi_2}=
 \overline{R}_{\phi_1, \phi_2}+\overline{R}_{{\rm id}_B, \af}\circ (\phi_2)_{*1}.
\eneq
\end{proof}

\begin{thm}\label{MT2}
Let $B$
be a unital  separable simple {{amenable}} \CA\, in ${\cal B}_0$  {{satisfying the UCT,}}
let $C=B\otimes U_1,$ where $U_1$ is a UHF-algebra
of infinite type{,} { let $A_1$ be a unital separable amenable simple \CA\, in ${\cal B}_0$, and let $A=A_1\otimes U_2,$
where $U_2$ is another UHF-algebra of infinite type. }
 Then the map $\boldsymbol{\mathfrak{K}}:
\mathrm{Mon}_{asu}^e(C,A)\to {KKUT}(C,A)^{++}$ is surjective.
Moreover, for each $(\kappa, \af, \gamma)\in {KKUT}(C,A)^{++}$,
there exists a bijection
$$
\eta: \langle \kappa,\af,\gamma \rangle \to \mathrm{Hom}({K}_1(C),
{\overline{\rho_A(K_0(A))}})/{\cal R}_0.
$$
\end{thm}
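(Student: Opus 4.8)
The statement asserts that $\boldsymbol{\mathfrak K}$ is surjective and that each fiber $\langle\kappa,\af,\gamma\rangle$ is parametrized bijectively by $\mathrm{Hom}(K_1(C),\overline{\rho_A(K_0(A))})/{\mathcal R}_0$. I would prove surjectivity first: given $(\kappa,\af,\gamma)\in KKUT(C,A)^{++}$, the compatibility hypothesis means $\gamma,\af,\kappa$ form a compatible triple, so Corollary \ref{L86} (applied with $B$ playing the role of its ``$B$'' and $A$ its ``$A$''; note $C=B\otimes U_1$ and $A=A_1\otimes U_2$ have exactly the required form) produces a unital monomorphism $h\colon C\to A$ with $[h]=\kappa$, $h_T=\gamma$, and $h^{\ddag}=\af$. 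Hence $\boldsymbol{\mathfrak K}(h)=(\kappa,\af,\gamma)$, so the fiber is nonempty, and surjectivity follows.

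\textbf{Constructing the map $\eta$.} Fix a base point $\phi_0$ in $\langle\kappa,\af,\gamma\rangle$. For any other $\phi$ in this fiber we have $[\phi]=[\phi_0]$ in $KK(C,A)$, $\phi_T=(\phi_0)_T$, and $\phi^{\ddag}=\phi_0^{\ddag}$; by Lemma \ref{L92} (stable rank one for $A$ holds since $A\in{\mathcal B}_0$, see Theorem 9.7 of \cite{GLN-I}) the rotation map $R_{\phi_0,\phi}$ lands in $\mathrm{Hom}(K_1(C),\overline{\rho_A(K_0(A))})$. I would then define $\eta$ by sending the asymptotic-unitary-equivalence class of $\phi$ to $\overline{R_{\phi_0,\phi}}\in\mathrm{Hom}(K_1(C),\overline{\rho_A(K_0(A))})/{\mathcal R}_0$. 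That this is well defined on asymptotic equivalence classes is the standard fact that $\overline{R}$ is an invariant of asymptotic unitary equivalence (as used in the ``only if'' direction of Theorem \ref{Tm72}, from 4.3 of \cite{Lnclasn}). Injectivity of $\eta$ is exactly Theorem \ref{Tm72}: if $\overline{R_{\phi_0,\phi}}=\overline{R_{\phi_0,\psi}}$ then, using additivity of rotation maps (Theorem \ref{T96}), $\overline{R_{\phi,\psi}}=0$, and since $[\phi]=[\psi]$, $\phi^{\ddag}=\psi^{\ddag}$, $\phi_T=\psi_T$, Theorem \ref{Tm72} gives $\phi\sim_{asu}\psi$.

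\textbf{Surjectivity of $\eta$.} Given $\lambda\in\mathrm{Hom}(K_1(C),\overline{\rho_A(K_0(A))})$ I must produce $\phi$ in the fiber with $\overline{R_{\phi_0,\phi}}=\lambda$. The device is Theorem \ref{T94}: since $\phi_0$ is a unital monomorphism $C\to A$ with both algebras of the right tensor-product form and $C$ satisfying the UCT, I would apply \ref{T94} with the embedding there replaced by $\phi_0$ to obtain $\psi\in\overline{\mathrm{Inn}}(C,A)$ with $[\psi]=[\phi_0]$ in $KK(C,A)$ and rotation map $R_{\phi_0,\psi}(a,b)=\rho_A(a)+\lambda(b)$ under an identification $K_1(M_{\phi_0,\psi})\cong K_0(A)\oplus K_1(C)$; in particular $\overline{R_{\phi_0,\psi}}=\lambda$. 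Since $\psi$ is a limit of inner automorphisms composed with $\phi_0$, it automatically has $\psi_T=(\phi_0)_T=\gamma$ and $\psi^{\ddag}=\phi_0^{\ddag}=\af$ and $[\psi]=\kappa$, so $\psi$ lies in $\langle\kappa,\af,\gamma\rangle$ and $\eta(\psi)=\lambda$. The one point requiring care, and the main obstacle, is verifying that Theorem \ref{T94} genuinely applies with $\phi_0$ (rather than a genuine inclusion) as the map $\imath$: one should check that the only features of $\imath$ used in \ref{T94} (and in the properties (B1), (B2) invoked there) are that it is a unital monomorphism of the given class of algebras, which is the case, and that the parametrization of $K_1(M_{\phi_0,\psi})$ is compatible with the choice of splitting $\theta$ used to define $\overline{R}$. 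I would also double-check that $\overline{R}_{\phi_0,\phi_0}=0$, so that $\eta(\phi_0)=0$, giving a normalization consistent with the ${\mathcal R}_0$-quotient. Assembling these, $\eta$ is a well-defined bijection, completing the proof.
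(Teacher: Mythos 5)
Your proposal is correct and follows essentially the same route as the paper: surjectivity of $\boldsymbol{\mathfrak{K}}$ from Corollary \ref{L86}, the map $\eta$ defined by $\phi\mapsto \overline{R}_{\phi_0,\phi}$ with Lemma \ref{L92} guaranteeing it lands in $\mathrm{Hom}(K_1(C),\overline{\rho_A(K_0(A))})/{\cal R}_0$, surjectivity of $\eta$ via Theorem \ref{T94}, and injectivity via Theorem \ref{T96} together with Theorem \ref{Tm72}. The one point you flag as the main obstacle---applying Theorem \ref{T94} with $\phi_0$ in place of a genuine inclusion---is handled in the paper exactly along the lines you indicate: one applies \ref{T94} to the unital subalgebra $\phi_0(C)\subset A$ (which is isomorphic to $C$, hence of the required form) with its honest inclusion map, obtains $\psi\in\overline{{\rm Inn}}(\phi_0(C),A)$ with the prescribed rotation data, and then takes $\beta=\psi\circ\phi_0$, which lies in the fiber and satisfies $\overline{R}_{\phi_0,\beta}=\lambda$.
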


\begin{proof}
It follows from Lemma \ref{L86} {{(also Remark \ref{Remark202011-1})}} that $\boldsymbol{\mathfrak{K}}$ is
surjective.

Fix a triple $(\kappa,\af, \gamma)\in {KKT}(C,A)^{++}$ and choose
a unital monomorphism $\phi: C\to A$ such that $[\phi]=\kappa$,
$\phi^{\ddag}=\af$, and  $\phi_\mathrm{T}=\gamma.$
If $\phi_1: C\to A$ is another unital monomorphism such that
$\boldsymbol{\mathfrak{K}}(\phi_1)=\boldsymbol{\mathfrak{K}}({{\phi}}),$ then by Lemma \ref{L92},
\beq\label{T98-1}
{\overline{R}}_{\phi, \phi_1}\in {\rm Hom}(K_1(C), \overline{\rho_A(K_0(A))})/{\cal R}_0.
\eneq

Let $\lambda\in \mathrm{Hom}({K}_1(C), \overline{\rho_A(K_0(A))})$
be a \hm.  It follows from Theorem \ref{T94} that there is a unital monomorphism
$\psi\in \overline{{{\rm Inn}}}(\phi(C), A)$ with $[\psi\circ \phi]=[\phi]$ in $KK(C,A)$ such that there exists a homomorphism
$\theta: {K}_1(C)\to K_1(M_{\phi, \psi\circ \phi})$ with
$(\pi_0)_{*1}\circ \theta={\rm{id}}_{{K}_1(C)}$ for which
$R_{\phi, \psi\circ \phi}\circ \theta=\lambda.$ Let
$\beta=\psi\circ \phi.$ Then $R_{\phi, \beta}\circ
\theta=\lambda.$ Note also that, since $\psi\in
{\overline{\rm{Inn}}}(\phi(C), A),$ $\beta^{\ddag}=\phi^{\ddag}$
and $\beta_\mathrm{T}=\phi_\mathrm{T}.$ In particular, $\boldsymbol{\mathfrak{K}}(\beta)={\boldsymbol{\mathfrak{K}}}(\phi).$

Thus{{, for each unital monomorphism $\phi$,}} we obtain a well-defined  and surjective map $$\eta_{{\phi}}: \langle
[\phi], \phi^{\ddag}, \phi_T\rangle \to \mathrm{Hom}({K}_1(A),
\overline{\rho_A(K_0(A))})/{\cal R}_0.$$
To see that $\eta_\phi$  is injective,  consider two monomorphisms
%
$\phi_1, \phi_2: C\to A$
in $ \langle
[\phi], \phi^{\ddag},\phi_T\rangle$  such that
$$
\overline{R}_{\phi, \phi_1}=\overline{R}_{\phi, \phi_2}.
$$
Then, by Theorem \ref{T96},
\beq
\overline{R}_{\phi_1,\phi_2}=\overline{R}_{\phi_1,\phi}+\overline{R}_{\phi,
\phi_2}
=-\overline{R}_{\phi, \phi_1}+\overline{R}_{\phi, \phi_2}=0.
\eneq
It follows from Theorem \ref{Tm72} that $\phi_1$ and $\phi_2$ are
asymptotically unitarily equivalent. The map $\eta_{{\phi}}$ is the desired bijection $\eta$ as $\langle
[\phi], \phi^{\ddag}, \phi_T\rangle=\langle \kappa,\af,\gamma \rangle$.
\end{proof}

\begin{df}
{\rm  Denote by $KKUT_e^{-1}(A,A)^{++}$ the subset of those
elements ${{(}} \kappa, \af, \gamma{{)} }\in KKUT_e(A,A)^{++}$
for which $\kappa|_{K_i(A)}$ is an isomorphism $(i=0,1$), $\af$ is an
isomorphism, and $\gamma$ is {an}  affine homeomorphism.  {{Recall from the proof of Theorem \ref{MT2} that
$\eta_{{\rm id}_A}: {\langle [{\rm id}_A], {\rm
id}_A^{\ddag}, ({\rm id}_A)_T\rangle}
\to$ \\ $\mathrm{Hom}({K}_1(A),
\overline{\rho_A(K_0(A))})/{\cal R}_0$ is a bijection.}}

Denote by $\langle {\rm id}_A\rangle $ the class of those
automorphisms $\psi$ which are asymptotically unitarily equivalent
to ${\rm id}_A${{---this subset of ${\rm Aut}(A)$ gives rise to a single element in $\mathrm{Mon}_{asu}^e(A,A)$ which should not be confused with the subset  ${\langle [{\rm id}_A], {\rm
id}_A^{\ddag}, ({\rm id}_A)_T\rangle}\subset\mathrm{Mon}_{asu}^e(A,A)$.}}
Note that, if $\psi\in \langle {\rm id}_A\rangle
,$ then $\psi$ is {\it asymptotically inner}, i.e., there exists a
continuous path of unitaries $\{u(t): t\in [0,\infty)\}\subset A$
such that
$$
\psi(a)=\lim_{t\to\infty}u(t)^*au(t)\tforal a\in A.
$$

{{ Note that $\langle {\rm id}_A\rangle $ is a normal subgroup of ${\rm Aut}(A)$.}}

}
\end{df}

\begin{cor}\label{C910}
Let $A_1\in {\cal B}_0$
be a unital simple {{amenable}}  \CA\,  {{satisfying the UCT}} and let
$A=A_1\otimes U$ for some  UHF-algebra $U$ of infinite type. Then one
has the following short exact sequence:
\beq\label{Inn1}
\hspace{-0.4in}0 \to {\rm Hom}(K_1(A), \overline{\rho_A(K_0(A))})/{\cal
R}_0\stackrel{\eta_{{\rm id}_A}^{-1}}{\to}{\rm Aut}(A)/\langle
{\rm id}_A\rangle \stackrel{{\boldsymbol{\mathfrak{K}}}}{\to}
KKUT_e^{-1}(A,A)^{++}\to 0.
\eneq

In particular, if $\phi, \psi\in \mathrm{Aut}(A)$ are such that
$$
{\boldsymbol{\mathfrak{K}}}(\phi)={\boldsymbol{\mathfrak{K}}}(\psi)={\boldsymbol{\mathfrak{K}}}({\rm
id}_A),
$$
then
$$
\eta_{{\rm id}_A}(\phi\circ \psi)=\eta_{{\rm
id}_A}(\phi)+\eta_{{\rm id}_A}(\psi).
$$

\end{cor}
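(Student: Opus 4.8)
The plan is to deduce Corollary \ref{C910} directly from Theorem \ref{MT2} and the accompanying bijection $\eta=\eta_{\mathrm{id}_A}$, together with the additivity of the rotation maps established in Theorem \ref{T96}. First I would observe that since $A_1\in{\cal B}_0$ is unital separable simple amenable satisfying the UCT and $A=A_1\otimes U$, Theorem \ref{MT2} applies with $C=A$ (noting $A=A_1\otimes U$ can itself be written as $B\otimes U_1$ with $B=A_1\otimes U'$, $U=U'\otimes U_1$, and every algebra in ${\cal B}_0$ is ${\cal Z}$-stable and absorbs UHF-algebras of infinite type). Thus ${\boldsymbol{\mathfrak{K}}}:\mathrm{Mon}_{asu}^e(A,A)\to KKUT(A,A)^{++}$ is surjective, and restricting to automorphisms, ${\boldsymbol{\mathfrak{K}}}$ maps $\mathrm{Aut}(A)/\langle\mathrm{id}_A\rangle$ onto $KKUT_e^{-1}(A,A)^{++}$: surjectivity requires that a triple $(\kappa,\af,\gamma)$ with $\kappa$ a $K$-theory isomorphism, $\af$ an isomorphism, and $\gamma$ an affine homeomorphism be realized by an actual automorphism, which follows because the monomorphism $\phi$ provided by Theorem \ref{MT2} is then invertible up to approximate/asymptotic unitary equivalence via the intertwining argument (one constructs $\phi$ and a candidate inverse $\psi$ with ${\boldsymbol{\mathfrak{K}}}(\psi\circ\phi)={\boldsymbol{\mathfrak{K}}}(\mathrm{id}_A)$ and applies Theorem \ref{Tm72}). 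Injectivity on the kernel is the content of the bijection $\eta$.

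The key point for exactness in the middle and on the left is this: the kernel of ${\boldsymbol{\mathfrak{K}}}$ on $\mathrm{Aut}(A)/\langle\mathrm{id}_A\rangle$ is exactly the set of classes $[\phi]$ with ${\boldsymbol{\mathfrak{K}}}(\phi)={\boldsymbol{\mathfrak{K}}}(\mathrm{id}_A)$, i.e. $\langle[\mathrm{id}_A],\mathrm{id}_A^{\ddag},(\mathrm{id}_A)_T\rangle$, and by Theorem \ref{MT2} the map $\eta_{\mathrm{id}_A}:\langle[\mathrm{id}_A],\mathrm{id}_A^{\ddag},(\mathrm{id}_A)_T\rangle\to\mathrm{Hom}(K_1(A),\overline{\rho_A(K_0(A))})/{\cal R}_0$ given by $\phi\mapsto\overline{R}_{\mathrm{id}_A,\phi}$ is a bijection; hence $\eta_{\mathrm{id}_A}^{-1}$ is a well-defined injection of $\mathrm{Hom}(K_1(A),\overline{\rho_A(K_0(A))})/{\cal R}_0$ into $\mathrm{Aut}(A)/\langle\mathrm{id}_A\rangle$ with image precisely $\ker{\boldsymbol{\mathfrak{K}}}$. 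This gives the short exact sequence \eqref{Inn1} as sets with a distinguished basepoint; to make it an exact sequence of groups one must check that $\eta_{\mathrm{id}_A}^{-1}$ is a group homomorphism, which is exactly the displayed identity $\eta_{\mathrm{id}_A}(\phi\circ\psi)=\eta_{\mathrm{id}_A}(\phi)+\eta_{\mathrm{id}_A}(\psi)$ for $\phi,\psi$ in the kernel.

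To prove that last identity I would apply Lemma \ref{L97} with $B=A$, $\phi_1=\phi_2=\mathrm{id}_A$ and $\af=\psi$ (an automorphism with $[\psi]=[\mathrm{id}_A]$ in $KK(A,A)$ and $\psi_T=\mathrm{id}_T$, since ${\boldsymbol{\mathfrak{K}}}(\psi)={\boldsymbol{\mathfrak{K}}}(\mathrm{id}_A)$, and $(\mathrm{id}_A)_T$ is trivially an affine homeomorphism). Lemma \ref{L97} then yields $\overline{R}_{\mathrm{id}_A,\psi\circ\mathrm{id}_A}=\overline{R}_{\mathrm{id}_A,\psi}\circ(\mathrm{id}_A)_{*1}+\overline{R}_{\mathrm{id}_A,\mathrm{id}_A}$; more to the point, applying it with $\phi_1=\mathrm{id}_A$, $\phi_2=\psi$, $\af=\phi$ gives $\overline{R}_{\mathrm{id}_A,\phi\circ\psi}=\overline{R}_{\mathrm{id}_A,\phi}\circ\psi_{*1}+\overline{R}_{\mathrm{id}_A,\psi}$, and since $\psi_{*1}=\mathrm{id}_{K_1(A)}$ (because $[\psi]=[\mathrm{id}_A]$ in $KK$), this reads $\overline{R}_{\mathrm{id}_A,\phi\circ\psi}=\overline{R}_{\mathrm{id}_A,\phi}+\overline{R}_{\mathrm{id}_A,\psi}$, i.e. $\eta_{\mathrm{id}_A}(\phi\circ\psi)=\eta_{\mathrm{id}_A}(\phi)+\eta_{\mathrm{id}_A}(\psi)$, as desired. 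Finally, that $\langle\mathrm{id}_A\rangle$ is a normal subgroup of $\mathrm{Aut}(A)$ (so the quotient group makes sense) is noted in the definition preceding the corollary and is straightforward: conjugating an asymptotically inner automorphism by any automorphism is again asymptotically inner.

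The main obstacle, I expect, is not any single deep new estimate — all the heavy lifting (the uniqueness Theorem \ref{Tm72}, the existence Theorem \ref{MT2}, the rotation-map algebra of Theorems \ref{T96} and \ref{L97}) is already in place — but rather the bookkeeping needed to pass cleanly from the monomorphism statement of Theorem \ref{MT2} to the automorphism statement: one must verify that surjectivity of ${\boldsymbol{\mathfrak{K}}}$ onto $KKUT_e^{-1}(A,A)^{++}$ really produces automorphisms and not merely unital endomorphisms, which requires running the approximate intertwining (Elliott's two-sided intertwining) using the realization of both $(\kappa,\af,\gamma)$ and its inverse triple together with the uniqueness theorem, and then checking that the resulting automorphism's class in $\mathrm{Aut}(A)/\langle\mathrm{id}_A\rangle$ is well defined independently of the choices. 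Care is also needed to confirm that the $\eta$ produced in Theorem \ref{MT2} for the specific triple ${\boldsymbol{\mathfrak{K}}}(\mathrm{id}_A)$ agrees, under the identification of automorphism classes with monomorphism classes, with the map $\phi\mapsto\overline{R}_{\mathrm{id}_A,\phi}$ used in the homomorphism computation above — but this is immediate from the construction of $\eta_\phi$ in the proof of Theorem \ref{MT2}.
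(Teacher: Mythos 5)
Your skeleton is the paper's (existence via Theorem \ref{MT2}/Corollary \ref{L86}, uniqueness plus Elliott intertwining, Lemma \ref{L97} for the additivity identity -- your computation with $\phi_1=\mathrm{id}_A$, $\phi_2=\psi$, $\af=\phi$ and $\psi_{*1}=\mathrm{id}_{K_1(A)}$ is exactly the paper's), but the surjectivity step has a genuine gap. You propose to realize $(\kappa,\af,\gamma)$ and its inverse triple by monomorphisms $\phi,\psi$ and then ``apply Theorem \ref{Tm72}'' to $\psi\circ\phi$; Theorem \ref{Tm72} also requires $\overline{R}_{\mathrm{id}_A,\psi\circ\phi}=0$, which is not implied by $\boldsymbol{\mathfrak{K}}(\psi\circ\phi)=\boldsymbol{\mathfrak{K}}(\mathrm{id}_A)$ and is never verified. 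What is actually available is only the approximate uniqueness of Lemma \ref{MUN2}, and that is what the paper uses, followed by Elliott's approximate intertwining. But then the resulting automorphism agrees with the realizing monomorphism $h$ only in $KL(A,A)$, not necessarily in $KK(A,A)$; since $KKUT_e^{-1}(A,A)^{++}$ records the $KK$-class, surjectivity is not yet established. The discrepancy is a $\mathrm{Pext}$ element and must be removed: the paper composes with $\psi_0\in\overline{{\rm Inn}}(A,A)$ ``as in the proof of Corollary \ref{L86}'' (resting on Theorem 3.17 of \cite{L-N} and the properties (B1), (B2) obtained from Lemmas \ref{Vpair2} and \ref{Extbot2}). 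This correction is precisely the $KL$-to-$KK$ passage the $\overline{{\rm Inn}}$ machinery exists for, and it is missing from your sketch; calling it bookkeeping about well-definedness misidentifies where the difficulty lies.

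Exactness at the left end also needs an argument you do not give. The bijection $\eta_{\mathrm{id}_A}$ of Theorem \ref{MT2} is defined on asymptotic-unitary-equivalence classes of unital \emph{monomorphisms}, so to regard $\eta_{\mathrm{id}_A}^{-1}$ as an injection of ${\rm Hom}(K_1(A),\overline{\rho_A(K_0(A))})/{\cal R}_0$ into ${\rm Aut}(A)/\langle\mathrm{id}_A\rangle$ with image $\ker\boldsymbol{\mathfrak{K}}$, you must show that every class $\lambda$ is realized by an \emph{automorphism}; this cannot be repaired afterwards by intertwining, because approximate unitary equivalence does not preserve $\overline{R}_{\mathrm{id}_A,\cdot}$. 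The paper gets it from the proof of Theorem \ref{MT2} (via Theorem \ref{T94}): the realizing map $\psi_{00}$ lies in $\overline{{\rm Inn}}(A,A)$ and is an automorphism. Finally, a small slip: unital simple algebras in ${\cal B}_0$ are ${\cal Z}$-stable but need not absorb a UHF algebra; this is harmless, since Theorem \ref{MT2} applies directly with source $C=A_1\otimes U$ and target $A=A_1\otimes U$, with no splitting of $U$ required.
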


\begin{proof}
It follows from Lemma \ref{L86} {{(see also Remark \ref{Remark202011-1})}} that,  for any $\langle \kappa, \af,
\gamma\rangle,$ there is a unital monomorphism $h: A\to A$ such
that ${\boldsymbol{\mathfrak{K}}}(h)=\langle \kappa, \af,
\gamma\rangle.$ The fact that $\kappa\in KK_e^{-1}(A,A)^{++}$
implies that there is $\kappa_1\in KK_e^{-1}(A,A)^{++}$ such that
$$
\kappa\times \kappa_1=\kappa_1\times \kappa=[{\rm id}_A].
$$
Using Lemma \ref{L86}, choose $h_1: A\to A$ such that
$$
{\boldsymbol{\mathfrak{K}}}(h)=\langle \kappa_1, \af^{-1},
\gamma^{-1}\rangle.
$$

It follows from Lemma \ref{MUN2} that $h_1\circ h$
and $h\circ h_1$ are approximately unitarily equivalent. Applying
the standard approximate intertwining argument of G. A. Elliott (Theorem 2.1 of \cite{Ell-RR0}), one
obtains two isomorphisms $\phi$ and $\phi^{-1}$ such that there is
a sequence of unitaries $\{u_n\}$ in $A$ such that
$$
\phi(a)=\lim_{n\to\infty}{\rm Ad}\, u_{2n+1}\circ h(a)\andeqn
\phi^{-1}(a)=\lim_{n\to\infty}{\rm Ad}\, u_{2n}\circ h_1(a)
$$
for all $a\in A.$ Thus, $[\phi]=[h]$ in $KL(A,A)$ and
$\phi^{\ddag}=h^{\ddag}$ and $\phi_T=h_T.$ Then, as in the proof
of \ref{L86}, there is $\psi_0\in {\overline{{\rm Inn}}}(A,A)$ such that
$[\psi_0\circ \phi]=[{\rm id}_A]$ in $KK(A,A)$ as well as
$(\psi_0\circ \phi)^{\ddag}=h^{\ddag}$ and $(\psi_0\circ
\phi)_T=h_T.$  So we have $\psi_0\circ \phi\in \mathrm{Aut}(A,A)$ such that
${\boldsymbol{\mathfrak{K}}}(\psi_0\circ \phi)=\langle \kappa, \af,
\gamma\rangle.$ {This implies that ${\mathfrak{K}}$ is surjective.}

Now let $\lambda\in {\rm Hom}(K_1(C), \overline{\text{Aff}(T(A))})/{\cal
R}_0.$  The proof Theorem \ref{MT2} says that there is $\psi_{00} \in
\overline{{\rm Inn}}(A,A)$ (in place of $\psi$) such that
${\boldsymbol{\mathfrak{K}}}(\psi_{00}\circ {\rm
id}_A)={\boldsymbol{\mathfrak{K}}}({\rm id}_A)$ and
$$
\overline{R}_{{\rm id}_A, \psi_{00}}=\lambda.
$$
Note that $\psi_{00}$ is again an automorphism.
The last part of the lemma then follows from Lemma \ref{L97}.
\end{proof}


\begin{df}[Definition 10.2 of \cite{Linajm} and see also \cite{LnTAMS12}]\label{Dsu}
Let $A$ be a unital \CA\, and $B$ be another \CA. Recall
(\cite{LnTAMS12}) that
$$
H_1(K_0(A), K_1(B))=\{x\in K_1(B): \phi([1_A])=x\, \, \text{for\,some }\,\phi\in
{\rm Hom}(K_0(A), K_1(B))\}.
$$
\end{df}

\begin{prop}[Proposition 12.3 of \cite{Linajm})]\label{Sup}
Let $A$  be a unital separable \CA\, and let $B$ be a unital \CA.
Suppose that $\phi: A\to B$ is a unital \hm\, and $u\in U(B)$ is a
unitary. Suppose that there is a continuous path of unitaries
$\{u(t): t\in [0,\infty)\}\subset B$ such that
\beq\label{sup-1}
u(0)=1_B\tand \lim_{t\to\infty}{\rm Ad}\, u(t)\circ \phi(a)={\rm
Ad}\, u\circ \phi(a)
\eneq
for all $a\in A.$  Then
$$
[u]\in H_1(K_0(A), K_1(B)).
$$

\end{prop}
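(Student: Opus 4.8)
The statement asserts that if a unital monomorphism $\phi: A\to B$ is asymptotically conjugated to $\mathrm{Ad}\,u\circ\phi$ along a path starting at $1_B$, then the class $[u]$ lies in the subgroup $H_1(K_0(A),K_1(B))$. The plan is to produce, from the path $\{u(t)\}$, an element of $KK$ or at least a $K$-theory class whose image computes $[u]$ and which visibly factors through $K_0(A)$. The natural device is the mapping torus $M_{\phi,\phi}$ together with the rotation-type machinery set up in Definition~\ref{DRphipsi} and the homomorphism $\theta: \underline{K}(A)\to\underline{K}(M_{\phi,\phi})$: since $[\phi]=[\phi]$ trivially in $KK(A,B)$, one has the split exact sequence $0\to\underline{K}(SB)\to\underline{K}(M_{\phi,\phi})\to\underline{K}(A)\to 0$, and in particular $K_1(M_{\phi,\phi})\cong K_0(B)\oplus K_1(A)$ with $K_0(B)=K_1(SB)$.

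First I would form the loop of unitaries obtained by concatenating $\{u(t): t\in[0,\infty)\}$ with its ``limit'' $u$: the hypothesis $\lim_t \mathrm{Ad}\,u(t)\circ\phi = \mathrm{Ad}\,u\circ\phi$ says that $u^*u(t)$ asymptotically commutes with $\phi(A)$, so for $t$ large the pair $(\phi, u^*u(t))$ is an $\mathcal F$-$\epsilon$-multiplicative datum, and as in Remark~\ref{gamma} and Definition~\ref{DRphipsi} it determines a unitary in a matrix algebra over the mapping torus $M_{\mathrm{Ad}\,u\circ\phi,\phi}$, or after reindexing over $M_{\phi,\mathrm{Ad}\,u\circ\phi}$, whose class in $K_1$ projects under $(\pi_e)_{*1}$ to $[1_A]$ (because $u^*u(0)=u^*$ and $u^*u(\infty)\to 1$, the relevant endpoint data is governed by $u$ acting by conjugation, which is the identity on $K$-theory; the nontrivial content is carried by $u$ itself). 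Concretely, the class $[u]\in K_1(B)$ is recovered as the image under the splitting projection $K_1(M_{\phi,\mathrm{Ad}\,u\circ\phi})\to K_0(B)=K_1(SB)\to K_1(B)$ of the class of the path $\{u^*u(t)\}$ viewed as a loop (using $u(0)=1_B$ so the loop is based), while its image in $K_1(A)$ under $(\pi_e)_{*1}$ records how $[u]$ is pinned by $[1_A]$.

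Next I would extract the homomorphism $K_0(A)\to K_1(B)$. The point is that the asymptotic path $\{u(t)\}$ realizes $\mathrm{Ad}\,u\circ\phi$ as approximately unitarily equivalent to $\phi$ through an \emph{honest continuous path beginning at the identity}, so the associated element of $KK(A, SB)$ (equivalently, the ``$\mathrm{Bott}$'' or rotation class) is well-defined on all of $\underline{K}(A)$, not merely on a finite subset; this is precisely the situation of Proposition~\ref{Sup}'s hypothesis and mirrors the construction of $\gamma_{\phi_1,\phi_2,v}$ and $\Gamma(\mathrm{Bott}(\cdot,\cdot))$ in Remark~\ref{gamma}. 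Evaluating that homomorphism on $[1_A]\in K_0(A)$ and tracing through the identifications $K_1(SB)=K_0(B)$ and the splitting of the mapping-torus sequence yields exactly $[u]\in K_1(B)$. Thus $[u]=\psi([1_A])$ for $\psi:=(\text{this induced map})\in\mathrm{Hom}(K_0(A),K_1(B))$, which is the definition of membership in $H_1(K_0(A),K_1(B))$.

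The main obstacle I anticipate is bookkeeping rather than conceptual: one must verify that the class determined by the genuinely continuous (as opposed to merely asymptotically multiplicative) path $\{u(t)\}$ indeed extends to a homomorphism defined on all of $K_0(A)$ and that its value at $[1_A]$ equals $[u]$ on the nose (not just up to an element of $H_1$ of some subgroup), and that replacing the asymptotic conjugation by a large-$t$ truncation does not corrupt this identification. This is handled exactly as in 10.6 of \cite{Linajm} and the discussion around \eqref{LN-0926-1}--\eqref{913-33} in Remark~\ref{gamma}: one chooses cofinal finite subsets $\mathcal F_n$ and $\epsilon_n\to 0$, picks $t_n\to\infty$ with $\mathrm{Ad}\,u(t_n)\circ\phi\approx_{\epsilon_n}\mathrm{Ad}\,u\circ\phi$ on $\mathcal F_n$, checks the resulting $\mathrm{Hom}_\Lambda$-elements are compatible under the connecting maps, and passes to the limit; the continuity of the whole path from $1_B$ to (asymptotically) $u$ is what forces the limit element to lie in the image of the honest $KK$-group, hence to give a bona fide homomorphism out of $K_0(A)$. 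Since $A$ and $B$ here are only assumed unital separable amenable, no classification input is needed — this is a pure $KK$/mapping-torus argument, and I would cite Proposition~12.3 of \cite{Linajm} as the precise precedent and adapt its proof verbatim.
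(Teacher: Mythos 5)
The paper itself gives no proof of this statement: Proposition \ref{Sup} is simply quoted from Proposition 12.3 of \cite{Linajm}, so your closing move ("cite Proposition 12.3 of \cite{Linajm} and adapt its proof verbatim") coincides with what the paper does, but it is not a blind proof, and your reconstruction of the underlying argument, while strategically on target (manufacture a homomorphism $K_0(A)\to K_1(B)$ from the asymptotic data and evaluate it at $[1_A]$), has concrete gaps as written. First, a degree error: the object you propose to use, the class in $K_1(M_{\phi,\mathrm{Ad}\,u\circ\phi})$ of a "based loop" built from $u^*u(t)$, lands under the splitting in $K_1(SB)\cong K_0(B)$, and your step "$K_1(SB)=K_0(B)\to K_1(B)$" is not a map; a $K_1$-class of the mapping torus with trivial image in $K_1(A)$ can never produce $[u]\in K_1(B)$. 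The class $[u]$ must be caught in $K_0(SB)\cong K_1(B)$, i.e., via $\mathrm{bott}_0(\phi,\cdot)$ applied to $K_0(A)$ (equivalently the $K_0$-row of the mapping-torus sequence, or the $K_0(A)$-component of the $\gamma$-construction of Remark \ref{gamma} followed by the projection onto $\underline{K}(SB)$).

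Second, the loop does not exist: the hypothesis gives convergence of $\mathrm{Ad}\,u(t)\circ\phi$, not of $u(t)$, so one cannot "concatenate $\{u(t)\}$ with its limit $u$." The rigorous substitute is $w(t):=u(t)u^*$, which for any finite ${\cal F}\subset A$ and $\ep>0$ satisfies $\|[\phi(a),w(t)]\|<\ep$ for all $a\in{\cal F}$ and all $t\ge T({\cal F},\ep)$; and the true role of $u(0)=1_B$ — which in your sketch is used only to "base the loop" — is that $[u(t)]=0$ in $K_1(B)$, whence $\mathrm{bott}_0(\phi,w(t))([1_A])=[w(t)]=-[u]$. Without this computation your assertion that the evaluation at $[1_A]$ "yields exactly $[u]$" is never derived. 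Third, the heart of the argument — that the partially defined maps $\mathrm{bott}_0(\phi,w(t))|_{G({\cal P})}$ stabilize as $t\to\infty$ and patch to a genuine element of $\mathrm{Hom}(K_0(A),K_1(B))$ — is merely "checked" in passing; the reason is that for fixed finite data any two sufficiently large truncations $w(t),w(t')$ are joined by the path $w(s)$, $s\in[t,t']$, of unitaries almost commuting with $\phi({\cal F})$, so the partial Bott maps agree and are compatible over an exhausting family of finitely generated subgroups. With these three points supplied the mapping torus (and any honest $KK$-element) is unnecessary, and the proof closes with $[u]=(-\kappa)([1_A])\in H_1(K_0(A),K_1(B))$.
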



\begin{lem}\label{fix-unitary}
Let $C=C'\otimes U$ for some $C'=\varinjlim(C_n, \psi_n)$ and a UHF algebra $U$ of infinite type, where each $C_n$ is a direct sum of C*-algebras in $\mathcal C_0$ and $\mathbf H$. Assume that $\psi_n$ is unital and injective. Let $A\in\mathcal B_1$. Let $\phi_1, \phi_2: C\to A$ be two monomorphisms such that there is an increasing sequence of finite subsets $\mathcal F_n\subset C$ with dense union, an increasing sequence of finite subsets $\mathcal P_n\subset K_1(C)$ with union equal to $K_1(C)$, a sequence of positive numbers $(\delta_n)$ with $\sum\delta_n<1$ and a sequence of unitaries $\{u_n\}\subset A$ such that
$$\mathrm{Ad}u_n\circ\phi_1\approx_{\delta_n} \phi_2\quad\textrm{on}\ \mathcal F_n\tand\,\,
\rho_A(\mathrm{bott}_1(\phi_2, u_n^*u_{n+1}))=0\tforal  x\in\mathcal P_n.$$
Suppose that $H_1(K_0(C), K_1(A))=K_1(A).$
Then there exists a sequence of unitaries $v_n\in U_0(A)$ such that
\beq\label{strong-1}
\mathrm{Ad}v_n\circ\phi_1\approx_{\delta_n} \phi_2\quad\textrm{on}\ \mathcal F_n\tand\\
\rho_A(\mathrm{bott}_1(\phi_2, v_n^*v_{n+1}))=0,\quad x\in\mathcal P_n.
\eneq
\end{lem}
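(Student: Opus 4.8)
The goal is to replace the given sequence of unitaries $\{u_n\}\subset A$, which already implements an approximate intertwining of $\phi_1$ and $\phi_2$ with the vanishing $\mathrm{bott}_1$ condition, by a sequence $\{v_n\}$ lying in $U_0(A)$ while retaining both properties. The only obstruction to $u_n$ itself lying in $U_0(A)$ is its class $[u_n]\in K_1(A)$; so the plan is to correct each $u_n$ by a unitary $w_n$ with $[w_n]=-[u_n]$ chosen so that $w_n$ commutes with $\phi_2$ up to small error and contributes nothing to the relevant $\mathrm{bott}_1$ maps. The hypothesis $H_1(K_0(C),K_1(A))=K_1(A)$ is exactly what is needed to produce such $w_n$ as (near-)images of $\phi_2$ applied to unitaries in $C$: write $[u_n]=\mu_n([1_C])$ for some $\mu_n\in\mathrm{Hom}(K_0(C),K_1(A))$, but more usefully, since $C=C'\otimes U$ with $C'$ an inductive limit of the building blocks and $H_1$ is realized by unitaries, find a unitary $a_n\in U(M_k(C_{m(n)}))$ (for suitable $m(n)$, $k$) whose image under $\phi_2$ has $K_1$-class equal to $-[u_n]$.

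First I would set up the bookkeeping: pass to a subsequence so that $\mathcal F_n\subset \psi_{m(n),\infty}(C_{m(n)})$ and $\mathcal P_n$ is generated by classes of unitaries in $C_{m(n)}$, and recall (Theorem 9.7 of \cite{GLN-I}) that $C$, being in $\mathcal B_1$-type after tensoring, has stable rank one so $K_1(A)=U(A)/U_0(A)$ and likewise classes in $K_1$ are represented by honest unitaries. Next, using $H_1(K_0(C),K_1(A))=K_1(A)$, for each $n$ choose a projection-type element or, better, a unitary $b_n\in U(M_{k_n}(C))$ with $[\phi_2(b_n)]=-[u_n]$ in $K_1(A)$; since $C$ is an inductive limit we may assume $b_n$ lies in a finite stage and, enlarging $k_n$ and conjugating, that $b_n$ can be taken close to a unitary commuting with a large finite subset of $\phi_2(C)$. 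Then apply Theorem \ref{BB-exi+} (or Theorem \ref{Extbot3}) to the monomorphism $\phi_2$: given the data $(\ep_n,\sigma_n,\mathcal F_n,\mathcal P_n)$ and the homomorphism $\Gamma_n\colon \mathbb Z^{k}\to U_0(A)/CU(A)$ chosen to be trivial, obtain a unitary $w_n'\in A$ with $\|[\phi_2(f),w_n']\|$ small on $\mathcal F_n$, $\mathrm{Bott}(\phi_2,w_n')|_{\mathcal P_n}=0$, and with the de la Harpe--Skandalis determinant controlled; combining $w_n'$ with $\phi_2(b_n)$ (which carries the correct $K_1$-class but a priori the wrong Bott data, to be cancelled by $w_n'$) produces a unitary $w_n$ with $[w_n]=-[u_n]$, $\mathrm{Bott}(\phi_2,w_n)|_{\mathcal P_n}=0$, and $\|[\phi_2(f),w_n]\|<\delta_n/2$ on $\mathcal F_n$.

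Having constructed $w_n$, set $v_n=u_nw_n$. Then $[v_n]=[u_n]-[u_n]=0$, so $v_n\in U_0(A)$ by stable rank one. Since $w_n$ nearly commutes with $\phi_2$ on $\mathcal F_n$ and $\mathrm{Ad}\,u_n\circ\phi_1\approx_{\delta_n}\phi_2$ on $\mathcal F_n$, one checks $\mathrm{Ad}\,v_n\circ\phi_1\approx_{\delta_n'}\phi_2$ on $\mathcal F_n$ for a slightly larger $\delta_n'$ (re-index at the start so the sum stays below $1$). For the Bott condition, use the additivity/cocycle identity for $\mathrm{bott}_1$ (as in 2.10--2.15 of \cite{Lnclasn}): $\mathrm{bott}_1(\phi_2,v_n^*v_{n+1})$ decomposes, up to the chosen small errors, into a sum of $\mathrm{bott}_1(\phi_2,\cdot)$ terms built from $u_n^*u_{n+1}$, $w_n^*$, $w_{n+1}$, each of which either vanishes on $\mathcal P_n$ by hypothesis or was arranged to vanish in the construction of $w_n$; composing with $\rho_A$ kills any remaining torsion ambiguity, giving $\rho_A(\mathrm{bott}_1(\phi_2,v_n^*v_{n+1}))=0$ on $\mathcal P_n$. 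The main obstacle is the second step: producing the correcting unitaries $w_n$ that simultaneously fix the $K_1$-class to be $-[u_n]$, kill $\mathrm{Bott}(\phi_2,\cdot)|_{\mathcal P_n}$, and almost commute with $\phi_2$ on $\mathcal F_n$ --- this is exactly where $H_1(K_0(C),K_1(A))=K_1(A)$ is indispensable (it guarantees the class $-[u_n]$ is realized by an element that is, up to approximation, central relative to $\phi_2(C)$, cf.\ Proposition \ref{Sup}) and where Theorem \ref{BB-exi+}/\ref{Extbot3} must be invoked carefully to control all invariants at once.
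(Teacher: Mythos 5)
Your overall strategy---correct each $u_n$ by a unitary $w_n$ with $[w_n]=-[u_n]$ which almost commutes with $\phi_2({\cal F}_n)$ and has trivial ${\rm bott}_1$ on ${\cal P}_n$, then set $v_n=u_nw_n$---is exactly the strategy of the paper, and the final bookkeeping ($[v_n]=0$ together with stable rank one of $A$ gives $v_n\in U_0(A)$; additivity of ${\rm bott}_1$ and the hypothesis give the second condition) is fine. The gap is in how you propose to produce $w_n$. You want to realize the class $-[u_n]$ as $[\phi_2(b_n)]$ for a unitary $b_n\in U(M_{k_n}(C))$; but that would require $-[u_n]$ to lie in the image of $(\phi_2)_{*1},$ which is an entirely different condition from the hypothesis $H_1(K_0(C),K_1(A))=K_1(A)$ (the latter concerns homomorphisms $K_0(C)\to K_1(A)$ evaluated at $[1_C],$ not the range of $(\phi_2)_{*1}$), and no such $b_n$ need exist. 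Even granting its existence, $\phi_2(b_n)$ lives in $M_{k_n}(A),$ need not nearly commute with $\phi_2({\cal F}_n),$ and its Bott data relative to $\phi_2$ is uncontrolled; your appeal to Proposition \ref{Sup} goes in the wrong direction (it extracts membership in $H_1$ from an asymptotic path, it does not realize a class in $H_1$ by an almost central unitary). Finally, the way you invoke Theorem \ref{BB-exi+} / Lemma \ref{Extbot3} (with $\Gamma_n$ trivial and ${\rm Bott}(\phi_2,w_n')|_{{\cal P}_n}=0$) yields a unitary of trivial $K_1$-class, so in your scheme the entire class correction rests on the nonexistent $b_n$.

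The way to close the gap---and this is the paper's argument---is to use $H_1$ at the level of $KL$ and then the existence theorem with \emph{prescribed nonzero} Bott data: choose $\kappa_{n,0}\in{\rm Hom}(K_0(C),K_1(A))$ with $\kappa_{n,0}([1_C])=-[u_n],$ use the UCT to find $\kappa_n\in KL(C\otimes C(\T),A)$ with $\kappa_n|_{\boldsymbol{\bt}(K_0(C))}=\kappa_{n,0}$ and $\kappa_n|_{\boldsymbol{\bt}(K_1(C))}=0,$ and apply Lemma \ref{Extbot1} to $\phi_2$ (its trace--smallness hypothesis is vacuous here because the $K_1$-component of $\kappa_n$ is zero) to obtain $w_n$ with $\|[\phi_2(a),w_n]\|$ small on ${\cal F}_n$ and ${\rm Bott}(\phi_2,w_n)|_{{\cal P}_n}=\kappa_n\circ\boldsymbol{\bt}|_{{\cal P}_n}.$ Adding $[1_C]$ to ${\cal P}_n$ and evaluating ${\rm bott}_0$ at $[1_C]$ forces $[w_n]=-[u_n],$ while the ${\rm bott}_1$-part vanishes by construction; thus $v_n=u_nw_n$ works. (A minor point: to keep the tolerance exactly $\delta_n$ rather than a re-indexed $\delta_n',$ do as the paper does: choose $\eta_n<\delta_n$ with ${\rm Ad}\,u_n\circ\phi_1\approx_{\eta_n}\phi_2$ on ${\cal F}_n$ and demand $\|[\phi_2(a),w_n]\|<(\delta_n-\eta_n)/2.$)
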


\begin{proof}
Let $x_n=[u_n]\in K_1(A)$. Since $H_1(K_0(C), K_1(A))=K_1(A)$, there is a homomorphism $$\kappa_{n, 0}: K_0(C)\to K_1(A)$$
such that $\kappa_{n, 0}([1_C])=-x_n$. Since $C$ satisfies the Universal Coefficient Theorem, there is $\kappa_n\in KL(C\otimes C(\T), A)$ such that
$$
(\kappa_n)|_{{\boldsymbol{\bt}}(K_0(C))}=\kappa_{n, 0}\quad\mathrm{and}\quad (\kappa_n)|_{{\boldsymbol{\bt}}(K_1(C))}=0.
$$
\Wlog, we may assume that
$[1_{C}]\in {\cal P}_n,$  $n=1,2,.....$
For each $\delta_n$, choose a positive number $\eta_n<\delta_n,$
such that
$$\mathrm{Ad}u_{n}\circ\phi_1\approx_{\eta_n}\phi_2\quad \textrm{on}\ \mathcal F_n. $$
By Lemma \ref{Extbot1}, there is a unitary $w_n\in U(A)$ such that
\beq\nonumber
\|[\phi_2(a), w_n]\|<(\delta_n-\eta_n)/2\quad \rforal a\in\mathcal F_n\andeqn
\mathrm{Bott}(\phi_2, w_n)|_{{\cal P}_n}=\kappa_n|_{{\boldsymbol{\bt}}({\cal P}_n)}.
\eneq
Put $v_n=u_nw_n$, $n=1, 2, ...$. Then
$$\mathrm{Ad} v_n\circ\phi_1\approx_{\delta_n} \phi_2\quad\textrm{on}\ \mathcal F_n,\,\,\,
\rho_A(\mathrm{bott}_1(\phi_2, v_n^*v_{n+1}))|_{{\cal P}_n}=0$$
and, since $[1_C]\in {\cal P}_n,$
$$[v_n]=[u_n]-x_n=0,$$
as desired.
\end{proof}

\begin{thm}\label{T105}
Let $B\in {\cal B}_1$ be a unital separable simple \CA\, which satisfies the UCT,
let $A_1\in {\cal B}_1$ be a unital separable
simple \CA, and let $C=B\otimes U_1$ and $A=A_1\otimes U_2,$ where $U_1$ and $U_2$ are unital infinite dimensional UHF-algebras. Suppose that
$H_1(K_0(C),K_1(A))=K_1(A)$ and suppose that $\phi_1, \phi_2: C\to
A$ are two unital monomorphisms which are asymptotically unitarily
equivalent. Then $\phi_1$ and $\phi_2$ are strongly asymptotically unitarily
equivalent, that is,  there exists a continuous path of unitaries
$\{u(t): t\in [0, \infty)\}\subset A$ such that
$$
u(0)=1\tand \lim_{t\to\infty}{\rm Ad}u(t)\circ
\phi_1(a)=\phi_2(a)\ \textrm{for all}\ a\in C.
$$
\end{thm}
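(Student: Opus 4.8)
The plan is to reduce the statement about strong asymptotic unitary equivalence to the already-established asymptotic unitary equivalence by absorbing the $K_1$-obstruction that a given asymptotic-equivalence path may carry. Recall that $\phi_1$ and $\phi_2$ being asymptotically unitarily equivalent gives a continuous path $\{w(t)\}$ with $w(0)=1$ and $\mathrm{Ad}\,w(t)\circ\phi_1\to\phi_2$; the issue is that this path need not be \emph{strongly} asymptotic, which by Proposition \ref{Sup} forces a necessary condition $[w(t)]\in H_1(K_0(C),K_1(A))$. The hypothesis $H_1(K_0(C),K_1(A))=K_1(A)$ is exactly what removes this obstruction, so the task is to modify the path so that the relevant $K_1$-classes vanish.

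First I would recast the given asymptotic equivalence in discrete form: choose an increasing sequence of finite subsets $\mathcal F_n\subset C$ with dense union, an increasing sequence $\mathcal P_n\subset K_1(C)$ with union $K_1(C)$, and positive $\delta_n$ with $\sum\delta_n<1$, and extract from the continuous path a sequence of unitaries $u_n\in A$ with $\mathrm{Ad}\,u_n\circ\phi_1\approx_{\delta_n}\phi_2$ on $\mathcal F_n$. Using Lemma \ref{inv71} (whose hypotheses $[\phi_1]=[\phi_2]$ in $KK$, $\phi_1^{\ddag}=\phi_2^{\ddag}$, $(\phi_1)_T=(\phi_2)_T$ and the rotation condition $R_{\phi_1,\phi_2}(K_1(M_{\phi_1,\phi_2}))\subset\rho_A(K_0(A))$ all follow from asymptotic unitary equivalence via Lemma \ref{L92} and the fact — from Theorem \ref{Tm72}'s ``only if'' direction or \cite{Lnclasn} — that asymptotic unitary equivalence implies $\overline{R}_{\phi_1,\phi_2}=0$, hence $R_{\phi_1,\phi_2}$ lands in $\rho_A(K_0(A))$), I can assume the $u_n$ moreover satisfy $\rho_A(\mathrm{bott}_1(\phi_2,u_n^*u_{n+1}))(x)=0$ for $x\in\mathcal P_n$. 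This puts us precisely in the hypotheses of Lemma \ref{fix-unitary}.

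Applying Lemma \ref{fix-unitary} — this is where $H_1(K_0(C),K_1(A))=K_1(A)$ is used, to correct each $u_n$ by a unitary $w_n$ with prescribed Bott element (via Lemma \ref{Extbot1}) so as to kill $[u_n]$ in $K_1(A)$ — I obtain a new sequence $v_n\in U_0(A)$ with $\mathrm{Ad}\,v_n\circ\phi_1\approx_{\delta_n}\phi_2$ on $\mathcal F_n$ and $\rho_A(\mathrm{bott}_1(\phi_2,v_n^*v_{n+1}))(x)=0$ for $x\in\mathcal P_n$. The remaining work is the standard path-construction step: since each $v_n\in U_0(A)$ one connects $1$ to $v_1$ by a path, and then successively connects $v_n$ to $v_{n+1}$ along a path inside $A$. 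Here the vanishing of $\mathrm{bott}_1(\phi_2,v_n^*v_{n+1})$ together with the almost-commutation $\|[\phi_2(a),v_n^*v_{n+1}]\|$ small on $\mathcal F_n$ is exactly the input for the Basic Homotopy Lemma \ref{BHfull} (after the usual reduction, noting also Remark \ref{RBHfull} handles the $CU$ and determinant side-conditions), which produces a continuous path $\{z_n(t):t\in[0,1]\}$ from $1$ to $v_n^*v_{n+1}$ with $\|[\phi_2(a),z_n(t)]\|$ controlled on $\mathcal F_n$. Splicing $u(t+n-1)=v_n z_{n+1}(t)$ yields a continuous path $\{u(t):t\in[0,\infty)\}$ with $u(0)=1$ and $\mathrm{Ad}\,u(t)\circ\phi_1(a)\to\phi_2(a)$ for all $a\in C$, which is the desired strong asymptotic unitary equivalence.

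The main obstacle I anticipate is verifying that Lemma \ref{inv71} and Lemma \ref{BHfull} genuinely apply in the present generality: Lemma \ref{inv71} as stated requires $A_1\in\mathcal B_0$ and $C_1$ to be of the specific inductive-limit form of Theorem 14.10 of \cite{GLN-I}, whereas here $A_1,B\in\mathcal B_1$. This is bridged by Remark \ref{Remark202011-1}: since $B$ satisfies the UCT, $C=B\otimes U_1\in\mathcal B_0$ and is an inductive limit of direct sums of algebras in $\mathcal C_0$ and $\mathbf H$, and likewise $A=A_1\otimes U_2\in\mathcal B_0$ — so after replacing these algebras by their (isomorphic) model forms one is squarely in the setting of the cited lemmas. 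One must also be slightly careful that the $\delta_n$-estimates and the finite subsets $\mathcal P_n$ chosen for Lemma \ref{fix-unitary} are compatible with those demanded by Lemma \ref{BHfull}; this is a routine bookkeeping matter of choosing the sequences in the right order, exactly as in the proof of Theorem \ref{Tm72}.
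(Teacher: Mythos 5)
Your overall strategy---pass to the discrete picture, invoke the ``only if'' conditions of Theorem \ref{Tm72} (4.3 of \cite{Lnclasn}), use Lemma \ref{inv71} and Lemma \ref{fix-unitary} (with Remark \ref{Remark202011-1}) to replace the unitaries by ones in $U_0(A)$, and then build the path---is the same as the paper's, but the step where you connect $v_n$ to $v_{n+1}$ has a genuine gap. Lemma \ref{BHfull} does not apply with the data you supply: its hypotheses require ${\rm Bott}(\phi_2,\,v_n^*v_{n+1})|_{\cal P}=0$ as a map on $\underline{K}(C)$ (so also on $K_0$ and on the torsion-coefficient groups), together with the $CU$-distance condition \eqref{BHfull-1n1} for the unitaries built from the projections $p_i,q_i$. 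What Lemmas \ref{inv71} and \ref{fix-unitary} give is only $\rho_A(\mathrm{bott}_1(\phi_2, v_n^*v_{n+1}))|_{{\cal P}_n}=0$, which is far weaker, and Remark \ref{RBHfull} does not dispose of \eqref{BHfull-1n1}---it only shows that \eqref{BHfull-1n2} is redundant once \eqref{BHfull-1n1} holds. Passing from the weak trace condition to the actual hypotheses of Lemma \ref{BHfull} is precisely the long middle portion of the proof of Theorem \ref{Tm72}: the splitting $\theta$ of $\underline{K}(M_{\phi_1,\phi_2})$ with $R_{\phi_1,\phi_2}\circ\theta=0$, the maps $\gamma_n,\kappa_n,\xi_n$, the correction unitaries $w_n'$ produced by Lemma \ref{Extbot3} so that the full Bott element of $\phi_2$ with $u_n^*u_{n+1}$ (where $u_n=v_nw_n^*$) vanishes, and the further corrections $s_n$ arranging \eqref{n-72-42}. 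That is not routine bookkeeping; it is the body of the argument, and your proposal passes over it.

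Moreover---and this is the actual content of the paper's short proof of this theorem---once you rerun the Theorem \ref{Tm72} construction, the spliced path connects the corrected unitaries $\tilde u_n$ (built from $v_n$, $w_n'$ and $s_n$), not the $v_n$ themselves, so $v_n\in U_0(A)$ alone does not put the path in $U_0(A)$: you must check that the corrections do not reintroduce a nontrivial class in $K_1(A)$. The paper does exactly this: with $v_n\in U_0(A)$ one gets $\xi_n([1_C])=0$ (since ${\rm Bott}(\phi_1, v_nv_{n+1}^*)$ at $[1_C]$ is $[v_nv_{n+1}^*]=0$), hence $\kappa_n([1_C])=0$, hence $[w_n']=0$ and $w_n, u_n\in U_0(A)$; similarly $[s_n]=0$ because ${\rm Bott}(\phi_2\circ\imath_{n+1,\infty}, s_{n+1})|_{{\cal P}_n}=0$ with $[1_C]\in{\cal P}_n$. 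Only then does the whole path lie in $U_0(A)$, after which one prepends a path from $1_A$ to $u(0)$. Your write-up never addresses this $K_1$-tracking of the correction unitaries, so as it stands it does not establish strong asymptotic unitary equivalence.
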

\begin{proof}
By 4.3 of \cite{Lnclasn}, one has
\begin{eqnarray*}
[\phi_1]=[\phi_2]\quad {\rm in}\,\,\, KK(C,A),\\
\phi^{\ddag}=\psi^{\ddag},\,\,\,(\phi_1)_T=(\phi_2)_T \andeqn {\overline{R_{\phi_1,\phi_2}}}=0.
\end{eqnarray*}
Then by Lemma \ref{fix-unitary} {{(see also Remark \ref{Remark202011-1})}}, one may assume that $v_n\in U_0(A)$ ($n=1, 2, ...$) in the proof of Theorem \ref{Tm72}. It follows that $\xi_n([1_C])=0$, $n=1, 2, ...$, and therefore $\kappa_n([1_C])=0$. This implies that $\gamma_n\circ\beta([1_C])=0$. Hence $w_n\in U_0(A)$, and also $u_n\in U_0(A)$. Therefore, the continuous path of unitaries $\{u(t)\}$ constructed in Theorem \ref{Tm72} is in $U_0(A)$, and then one may require that $u(0)=1_A$ by connecting $u(0)$ to $1_A$.
\end{proof}

\section{The general classification theorem}

\begin{lem}\label{L112}
Let $A_1\in {\cal B}_0$ be a unital separable simple \CA, let $A=A_1\otimes U$ for some infinite dimensional UHF-algebra, and
let $\mathfrak{p}$ be a supernatural number of infinite type. Then
the \hm\, $\imath: a\mapsto a\otimes 1$ induces an isomorphism
from $U_0(A)/CU(A)$ to $U_0(A\otimes M_{\mathfrak{p}})/CU(A\otimes
M_{\mathfrak{p}}).$
\end{lem}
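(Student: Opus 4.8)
The statement concerns the canonical inclusion $\imath\colon A\to A\otimes M_{\mathfrak p}$, $a\mapsto a\otimes 1$, and asks that the induced map on $U_0/CU$ be an isomorphism. Since $A=A_1\otimes U$ with $U$ an infinite dimensional UHF algebra, $A$ is a unital separable simple \CA{} in $\mathcal B_0$; in particular $A$ has stable rank one (Theorem 9.7 of \cite{GLN-I}), and the same holds for $A\otimes M_{\mathfrak p}$. So for both algebras we have the description $U_0(\cdot)/CU(\cdot)\cong \Aff(T(\cdot))/\overline{\rho(K_0(\cdot))}$ coming from the de la Harpe--Skandalis determinant (see \ref{Dcu} and the proof of Lemma 11.5 of \cite{GLN-I}). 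The plan is to transport the problem to this affine-function picture: the map $\imath$ on $U_0/CU$ corresponds, under these identifications, to the map on $\Aff(T(\cdot))/\overline{\rho(K_0(\cdot))}$ induced by the restriction map $\imath_T\colon T(A\otimes M_{\mathfrak p})\to T(A)$ on trace spaces. So it suffices to show $\imath_T$ induces an affine homeomorphism $T(A\otimes M_{\mathfrak p})\to T(A)$ and that $\imath$ carries $\overline{\rho_A(K_0(A))}$ onto $\overline{\rho_{A\otimes M_{\mathfrak p}}(K_0(A\otimes M_{\mathfrak p}))}$ inside $\Aff$.

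First I would handle the trace spaces. Because $M_{\mathfrak p}$ has a unique trace, every trace on $A\otimes M_{\mathfrak p}$ is a product trace $\tau\otimes\tau_{M_{\mathfrak p}}$, so $\tau\mapsto \tau\otimes\tau_{M_{\mathfrak p}}$ is an affine homeomorphism $T(A)\to T(A\otimes M_{\mathfrak p})$ which is inverse to $\imath_T$; hence $\imath^\sharp\colon \Aff(T(A))\to \Aff(T(A\otimes M_{\mathfrak p}))$ is an isometric order isomorphism of ordered Banach spaces. Next, for the $K_0$ part: $\mathfrak p$ is of infinite type, so $K_0(M_{\mathfrak p})=\mathbb Z[1/\mathfrak p]$ is a dense subgroup of $\mathbb Q$ containing $\mathbb Z$, and by the Künneth formula $K_0(A\otimes M_{\mathfrak p})=K_0(A)\otimes \mathbb Z[1/\mathfrak p]$. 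Under the identification of $\Aff$-spaces above, $\rho_{A\otimes M_{\mathfrak p}}(K_0(A\otimes M_{\mathfrak p}))$ is the subgroup of $\Aff(T(A))$ generated by $\{\tfrac1n\,\rho_A(x): x\in K_0(A),\ n|\mathfrak p^\infty\}$, which contains $\rho_A(K_0(A))$; taking closures, I would argue $\overline{\rho_{A\otimes M_{\mathfrak p}}(K_0(A\otimes M_{\mathfrak p}))}=\overline{\rho_A(K_0(A))}$. The inclusion $\supseteq$ is immediate; for $\subseteq$, note each generator $\tfrac1n\rho_A(x)$ can be approximated in $\Aff(T(A))$ by elements of $\rho_A(K_0(A))$: indeed $A$ has the property that $\rho_A(K_0(A))$ is ``divisible in the closure'' because $A\cong A\otimes U$ already absorbs an infinite type UHF algebra, so $K_0(A)$ is already uniquely $n$-divisible after tensoring, giving $\rho_A(K_0(A))\otimes\mathbb Q$-saturation within the closure. (More carefully: $K_0(A)=K_0(A_1)\otimes K_0(U)$, and $K_0(U)$ is a dense subgroup of $\mathbb Q$, so $\rho_A(K_0(A))$ is already closed under multiplication by $\tfrac1m$ for the primes dividing the supernatural number of $U$; for the remaining primes one uses weak unperforation and simplicity to get density of the relevant approximants.)

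Assembling these: $\imath$ induces an isomorphism $\Aff(T(A))/\overline{\rho_A(K_0(A))}\xrightarrow{\ \cong\ }\Aff(T(A\otimes M_{\mathfrak p}))/\overline{\rho_{A\otimes M_{\mathfrak p}}(K_0(A\otimes M_{\mathfrak p}))}$, and this is exactly the map $\imath$ induces on $U_0/CU$ via the de la Harpe--Skandalis identification; naturality of that identification with respect to unital homomorphisms (which follows from the integral formula defining $\overline D$ and the fact that $\imath$ sends a smooth path to a smooth path with the same determinant) finishes the proof. The main obstacle I anticipate is the last bookkeeping step about closures: one must be sure that $\overline{\rho_A(K_0(A))}$ is genuinely invariant under multiplication by $1/n$ for all $n|\mathfrak p^\infty$, i.e. that tensoring with $M_{\mathfrak p}$ does not enlarge the closed subgroup of $\Aff(T(A))$. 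This is where one genuinely uses that $A$ already absorbs an infinite type UHF algebra (so that, after the affine identification, division by the primes of $U$ is free) together with weak unperforation of $K_0(A)$ (Theorem 9.11 of \cite{GLN-I}) and compactness of $T(A)$ to push the remaining approximations through; I would isolate this as a small lemma about $\Aff(T(A))/\overline{\rho_A(K_0(A))}$ being uniquely divisible, which in fact is implicit in the proof of Lemma 11.5 of \cite{GLN-I} showing $U_0(A)/CU(A)$ is divisible.
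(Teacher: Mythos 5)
Your proposal is correct, but it takes a genuinely different route from the paper. The paper argues entirely inside $U/CU$: it writes $A\otimes M_{\mathfrak p}=\lim_n (M_{m(n)}(A),\imath_n)$ with connecting maps $a\mapsto{\rm diag}(a,\dots,a)$, observes by a conjugation trick that on $U/CU$ the connecting map equals $k(n)$ times the corner embedding $j_n\colon \bar u\mapsto\overline{{\rm diag}(u,1,\dots,1)}$, which is an isomorphism by Corollary 11.11 of \cite{GLN-I}; injectivity on $U_0/CU$ then comes from torsion-freeness (Lemma 11.5 of \cite{GLN-I}) and surjectivity from divisibility (Corollary 11.7 of \cite{GLN-I}), and one passes to the inductive limit. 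You instead transport everything through the Thomsen/de la Harpe--Skandalis identification $U_0(\cdot)/CU(\cdot)\cong\Aff(T(\cdot))/\overline{\rho(K_0(\cdot))}$ (legitimate here, since both algebras have stable rank one and the identification is natural for unital homomorphisms), identify $T(A\otimes M_{\mathfrak p})$ with $T(A)$ via product traces, and reduce the lemma to the single claim $\overline{\rho_{A\otimes M_{\mathfrak p}}(K_0(A\otimes M_{\mathfrak p}))}=\overline{\rho_A(K_0(A))}$. What your route buys is that no divisibility and no inductive-limit bookkeeping are needed: once the closed subgroups agree, the map on the $\Aff$-quotients is tautologically bijective. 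What it costs is the closure step, and there your write-up is weakest: the parenthetical detour about the primes dividing the supernatural number of $U$, weak unperforation and ``density of the relevant approximants'' is both vague and unnecessary. The clean argument is the one you only reach at the end: for $n\mid\mathfrak p^\infty$ and $x\in K_0(A)$, the class of $\tfrac1n\rho_A(x)$ in $\Aff(T(A))/\overline{\rho_A(K_0(A))}\cong U_0(A)/CU(A)$ is killed by $n$, hence is zero because this group is torsion free (Lemma 11.5 of \cite{GLN-I}) --- torsion-freeness, not divisibility, is what does the work, so ``uniquely divisible'' is more than you need. With that step stated this way, your proof is complete and rests on exactly the same ingredients from Section 11 of \cite{GLN-I} that the paper's own proof invokes, just organized through the determinant picture rather than through the amplification maps.
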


\begin{proof}
There are  sequences of positive integers $ \{m(n)\}$ and
$\{k(n)\}$ such that $A\otimes
M_{\mathfrak{p}}=\lim_{n\to\infty}(A\otimes M_{m(n)}, \imath_n),$
where
$$
\imath_n: M_{m(n)}(A)\to M_{m(n+1)}(A)
$$
is defined by $\imath(a)={\rm diag}(\overbrace{a,a,...,a}^{k(n)})$
for all $a\in M_{m(n)}(A),$ $n=1,2,....$ Note, $M_{m(n)}(A)=M_{m(n)}(A_1)\otimes U$ and $M_{m(n)}(A_1)\in {\cal B}_0.$ Let $$j_n:
U(M_{m(n)}(A))/CU(M_{m(n)}(A)))\to
U(M_{m(n+1)}(A))/CU(M_{m(n+1)}(A))
$$ be defined by
$$
j_n({\bar u})=\overline{{\rm
diag}(u,\underbrace{1,1,...,1}_{k(n)-1})}\tforal u\in
U((M_{m(n)}(A)).
$$
It follows from Corollary11.11 of \cite{GLN-I} 
that $j_n$ is
an isomorphism.
 By Corollary 11.7 of \cite{GLN-I}, 
the abelian group $U_0(M_{m(n)}(A))/CU(M_{m(n)}(A))$ is divisible. For each $n$ and $i,$ there is a unitary $U_i\in
M_{m(n+1)}(A)$ such that
$$
U_i^*E_{1,1}U_i=E_{i,i},\,\,\,i=2,3,...,k(n),
$$
where $E_{i,i}=\sum_{j=(i-1)m(n)+1}^{im(n)}e_{j,j}$  and
$\{e_{i,j}\}$ is a system of matrix units for $M_{m(n+1)}.$ Then
$$
\imath_n(u)=u'(U_2^*u'U_2)(U_3^*u'U_3)\cdots (U_{k(n)}^*u'U_{k(n)}),
$$
where $u'={\rm diag}(u,\overbrace{1,1,...,1}),$ for all $u\in
M_{m(n)}(A).$ Thus,
$$
\imath_n^{\ddag}({\bar u})=k(n)j_n(\bar u).
$$
It follows that
$\imath_n^{\ddag}|_{U_0(M_{m(n)}(A))/CU(M_{m(n)}(A))}$ is
injective, since $U_0(M_{m(n+1)}(A))/CU(M_{m(n+1)}(A))$ is torsion
free (see Lemma 11.5 of \cite{GLN-I}) 
and $j_n$ is injective. For each $z\in U_0(M_{m(n+1)}(A)/CU(M_{m(n+1)}),$ there is a unitary $v\in
M_{m(n+1)}(A)$ such that
$$
j_n({\bar v})=z,
$$
since $j_n$ is an isomorphism. By the divisibility of
$U_0(M_{m(n)}(A)/CU(M_{m(n)}),$ there is $u\in M_{m(n)}(A)$ such
that
$$
\overline{u^{k(n)}}=\overline{u}^{k(n)}=\overline{v}.
$$
As above,
$$
\imath_n^{\ddag}({\bar u})=k(n)j_n(\bar v)=z.
$$
So $\imath_n^{\ddag}|_{U_0(M_{m(n)}(A))/CU(M_{m(n)}(A))}$ is
surjective.  It follows that $\imath_{n,
\infty}^{\ddag}|_{U_0(M_{m(n)}(A))/CU(M_{m(n)}(A))}$ is an
isomorphism. One then concludes that
$\imath^{\ddag}|_{U_0(A)/CU(A)}$ is an isomorphism.
\end{proof}

\begin{lem}\label{L113}
Let $A_1$ and $B_1$ be two unital separable simple \CA s  in ${\cal B}_0,$ let $A=A_1\otimes U_1$ and let $B=B_1\otimes U_2,$ where $U_1$ and $U_2$ are two  UHF-algebras of infinite type.
Let $\phi: A\to B$ be an isomorphism and let $\bt:
B\otimes M_{\mathfrak{p}}\to B\otimes M_{\mathfrak{p}}$ be an
automorphism such that $\bt_{*1}={\rm id}_{K_1(B\otimes
M_{\mathfrak{p}})}$ for some supernatural number $\mathfrak{p}$ of infinite type.
Then
$$\psi^{\ddag}(U(A)/CU(A))=
(\phi_0)^{\ddag}(U(A)/CU(A))=U(B)/CU(B),
$$
where $\phi_0=\imath\circ \phi,$ $\psi=\bt\circ \imath\circ \phi$
and where $\imath: B\to B\otimes M_{\mathfrak{p}}$ is defined by
$\imath(b)=b\otimes 1$ for all $b\in B.$ Moreover, there is an
isomorphism $\mu: U(B)/CU(B)\to U(B)/CU(B)$ with
$\mu(U_0(B)/CU(B))\subset U_0(B)/CU(B)$ such that
$$
\imath^{\ddag}\circ \mu\circ \phi^{\ddag}=\psi^{\ddag}\andeqn
q_1\circ \mu=q_1,
$$
where $q_1: U(B)/CU(B)\to K_1(B)$ is the quotient map.
\end{lem}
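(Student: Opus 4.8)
The plan is to work entirely with the abelian group $U(B)/CU(B)$ and its canonical decomposition. Recall from \ref{Dcu} (and Lemma 11.5 of \cite{GLN-I}) that for a unital simple $C^*$-algebra of stable rank one with the divisibility and weak unperforation properties enjoyed by algebras in ${\cal B}_0$, one has a splitting short exact sequence
\[
0\to U_0(B)/CU(B)\to U(B)/CU(B)\stackrel{q_1}{\to}K_1(B)\to 0,
\]
with $U_0(B)/CU(B)\cong \Aff(T(B))/\overline{\rho_B(K_0(B))}$ a divisible (hence injective) abelian group, and similarly for $B\otimes M_{\mathfrak p}$. The first assertion, that $\psi^{\ddag}$ and $\phi_0^{\ddag}$ are onto $U(B)/CU(B)$, will follow once we observe: $\phi$ is an isomorphism, so $\phi^{\ddag}:U(A)/CU(A)\to U(B)/CU(B)$ is an isomorphism; by Lemma \ref{L112} the map $\imath^{\ddag}:U_0(B)/CU(B)\to U_0(B\otimes M_{\mathfrak p})/CU(B\otimes M_{\mathfrak p})$ is an isomorphism, and since $\mathfrak p$ is of infinite type $\imath_{*1}:K_1(B)\to K_1(B\otimes M_{\mathfrak p})$ is also an isomorphism, so by the five lemma $\imath^{\ddag}$ is an isomorphism on the full quotients; finally $\bt^{\ddag}$ is an isomorphism of $U(B\otimes M_{\mathfrak p})/CU(B\otimes M_{\mathfrak p})$ because $\bt$ is an automorphism. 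Composing, $\phi_0^{\ddag}=\imath^{\ddag}\circ\phi^{\ddag}$ and $\psi^{\ddag}=\bt^{\ddag}\circ\imath^{\ddag}\circ\phi^{\ddag}$ are isomorphisms onto $U(B)/CU(B)$.

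For the ``moreover'' part, the natural candidate is $\mu:=(\imath^{\ddag})^{-1}\circ\psi^{\ddag}\circ(\phi^{\ddag})^{-1}$, where $(\imath^{\ddag})^{-1}$ makes sense as just argued. Then by construction $\imath^{\ddag}\circ\mu\circ\phi^{\ddag}=\psi^{\ddag}$, and $\mu$ is an isomorphism of $U(B)/CU(B)$. It remains to check the two compatibility conditions: $q_1\circ\mu=q_1$ and $\mu(U_0(B)/CU(B))\subseteq U_0(B)/CU(B)$. For the first, one chases the $K_1$-level: applying $q_1^{B\otimes M_{\mathfrak p}}$ and using naturality ($q_1^{B\otimes M_{\mathfrak p}}\circ\imath^{\ddag}=\imath_{*1}\circ q_1^B$, $q_1^{B\otimes M_{\mathfrak p}}\circ\bt^{\ddag}=\bt_{*1}\circ q_1^{B\otimes M_{\mathfrak p}}=q_1^{B\otimes M_{\mathfrak p}}$ since $\bt_{*1}=\id$, and $q_1^B\circ\phi^{\ddag}=\phi_{*1}\circ q_1^A$), one gets $\imath_{*1}\circ q_1^B\circ\mu=q_1^{B\otimes M_{\mathfrak p}}\circ\psi^{\ddag}\circ(\phi^{\ddag})^{-1}=\bt_{*1}\circ\imath_{*1}\circ\phi_{*1}\circ q_1^A\circ(\phi^{\ddag})^{-1}=\imath_{*1}\circ q_1^B$, and cancelling the injective $\imath_{*1}$ yields $q_1^B\circ\mu=q_1^B$.

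The second compatibility, $\mu(U_0(B)/CU(B))\subseteq U_0(B)/CU(B)$, is then essentially automatic from $q_1\circ\mu=q_1$: if $\bar u\in U_0(B)/CU(B)=\ker q_1^B$, then $q_1^B(\mu(\bar u))=q_1^B(\bar u)=0$, so $\mu(\bar u)\in\ker q_1^B=U_0(B)/CU(B)$. (One should double-check that the identification $U_0(B)/CU(B)=\ker q_1$ used here is exactly the one in the splitting sequence of \ref{Dcu}, which it is by the definition there.) I expect the main obstacle to be purely bookkeeping: making sure that the isomorphism $\imath^{\ddag}$ on the \emph{full} quotient $U(B)/CU(B)\to U(B\otimes M_{\mathfrak p})/CU(B\otimes M_{\mathfrak p})$ is legitimately invertible — this requires combining Lemma \ref{L112} (invertibility on the $U_0$-part) with the fact that $\imath_{*1}$ is an isomorphism (true for $\mathfrak p$ of infinite type, since $K_1(B)\otimes K_0(M_{\mathfrak p})\cong K_1(B)$ as $K_0(M_{\mathfrak p})=\Z[1/\mathfrak p]$ is flat and contains $\Z$ as a subgroup in a way that the inclusion $K_1(B)\to K_1(B)\otimes\Z[1/\mathfrak p]$ is injective, and surjectivity needs weak unperforation / divisibility considerations — but in fact $\imath_{*1}$ is injective always and here $B$ is already $U_2$-stable so $K_1(B)$ is already uniquely divisible by the primes dividing $\mathfrak p$ if $U_2$ and $\mathfrak p$ overlap; in general one still gets an isomorphism on $U_0$ plus injectivity on $K_1$, which via a diagram chase gives injectivity of $\imath^{\ddag}$, and surjectivity of $\mu$ as defined only needs $\imath^{\ddag}$ to have the image containing $\psi^{\ddag}((\phi^{\ddag})^{-1}(\cdot))$, so one may prefer to \emph{define} $\mu$ differently if $\imath^{\ddag}$ fails to be onto). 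To sidestep this subtlety cleanly, the safest route is: first prove $\imath^{\ddag}$ is injective by the five lemma (using Lemma \ref{L112} and injectivity of $\imath_{*1}$), then note that $\psi^{\ddag}\circ(\phi^{\ddag})^{-1}$ maps into the image of $\imath^{\ddag}$ because every element of $\psi^{\ddag}(U(A)/CU(A))=U(B)/CU(B)$, when we compute its value, factors through $\imath^{\ddag}$ by the very formula $\psi=\bt\circ\imath\circ\phi$ together with surjectivity of $\bt^{\ddag}$ — concretely $\psi^{\ddag}=\bt^{\ddag}\circ\imath^{\ddag}\circ\phi^{\ddag}$ shows $\mathrm{im}\,\psi^{\ddag}\subseteq\bt^{\ddag}(\mathrm{im}\,\imath^{\ddag})$, but since $\bt^{\ddag}$ is an isomorphism we need $\bt^{\ddag}(\mathrm{im}\,\imath^{\ddag})=U(B\otimes M_{\mathfrak p})/CU$, i.e. $\imath^{\ddag}$ onto after all — so genuinely we do need $\imath^{\ddag}$ surjective, which forces us back to proving $\imath_{*1}$ is onto. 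This is where the hypothesis that $B=B_1\otimes U_2$ and $\mathfrak p$ of infinite type is used: $K_1(B)$ is a $\Z[1/\ell : \ell\mid U_2]$-module, and one shows $K_1(B)\otimes K_0(M_{\mathfrak p})\cong K_1(B)$ via Lemma \ref{L112}-type arguments, or directly cites that $\imath$ is a $KK$-equivalence-like inclusion on $K$-theory up to the relevant localization. I would carry out this $K_1$-surjectivity step first, then the five-lemma, then define $\mu$ and verify the two compatibilities as above.
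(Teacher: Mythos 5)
Your construction of $\mu$ rests on the claim that $\imath_{*1}\colon K_1(B)\to K_1(B\otimes M_{\mathfrak{p}})$ is an isomorphism, so that, together with Lemma \ref{L112} and the five lemma, $\imath^{\ddag}$ becomes invertible and you may set $\mu=(\imath^{\ddag})^{-1}\circ\psi^{\ddag}\circ(\phi^{\ddag})^{-1}$. That claim is false, and this is a genuine gap. By the K\"unneth formula $K_1(B\otimes M_{\mathfrak{p}})\cong K_1(B)\otimes K_0(M_{\mathfrak{p}})$ and $\imath_{*1}$ is the localization map $x\mapsto x\otimes 1$: it is surjective only if $K_1(B)$ is already divisible by the primes occurring in $\mathfrak{p}$, and injective only if $K_1(B)$ has no torsion at those primes. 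Nothing in the hypotheses relates $\mathfrak{p}$ to $U_2$; in the intended application (Lemma \ref{L115}, used in Theorem \ref{CMT1}) $\mathfrak{p}$ is relatively prime to the UHF algebra already absorbed by $B$, so for instance $K_1(B)\cong\Z[1/3]$ with $\mathfrak{p}=2^{\infty}$ gives a non-surjective $\imath_{*1}$, and $3$-torsion in $K_1(B)$ with $3\mid\mathfrak{p}$ destroys injectivity (flatness of $K_0(M_{\mathfrak{p}})$ does not make $x\mapsto x\otimes 1$ injective). Your closing attempt to repair this ($K_1(B)\otimes K_0(M_{\mathfrak{p}})\cong K_1(B)$ because $K_1(B)$ is a $K_0(U_2)$-module) fails for the same reason. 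Relatedly, the first display of the lemma is not the assertion that $\psi^{\ddag}$ is onto its codomain $U(B\otimes M_{\mathfrak{p}})/CU(B\otimes M_{\mathfrak{p}})$; it says that the images of $\psi^{\ddag}$ and $\phi_0^{\ddag}$ both coincide with the canonical copy $\imath^{\ddag}(U(B)/CU(B))$ --- a statement that would be nearly immediate if $\imath^{\ddag}$ were an isomorphism, and which is exactly where the hypothesis $\bt_{*1}=\mathrm{id}$ has to enter.

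The proof the paper points to (Lemma 11.3 of \cite{Lnclasn}) inverts $\imath^{\ddag}$ only where it is invertible, namely on $U_0(B)/CU(B)$ (Lemma \ref{L112}), and works with the splitting $U(B)/CU(B)=U_0(B)/CU(B)\oplus J_c(K_1(B))$. Since $\bt_{*1}=\mathrm{id}$, for each $z\in K_1(B)$ the element $\bt^{\ddag}(\imath^{\ddag}(J_c(z)))-\imath^{\ddag}(J_c(z))$ maps to $0$ in $K_1(B\otimes M_{\mathfrak{p}})$, hence lies in $U_0(B\otimes M_{\mathfrak{p}})/CU(B\otimes M_{\mathfrak{p}})$, which by Lemma \ref{L112} equals $\imath^{\ddag}(U_0(B)/CU(B))$; write it as $\imath^{\ddag}(y_z)$ with $y_z\in U_0(B)/CU(B)$ unique, so that $z\mapsto y_z$ is a homomorphism. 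Setting $\mu(x_0+J_c(z))=(\imath^{\ddag}|_{U_0(B)/CU(B)})^{-1}\bt^{\ddag}\imath^{\ddag}(x_0)+J_c(z)+y_z$ gives a map with $q_1\circ\mu=q_1$ and $\mu(U_0(B)/CU(B))\subset U_0(B)/CU(B)$, bijective because of its triangular form, and satisfying $\imath^{\ddag}\circ\mu=\bt^{\ddag}\circ\imath^{\ddag}$; composing with $\phi^{\ddag}$ yields $\imath^{\ddag}\circ\mu\circ\phi^{\ddag}=\psi^{\ddag}$, and surjectivity of $\mu$ gives $\psi^{\ddag}(U(A)/CU(A))=\imath^{\ddag}(U(B)/CU(B))=\phi_0^{\ddag}(U(A)/CU(A))$. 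Your two compatibility checks (the $q_1$-diagram chase, and that $q_1\circ\mu=q_1$ forces preservation of $\ker q_1=U_0(B)/CU(B)$) are fine and carry over to this corrected $\mu$, but the definition of $\mu$ itself must be replaced as above, since the global inverse of $\imath^{\ddag}$ you rely on does not exist.
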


\begin{proof}
The proof is exactly the same as that of Lemma 11.3 of \cite{Lnclasn}.
\end{proof}

\begin{lem}\label{L114}
Let $A_1$ and $B_1$ be two unital   simple amenable \CA s  in
${\cal B}_0$ {{satisfying the UCT,}} let $A=A_1\otimes U_1$, and let
$B=B_1\otimes U_2,$ where $U_1$ and $U_2$ are
UHF-algebras {{of infinite type}}.  Suppose that
$\phi_1, \phi_2: A\to B$ are two isomorphisms such that
$[\phi_1]=[\phi_2]$ in $KK(A,B).$ Then there exists an
automorphism $\bt: B\to B$ such that $[\bt]=[{\rm id}_B]$ in
$KK(B,B)$ and $\bt\circ \phi_2$ is asymptotically unitarily
equivalent to $\phi_1.$ Moreover, if $H_1(K_0(A), K_1(B))=K_1(B),$ then $\beta$ can be chosen so that $\beta\circ\phi_1$ and $\beta\circ\phi_2$ are strongly asymptotically unitarily equivalent.
\end{lem}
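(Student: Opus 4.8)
The strategy is the standard Elliott-type ``modify by an automorphism'' argument, which here runs through the existence and uniqueness results already assembled. First I would reduce to an asymptotic unitary equivalence statement: it suffices to produce an automorphism $\bt\colon B\to B$ with $[\bt]=[{\rm id}_B]$ in $KK(B,B)$, $\bt^{\ddag}={\rm id}_B^{\ddag}$, $\bt_T=({\rm id}_B)_T$, and such that the obstruction $\overline{R_{\phi_1,\bt\circ\phi_2}}$ vanishes; then Theorem \ref{Tm72} applied to the pair $\phi_1,\bt\circ\phi_2\colon A\to B$ (noting $A=A_1\otimes U_1$ and $B=B_1\otimes U_2$ are of the required form, with $B_1\in{\cal B}_0$ playing the role of the target) gives that $\phi_1$ and $\bt\circ\phi_2$ are asymptotically unitarily equivalent, which is the first conclusion.

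The construction of $\bt$ is exactly where Corollary \ref{C910} and Theorem \ref{T96} enter. Since $[\phi_1]=[\phi_2]$ in $KK(A,B)$ and $\phi_1,\phi_2$ are isomorphisms, the triple $(\,[\phi_1]\,,\,\phi_1^{\ddag}\,,\,(\phi_1)_T\,)$ and the triple $(\,[\phi_2]\,,\,\phi_2^{\ddag}\,,\,(\phi_2)_T\,)$ carry the same $KK$-class but a priori differ in the $U/CU$ and tracial data; however $\phi_1,\phi_2$ being isomorphisms with the same $KK$-class forces $\phi_1^{\ddag}=\phi_2^{\ddag}$ and $(\phi_1)_T=(\phi_2)_T$ up to the freedom we are about to absorb — more precisely, one first composes $\phi_2$ with an element of $\overline{{\rm Inn}}(B,B)$ (available via Lemma \ref{L86}/Corollary \ref{C910}) to arrange $\boldsymbol{\mathfrak{K}}(\phi_2)=\boldsymbol{\mathfrak{K}}(\phi_1)$, using that the relevant $KKUT_e^{-1}(B,B)^{++}$ data agree because $[\phi_1]=[\phi_2]$. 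Having matched $\boldsymbol{\mathfrak{K}}$, the residual discrepancy between $\phi_1$ and $\phi_2$ is measured by $\overline{R_{\phi_1,\phi_2}}\in{\rm Hom}(K_1(A),\overline{\rho_B(K_0(B))})/{\cal R}_0$ (it lies in this subgroup by Lemma \ref{L92}). Now invoke the surjectivity half of Theorem \ref{MT2}, or equivalently the exact sequence of Corollary \ref{C910}: there is an automorphism $\bt\in{\rm Aut}(B)$ with $\boldsymbol{\mathfrak{K}}(\bt)=\boldsymbol{\mathfrak{K}}({\rm id}_B)$ and $\overline{R_{{\rm id}_B,\bt}}$ equal to any prescribed class; choose it so that $\overline{R_{{\rm id}_B,\bt}}\circ(\phi_2)_{*1}=-\overline{R_{\phi_1,\phi_2}}$ (possible since $(\phi_2)_{*1}$ is an isomorphism). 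Then by Lemma \ref{L97},
\[
\overline{R}_{\phi_1,\bt\circ\phi_2}=\overline{R}_{{\rm id}_B,\bt}\circ(\phi_2)_{*1}+\overline{R}_{\phi_1,\phi_2}=0 ,
\]
while $[\bt\circ\phi_2]=[\phi_2]=[\phi_1]$ in $KK(A,B)$, $(\bt\circ\phi_2)^{\ddag}=\phi_2^{\ddag}=\phi_1^{\ddag}$, and $(\bt\circ\phi_2)_T=(\phi_2)_T=(\phi_1)_T$. This is precisely the hypothesis list \eqref{Tm72-1}, so Theorem \ref{Tm72} applies and yields the asymptotic unitary equivalence.

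For the ``moreover'' clause, under the extra hypothesis $H_1(K_0(A),K_1(B))=K_1(B)$ I would apply Theorem \ref{T105}: $A=A_1\otimes U_1$ and $B=B_1\otimes U_2$ with $A_1,B_1\in{\cal B}_1$ (indeed in ${\cal B}_0\subset{\cal B}_1$) and $B$ satisfies the UCT, so the hypotheses of \ref{T105} are met, and asymptotic unitary equivalence of $\phi_1$ and $\bt\circ\phi_2$ upgrades automatically to \emph{strong} asymptotic unitary equivalence (a path starting at $1_B$). One small point to check is that the statement asks for $\bt$ so that $\bt\circ\phi_1$ and $\bt\circ\phi_2$ are strongly asymptotically unitarily equivalent; since strong asymptotic unitary equivalence is preserved under post-composition with the automorphism $\bt$ (conjugate the path by a lift, or simply note $\bt$ is asymptotically inner is not needed — one just transports the path), and $\phi_1$ is strongly asymptotically unitarily equivalent to $\bt\circ\phi_2$, a bookkeeping rearrangement gives the stated form. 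The main obstacle is not any single deep step — all the heavy lifting is in Theorems \ref{Tm72}, \ref{T94}, \ref{MT2} and \ref{T105} — but rather the careful verification that the intermediate $\overline{{\rm Inn}}(B,B)$-adjustment genuinely matches all three components of $\boldsymbol{\mathfrak{K}}$ simultaneously without disturbing the $KK$-class, and that the rotation-map bookkeeping via Lemma \ref{L97} is applied with the correct signs and the correct identification of $K_1(M_{\phi,\psi})$; getting these compatibilities exactly right is where the proof of Lemma 11.3/11.4 of \cite{Lnclasn}, which this mirrors, must be followed line by line.
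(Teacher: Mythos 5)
Your overall architecture is the same as the paper's: correct $\phi_2$ by an automorphism so that all of $[\cdot]$, $\ddag$, $T$ match $\phi_1$, kill the remaining rotation obstruction using the exact sequence of Corollary \ref{C910} together with Lemma \ref{L97} (your sign bookkeeping there is right), conclude asymptotic unitary equivalence from Theorem \ref{Tm72}, and get the strong version from Theorem \ref{T105} under $H_1(K_0(A),K_1(B))=K_1(B)$. The gap is in the matching step. You reduce to finding $\bt$ with $\bt^{\ddag}={\rm id}_B^{\ddag}$ and $\bt_T=({\rm id}_B)_T$, and you justify the preliminary matching by claiming that one can compose $\phi_2$ with an element of $\overline{{\rm Inn}}(B,B)$ to arrange $\boldsymbol{\mathfrak{K}}(\phi_2)=\boldsymbol{\mathfrak{K}}(\phi_1)$, ``using that the relevant $KKUT$ data agree because $[\phi_1]=[\phi_2]$.'' Both halves of this fail: $[\phi_1]=[\phi_2]$ in $KK(A,B)$ does \emph{not} force $\phi_1^{\ddag}=\phi_2^{\ddag}$ or $(\phi_1)_T=(\phi_2)_T$ (Theorem \ref{MT2} itself produces automorphisms of $B$ that are $KK$-trivial but act nontrivially on $U(B)/CU(B)$ and on $T(B)$, and composing $\phi_2$ with such an automorphism changes neither its $KK$ class nor the hypotheses), and an element of $\overline{{\rm Inn}}(B,B)$ induces the identity on $U(B)/CU(B)$ and on $T(B)$, so it can never repair a discrepancy in the $\ddag$ or trace data — in the proof of \ref{C910} the $\overline{{\rm Inn}}$ correction is used only to pass from $KL$ to $KK$, not to adjust $\ddag$ or $T$. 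Consequently, with your $\bt$ the asserted identities $(\bt\circ\phi_2)^{\ddag}=\phi_1^{\ddag}$ and $(\bt\circ\phi_2)_T=(\phi_1)_T$ are false in general, the hypothesis $\tau\circ\phi_1=\tau\circ(\bt\circ\phi_2)$ needed even to define $\overline{R}_{\phi_1,\bt\circ\phi_2}$ (and to invoke Lemma \ref{L92}) is unavailable, and Theorem \ref{Tm72} cannot be applied.

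The repair is exactly what the paper does: use the surjectivity half of Theorem \ref{MT2} to produce a genuine (non-approximately-inner) automorphism $\bt_1$ with $[\bt_1]=[{\rm id}_B]$ in $KK(B,B)$, $\bt_1^{\ddag}=\phi_1^{\ddag}\circ(\phi_2^{-1})^{\ddag}$ and $(\bt_1)_T=(\phi_1)_T\circ(\phi_2)_T^{-1}$; only then is $\overline{R}_{\phi_1,\bt_1\circ\phi_2}$ defined and, by Lemma \ref{L92}, lands in ${\rm Hom}(K_1(A),\overline{\rho_B(K_0(B))})/{\cal R}_0$, so that \ref{C910} supplies $\bt_2$ with $\overline{R}_{{\rm id}_B,\bt_2}=-\overline{R}_{\phi_1,\bt_1\circ\phi_2}\circ(\phi_2)_{*1}^{-1}$, and $\bt=\bt_2\circ\bt_1$ does the job via \ref{L97} and \ref{Tm72}. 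Your remaining steps (the rotation computation, the use of \ref{T105}) then go through as in the paper; the worry you raise about $\bt\circ\phi_1$ versus $\phi_1$ in the ``moreover'' clause is only about a slip in the statement — what is proved and later used (in Theorem \ref{CMT1}) is that $\phi_1$ and $\bt\circ\phi_2$ are strongly asymptotically unitarily equivalent, so no transport of the path by $\bt$ is needed.
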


\begin{proof}
It follows from Theorem \ref{MT2} that there is an automorphism $\bt_1:
B\to B$ satisfying the following condition:
\beq
[\bt_1]=[{\rm id}_B]\,\,\,{\rm in}\,\,\,KK(B,B),\\
\bt_1^{\ddag}=\phi_1^{\ddag}\circ (\phi_2^{-1})^{\ddag}\andeqn
(\bt_1)_T=(\phi_1)_T\circ (\phi_2)_T^{-1}.
\eneq
By Corollary \ref{C910}, there is automorphism $\bt_2\in \mathrm{Aut}(B)$ such that
\beq
[\bt_2]=[{\rm id}_B]\,\,\, {\rm in}\,\,\, KK(B,B),\\
\bt_2^{\ddag}={\rm id}_B^{\ddag},\,\,\,(\bt_2)_T=({\rm
id}_B)_T, \andeqn\\
 \overline{R}_{{\rm id}_B,\bt_2}=-\overline{R}_{\phi_1,\bt_1\circ \phi_2}\circ
(\phi_2)_{*1}^{-1}.
\eneq
Put $\bt=\bt_2\circ \bt_1.$
 It follows that
\beq
[\bt\circ \phi_2]=[\phi_1]\,\,\,{\rm in}\,\,\, KK(A,B),
(\bt\circ \phi_2)^{\ddag}=\phi_1^{\ddag}, \andeqn (\bt\circ
\phi_2)_T=(\phi_1)_T.
\eneq
Moreover, by \ref{L97},
\beq
\overline{R}_{\phi_1,\bt\circ \phi_2}&=&\overline{R}_{{\rm
id}_B,\bt_2}\circ
(\phi_2)_{*1}+\overline{R}_{\phi_1,\bt_1\circ\phi_2}\\
&=&(-\overline{R}_{\phi_1,\bt_1\circ \phi_2}\circ
(\phi_2)_{*1}^{-1})\circ (\phi_2)_{*1}
+\overline{R}_{\phi_1,\bt_1\circ\phi_2}=0.
\eneq
It follows from \ref{MT2} that $\bt\circ \phi_2$ and $\phi_1$ are
asymptotically unitarily equivalent.

In the case that $H_1(K_0(A), K_1(B))=K_1(B),$ it follows from
Theorem \ref{T105} that $\bt\circ \phi_2$ and $\phi_1$ are strongly
asymptotically unitarily equivalent.
\end{proof}

\begin{lem}\label{L115}
Let $A_1$ and  $B_1$ be two  unital simple amenable \CA s in
${\cal B}_0$ {{satisfying the UCT}}  and let $A=A\otimes U_1$ and
 $B=B_1\otimes U_2$ for  UHF-algebras $U_1$ and $U_2$ of infinite type.  Let $\phi: A\to B$ be an
isomorphism. Suppose that $\bt\in \mathrm{Aut}(B\otimes M_{\mathfrak{p}})$ is such that
$$[\bt]=[{\rm id}_{B\otimes
M_{\mathfrak{p}}}]\,\,\,{\textrm in}\,\,\,KK(B\otimes
M_{\mathfrak{p}},B\otimes M_{\mathfrak{p}}) \tand \bt_T=({\rm
id}_{B\otimes M_{\mathfrak{p}}})_T
$$
for some supernatural number $\mathfrak{p}$ of infinite type.

Then there exists an automorphism $\af\in Aut(B)$ with
$[\af]=[{\rm id}_{B}]$ in $KK(B,B)$ such that $ \imath\circ
\af\circ \phi $ and $\bt\circ \imath\circ \phi$ are asymptotically
unitarily equivalent, where $\imath: B\to B\otimes
M_{\mathfrak{p}}$ is defined by $\imath(b)=b\otimes 1$ for all
$b\in B.$
\end{lem}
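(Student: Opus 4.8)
The plan is to follow the scheme of Lemma 11.5 of \cite{Lnclasn} and adapt it to the setting of $\mathcal B_0$, using the machinery already developed in Sections 27--30 of this paper. First I would transport $\bt$ through the isomorphism $\phi$: set $\psi_0=\imath\circ\phi$ and $\psi=\bt\circ\imath\circ\phi$, two unital monomorphisms from $A$ into $B\otimes M_{\mathfrak p}$. By hypothesis $[\bt]=[\mathrm{id}_{B\otimes M_{\mathfrak p}}]$ in $KK$ and $\bt_T=(\mathrm{id})_T$, so $[\psi]=[\psi_0]$ in $KK(A,B\otimes M_{\mathfrak p})$ and $\psi_T=(\psi_0)_T$. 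The one invariant that can differ is the determinant map: by Lemma \ref{L113} there is an isomorphism $\mu\colon U(B)/CU(B)\to U(B)/CU(B)$ with $\mu(U_0(B)/CU(B))\subset U_0(B)/CU(B)$ and $q_1\circ\mu=q_1$ such that $\imath^{\ddag}\circ\mu\circ\phi^{\ddag}=\psi^{\ddag}$. The point of $\mu$ being the identity on $K_1$ and preserving the connected component is that $\mu\circ\phi^{\ddag}$ and $\phi^{\ddag}$ agree modulo $\Aff(T(B))/\overline{\rho_B(K_0(B))}$, and since $\phi_0^{\ddag}=\imath^{\ddag}\circ\phi^{\ddag}$ and $\imath^{\ddag}|_{U_0(B)/CU(B)}$ is an isomorphism onto $U_0(B\otimes M_{\mathfrak p})/CU(B\otimes M_{\mathfrak p})$ by Lemma \ref{L112}, we can realize the discrepancy $\mu\circ(\phi^{\ddag})\circ(\phi^{\ddag})^{-1}$ (an automorphism of $U(B)/CU(B)$ fixing $K_1(B)$ and carrying $U_0$ into $U_0$) by an automorphism of $B$.

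The second step is to produce $\af\in\mathrm{Aut}(B)$ with $[\af]=[\mathrm{id}_B]$ in $KK(B,B)$, $\af_T=(\mathrm{id}_B)_T$, and $\af^{\ddag}=\mu$. This is exactly the kind of existence statement furnished by Theorem \ref{MT2} (surjectivity of $\boldsymbol{\mathfrak K}$ onto $KKUT_e(B,B)^{++}$): the triple $([\mathrm{id}_B],\mu,(\mathrm{id}_B)_T)$ lies in $KKUT_e^{-1}(B,B)^{++}$ because $\mu$ is an isomorphism of $U(B)/CU(B)$ (one must check $\mu$ is compatible with $[\mathrm{id}_B]$ and $(\mathrm{id}_B)_T$, which holds precisely because $q_1\circ\mu=q_1$ and $\mu$ preserves $U_0(B)/CU(B)$, so it induces the identity on $\Aff(T(B))/\overline{\rho_B(K_0(B))}$ up to the relevant quotient—or more carefully, one chooses $\mu$ from Lemma \ref{L113} with this compatibility built in). Then applying Corollary \ref{C910} (or directly the approximate intertwining argument behind Theorem \ref{MT2}) upgrades the monomorphism realizing this triple to a genuine automorphism $\af$.

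The third step is the asymptotic-unitary-equivalence comparison. Having chosen $\af$ so that $\af^{\ddag}=\mu$, I would verify that $\imath\circ\af\circ\phi$ and $\bt\circ\imath\circ\phi=\psi$ have the same full invariant in $KKUT(A,B\otimes M_{\mathfrak p})^{++}$: the $KK$-classes agree because $[\af]=[\mathrm{id}_B]$ and $[\bt]=[\mathrm{id}]$; the trace maps agree because $\af_T=(\mathrm{id}_B)_T$ and $\bt_T=(\mathrm{id})_T$; and the determinant maps agree by the computation $(\imath\circ\af\circ\phi)^{\ddag}=\imath^{\ddag}\circ\mu\circ\phi^{\ddag}=\psi^{\ddag}$. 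By Theorem \ref{MT2} the set $\langle\kappa,\af',\gamma\rangle$ of asymptotic classes with a given invariant is parametrized by $\mathrm{Hom}(K_1(A),\overline{\rho_{B\otimes M_{\mathfrak p}}(K_0(B\otimes M_{\mathfrak p}))})/\mathcal R_0$ via the rotation map, so $\imath\circ\af\circ\phi$ and $\psi$ are asymptotically unitarily equivalent iff $\overline R_{\imath\circ\af\circ\phi,\,\psi}=0$. If this rotation obstruction is nonzero, I would absorb it exactly as in the proof of Lemma \ref{L114}: replace $\af$ by $\af_2\circ\af$, where $\af_2\in\mathrm{Aut}(B)$ has $[\af_2]=[\mathrm{id}_B]$, $\af_2^{\ddag}=(\mathrm{id}_B)^{\ddag}$, $(\af_2)_T=(\mathrm{id}_B)_T$, and $\overline R_{\mathrm{id}_B,\af_2}$ chosen (using the bijection in Corollary \ref{C910} and Lemma \ref{L97}, the additivity $\overline R_{\phi_1,\af\circ\phi_2}=\overline R_{\mathrm{id},\af}\circ(\phi_2)_{*1}+\overline R_{\phi_1,\phi_2}$) to cancel the discrepancy; one needs here that $\overline R_{\imath\circ\af\circ\phi,\psi}$ lies in the image of $\mathrm{Hom}(K_1(B),\overline{\rho(K_0)})/\mathcal R_0$ under precomposition with $(\imath\circ\af\circ\phi)_{*1}$, which it does since $(\imath\circ\af\circ\phi)_{*1}$ is an isomorphism on $K_1$. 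The main obstacle I anticipate is precisely the bookkeeping in this last step: one must be careful that the rotation-map term being cancelled takes values in $\overline{\rho_{B\otimes M_{\mathfrak p}}(K_0(B\otimes M_{\mathfrak p}))}$ (guaranteed by Lemma \ref{L92} applied to the pair with equal $KK$-class, trace map, and determinant), and that pulling it back along the $K_1$-isomorphism keeps it inside $\overline{\rho_B(K_0(B))}$ so that Theorem \ref{MT2}/Corollary \ref{C910} can realize it by an automorphism of $B$ rather than of $B\otimes M_{\mathfrak p}$; this is where Lemma \ref{L112} is essential.
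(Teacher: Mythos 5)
Your proposal is correct and follows essentially the same route as the paper's proof: Lemma \ref{L113} supplies $\mu$, Theorem \ref{MT2} (together with Corollary \ref{C910}) realizes it by an automorphism $\af$ of $B$ with $[\af]=[{\rm id}_B]$ and $\af_T=({\rm id}_B)_T$, and the rotation obstruction is then killed using Lemma \ref{L92}, Theorem \ref{T96}/Lemma \ref{L97}, and finally Theorem \ref{Tm72}. The only difference is organizational: the paper prescribes $\overline{R}_{{\rm id}_B,\af}$ directly in a single application of Theorem \ref{MT2}, whereas you first fix $\af$ and then cancel the rotation by a second automorphism $\af_2$ as in Lemma \ref{L114}; your remark that Lemmas \ref{L92} and \ref{L112} guarantee the obstruction takes values in $\overline{\rho_B(K_0(B))}$ (after identifying $\Aff(T(B\otimes M_{\mathfrak{p}}))$ with $\Aff(T(B))$) is precisely the point that makes either packaging work.
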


\begin{proof}
It follows from Lemma \ref{L113} that there is an isomorphism $\mu:
U(B)/CU(B)\to U(B)/CU(B)$ such that
$$
\imath^{\ddag}\circ \mu\circ \phi^{\ddag}=(\bt\circ \imath\circ
\phi)^{\ddag}.
$$
Note that $\imath_T: T(B\otimes M_{\mathfrak{p}})\to T(B)$ is an
affine homeomorphism.

It follows from Theorem \ref{MT2} that there is an automorphism $\af: B\to
B$ such that
\beq\label{l1-1}
&&[\af]=[{\rm id}_B]\,\,\,{\rm in}\,\,\,KK(B,
B),\\
&&\af^{\ddag}=\mu,\,\,\, \af_T=(\bt\circ \imath\circ \phi)_T\circ
((\imath\circ
\phi)_T)^{-1}=({\rm id}_{B\otimes M_{\mathfrak{p}}})_T\andeqn\\
&&\overline{R}_{{\rm
id}_B,\af}(x)(\tau)=-\overline{R}_{\bt\circ\imath\circ \phi,\,
\imath\circ \phi}(\phi_{*1}^{-1}(x))(\imath_T(\tau))\tforal x\in
K_1(A)
\eneq
and for all $\tau\in T(B).$

Denote by $\psi=\imath\circ \af\circ \phi.$ Then we have, by Lemma
\ref{L97},
\beq
&&[\psi]= [\imath\circ \phi]=[\bt\circ\imath\circ \phi]\,\,\,{\rm
in}\,\,\, KK(A, B\otimes M_{\mathfrak{p}})\\
&&\psi^{\ddag}=\imath^{\ddag}\circ\mu\circ \phi^{\ddag}=(\bt\circ
\imath\circ \phi)^{\ddag}, \, \mathrm{and}\\
&&\psi_T=(\imath\circ \af\circ \phi)_T=(\imath\circ
\phi)_T=(\bt\circ \imath\circ \phi)_T.
\eneq
Moreover, for any $x\in K_1(A)$ and $\tau\in T(B\otimes
M_{\mathfrak{p}}),$
\beq
\overline{R}_{\bt\circ \imath\circ \phi, \psi}(x)(\tau)&=&
\overline{R}_{\bt\circ \imath\circ\phi,
\imath\circ\phi}(x)(\tau)+\overline{R}_{\imath, \imath\circ \af}\circ
\phi_{*1}(x)(\tau)\\
&=&\overline{R}_{\bt\circ \imath\circ\phi,
\imath\circ\phi}(x)(\tau)+\overline{R}_{{\rm id}_B, \imath\circ \af}\circ
\phi_{*1}(x)(\imath_T^{-1}(\tau))\\
&=&\overline{R}_{\bt\circ \imath\circ\phi,
\imath\circ\phi}(x)(\tau)-\overline{R}_{\bt\circ \imath\circ\phi,
\imath\circ\phi}(\phi_{*1}^{-1})(\phi_{*1}(x))(\tau)=0.
\eneq
It follows from Theorem \ref{Tm72}  that $\imath\circ \af\circ \phi$ and
$\bt\circ \imath\circ \phi$ are asymptotically unitarily
equivalent.
\end{proof}

{{Let ${\cal N}$ be the class of separable amenable \CA s which satisfy the UCT.}}


%


  \begin{thm}\label{CMT1}
Let $A$ and $B$ be two unital separable  simple
\CA s in ${\cal N}.$  Suppose that there is an isomorphism
$$
\Gamma: {\rm Ell}(A)\to {\rm Ell}(B).
$$
Suppose also that, for some pair of relatively prime supernatural
numbers $\mathfrak{p}$ and $\mathfrak{q}$ of infinite type such
that $M_{\mathfrak{p}}\otimes M_{\mathfrak{q}}\cong Q,$ we have
$A\otimes M_{\mathfrak{p}}\in {\cal B}_0$, $B\otimes
M_{\mathfrak{p}}\in {\cal B}_0$, $A\otimes M_{\mathfrak{q}}\in {\cal B}_0$, and
$B\otimes M_{\mathfrak{q}}\in {\cal B}_0.$ Then,
$$
A\otimes {\cal Z}\cong B\otimes {\cal Z}.
$$
\end{thm}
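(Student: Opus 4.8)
The plan is to follow W.\ Winter's deformation strategy for $\mathcal Z$-stability, using the classification results already established for algebras in $\mathcal B_0$. Since $M_{\mathfrak p}\otimes M_{\mathfrak q}\cong Q$, we have $A\otimes Q\cong (A\otimes M_{\mathfrak p})\otimes M_{\mathfrak q}$, and by Theorem \ref{B0=B1tU} together with the hypothesis $A\otimes M_{\mathfrak p}\in\mathcal B_0\subset\mathcal B_1$ we get $A\otimes Q\in\mathcal B_0$, and likewise $B\otimes Q\in\mathcal B_0$. The isomorphism $\Gamma:\mathrm{Ell}(A)\to\mathrm{Ell}(B)$ induces an isomorphism $\mathrm{Ell}(A\otimes Q)\cong\mathrm{Ell}(B\otimes Q)$ (both $K$-groups are rationalized, the tracial data is unchanged up to the canonical identification), so by the main isomorphism theorem of Part I (Theorem \ref{IST1}) there is an isomorphism $\Phi_Q: A\otimes Q\to B\otimes Q$ carrying this identification. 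The same reasoning gives isomorphisms $\Phi_{\mathfrak p}: A\otimes M_{\mathfrak p}\to B\otimes M_{\mathfrak p}$ and $\Phi_{\mathfrak q}: A\otimes M_{\mathfrak q}\to B\otimes M_{\mathfrak q}$, each compatible with $\Gamma$.

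Next I would realize $A\otimes\mathcal Z$ as (a corner of, or directly as) the stationary "dimension-drop"-type algebra built from paths in $A\otimes Q$ with endpoints in $A\otimes M_{\mathfrak p}$ and $A\otimes M_{\mathfrak q}$, in the sense used by Winter. Concretely, set
$$
Z_{\mathfrak p,\mathfrak q}(A)=\{f\in C([0,1], A\otimes Q): f(0)\in A\otimes M_{\mathfrak p}\otimes 1,\ f(1)\in 1\otimes M_{\mathfrak q}\otimes A\},
$$
and recall $A\otimes\mathcal Z\cong A\otimes Z_{\mathfrak p,\mathfrak q}$ type results; since $A\cong A\otimes\mathcal Z$ when $A\in\mathcal N_1$ is already $\mathcal Z$-stable (and in any case $A\otimes\mathcal Z$ is what we are comparing), it suffices to produce an isomorphism $Z_{\mathfrak p,\mathfrak q}(A)\cong Z_{\mathfrak p,\mathfrak q}(B)$ compatible with the given data. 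To build such an isomorphism one needs the two endpoint isomorphisms $\Phi_{\mathfrak p}\otimes\mathrm{id}$ and $\Phi_{\mathfrak q}\otimes\mathrm{id}$ (amplified into $A\otimes Q$) to be connected, through the ambient isomorphism $\Phi_Q$, by a continuous path of automorphisms of $B\otimes Q$; equivalently, after composing with $\Phi_Q^{-1}$, one needs the two resulting automorphisms of $B\otimes Q$ — the ones comparing $\Phi_{\mathfrak p}$ (resp.\ $\Phi_{\mathfrak q}$) amplified to $Q$ against $\Phi_Q$ — to be \emph{asymptotically inner}. This is exactly where the asymptotic unitary equivalence machinery of Sections 9–10 enters.

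The key step, then, is: the two isomorphisms $\imath_{\mathfrak q}\circ\Phi_{\mathfrak p}$ and $\Phi_Q$ from $A\otimes M_{\mathfrak p}$ into $B\otimes Q$ (suitably amplified) agree on $\mathrm{Ell}$ up to the canonical identifications and in $KK$, and one must upgrade this to asymptotic unitary equivalence; similarly at the $\mathfrak q$-endpoint. For this I would invoke Lemma \ref{L114}: given two isomorphisms between algebras $A_1\otimes U_1$ and $B_1\otimes U_2$ inducing the same class in $KK$, there is an automorphism $\beta$ with $[\beta]=[\mathrm{id}]$ such that $\beta$ composed with one is asymptotically unitarily equivalent to the other — and under $H_1(K_0,K_1)=K_1$, strongly so (Theorem \ref{T105}). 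Using Corollary \ref{C910} and Lemma \ref{L115} one absorbs the discrepancy in the rotation/determinant invariants $\overline R$ and $\ddag$ into a single automorphism of the endpoint algebra that is $KK$- and trace-trivial, which by Theorem \ref{MT2}/\ref{Tm72} is asymptotically inner after a correction. Matching the corrections at both endpoints simultaneously (so that one global path of unitaries in $B\otimes Q$ restricts correctly at $t=0$ and $t=1$) is the delicate bookkeeping, and it is carried out by feeding the asymptotic-equivalence paths into the construction of an isomorphism of the path algebras, as in Winter's argument.

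\textbf{Main obstacle.} I expect the principal difficulty to be precisely the simultaneous compatibility at the two endpoints: an asymptotic unitary equivalence $\mathrm{Ad}\,u(t)\circ\psi\to\psi'$ gives a path $u(t)$ going to infinity, and to splice it into an isomorphism of $Z_{\mathfrak p,\mathfrak q}$-type algebras one needs the path to be reparametrized onto $[0,1)$ and to match a chosen identification of the $\mathfrak q$-endpoint — while the freedom to adjust by $KK$-trivial automorphisms (governed by $\mathrm{Hom}(K_1,\overline{\rho_A(K_0)})/\mathcal R_0$ via Corollary \ref{C910}) must be used to kill the obstruction living in the rotation map of the composite at the far endpoint. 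Controlling that this obstruction vanishes — i.e.\ that $\overline R_{\phi_1,\phi_2}=0$ for the relevant pair, which is the hypothesis of Theorem \ref{Tm72} — requires the exact-sequence computation of Corollary \ref{C910} and the additivity of $\overline R$ (Theorem \ref{T96}), together with the observation that the rotation maps of the endpoint isomorphisms, being built from the same $\Gamma$, are consistent. Once this is arranged, the resulting isomorphism $Z_{\mathfrak p,\mathfrak q}(A)\cong Z_{\mathfrak p,\mathfrak q}(B)$ yields $A\otimes\mathcal Z\cong B\otimes\mathcal Z$, using that $Z_{\mathfrak p,\mathfrak q}(D)\otimes\mathcal Z\cong D\otimes\mathcal Z$ (indeed $Z_{\mathfrak p,\mathfrak q}(D)$ absorbs $\mathcal Z$) for $D=A,B$.
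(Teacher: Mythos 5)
Your proposal is correct and follows essentially the same route as the paper: classify at both endpoints via Theorem \ref{IST1}, amplify to $A\otimes Q\to B\otimes Q$, upgrade the $KK$-agreement (automatic by divisibility) to strong asymptotic unitary equivalence using Lemma \ref{L114}, Lemma \ref{L115}, Corollary \ref{C910} and Theorems \ref{Tm72}/\ref{T105}, and feed the resulting path of unitaries into Winter's deformation theorem (Theorem 7.1 of \cite{Winter-Z}). The only cosmetic differences are your extra intermediary $\Phi_Q$ (the paper compares $\phi_{\mathfrak p}\otimes\mathrm{id}_{M_{\mathfrak q}}$ and $\psi_{\mathfrak q}\otimes\mathrm{id}_{M_{\mathfrak p}}$ directly, correcting only the $\mathfrak q$-endpoint by a $KK$-trivial automorphism) and your explicit path-algebra framing, which the paper delegates to Winter's ``unitarily suspended isomorphism.''
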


\begin{proof}
The proof is almost identical to that of 11.7 of \cite{Lnclasn}, with a few necessary modifications.
Note that $\Gamma$ induces an isomorphism
$$
\Gamma_{\mathfrak{p}}: {\rm Ell}(A\otimes M_{\mathfrak{p}})\to
{\rm Ell}(B\otimes M_{\mathfrak{p}}).
$$
 Since $A\otimes M_{\mathfrak{p}} \in {\cal B}_0$ and
$B\otimes M_{\mathfrak{p}}\in {\cal B}_0$, by Theorem
21.10 of \cite{GLN-I},
there is an isomorphism $\phi_{\mathfrak{p}}:
A\otimes M_{\mathfrak{p}}\to B\otimes M_{\mathfrak{p}}.$ Moreover
(by the proof of Theorem 21.10 of \cite{GLN-I}), 
$\phi_{\mathfrak{p}}$ carries $\Gamma_{\mathfrak{p}}.$  In the same way, $\Gamma$ induces an isomorphism
$$
\Gamma_{\mathfrak{q}}:{\rm Ell}(A\otimes M_{\mathfrak{q}})\to
{\rm Ell}(B\otimes M_{\mathfrak{q}})
$$
and there is an isomorphism $\psi_{\mathfrak{q}}: A\otimes
M_{\mathfrak{q}}\to B\otimes M_{\mathfrak{q}}$ which induces
$\Gamma_{\mathfrak{q}}.$

 Put $\phi=\phi_{\mathfrak{p}}\otimes {\rm
id}_{M_{\mathfrak{q}}}: A\otimes Q\to B\otimes Q$ and
$\psi=\psi_{\mathfrak{q}}\otimes {\rm
id}_{{M_{\mathfrak{p}}}}: A\otimes Q\to B\otimes Q.$
Note that
$$
(\phi)_{*i}=(\psi)_{*i}\,\,{\rm (} i=0,1 {\rm )} \andeqn
\phi_T=\psi_T
$$
(all four of these maps are induced by $\Gamma$). Note that $\phi_T$ and $\psi_T$
are affine homeomorphisms. Since $K_{*i}(B\otimes Q)$ is
divisible, we in fact have $[\phi]=[\psi]$ (in $KK(A\otimes Q,
B\otimes Q)$). It follows from Lemma \ref{L114} that there is an
automorphism $\bt: B\otimes Q\to B\otimes Q$ such that
$$
[\bt]=[{\rm id}_{B\otimes Q}]\,\,\, \mathrm{in}\  KK(B\otimes Q, B\otimes Q)
$$
{and} such that $\phi$ and $\bt\circ \psi$ are asymptotically unitarily
equivalent.  Since $K_1(B\otimes Q)$ is divisible,
$H_1(K_0(A\otimes Q), K_1(B\otimes Q))=K_1(B\otimes Q).$ It
follows that $\phi$ and $\bt\circ \psi$ are strongly
asymptotically unitarily equivalent. Note also in this case
$$
\bt_T=({\rm id}_{B\otimes Q})_T.
$$
Let $\imath: B\otimes M_{\mathfrak{q}}\to B\otimes Q$ be defined by
$\imath(b)=b\otimes 1$ for $b\in B.$ We consider the pair
$\bt\circ \imath\circ {\psi}_{\mathfrak{q}}$ and $\imath \circ
{\psi}_{\mathfrak{q}}.$ Applying Lemma \ref{L115}, we obtain an
automorphism $\af: B\otimes M_{\mathfrak{q}}\to B\otimes
M_{\mathfrak{q}}$ such that $\imath\circ \af\circ
\psi_{\mathfrak{q}}$ and $\bt\circ \imath\circ
\psi_{\mathfrak{q}}$ are asymptotically unitarily equivalent (in
$B\otimes Q$). So, by  Lemma \ref{L114}, they are strongly asymptotically unitarily
equivalent in $B\otimes Q.$
Moreover,
$$
[\af]=[{\rm id}_{B\otimes M_{{\mathfrak{q}}}}]\,\,\,{\rm in}\,\,\,
KK(B\otimes M_{\mathfrak{q}},B\otimes M_{\mathfrak{q}}).
$$

We will show that $\bt\circ \psi$ and $(\af\circ{\psi}_{\mathfrak{q}})\otimes {\rm id}_{M_{\mathfrak{p}}}$
 are
strongly asymptotically unitarily equivalent. Define
$\bt_1=(\bt\circ \imath\circ\psi_{\mathfrak{q}})
\otimes {\rm
id}_{M_{\mathfrak{p}}}: B\otimes Q\otimes M_{\mathfrak{p}}\to
B\otimes Q\otimes M_{\mathfrak{p}}.$ Let $j: Q\to Q\otimes
M_{\mathfrak{p}}$ be defined by $j(b)=b\otimes 1.$ There is an
isomorphism $s: M_{\mathfrak{p}}\to M_{\mathfrak{p}}\otimes
M_{\mathfrak{p}}$ such that the homomorphism ${\rm id}_{M_{\mathfrak{q}}}\otimes s: M_{\mathfrak{q}}\otimes M_{\mathfrak{p}} (=Q)\to M_{\mathfrak{q}}\otimes M_{\mathfrak{p}}\otimes M_{\mathfrak{p}}(=Q\otimes M_{\mathfrak{p}})$ induces
$({\rm id}_{M_{\mathfrak{q}}}\otimes
s)_{*0}=j_{*0}.$
In this case, $[{\rm id}_{M_{\mathfrak{q}}}\otimes s]=[j].$ Since
$K_1(M_{\mathfrak{p}})=0{,}$ {by} Theorem \ref{Tm72}, ${\rm
id}_{M_{\mathfrak{q}}}\otimes s$ is strongly asymptotically
unitarily equivalent to $j.$ It follows that $
(\af\circ\psi_{\mathfrak{q}})
\otimes {\rm id}_{M_{\mathfrak{p}}}$ and
$
(\bt\circ \imath\circ \psi_{\mathfrak{q}}) \otimes {\rm
id}_{M_{\mathfrak{p}}}$ are strongly asymptotically unitarily
equivalent (note that $\imath\circ \af\circ
\psi_{\mathfrak{q}}$ and $\bt\circ \imath\circ
\psi_{\mathfrak{q}}$  are
strongly asymptotically unitarily equivalent).
Consider the \SCA\, $C=\bt\circ \psi(1\otimes
M_{\mathfrak{p}})\otimes M_{\mathfrak{p}}\subset B\otimes Q\otimes
M_{\mathfrak{p}}.$ In $C,$  $\bt\circ \phi|_{1\otimes
M_{\mathfrak{p}}}$ and $j_0$ are strongly asymptotically unitarily
equivalent, where $j_0: M_{\mathfrak{p}}\to C$ is defined by $j_0(a)=1\otimes
a$ for all $a\in M_{\mathfrak{p}}.$ In particular, there exists a continuous path
of unitaries $\{v(t): t\in [0,\infty)\}\subset C$ such that
\beq\label{CM1-1}
\lim_{t\to\infty}{\rm Ad}\, v(t) \circ\bt\circ \phi(1\otimes
a)=1\otimes a\tforal a\in M_{\mathfrak{p}}.
\eneq
It follows that $\bt\circ \psi$ and $\bt_1$ are strongly
asymptotically unitarily equivalent. Therefore $\bt\circ \psi$ and
$
(\af\circ \psi_{\mathfrak{q}})\otimes {\rm id}_{M_{\mathfrak{p}}}$
are strongly asymptotically unitarily equivalent. Finally, we
conclude that $
(\af\circ \psi_{\mathfrak{q}})\otimes {\rm
id}_{M_{\mathfrak{p}}}$ and $\phi$ are strongly asymptotically
unitarily equivalent. Note that $\af\circ \psi_{\mathfrak{q}}$ is
an isomorphism which induces $\Gamma_{\mathfrak{q}}.$

Let $\{u(t): t\in [0,1)\}$ be a continuous path of unitaries in
$B\otimes Q$ with $u(0)=1_{B\otimes Q}$ such that
$$
\lim_{t\to\infty}{\rm Ad}\, u(t)\circ \phi(a)=\af\circ
\psi_{\mathfrak{q}}\otimes {\rm id}_{{M_{\mathfrak{p}}}}(a)\tforal
a\in A\otimes Q.
$$
One then obtains a unitary suspended isomorphism which lifts
$\Gamma$ along $Z_{p,q}$ (see \cite{Winter-Z}). It follows from Theorem
7.1 of \cite{Winter-Z} that $A\otimes {\cal Z}$ and $B\otimes {\cal Z}$
are isomorphic.
\end{proof}

\begin{df}\label{Class}
Denote by ${\cal N}_0$ the class of those unital  simple
\CA s $A$  in ${\cal N}$ for which $A\otimes
M_{\mathfrak{p}}\in {\cal N}\cap {\cal B}_0$ for any supernatural number
${\mathfrak{p}}$ of infinite type.

Of course ${\cal N}_0$  contains all unital simple amenable \CA s
in ${\cal B}_0$  which satisfy the UCT.
It contains all unital simple inductive limits of \CA s in ${\cal C}_0.$
It should be noted that, by Theorem 19.3
 of \cite{GLN-I}, ${\cal N}_0={\cal N}_1.$

\end{df}

\begin{cor}\label{CM1}
Let $A$ and $B$ be two \CA s in ${\cal N}_0.$ Then $A\otimes {\cal
Z}\cong B\otimes {\cal Z}$ if and only if ${\rm Ell}(A\otimes {\cal
Z})\cong {\rm Ell}(B\otimes {\cal Z}).$
\end{cor}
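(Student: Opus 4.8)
\textbf{Proof proposal for Corollary \ref{CM1}.}

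The plan is to reduce the statement to Theorem \ref{CMT1} by producing the two pairs of coprime supernatural numbers of infinite type with the required property. The "only if" direction is trivial: an isomorphism $A\otimes{\cal Z}\cong B\otimes{\cal Z}$ induces an isomorphism of Elliott invariants functorially, so nothing needs to be done there. For the "if" direction, suppose $\Gamma: {\rm Ell}(A\otimes{\cal Z})\to {\rm Ell}(B\otimes{\cal Z})$ is an isomorphism. First I would recall that since $A, B\in{\cal N}_0$, by Definition \ref{Class} one has $A\otimes M_{\mathfrak r}\in{\cal N}\cap{\cal B}_0$ and $B\otimes M_{\mathfrak r}\in{\cal N}\cap{\cal B}_0$ for \emph{every} supernatural number $\mathfrak r$ of infinite type. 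In particular this holds for any chosen pair $\mathfrak p,\mathfrak q$ of relatively prime supernatural numbers of infinite type with $M_{\mathfrak p}\otimes M_{\mathfrak q}\cong Q$ (for instance $\mathfrak p = 2^\infty 3^\infty 5^\infty\cdots$ over the primes $\equiv 1 \bmod 2$ in some enumeration and $\mathfrak q$ over the complementary set — any standard splitting of all primes into two infinite sets works).

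Next I would observe that ${\cal Z}$-stability lets us pass freely between $A$ and $A\otimes{\cal Z}$ at the level of the hypotheses of Theorem \ref{CMT1}. Indeed, $(A\otimes{\cal Z})\otimes M_{\mathfrak p} \cong (A\otimes M_{\mathfrak p})\otimes{\cal Z}$, and since $A\otimes M_{\mathfrak p}\in{\cal B}_1$ (as ${\cal B}_0\subset{\cal B}_1$), Theorem \ref{Zstable} gives $(A\otimes M_{\mathfrak p})\otimes{\cal Z}\cong A\otimes M_{\mathfrak p}\in{\cal B}_0$; the same applies with $\mathfrak q$ in place of $\mathfrak p$ and with $B$ in place of $A$. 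Therefore the unital separable simple \CA s $A' := A\otimes{\cal Z}$ and $B' := B\otimes{\cal Z}$ lie in ${\cal N}$ and satisfy $A'\otimes M_{\mathfrak p}\in{\cal B}_0$, $B'\otimes M_{\mathfrak p}\in{\cal B}_0$, $A'\otimes M_{\mathfrak q}\in{\cal B}_0$, $B'\otimes M_{\mathfrak q}\in{\cal B}_0$. (One should also note that $A'$ and $B'$ are indeed simple and separable, which is immediate since ${\cal Z}$ is unital simple separable nuclear and these properties are preserved under tensoring; amenability/UCT membership in ${\cal N}$ is likewise preserved, so $A',B'\in{\cal N}$.)

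With these facts in hand, I would apply Theorem \ref{CMT1} to $A'$, $B'$, and the isomorphism $\Gamma: {\rm Ell}(A')\to{\rm Ell}(B')$, together with the pair $\mathfrak p,\mathfrak q$, to conclude $A'\otimes{\cal Z}\cong B'\otimes{\cal Z}$. Finally, using ${\cal Z}\otimes{\cal Z}\cong{\cal Z}$ (which follows from ${\cal Z}$ being strongly self-absorbing, or simply from Theorem \ref{Zstable} applied to ${\cal Z}$ itself), we get $A'\otimes{\cal Z}\cong A\otimes{\cal Z}\otimes{\cal Z}\cong A\otimes{\cal Z}$ and similarly $B'\otimes{\cal Z}\cong B\otimes{\cal Z}$, hence $A\otimes{\cal Z}\cong B\otimes{\cal Z}$, as desired.

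I do not expect any real obstacle here: the corollary is essentially a bookkeeping consequence of Theorem \ref{CMT1}, the definition of ${\cal N}_0$, and ${\cal Z}$-stability. The only point requiring a little care is verifying that tensoring with ${\cal Z}$ (or with $M_{\mathfrak p}$, $M_{\mathfrak q}$) preserves membership in the relevant classes and the simplicity/separability hypotheses of Theorem \ref{CMT1}, and that the chosen $\mathfrak p,\mathfrak q$ genuinely satisfy $M_{\mathfrak p}\otimes M_{\mathfrak q}\cong Q$ — all of which are standard.
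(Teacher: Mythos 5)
Your proof is correct and follows essentially the same route as the paper, which simply observes that the corollary is an immediate consequence of Theorem \ref{CMT1}; you have merely filled in the routine verifications, namely that $(A\otimes {\cal Z})\otimes M_{\mathfrak p}\cong A\otimes M_{\mathfrak p}\in {\cal B}_0$ (and likewise for $\mathfrak q$ and for $B$) by ${\cal Z}$-stability, and that ${\cal Z}\otimes{\cal Z}\cong{\cal Z}$ converts the conclusion of Theorem \ref{CMT1} for $A\otimes{\cal Z}$, $B\otimes{\cal Z}$ into $A\otimes{\cal Z}\cong B\otimes{\cal Z}$. The only cosmetic point is to cite the ${\cal Z}$-absorption fact as Theorem 10.7 of the first part (as the paper does in Corollary \ref{CM2}) rather than the restatement you labelled, since that restatement does not appear in the compiled text.
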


\begin{proof}
This follows from Theorem \ref{CMT1} immediately.
\end{proof}





\begin{thm}\label{MFTh}
Let $A$ and $B$ be two unital separable simple amenable ${\cal Z}$-stable \CA s which satisfy the UCT.
Suppose that $gTR(A\otimes Q)\le 1$ and $gTR(B\otimes Q)\le 1.$ Then
$A\cong B$ if and only if
\begin{equation*}
{\rm Ell}(A)\cong {\rm Ell}(B).
\end{equation*}
\end{thm}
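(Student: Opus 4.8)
The plan is to reduce Theorem \ref{MFTh} to Corollary \ref{CM1} by using $\mathcal{Z}$-stability and the hypothesis on the rational generalized tracial rank. First I would observe that since $A$ and $B$ are $\mathcal{Z}$-stable, we have $A\cong A\otimes\mathcal{Z}$ and $B\cong B\otimes\mathcal{Z}$, so it suffices to prove $A\otimes\mathcal{Z}\cong B\otimes\mathcal{Z}$ if and only if $\mathrm{Ell}(A\otimes\mathcal{Z})\cong\mathrm{Ell}(B\otimes\mathcal{Z})$ (the ``only if'' direction being trivial as the Elliott invariant is a functor). The key point is to verify that $A$ and $B$ belong to the class $\mathcal{N}_0$ of Definition \ref{Class}, so that Corollary \ref{CM1} applies directly.

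To place $A$ in $\mathcal{N}_0$, I need to show $A\otimes M_{\mathfrak p}\in\mathcal{N}\cap\mathcal{B}_0$ for every supernatural number $\mathfrak p$ of infinite type. The UCT passes to tensor products with the nuclear, UCT algebra $M_{\mathfrak p}$, so $A\otimes M_{\mathfrak p}\in\mathcal{N}$; amenability and separability are similarly preserved. For the $\mathcal{B}_0$ membership: the hypothesis $gTR(A\otimes Q)\le 1$ means $A\otimes Q\in\mathcal{B}_1$ (using Proposition 3.21 of \cite{GLN-I}, as recalled after Definition \ref{DgTR}). Now $A\otimes M_{\mathfrak p}$ is a unital separable simple $C^*$-algebra with $(A\otimes M_{\mathfrak p})\otimes U\cong A\otimes Q$ for a suitable infinite-dimensional UHF algebra $U$ (choose $U=M_{\mathfrak q}$ where $M_{\mathfrak p}\otimes M_{\mathfrak q}\cong Q$, or more simply $U=Q$ itself and note $A\otimes M_{\mathfrak p}\otimes Q\cong A\otimes Q$). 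Since $A\otimes M_{\mathfrak p}\otimes U\in\mathcal{B}_1$, Theorem \ref{B0=B1tU} (Corollary 19.3 of \cite{GLN-I}) gives $A\otimes M_{\mathfrak p}\otimes U\in\mathcal{B}_0$; but in fact one needs $A\otimes M_{\mathfrak p}$ itself, which already has the required form (it is UHF-absorbing since $M_{\mathfrak p}\cong M_{\mathfrak p}\otimes M_{\mathfrak p}$), so $A\otimes M_{\mathfrak p}\otimes M_{\mathfrak p}\cong A\otimes M_{\mathfrak p}$ is of the form ``something $\otimes$ infinite-type UHF'' and lies in $\mathcal{B}_0$ by the same corollary. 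The same argument applies to $B$.

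With $A,B\in\mathcal{N}_0$ established, Corollary \ref{CM1} yields $A\otimes\mathcal{Z}\cong B\otimes\mathcal{Z}$ if and only if $\mathrm{Ell}(A\otimes\mathcal{Z})\cong\mathrm{Ell}(B\otimes\mathcal{Z})$. Finally, since $A$ is already $\mathcal{Z}$-stable, $\mathrm{Ell}(A\otimes\mathcal{Z})\cong\mathrm{Ell}(A)$ canonically (indeed $A\otimes\mathcal{Z}\cong A$), and likewise for $B$; so $\mathrm{Ell}(A)\cong\mathrm{Ell}(B)$ implies $A\cong A\otimes\mathcal{Z}\cong B\otimes\mathcal{Z}\cong B$. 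Conversely $A\cong B$ trivially gives $\mathrm{Ell}(A)\cong\mathrm{Ell}(B)$.

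The main obstacle I anticipate is purely bookkeeping rather than conceptual: one must be careful that the reduction to $\mathcal{B}_0$ via Theorem \ref{B0=B1tU} is applied to the right algebra. The cleanest route is to note that for $\mathfrak p$ of infinite type, $M_{\mathfrak p}$ absorbs itself tensorially, so $A\otimes M_{\mathfrak p}$ is of the form $A_1\otimes U$ with $U$ an infinite-type UHF algebra and $A_1\otimes U\cong A\otimes Q\in\mathcal{B}_1$; hence Corollary 19.3 of \cite{GLN-I} applies verbatim. A secondary point to state carefully is that $gTR(A\otimes Q)\le 1$ is exactly the statement $A\otimes Q\in\mathcal{B}_1$ and that this in turn feeds the hypothesis of $\mathcal{N}_0$; once these identifications are laid out, the theorem follows immediately from Corollary \ref{CM1} and $\mathcal{Z}$-stability of $A$ and $B$.
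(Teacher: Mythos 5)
Your overall route coincides with the paper's: verify that $A,B\in{\cal N}_0$, invoke Corollary \ref{CM1}, and finish with ${\cal Z}$-stability ($A\cong A\otimes{\cal Z}$, $B\cong B\otimes{\cal Z}$). The parts about the UCT and amenability passing to $A\otimes M_{\mathfrak{p}}$, and the final bookkeeping with the invariant, are fine. The problem is the step where you claim $A\otimes M_{\mathfrak{p}}\in{\cal B}_0$ for an arbitrary supernatural number $\mathfrak{p}$ of infinite type. Theorem \ref{B0=B1tU} (Corollary 19.3 of \cite{GLN-I} as you quote it) has as its hypothesis that the algebra in question, written as $A_1\otimes U$, already lies in ${\cal B}_1$. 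Writing $A\otimes M_{\mathfrak{p}}\cong A\otimes M_{\mathfrak{p}}\otimes M_{\mathfrak{p}}$ does put it in the form ``something $\otimes$ infinite-type UHF,'' but to apply the corollary you would need to know $A\otimes M_{\mathfrak{p}}\in{\cal B}_1$, and your hypothesis only gives $A\otimes Q\in{\cal B}_1$. The observation $(A\otimes M_{\mathfrak{p}})\otimes M_{\mathfrak{q}}\cong A\otimes Q\in{\cal B}_1$ concerns a different algebra (it only yields $A\otimes Q\in{\cal B}_0$, as you yourself note), and the parenthetical identification ``$A_1\otimes U\cong A\otimes Q$'' with $A_1\otimes U=A\otimes M_{\mathfrak{p}}$ is false in general (already the $K$-theory differs). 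So the concluding phrase ``lies in ${\cal B}_0$ by the same corollary'' does not follow from what you have established.

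The descent from $A\otimes Q\in{\cal B}_1$ to $A\otimes M_{\mathfrak{p}}\in{\cal B}_0$ (or even ${\cal B}_1$) for every infinite-type $\mathfrak{p}$ is not a formal consequence of $M_{\mathfrak{p}}\otimes M_{\mathfrak{p}}\cong M_{\mathfrak{p}}$; it is exactly the nontrivial content of Corollary 19.3 of \cite{GLN-I} in the strength in which the paper actually uses it (this is also what lies behind the remark ${\cal N}_0={\cal N}_1$ in Definition \ref{Class}), and its proof in Part I rests on the classification and range theorems, not on self-absorption of UHF algebras. Note also that even the weaker input needed for Theorem \ref{CMT1} --- ${\cal B}_0$-membership of $A\otimes M_{\mathfrak{p}}$ and $A\otimes M_{\mathfrak{q}}$ for a single coprime pair with $M_{\mathfrak{p}}\otimes M_{\mathfrak{q}}\cong Q$ --- already requires this descent, so the gap cannot be avoided by restricting to two supernatural numbers. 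The fix is simply to cite the result in the form the paper does (directly concluding $A\otimes U, B\otimes U\in{\cal B}_0$ for every UHF algebra $U$ of infinite type from $gTR(A\otimes Q)\le 1$), or else to supply a genuine argument that $A\otimes M_{\mathfrak{p}}\in{\cal B}_1$; as written, that step would fail.
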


\begin{proof}
It follows from Corollary 19.3 of \cite{GLN-I}
that $A\otimes U, B\otimes U\in {\cal B}_0$ for any UHF-algebra $U$ of infinite type.
The theorem follows immediately by Corollary \ref{CM1}.
\end{proof}

\begin{cor}\label{MFTC}
Let $A$ and $B$ be two unital separable amenable simple \CA s which satisfy the UCT.
Suppose that $gTR(A)\le 1$ and $gTR(B)\le 1.$ Then
$A\cong B$ if and only if
\begin{equation*}
{\rm Ell}(A)\cong {\rm Ell}(B).
\end{equation*}
\end{cor}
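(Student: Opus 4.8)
The plan is to reduce Corollary \ref{MFTC} to Theorem \ref{MFTh} by showing that the hypothesis $gTR(A)\le 1$ is strictly stronger than the hypothesis appearing in \ref{MFTh}, namely that $A$ is unital separable amenable simple, $\mathcal Z$-stable, satisfies the UCT, and $gTR(A\otimes Q)\le 1$ (and likewise for $B$). Once we know that $gTR(A)\le 1$ implies all of those properties, the corollary follows by simply invoking \ref{MFTh}, since the conclusion and the invariant condition ${\rm Ell}(A)\cong{\rm Ell}(B)$ are identical in the two statements.

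First I would observe that $gTR(A)\le 1$ means $A\in\mathcal B_1$ by Definition \ref{DgTR} together with Proposition 3.21 of \cite{GLN-I} (quoted in the excerpt right after \ref{DgTR}); in particular $A$ is a unital infinite-dimensional simple \CA. Amenability and the UCT are assumed in the hypotheses of the corollary, so they carry over verbatim. The key remaining points are $\mathcal Z$-stability and $gTR(A\otimes Q)\le 1$. For $\mathcal Z$-stability I would apply Theorem \ref{Zstable} (Theorem 10.7 of \cite{GLN-I}): every unital separable simple amenable \CA\ in $\mathcal B_1$ satisfies $A\otimes\mathcal Z\cong A$. For the condition on $A\otimes Q$, note that $Q$ is the UHF-algebra with $K_0(Q)=\mathbb Q$, so $Q$ is an infinite-dimensional UHF-algebra; since $A\in\mathcal B_1$, the tensor product $A\otimes Q$ is again a unital separable simple amenable \CA. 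One then needs that $A\otimes Q\in\mathcal B_1$, equivalently $gTR(A\otimes Q)\le 1$. This should follow from the fact that $\mathcal B_1$ is closed under tensoring with UHF-algebras — indeed Theorem \ref{B0=B1tU} (Corollary 19.3 of \cite{GLN-I}) asserts that if $A_1\otimes U\in\mathcal B_1$ for an infinite-dimensional UHF-algebra $U$ then $A_1\otimes U\in\mathcal B_0$; the requisite closure statement $A\in\mathcal B_1\Rightarrow A\otimes U\in\mathcal B_1$ is part of the structure theory of Part I. (In fact the proof of \ref{MFTh} already uses "$A\otimes U\in\mathcal B_0$" for $A$ as in \ref{MFTh}, so the same reference supplies $A\otimes Q\in\mathcal B_0\subset\mathcal B_1$ here.) Symmetrically, $gTR(B)\le 1$ gives all of the hypotheses on $B$.

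Having verified that $A$ and $B$ satisfy all the hypotheses of Theorem \ref{MFTh}, I would conclude: $A\cong B$ if and only if ${\rm Ell}(A)\cong{\rm Ell}(B)$, which is exactly the statement of Corollary \ref{MFTC}. The main (really the only) obstacle is the bookkeeping step of confirming that $A\otimes Q\in\mathcal B_1$; this is not a deep point given Part I, but it does require citing the correct permanence property from \cite{GLN-I} rather than re-deriving it. Everything else is a direct appeal to \ref{Zstable}, to the identification $\mathcal B_1=\{gTR\le 1\}$, and to \ref{MFTh}. No new uniqueness or existence theorem is needed, since all the analytic work has already been done in proving \ref{CMT1}, \ref{CM1}, and \ref{MFTh}.
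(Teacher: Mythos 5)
Your reduction is exactly the paper's route: Corollary \ref{MFTC} is stated as an immediate consequence of Theorem \ref{MFTh}, and the essentially identical Corollary \ref{CM2} ($A,B\in{\cal B}_1\cap{\cal N}$) is proved there by precisely your two steps --- ${\cal Z}$-stability of a ${\cal B}_1$-algebra via Theorem 10.7 of \cite{GLN-I}, followed by an appeal to \ref{MFTh}. The only quibble is your parenthetical citation for the permanence step: the fact you need, $A\in{\cal B}_1\Rightarrow A\otimes Q\in{\cal B}_0\subset{\cal B}_1$, is Corollary 19.3 of \cite{GLN-I} in the form it is invoked in the proof of Lemma \ref{L85} (with hypothesis $A\in{\cal B}_1$), not as it is used in the proof of \ref{MFTh}, where $A\otimes Q\in{\cal B}_1$ is already a hypothesis.
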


\begin{cor}\label{CM2}
Let $A$ and $B$ be two unital simple \CA s in ${\cal B}_1 \cap {\cal N}.$
Then $A\cong B$ if and only if
$$
{\rm Ell}(A)\cong {\rm Ell}(B).
$$
\end{cor}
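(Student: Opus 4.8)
The plan is to deduce Corollary \ref{CM2} directly from Theorem \ref{MFTh}, since the two statements are essentially the same assertion phrased in slightly different language. Suppose $A, B \in {\cal B}_1 \cap {\cal N}$ with ${\rm Ell}(A) \cong {\rm Ell}(B)$. The first step is to observe that every unital separable simple \CA\ in ${\cal B}_1$ is automatically ${\cal Z}$-stable: this is exactly Theorem \ref{Zstable} (Theorem 10.7 of \cite{GLN-I}), which gives $A \otimes {\cal Z} \cong A$ and $B \otimes {\cal Z} \cong B$. Thus $A$ and $B$ are unital separable simple amenable ${\cal Z}$-stable \CA s satisfying the UCT.

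The second step is to check the hypotheses on rational generalized tracial rank. Since $A \in {\cal B}_1$, by Theorem \ref{B0=B1tU} (Corollary 19.3 of \cite{GLN-I}) applied with $U = Q$, we get $A \otimes Q \in {\cal B}_0 \subset {\cal B}_1$; in particular, by Proposition 3.21 of \cite{GLN-I} (cited just after Definition \ref{DgTR}), $gTR(A \otimes Q) \le 1$. The same applies to $B$, giving $gTR(B \otimes Q) \le 1$. Strictly speaking one should note that $A \otimes Q$ is still unital, separable, simple, and infinite dimensional, so that the defining property of ${\cal B}_1$ and hence of $gTR \le 1$ makes sense — this is immediate since $A$ contains a unital copy of $Q$ and $Q$ is infinite dimensional, so $A$ (and $A \otimes Q$) is infinite dimensional.

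The third step is simply to invoke Theorem \ref{MFTh}: with $A, B$ unital separable simple amenable ${\cal Z}$-stable satisfying the UCT and $gTR(A \otimes Q) \le 1$, $gTR(B \otimes Q) \le 1$, the isomorphism ${\rm Ell}(A) \cong {\rm Ell}(B)$ yields $A \cong B$. The converse direction (that $A \cong B$ implies ${\rm Ell}(A) \cong {\rm Ell}(B)$) is immediate, since the Elliott invariant is a functor. I do not expect any genuine obstacle here: the entire content of Corollary \ref{CM2} is already packaged inside Theorem \ref{MFTh} together with the ${\cal Z}$-stability of ${\cal B}_1$-algebras from Part I, and the only thing to verify carefully is the bookkeeping that ${\cal B}_1$-membership plus the UCT plus ${\cal N}$-membership (amenability) exactly matches the hypotheses of Theorem \ref{MFTh}. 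If there is a subtle point, it is only in confirming that "$A \in {\cal B}_1 \cap {\cal N}$" supplies both amenability (from ${\cal N}$) and simplicity/separability/unitality (built into the definition of ${\cal B}_1$ as used in this paper).
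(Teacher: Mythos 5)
Your proposal is correct and takes essentially the same route as the paper, whose proof simply invokes Theorem 10.7 of \cite{GLN-I} to get $A\otimes {\cal Z}\cong A$ and $B\otimes {\cal Z}\cong B$ and then applies Theorem \ref{MFTh}. The one small caveat is that your appeal to Corollary 19.3 of \cite{GLN-I} formally requires $A\otimes Q\in {\cal B}_1$ rather than $A\in {\cal B}_1$; the needed fact that ${\cal B}_1$-membership passes to $A\otimes Q$ (equivalently, $gTR(A\otimes Q)\le 1$) is precisely the hypothesis of Theorem \ref{MFTh}, and it is the same Part I fact that the paper's own two-line proof uses implicitly.
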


\begin{proof}
It follows from Theorem
10.7 of \cite{GLN-I}
that $A\otimes {\cal Z}\cong A$ and
$B\otimes {\cal Z}\cong B.$ The corollary then follows from Theorem \ref{MFTh}.
\end{proof}

\begin{rem}
Soon after this work was completed in 2015, it was shown (see \cite{EGLN})
\CA s $A$ with finite decomposition rank  which satisfy the UCT have $gTR(A\otimes U)\le 1$ for all
UHF-algebras of infinite type.  Therefore, by Theorem \ref{MFTh}, they are classified by the Elliott invariant.
\end{rem}

{\small
\bibliographystyle{amsplain}

\providecommand{\bysame}{\leavevmode\hbox to3em{\hrulefill}\thinspace}
\providecommand{\MR}{\relax\ifhmode\unskip\space\fi MR }
\providecommand{\MRhref}[2]{%
  \href{https://urldefense.com/v3/__http://www.ams.org/mathscinet-getitem?mr=*1*7D*7B*2__;IyUlIw!5W9E9PnL_ac!Uc_V1SBthxM_rOFn2LVg5SG0uHr0VdNB3FZg1tT6CRB9vOCgEx3u3zrx_Z7VjNs$ }
}
\providecommand{\href}[2]{#2}

}

\noindent
G. Gong:  Department of Mathematics, Hebei Normal University, Shijiazhuang, Hebei 050016, China and
     Department of mathematics, University of Puerto Rico, Rio Piedras, PR 00936, USA.
ghgong@gmail.com,

\noindent
H. Lin:  Department of Mathematic, East China Normal University, Shanghai 200062, China\\
   and Current\\
    Department of Mathematics, University of Oregon,
    Eugene, Oregon, 97402, USA. \\
hlin@uoregon.edu,

\noindent
Z. Niu: Department of Mathematics, University of Wyoming, Laramie, WY 82071,  USA.\\
zniu@uwyo.edu.


\end{document}